\newcommand{\calA}{\mathcal{A}}
\newcommand{\calB}{\mathcal{B}}
\newcommand{\calD}{\mathcal{D}}
\newcommand{\calH}{\mathcal{H}}
\newcommand{\calL}{\mathcal{L}}
\newcommand{\calP}{\mathcal{P}}
\newcommand{\calS}{\mathcal{S}}
\newcommand{\upcalW}[1]{\overline{\mathcal{W}_{#1}}}
\newcommand{\calC}{\mathcal{C}}
\newcommand{\C}{\mathbb{C}}
\newcommand{\hatC}{\widehat{\mathbb{C}}}
\newcommand{\D}{\mathbb{D}}
\renewcommand{\H}{\mathbb{H}}
\newcommand{\R}{\mathbb{R}}
\newcommand{\hatR}{\widehat{\mathbb{R}}}
\newcommand{\ophom}[1]{\operatorname{Homeo}^+(#1)}
\newcommand{\AC}{\ensuremath{\mathrm{AC}}}
\newcommand{\ACloc}{\ensuremath{\mathrm{AC}_{loc}}}
\newcommand{\ACp}{\ensuremath{\mathrm{AC}^+_{loc}}}
\def\Aut{\operatorname{Aut}}
\def\MOB{\operatorname{M\ddot{o}b}}
\def\T{{\mathbb T}}
\newcommand{\wC}{\widehat \C}
\newcommand{\wR}{\widehat \R}
\newcommand{\brac}[1]{\left \langle #1 \right \rangle}
\newcommand{\SSS}{\mathbb{S}^1}
\newcommand{\dd}{\mathrm{d}}
\def\PSL{\operatorname{PSL}}
\def\SLE{\operatorname{SLE}}
\def\QS{\operatorname{QS}}
\def\WP{\operatorname{WP}}
\def\Mob{M\"obius }
\newcommand{\g}{\gamma}
\renewcommand{\Im}{\textnormal{Im}}
\newcommand{\gvp}{\varphi}
\newcommand{\half}{\frac{1}{2}}
\newcommand{\Teich}{Teichm\"{u}ller }
\newcommand{\bs}{\backslash}
\newcommand{\abs}[1]{\left\lvert #1 \right \rvert}
\newtheorem{thm}{Theorem}[section]
\newtheorem{prop}[thm]{Proposition}
\newtheorem{lemma}[thm]{Lemma}
\newtheorem{cor}[thm]{Corollary}
\newtheorem{con}[thm]{Conjecture}
\theoremstyle{definition}
\newtheorem{remark}[thm]{Remark}
\newtheorem{definition}[thm]{Definition}
\newtheorem{example}[thm]{Example}
\numberwithin{equation}{section}
\renewcommand{\tabcolsep}{0.2cm}
\renewcommand{\arraystretch}{1.2}
\begin{document}
\title{On Loewner energy and curve composition}
\author{Tim Mesikepp\thanks{Email: tmesikepp@gmail.com}, 
Yaosong Yang\thanks{Institute of Mathematics, Academy of Mathematics and Systems Science, Chinese Academy of Sciences, Beijing, 100190, China;~School of Mathematical Sciences, University of Chinese Academy of Sciences, Beijing 100049, China. ~Email: yangyaosong@amss.ac.cn}}

\date{\today}

\maketitle
\begin{abstract}
The composition $\gamma \circ \eta$ of Jordan curves $\gamma$ and $\eta$ in universal \Teich space is defined through the composition $h_\gamma \circ h_\eta$ of their conformal weldings.  We show that whenever $\gamma$ and $\eta$ have finite Loewner energy $I^L$, the energy of their composition satisfies $$I^L(\gamma \circ \eta) \lesssim_K I^L(\gamma) + I^L(\eta),$$ with an explicit constant in terms of the quasiconformal $K$ of $\gamma$ and $\eta$.  We also study the asymptotic growth rate of the Loewner energy under $n$ self-compositions $\gamma^n := \gamma \circ \cdots \circ \gamma$, showing $$\limsup_{n \rightarrow \infty} \frac{1}{n}\log I^L(\gamma^n) \lesssim_K 1,$$ again with explicit constant. 

Our approach is to define a new conformally-covariant rooted welding functional $W_h(y)$, and show $W_h(y) \asymp_K I^L(\gamma)$ when $h$ is a welding of $\gamma$ and $y$ is any root (a point in the domain of $h$).  In the course of our arguments we also give several new expressions for the Loewner energy, including generalized formulas in terms of the Riemann maps $f$ and $g$ for $\gamma$ which hold irrespective of the placement of $\gamma$ on the Riemann sphere, the normalization of $f$ and $g$, and what disks $D, \overline{D}^c \subset \hatC$ serve as domains.  An additional corollary is that $I^L(\gamma)$ is bounded above by a constant only depending on the Weil--Petersson distance from $\gamma$ to the circle. 
\end{abstract}

\section{Introduction and main results}

\subsection{Motivation}

The Loewner energy, despite being defined deterministically, owes its genesis to the fecund soil of probability theory. Friz--Shekar \cite{ShekharFriz17} and Wang \cite{Yilin19} independently initiated its study, both in probabilistic settings: the former sought to give Rohde--Schramm-like theorems for drivers in Loewner's equation more general than Brownian motion, while the latter used the reversibility of Schramm--Loewner evolution SLE to show the reversibility of Loewner energy.  While both these initial papers studied Jordan arcs connecting boundary points of a simply-connected domain $\Omega \subset \C$, Rohde--Wang \cite{Loopenergy} subsequently generalized the energy to Jordan curves on the Riemann sphere $\wC$.\footnote{For further connections between Loewner energy and probability, see, for instance, \cite{Interplay,YinlinAnoteBLM,Guskov,FredrikCoulomb,LDP24EveliinaYilin,YilinNotice24,YilinFoliations24,Baverez24Loopmeasure,Yilin24CMP,FanJinwoo2025quasiinvariancesleweldingmeasures,BJ25conformalwelding}.}

\emph{Composition} of curves, however, was not immediately in sight, at least not until Wang's deep work \cite{Yilinequivalent} giving expressions for the Loewner energy in terms of  conformal maps instead of the associated driving function (see \S2 for undefined terminology and further background).  Recall that for a Jordan curve $\gamma \subset \mathbb{C}$ with Riemann maps $f:\D \rightarrow \Omega$, $g:\D^* \rightarrow \Omega^*$ to the two components of the complement $\wC \bs \gamma$ of $\gamma$, Wang showed
\begin{align}\label{Eq:LEDisk}
    I^L(\gamma) = \frac{1}{\pi}\int_{\mathbb{D}} \left| \frac{f''}{f'} \right|^2 dA + \frac{1}{\pi}\int_{\mathbb{D}^*} \left| \frac{g''}{g'} \right|^2 dA + 4 \log \left| \frac{f'(0)}{g'(\infty)} \right|,
\end{align}
provided $g(\infty)=\infty$.  When $\gamma$ passes through $\infty$, Wang similarly showed
\begin{align}\label{Eq:LEH}
    I^L(\gamma) = \frac{1}{\pi}\int_{\mathbb{H}} \left| \frac{f''}{f'} \right|^2 dA + \frac{1}{\pi}\int_{\mathbb{H}^*} \left| \frac{g''}{g'} \right|^2 dA,
\end{align}
for Riemann maps $f:\H \rightarrow \Omega$, $g:\H^* \rightarrow \Omega^*$ from the upper half plane and its complement which both fix $\infty$.
Among the many striking aspects of these formulas is the fact that \eqref{Eq:LEDisk}, up to multiplicative constant, had been previously shown by Takhtajan--Teo to be finite if and only if $\gamma$ belongs to the \emph{Weil--Petersson} class $T_0(1)$.  Weil--Petersson curves, first studied by Cui \cite{Cui} (although not under that name), are a type of $L^2$ class within the $L^\infty$ class of universal \Teich space $T(1)$.  Wang thereby showed finite-energy curves are nothing other than Weil--Petersson curves.  And, in fact, the connection is much deeper, as \eqref{Eq:LEDisk} is also a constant multiple of Takhtajan--Teo's ``universal Liouville action" functional $\mathrm{S}_1$, which is the K\"ahler potential for the Weil--Petersson metric on $T_0(1)$.\footnote{The Weil--Petersson metric is, up to constant multiple, the unique homogeneous K\"ahler metric on $T_0(1)$~\cite[Ch.II Cor. 4.2]{TTbook}. 

While introduced under a different name in \cite{Schiffer62Connections}, we also comment that $\mathrm{S}_1$ is sometimes called the ``Takhtajan-Teo functional" in the geometry literature.  Owing to Wang's work is it usually known as the Loewner energy in the probability and analysis communities.}  See \cite{TTbook,Yilinequivalent}.

In particular, the Loewner energy has some connection with the algebraic structure on $T(1)$ and $T_0(1)$, which is given by composition.  We recall that the composition $\gamma \circ \eta$ of curves is defined through the composition of the corresponding \emph{conformal weldings}, where if $\gamma$ has conformal maps $f$ and $g$ as above, the conformal welding is the circle homeomorphism $h:= g^{-1}\circ f$. Hence $\gamma \circ \eta$ is the curve with conformal welding $h_\gamma \circ h_\eta$.  While $T(1)$ is thus closed under composition, $T_0(1)$ is additionally a topological group \cite[Ch.I Thm. 3.8]{TTbook}.  It is only natural, then, to ask how $I^L(\gamma \circ \eta)$ relates to $I^L(\gamma)$ and $I^L(\eta)$.

\subsection{Two main inequalities: energy of a composition}\label{ideas}

The first elementary observation is that it is not possible for the energy to be either multiplicative or additive, i.e. it cannot satisfy
\begin{align*}
    I^L(\gamma \circ \eta) = I^L(\gamma)I^L(\eta) \qquad \text{ or } \qquad  I^L(\gamma \circ \eta) = I^L(\gamma)+I^L(\eta).
\end{align*}
To see this, recall that inversion of weldings corresponds to complex conjugation of curves, which preserves Loewner energy. On the other hand, the sub-additivity relation
\begin{align}\label{Ineq:LESubadditive}
I^L(\gamma \circ \eta) \leq I^L(\gamma)+I^L(\eta)
\end{align}
seems \emph{a priori} possible.  Our first main result is this inequality, up to a constant depending on $\gamma$ and $\eta$.
\begin{thm}\label{Thm:GeneralCompBound}
    For Jordan $\gamma$ and $\eta$ of finite Loewner energy that are also $K$-quasicircles,
    \begin{align}\label{Ineq:LESubadditiveK}
        I^L(\gamma \circ \eta) \leq (4+K^2)^2\big( I^L(\gamma) + I^L(\eta) \big).
    \end{align}
\end{thm}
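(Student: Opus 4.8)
The plan is to factor the inequality through the rooted welding functional $W_h(y)$ introduced in the abstract, exploiting two of its features: the two-sided comparison $W_{h_\gamma}(y) \asymp_K I^L(\gamma)$ valid for any root $y$, and a subadditive behavior under composition. The point of working with $W$ rather than $I^L$ directly is that $\gamma \circ \eta$ is \emph{defined} as the curve whose welding is $h_\gamma \circ h_\eta$, so that on the welding side the operation ``$\circ$'' is ordinary composition of homeomorphisms, which a well-chosen functional can be made to respect. Thus I would first translate both sides of \eqref{Ineq:LESubadditiveK} into statements about weldings.

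The crux is a composition rule for $W$. Fixing a root $y$ in the domain of $h_\eta$, I expect
\begin{align*}
W_{h_\gamma \circ h_\eta}(y) \;\le\; W_{h_\gamma}\big(h_\eta(y)\big) + W_{h_\eta}(y),
\end{align*}
with the root of the outer factor $h_\gamma$ pushed forward to $h_\eta(y)$. This is precisely where the conformal covariance and the rooting are designed to pay off: under $h \mapsto h \circ k$ the quantity defining $W_h$ should transform by the cocycle rule for the transition functions (morally $\log(h\circ k)' = \log h' \circ k + \log k'$), while the change of root $y \mapsto k(y)$ adjusts the base measure covariantly, so that the two pieces separate. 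The \textbf{main obstacle} is to carry this out so that one obtains genuine subadditivity of the (quadratic) functional rather than merely a triangle inequality for $\sqrt{W}$: a careless expansion of a Dirichlet-type energy leaves a cross term and costs an extra multiplicative factor, which would spoil the clean constant in \eqref{Ineq:LESubadditiveK}. Controlling or absorbing that cross term, using the specific covariant normalization built into $W$, is the heart of the argument.

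Granting the composition rule, the theorem follows by chaining the comparison. Here it is essential that the lower half of the comparison be used in $K$-free form, $I^L(\gamma\circ\eta) \le W_{h_\gamma \circ h_\eta}(y)$, since the composed curve need not be a $K$-quasicircle — its welding is only quasisymmetric with a constant depending on $K$ — so the quasiconformal constant must enter solely through the upper half of the comparison, which is applied to the two given $K$-quasicircles. Concretely, and writing the upper comparison in the form I expect it to take,
\begin{align*}
I^L(\gamma \circ \eta) \;\le\; W_{h_\gamma \circ h_\eta}(y) \;\le\; W_{h_\gamma}\big(h_\eta(y)\big) + W_{h_\eta}(y) \;\le\; (4+K^2)^2\, I^L(\gamma) + (4+K^2)^2\, I^L(\eta),
\end{align*}
where the last step applies the bound $W_h(y) \le (4+K^2)^2\, I^L(\gamma)$ to the $K$-quasicircle $\gamma$ at the root $h_\eta(y)$, and likewise to $\eta$ at $y$. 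Since the resulting bound no longer depends on the chosen root, this is exactly \eqref{Ineq:LESubadditiveK}. Two checks I would not skip are that the comparison constant is genuinely root-independent (so the freedom in $y$, and hence in $h_\eta(y)$, does not degrade the estimate) and that the lower bound $I^L \le W_h$ holds with no quasiconformal hypothesis; without the latter the method would only produce a constant depending on the dilatation of $\gamma \circ \eta$, which is not controlled by a single $K$.
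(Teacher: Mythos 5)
Your overall architecture is the paper's: pass to weldings, sandwich $I^L$ between the rooted functional $W$ via the two-sided comparison (Theorem \ref{Comparable}), and use a composition estimate for $W$ in the middle. The paper's proof (Theorem \ref{Thm:GeneralCompBoundBothKs} in \S\ref{Sec:ProofOfSubAdditivity}) is exactly this three-line chain, including your two "checks": the lower comparison is indeed used in its universal, $K$-free form $\tfrac{3}{2}I^L(\gamma\circ\eta)\le W_{h_\gamma\circ h_\eta}(y)$, and the comparison is root-independent. However, your proposal has a genuine gap at what you yourself call the heart of the argument: the claimed composition rule $W_{h_\gamma\circ h_\eta}(y)\le W_{h_\gamma}(h_\eta(y))+W_{h_\eta}(y)$ with constant $1$ is not proved, and the mechanisms you invoke cannot deliver it. The cocycle identity $L_{h_1\circ h_2}(x,y)=L_{h_1}(h_2(x),h_2(y))+L_{h_2}(x,y)$ does hold (Lemma \ref{Lemma:LGeneralComp}), but after taking $H^{\half}$-norms the first summand is the \emph{pullback} $\calP_{h_2}\big(L_{h_1}(\cdot,h_2(y))\big)$, and the $H^{\half}$ seminorm is invariant only under \Mob pullbacks (Lemma \ref{Lem: pullbackoperatorMob}), not under general quasisymmetric ones. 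So this term must be estimated through the operator norm $\|\calP_{h_2}\|^2\le K^2+K^{-2}$ (Proposition \ref{const.} together with Corollary \ref{Cor:OperatorBound} and Lemma \ref{Lemma:K^2Optimal}); this is a second, unavoidable source of $K$-dependence that your sketch omits entirely. On top of that, the cross term you flag does cost a factor of $2$ via $(a+b)^2\le 2a^2+2b^2$; the paper does not absorb it, and nothing in the covariant normalization of $W$ (Theorem \ref{Thm:WMobiusInvariance}, which is \Mob covariance only) makes it disappear. The true statement is Lemma \ref{Lemma:WGeneralComp}:
\begin{align*}
W_{h_1\circ h_2}(y)\;\le\; 2(K_2^2+K_2^{-2})\,W_{h_1}(h_2(y))\;+\;2(K_1^2+K_1^{-2})\,W_{h_2}(y).
\end{align*}

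The gap is repairable without changing your scheme, because the target constant $(4+K^2)^2$ is generous: substituting the correct lemma into your chain gives
\begin{align*}
I^L(\gamma\circ\eta)\;\le\;\tfrac{2}{3}\,W_{h_\gamma\circ h_\eta}(y)\;\le\;\tfrac{4}{3}(K^2+K^{-2})\big(W_{h_\gamma}(h_\eta(y))+W_{h_\eta}(y)\big)\;\le\;\tfrac{2}{3}(K^2+K^{-2})(3+K^2+K^{-2})\big(I^L(\gamma)+I^L(\eta)\big),
\end{align*}
and $\tfrac{2}{3}(K^2+K^{-2})(3+K^2+K^{-2})\le(1+K^2)(4+K^2)\le(4+K^2)^2$ since $K\ge 1$. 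That repaired argument is precisely the paper's proof; as written, though, your proposal rests on an unproven (and, given the non-invariance of $H^{\half}$ under quasisymmetric pullback, implausible) subadditivity claim.
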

\noindent When extending the Loewner energy from Jordan arcs to curves, Rohde and Wang showed that all finite-energy curves are indeed $K$-quasicircles, with the constant a function of the Loewner energy \cite[Prop. 3.6]{Loopenergy}.  Thus $K$ in \eqref{Ineq:LESubadditiveK} may be taken as the maximum of $K_\gamma$ and $K_\eta$.  


One could also inquire about the long-term growth rate of energy under self composition.  Our second main inequality studies this by means of a ``Loewner entropy."

\begin{thm}\label{Thm:LEGrowth}
    If $\g$ is a Jordan curve of finite Loewner energy and a $K$-quasicircle, then
    \begin{align}\label{ineq: entropyineq}
        \limsup_{n \to \infty} \frac{\log I^{L}(\gamma^n)}{n} 
        \leq \log(K^2+K^{-2}).
    \end{align}
\end{thm}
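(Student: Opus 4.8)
The plan is to bypass Theorem~\ref{Thm:GeneralCompBound} entirely, since iterating it is fatal: the conformal welding of $\gamma^n$ is $h^n = h \circ \cdots \circ h$ where $h$ is the welding of $\gamma$, and quasisymmetry constants multiply under composition, so $\gamma^n$ is only a priori a $K^n$-quasicircle. Feeding this into~\eqref{Ineq:LESubadditiveK} would produce the hopeless factor $(4+K^{2n})^2$ and hence doubly-exponential growth. Instead I would run the entire argument through the rooted welding functional $W_h(y)$, whose two advertised features are tailored for exactly this purpose: a comparison $W_h(y) \asymp_K I^L(\gamma)$ that is \emph{uniform over the root} $y$, and a conformal covariance that lets one translate the root freely.

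The engine would be a single \emph{peeling estimate} for $W$ under one composition by the fixed $K$-quasiconformal welding $h$, of the shape
\begin{align*}
W_{h \circ \psi}(y) \le (K^2 + K^{-2})\, W_\psi(y') + W_h(y''),
\end{align*}
valid for an arbitrary welding $\psi$, where the roots $y', y''$ are the images/preimages of $y$ dictated by covariance. The decisive point---and what distinguishes this from Theorem~\ref{Thm:GeneralCompBound}---is that the multiplicative cost $K^2 + K^{-2}$ depends only on the dilatation of the single map $h$ that is peeled off, and not at all on the (unboundedly growing) constant of the accumulated map $\psi = h^{n-1}$.

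Applying this with $\psi = h^{n-1}$ and writing $a_n := \sup_y W_{h^n}(y)$, the uniformity of $W_h(y) \asymp_K I^L(\gamma)$ over roots bounds the additive term by $b := \sup_y W_h(y) \le C(K)\, I^L(\gamma) < \infty$, in which $K = K_\gamma$ is \emph{fixed}. Covariance lets me absorb the shifted root $y'$ into the supremum, yielding the linear recursion $a_n \le (K^2+K^{-2})\, a_{n-1} + b$. Solving gives $a_n \le (K^2+K^{-2})^n a_0 + b\,\frac{(K^2+K^{-2})^n - 1}{K^2 + K^{-2} - 1}$, and hence $\limsup_n \frac1n \log a_n \le \log(K^2+K^{-2})$.

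Finally I would transfer back to the Loewner energy using only the one direction $I^L(\gamma^n) \lesssim W_{h^n}(y) \le a_n$, which I would arrange to hold with a constant independent of $K$; because this last inequality is $K$-free it does not reintroduce the $K^n$ blow-up at the final step, and~\eqref{ineq: entropyineq} follows. I expect the genuine difficulty to be the peeling estimate, on two counts: first, isolating the cost of one composition so that it sees only the dilatation of $h$ and not that of $\psi$---this is precisely where the rooted, covariant formulation of $W$ must do its work, via the chain rule for $\log$-derivatives together with a sharp quasiconformal distortion bound for the energy density; and second, pinning the constant to the sharp value $K^2 + K^{-2}$ rather than a cruder power of $K$. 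Securing the uniformity of $W_h(y) \asymp_K I^L(\gamma)$ across all roots, needed to keep the additive term $b$ finite, is the supporting ingredient I would establish first.
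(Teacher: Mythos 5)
Your proposal shares the paper's skeleton: renormalize so the welding fixes $\infty$, run a recursion in $n$ through a composition estimate for the rooted welding energy, and transfer back to $I^L$ via the $K$-free lower bound $\tfrac{3}{2}I^L(\gamma^n)\le W_{h^n}(\infty)$ of Theorem~\ref{Comparable}. The gap is the engine itself: your peeling estimate
\begin{align*}
W_{h\circ\psi}(y)\ \le\ (K^2+K^{-2})\,W_\psi(y')\ +\ W_h(y''),
\end{align*}
with \emph{both} constants independent of the accumulated map $\psi$, cannot be produced by the mechanism you invoke, nor by anything in the paper. The chain rule reads $L_{h\circ\psi}(x,y)=L_h(\psi(x),\psi(y))+L_\psi(x,y)$ (Lemma~\ref{Lemma:LGeneralComp}), so estimating $K_{h\circ\psi}(y)=\|L_h(\psi(\cdot),\psi(y))+L_\psi(\cdot,y)\|_{H^{1/2}}^2$ forces you to control the $H^{1/2}$-norm of $L_h$ \emph{pre-composed with} $\psi$; the only available tool is Nag--Sullivan (Proposition~\ref{const.}, Corollary~\ref{Cor:OperatorBound}), which costs $\|\calP_\psi\|^2\le K_\psi^2+K_\psi^{-2}$. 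For $\psi=h^{n-1}$ this factor is of size $(K^2+K^{-2})^{n-1}$ --- and even to quantify it one needs Lemma~\ref{prop: compositionofK} (that $h^{n-1}$ welds a $K^{n-1}$-quasicircle), which your sketch never invokes --- and it unavoidably lands on the $K_h$, hence $W_h$, term. So every provable peeling bound carries a $\psi$-free constant on $W_\psi$ but an $n$-dependent constant on $W_h$; your recursion $a_n\le(K^2+K^{-2})a_{n-1}+b$ with a \emph{fixed} additive term $b$ therefore never gets started, and the point you call ``decisive'' is exactly the unproven step.

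The repair is to notice that the growing constant is harmless precisely because it multiplies the fixed term. This is what the paper does: Lemma~\ref{Lemma:WGeneralComp} with $h_1=h^{n-1}$, $h_2=h$, together with Lemma~\ref{prop: compositionofK}, gives
\begin{align*}
W_{h^n}(\infty)\ \le\ 2(K^2+K^{-2})\,W_{h^{n-1}}(\infty)\ +\ 2\big(K^{2(n-1)}+K^{-2(n-1)}\big)W_h(\infty),
\end{align*}
and since $K^{2(n-1)}+K^{-2(n-1)}\le(K^2+K^{-2})^{n-1}$ the additive term already sits at the target exponential scale. However, your instinct about the multiplicative constant is vindicated here: with the factor $2(K^2+K^{-2})$ on the recursive term, this recursion only yields $\limsup_n\frac{1}{n}\log W_{h^n}(\infty)\le\log 2+\log(K^2+K^{-2})$, and in fact the paper's asserted induction bound $2(n+2)(K^2+K^{-2})^{n-1}W_h(\infty)$ does not close (the recursion forces constants $d_n=2d_{n-1}+2$, so $d_4=22>2(4+2)$; the $d_n$ grow exponentially). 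To actually reach the stated rate, redo Lemma~\ref{Lemma:KGeneralComp} with the weighted inequality $(x+y)^2\le(1+\epsilon)x^2+(1+\epsilon^{-1})y^2$ in place of $(x+y)^2\le 2x^2+2y^2$, arranging the weight $1+\epsilon$ on the recursive term (whose pull-back cost involves only the fixed map, $\|\calP_{h}\|^2\le K^2+K^{-2}$) and letting the $n$-dependent factor ride on $W_h(\infty)$. This yields $a_n\le(1+\epsilon)(K^2+K^{-2})\,a_{n-1}+C_\epsilon(K^2+K^{-2})^{n-1}W_h(\infty)$, hence $\limsup_n\frac{1}{n}\log a_n\le\log(1+\epsilon)+\log(K^2+K^{-2})$ for every $\epsilon>0$, and $\epsilon\downarrow 0$ gives \eqref{ineq: entropyineq}. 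In short, your plan is repairable, but only by abandoning its central claim: accept the $n$-dependent factor on the $W_h$ term, quantify it with the multiplicativity Lemma~\ref{prop: compositionofK}, and recover the sharp constant through the weighted triangle inequality rather than through the peeling estimate you propose.
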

\noindent Hence for any $\varepsilon>0$, $I^L(\gamma^n) < \exp\big( (\log(K^2+K^{-2}) + \varepsilon)n \big)$ whenever $n \geq N(\varepsilon)$.

\subsubsection{Discussion, criticism, and a related result}\label{Sec:IntroMainInequalitiesDiscussion}
Before delving into proof strategies, we offer some brief discussion and criticism of these inequalities, and review a related result.

Overall, we do not expect \eqref{Ineq:LESubadditiveK} and \eqref{ineq: entropyineq} to be the last word on Loewner energy and curve composition, but rather hope they are a stimulating starting point.  In particular, the constants in \eqref{Ineq:LESubadditiveK} and \eqref{ineq: entropyineq} are not sharp (take $\eta = \gamma^{-1}$ in the former, for instance).

We also acknowledge that inequality \eqref{Ineq:LESubadditiveK} is admittedly rather far from \eqref{Ineq:LESubadditive}, given the dependence upon the curves.  However, note that, for each $K \geq 1$, \eqref{Ineq:LESubadditiveK} yields a fixed constant of sub-additivity on each collection 
\begin{align}\label{Eq:Gamma_K}
    \Gamma_K := \{\,\gamma \; : \; I^L(\gamma)<\infty, K_\gamma \leq K \,\}.
\end{align}
That is, $I^L(\gamma \circ \eta) \lesssim I^L(\gamma) + I^L(\eta)$ on every $\Gamma_K$.

We also note that while \eqref{ineq: entropyineq} bounds the exponential growth rate of the Loewner energy, it does not say that the limit supremum is, in general, positive.  Thus if the Loewner energy actually grows \emph{sub}-exponentially, then \eqref{ineq: entropyineq} is trivial because the left-hand side is always zero.  This would be the case, for instance, if \eqref{Ineq:LESubadditive} holds. On the other hand, if one can produce a curve $\gamma$ for which the left-hand side of \eqref{ineq: entropyineq} is positive, then \eqref{Ineq:LESubadditive} does not hold, and furthermore it is not the case that $I^L(\gamma \circ \eta) \lesssim I^L(\gamma) + I^L(\eta)$ for any constant independent of $
\gamma$ and $\eta$.

The only other result on Loewner energy and composition that we are aware of is from recent work of Alekseev--Shatashvili--Takhtajan~\cite{Takhtajan24}, who show, in our notation, that
\begin{align}\label{Eq:TStrikesAgain}
    I^L(\gamma \circ \eta) = I^L(\gamma) + I^L(\eta) - 48\log(|C_N(f_\gamma,f_\eta)|)
\end{align}
when $\gamma$ and $\eta$ are analytic Jordan curves \cite[Thm. 1.2]{Takhtajan24}.\footnote{Note the sign of the log term in the arXiv and published versions may be different.}  Here $f_\gamma$ and $f_\eta$ are associated conformal maps, and $C_N(\cdot,\cdot)$ is a $2$-cocycle defined in terms of certain Grunsky coefficients~\cite[Thm.~5.4]{Takhtajan24}.  The sign of the log term, however, does not seem to be known, and so this does not immediately yield a general inequality between $I^L(\gamma \circ \eta)$ and $I^L(\gamma)$ and $I^L(\eta)$. Note that, in contrast to \eqref{Eq:TStrikesAgain}, our results have the virtue of holding for all finite-energy curves, and not just analytic ones.

\subsubsection{Outline of the rest of the introduction}

In the remainder of the introduction we sketch our approach to proving Theorems \ref{Thm:GeneralCompBound} and \ref{Thm:LEGrowth} and highlight auxiliary contributions of our methods.  In \S\ref{Sec:IntroApproachWeldingEnergy} we introduce our ``welding energy" and inequality Theorem \ref{Comparable} relating the Loewner and welding energies, which is our primary tool for proving both the above inequalities.  We give an application of Theorem \ref{Comparable} to bounding $I^L(\gamma)$ by a function of $d_{\WP}(\gamma,0)$ in Corollary \ref{Thm: boundedcor}, and then proceed to sketch the proof of Theorem \ref{Comparable} in \S\ref{Sec:IntroProofStrategyForComparable}.  In the course of the sketch, we show in Corollary \ref{Thm:LoewnerIP} a new formula for the Loewner energy in terms of $H^\half$-inner products.

Sections \ref{Sec:IntroW} and \ref{Sec:IntroNPS} then discuss two out-workings of our methods that may be of independent interest.  Section \ref{Sec:IntroW} gives an overview of properties of our new welding energy $W_h(y)$, and introduces several M\"obius-invariant variations of $W_h(y)$.  Section \ref{Sec:IntroNPS} discusses our ``normalized pre-Schwarzian'' operator, and applies it in Theorems \ref{Thm:BoundaryLE} and \ref{Thm:InteriorLE} to give two generalized formulas for the Loewner energy.

\subsection{Proof approach for main inequalities: a rooted welding energy, and a corollary}\label{Sec:IntroApproachWeldingEnergy}
Our approach to bridging the gap between Loewner energy and conformal welding is to define a rooted functional $W_h(y)$ on conformal weldings $h$, and then prove a comparison inequality between $W_h(y)$ and $I^L(\gamma)$.  We proceed to introduce $W_h(y)$, state our main comparison inequality Theorem \ref{Comparable}, and give a corollary.

The genesis of our welding functional lies in the work of Shen, who first characterized the Weil--Petersson class $T_0(1)$  in terms of conformal welding.
\begin{prop}[\cite{ShenWP}]\label{IntroProp:ShenWeldingD}
    An element $h \in \ophom{\SSS}$ belongs to the Weil--Petersson class if and only if  $h$ is absolutely continuous with respect to the arclength measure and $\log h' \in H^{\half}(\SSS)$.
\end{prop}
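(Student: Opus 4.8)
The plan is to prove both implications by passing between three equivalent data attached to $h$: the boundary homeomorphism itself, a quasiconformal extension of $h$ to $\D$, and the logarithmic derivative $\log h'$. Two standard facts anchor the argument. First, I take as the working description of the \Teich class that $h \in T_0(1)$ if and only if $h$ admits a quasiconformal extension $H \colon \D \to \D$ whose Beltrami coefficient satisfies the hyperbolic $L^2$ bound $\int_\D |\mu_H(z)|^2 (1-|z|^2)^{-2}\, dA(z) < \infty$ (Cui; Takhtajan--Teo). Second, I use the trace description of the fractional Sobolev space: a real function $u$ lies in $H^{\half}(\SSS)$ precisely when its harmonic extension $\tilde u$ has finite Dirichlet energy, and then $\int_\D |\nabla \tilde u|^2\, dA$ is comparable to the homogeneous seminorm $\|u\|_{H^{\half}}^2 \asymp \int_{\SSS}\int_{\SSS} |u(\zeta)-u(\xi)|^2 |\zeta - \xi|^{-2}\, |d\zeta|\,|d\xi|$ (the Douglas formula). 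The entire content is then funneled into a single two-sided estimate comparing the hyperbolic $L^2$-norm of the Beltrami coefficient of a canonical extension of $h$ with the $H^{\half}$-seminorm of $\log h'$.

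For the direction $(\Leftarrow)$, assume $h$ is absolutely continuous with $\log h' \in H^{\half}(\SSS)$. Since $H^{\half}(\SSS) \hookrightarrow \VMO(\SSS)$, the hypothesis forces $\log h'$ to have vanishing mean oscillation, which makes $h$ quasisymmetric (indeed asymptotically conformal); in particular $h \in T(1)$ and admits quasiconformal extensions. I would take a concrete one --- the Beurling--Ahlfors extension $H$ (transferring to the real line by a Cayley map, where $\log h' \in H^{\half}(\R)$), or the Douady--Earle extension --- and estimate its Beltrami coefficient. Writing $\mu_H$ near the boundary through difference quotients of $h$, the hyperbolic integral $\int_\D |\mu_H|^2 (1-|z|^2)^{-2}\, dA$ reorganizes into a Douglas-type double integral of $\log h'$, yielding $\int_\D |\mu_H|^2(1-|z|^2)^{-2}\,dA \lesssim \|\log h'\|_{H^{\half}}^2 < \infty$. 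By the characterization above, $h \in T_0(1)$.

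For the direction $(\Rightarrow)$, assume $h \in T_0(1)$ and let $f \colon \D \to \Omega$, $g \colon \D^* \to \Omega^*$ be the welding maps, so $h = g^{-1}\circ f$. Membership in $T_0(1)$ is equivalent to finiteness of the Dirichlet integrals in \eqref{Eq:LEDisk}, i.e.\ to $\log f' \in \calD(\D)$ and $\log g' \in \calD(\D^*)$; since such functions lie in $\VMO$, the traces $f', g'$ and their reciprocals are integrable on $\SSS$, so $f$ and $g$ extend absolutely continuously and $h = g^{-1}\circ f$ is absolutely continuous with $h'$ defined a.e. This gives the first conclusion. For the second, I would run the estimate of the previous paragraph in reverse: taking the Douady--Earle extension of $h$, whose Beltrami coefficient lies in the hyperbolic $L^2$ class because $h \in T_0(1)$, the lower half of the two-sided bound gives $\|\log h'\|_{H^{\half}}^2 \lesssim \int_\D |\mu|^2(1-|z|^2)^{-2}\, dA < \infty$, whence $\log h' \in H^{\half}(\SSS)$.

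The main obstacle is precisely this two-sided estimate at the heart of both directions: relating the hyperbolic $L^2$-norm of the extension's Beltrami coefficient to the $H^{\half}$-seminorm of the \emph{nonlinear} functional $\log h'$. The difficulty is one of linearization. The Beurling--Ahlfors and Douady--Earle extensions depend nearly linearly on $h$, whereas both $\mu_H$ and $\log h'$ are nonlinear in $h$; one must expand $\mu_H$ to leading order in the boundary difference quotients of $h$, identify $\log h'$ as the surviving term, and control the error contributions uniformly as $|z|\to 1^-$ so that the weighted integral converges. Managing this boundary integrability, together with the implicit change of variables relating $h$ to its welding maps in the $(\Rightarrow)$ direction, is the technical crux; the absolute-continuity assertion, by contrast, falls out once the Dirichlet-space regularity of $\log f'$ and $\log g'$ is in hand.
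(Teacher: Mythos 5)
First, a point of reference: the paper does not prove this proposition at all — it is imported from Shen \cite{ShenWP} as a known input — so your attempt must be judged on its own soundness, and it has a genuine gap. The single tool your plan funnels everything through, a two-sided comparison $\|\log h'\|_{H^{\half}(\SSS)}^2 \asymp \int_{\D}|\mu_H(z)|^2(1-|z|^2)^{-2}\,dA(z)$ for a canonical (Beurling--Ahlfors or Douady--Earle) extension $H$ of $h$, does not exist: the lower half, which is exactly what you invoke for the direction $(\Rightarrow)$, is false. The Douady--Earle extension is conformally natural, so for every $h\in\Aut(\D)$ it equals $h$ itself and $\mu_H\equiv 0$; yet for $h_a(z)=\frac{z-a}{1-\bar{a}z}$ one has $h_a'(z)=\frac{1-|a|^2}{(1-\bar{a}z)^2}$, and the Douglas formula \eqref{Eq:IntroDouglasFormula} gives
\begin{align*}
  \big\|\log h_a'\big\|_{H^{1/2}(\SSS)}^2 \;=\; \frac{1}{2\pi}\int_{\D}\bigg|\frac{2\bar{a}}{1-\bar{a}z}\bigg|^2\,dA(z) \;=\; 2\log\frac{1}{1-|a|^2}\;\longrightarrow\;\infty \qquad (|a|\to 1).
\end{align*}
So the Beltrami side of your comparison vanishes identically along a family on which the $H^{\half}$ side is unbounded, and since these $h_a$ are $1$-quasiconformal and weld round circles, no constant depending on $K$ (or on $I^L$) can repair the bound. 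The structural reason is an invariance mismatch: $\int|\mu_H|^2\rho\,dA$ for the Douady--Earle extension is \Mob invariant, while $h\mapsto\|\log h'\|^2_{H^{\half}(\SSS)}$ is not. This is precisely the defect that the present paper's rooted functional is built to cure: the cross-ratio corrections in $L_h(x,y)$ make $W_h(y)$ \Mob covariant and make it vanish exactly on \Mob maps (Theorems \ref{Thm:WMobiusInvariance} and \ref{Thm:WVanishesMobius}), which is why Theorem \ref{Comparable} compares $I^L(\gamma)$ with $W_h(y)$ rather than with $\|\log h'\|^2_{H^{\half}}$.

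Parts of your outline are salvageable, but not as arranged. For $(\Rightarrow)$ no extension estimate is needed: writing $h=g^{-1}\circ f$, one has $\log h' = \log|f'| - \log|g'|\circ h$ a.e.; the Douglas formula places $\log|f'|$ and $\log|g'|$ in $H^{\half}(\SSS)$ (as in \eqref{Eq:LERHHalf}), and Nag--Sullivan (Proposition \ref{const.}) shows the pullback $\varphi\mapsto\varphi\circ h$ is bounded on $H^{\half}$ because $h$ is quasisymmetric; this yields $\log h'\in H^{\half}(\SSS)$ directly, and your absolute-continuity argument (log-derivatives in $\VMOA$ force $f',1/f',g',1/g'\in H^p$, then Banach--Zarecki for $g^{-1}\circ f$) is fine in outline. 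For $(\Leftarrow)$, your first step is sound ($H^{\half}\subset\VMO$ plus John--Nirenberg give $h'=e^{\log h'}\in A_\infty$, hence $h$ strongly quasisymmetric), but the remaining claim --- that the Beltrami coefficient of a Beurling--Ahlfors or Douady--Earle extension ``reorganizes into a Douglas-type double integral'' dominated by $\|\log h'\|^2_{H^{\half}}$ --- is essentially the entire hard half of Shen's theorem and cannot be waved through: Semmes-type extensions obtained by integrating $e^{\tilde b}$ (with $\tilde b$ the harmonic extension of $\log h'$) are homeomorphisms only under a small-norm hypothesis, the Douady--Earle coefficient does not linearize against $\log h'$ in any scale-uniform way, and even transferring the hypothesis $\log h'\in H^{\half}(\SSS)$ to the line for the Beurling--Ahlfors construction already requires the \Mob correction terms you have omitted. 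As written, the proposal establishes neither implication.
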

\noindent Our preferred disk in $\wC$ will be the upper half-plane $\H$, for which Shen and his coauthors similarly showed:
\begin{prop}[{\cite{WPII, STWrealline}}]\label{IntroProp:ShenWeldingH}
  An increasing homeomorphism $h$ of $\R$ onto itself is in the Weil--Petersson class if and only if $h$ is locally absolutely continuous with $\log h' \in H^{\half}(\R)$.
\end{prop}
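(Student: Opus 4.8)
The plan is to deduce the half-plane statement from its disk counterpart, Proposition \ref{IntroProp:ShenWeldingD}, by transporting everything through a Cayley transform and tracking how the condition $\log h' \in H^{\half}$ behaves under conjugation. Fix a \Mob map $M$ carrying $\D$ onto $\H$, so that $M$ restricts to a homeomorphism $\SSS \to \hatR$ sending a single point $\zeta_0 \in \SSS$ to $\infty$. Given an increasing homeomorphism $h$ of $\R$, extend it to fix $\infty$ and set $\tilde h := M^{-1}\circ h \circ M \in \ophom{\SSS}$; note $\tilde h(\zeta_0)=\zeta_0$. Since the Weil--Petersson class $T_0(1)$ is invariant under pre- and post-composition by \Mob transformations, $h$ lies in $T_0(1)$ if and only if $\tilde h$ does, and by Proposition \ref{IntroProp:ShenWeldingD} the latter is equivalent to $\tilde h$ being absolutely continuous with $\log \tilde h' \in H^{\half}(\SSS)$. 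Thus the theorem reduces to the purely analytic equivalence
\[
\big[\ \tilde h \text{ is AC and } \log\tilde h' \in H^{\half}(\SSS)\ \big]
\iff
\big[\ h \text{ is locally AC and } \log h' \in H^{\half}(\R)\ \big].
\]

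For this dictionary I would differentiate the conjugation relation. Writing $w = M(\zeta)$ and using that a conformal map distorts boundary arclength by the modulus of its derivative, the chain rule gives
\[
\log \tilde h'(\zeta) \;=\; (\log h')\big(M(\zeta)\big) \;+\; B(\zeta), \qquad B(\zeta) := \log\big|(M^{-1})'(h(w))\big| + \log\big|M'(\zeta)\big|.
\]
The absolute-continuity halves match up immediately: $M$ is a real-analytic diffeomorphism of $\SSS\setminus\{\zeta_0\}$ onto $\R$, so $h$ is locally AC exactly when $\tilde h$ is AC on $\SSS\setminus\{\zeta_0\}$, and monotonicity together with continuity at $\zeta_0$ upgrades this to AC on all of $\SSS$. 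For the $H^{\half}$ halves I would exploit the \Mob invariance of the homogeneous seminorm
\[
[u]_{\half}^2 \;=\; \iint \frac{|u(s)-u(t)|^2}{|s-t|^2}\,\dd s\,\dd t,
\]
which follows from the distortion identity $|M(s)-M(t)|^2 = |M'(s)|\,|M'(t)|\,|s-t|^2$ for \Mob maps; this gives $[(\log h')\circ M]_{\half,\SSS} = [\log h']_{\half,\R}$ exactly. Hence the transfer is an isometry of seminorms up to the additive cocycle $B$, and everything comes down to showing $B \in H^{\half}(\SSS)$.

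The cocycle is where the real work lies. Using $\log|M'(\zeta)| = -\log|(M^{-1})'(w)|$ and $(M^{-1})'(w) = (\text{const})/(cw+d)^2$, the two Jacobian terms combine into
\[
B(\zeta) \;=\; 2\log\left|\frac{cw+d}{c\,h(w)+d}\right|, \qquad w = M(\zeta),\ c \neq 0 .
\]
Individually neither term is in $H^{\half}(\SSS)$: each has a logarithmic singularity at $\zeta_0$ (indeed $\zeta\mapsto\log|\zeta-\zeta_0|$ has infinite Douglas energy), so membership of $B$ hinges on a cancellation as $\zeta\to\zeta_0$, i.e. as $w\to\pm\infty$, where $B(\zeta)\sim 2\log|w/h(w)|$. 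I expect this to be the main obstacle: it is precisely the statement that a Weil--Petersson homeomorphism of $\R$ fixing $\infty$ is asymptotically affine in the quantitative, $H^{\half}$-controlled sense needed to kill the singularity. Mere quasisymmetry of $h$ (which both sides supply, since $T_0(1)\subset T(1)$ and quasisymmetry is \Mob-invariant) only bounds the growth of $h$ by a power and is not enough; one must use the finiteness of the $H^{\half}$ energy itself to force the asymptotic slope to exist and to control its rate of convergence, thereby placing $B$ in $H^{\half}(\SSS)$. Once $B \in H^{\half}(\SSS)$ is established, the displayed decomposition together with the seminorm invariance yields both implications of the analytic equivalence, completing the reduction.
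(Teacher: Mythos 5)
Your reduction framework is sound as far as it goes, and it is consistent with machinery the paper itself develops: M\"obius conjugation preserves membership in $T_0(1)$ (via the Beltrami-coefficient model), the $H^{\half}$ seminorm is a M\"obius isometry (Lemma \ref{Lem: pullbackoperatorMob}), the absolute-continuity transfer is exactly Lemmas \ref{Lemma:ACPtRemovable} and \ref{Lemma:ACHatREquivalences}, and your chain-rule identity $\log \tilde h' = (\log h')\circ M + B$ with $B = 2\log\left|(cw+d)/(c\,h(w)+d)\right|$ is correct. The genuine gap is the one you flag yourself and then defer: you never prove $B \in H^{\half}(\SSS)$. Because the identity is exact and the pullback by $M$ is a seminorm isometry, the claim ``$B \in H^{\half}(\SSS)$ under either one of the two hypotheses alone'' is not a lemma on the way to Proposition \ref{IntroProp:ShenWeldingH} — modulo Proposition \ref{IntroProp:ShenWeldingD}, it \emph{is} Proposition \ref{IntroProp:ShenWeldingH}. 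Concretely, with the Cayley transform the unproved statement reads: $w \mapsto \log\bigl((w^2+1)/(h(w)^2+1)\bigr)$ lies in $H^{\half}(\R)$ assuming only that $h$ is locally AC with $\log h' \in H^{\half}(\R)$ (and, for the converse direction, assuming only the circle-side hypotheses on $\tilde h$). So what you have produced is a clean reformulation, not a proof; note that the paper itself quotes the disk and half-plane statements as two separate results of \cite{WPII, STWrealline} rather than deriving one from the other, precisely because this passage through the point at infinity is where the analytic content of those papers lies.

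Moreover, the mechanism you propose for closing the gap — that finiteness of the $H^{\half}$ energy forces $h(w)/w$ to converge to a positive slope at a controlled rate — is not available: it is false in general. The space $H^{\half}(\R)$ contains bounded functions that oscillate by a fixed amount over ever-longer logarithmic scales: a monotone ``log-ramp'' of height $1$ spread over $N$ dyadic scales has $H^{\half}$ seminorm squared of order $1/N$, so an alternating sequence of such ramps with $N_k = 2^k$ has finite total energy. Taking $\log h'$ to be such a function yields a bi-Lipschitz, locally AC, increasing homeomorphism with $\log h' \in H^{\half}(\R)$ for which $h(w)/w$ oscillates between two distinct constants and has no limit at $\pm\infty$; by the (true) proposition such an $h$ is nevertheless in the Weil--Petersson class, so $B$ does lie in $H^{\half}(\SSS)$ for it — but not for the reason you suggest. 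The required cancellation at $\zeta_0$ is of a genuinely integrated, $H^{\half}$ nature, and establishing it demands the real-variable analysis carried out in the cited papers rather than asymptotic-slope considerations or soft M\"obius bookkeeping.
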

\noindent Note that we may describe such an $h$ as an orientation-preserving homeomorphism of the extended real line $\wR$ that fixes $\infty$.  That is, $h \in \ophom{\wR}$ with $h(\infty)=\infty$.  Based on these results, a natural functional on weldings $h \in \ophom{\wR}$ appears to be
\begin{align*}
    h \mapsto \| \log h' \|_{H^{1/2}(\R)} := \bigg(\frac{1}{4 \pi^2}\int_{-\infty}^\infty \int_{-\infty}^\infty \frac{\left|\log h'(x)-\log h'(y)\right|^2}{\left| x-y\right|^2} \dd x\,\dd y \bigg)^{\half}.
\end{align*}
A connection with $I^L(\cdot)$ begins to crystallize when we re-write \eqref{Eq:LEH} as
\begin{align}\label{Eq:LEHNabla}
    I^L(\gamma) = \frac{1}{\pi} \int_\H \big| \nabla \log|f'|  \big|^2 dA + \frac{1}{\pi} \int_{\H^*} \big| \nabla \log|g'|  \big|^2 dA,
\end{align}
and recall that the classical Douglas formula says the trace map $F \mapsto F|_{\R}$ behaves particularly nice with respect to the Dirichlet energy $\calD_\H$ and $H^\half$ semi-norm, namely
\begin{align}\label{Eq:IntroDouglasFormula}
    \calD_\H(F) := \frac{1}{2\pi}\| \nabla F\|^2_{L^2(\H)} = \|F|_{\R}\|^2_{H^{1/2}(\R)}.
\end{align}
Thus \eqref{Eq:LEHNabla} becomes
\begin{align}
    I^L(\gamma) &= 2\calD_\H(\nabla \log|f'|) +2\calD_{\H^*}(\nabla \log|g'|) \notag\\
    &= 2\big\|\log |f'|\big\|^2_{H^{1/2}(\R)} + 2\big\|\log |g'| \big\|^2_{H^{1/2}(\R)}. \label{Eq:LERHHalf}
\end{align}
While the welding $h$ does not yet immediately appear, this suggests the natural welding functional may actually be $h \mapsto \| \log h' \|_{H^{1/2}(\R)}^2$.  In our attempts greater symmetry seemed necessary, and we define
\begin{align}\label{Eq:IntroWhInfty}
    W_h(\infty) := \big\| \log |h'|  \big\|_{H^{1/2}(\R)}^2 + \big\| \log |(h^{-1})'| \big\|_{H^{1/2}(\R)}^2.
\end{align}
The presence of both $h$ and $h^{-1}$ is reminiscent of the fact that norms from both sides of the curve appear in \eqref{Eq:LEHNabla} and \eqref{Eq:LERHHalf}.   

The ``$\infty$'' in the notation for $W_h$ merits comment.  Following Proposition \ref{IntroProp:ShenWeldingH} above, we are still assuming $h$ fixes $\infty$.  However, we may desire to handle other \Mob re-normalizations of $h$, motivated by the conformal invariance of the Loewner energy.  For this reason, for any $y \in \R$, we define the welding energy rooted at $y$ as
\begin{align*}
    W_h(y) := \bigg\| \log\bigg| \frac{(h(x)-h(y))^2}{h'(x)(x -y)^2} \bigg| \bigg\|^2_{H^{1/2}(\R,dx)} + \bigg\| \log\bigg| \frac{(h^{-1}(x)-h^{-1}(y))^2}{(h^{-1})'(x)(x -y)^2} \bigg| \bigg\|^2_{H^{1/2}(\R,dx)}
\end{align*}
provided $h(y)$ is finite.  The energy is also defined for when one or both of $y$ and $h(y)$ is infinite, for instance becoming \eqref{Eq:IntroWhInfty} in the latter case.  We motivate these expressions in \S\ref{Sec:IntroDiscussion} and \S\ref{Sec:welding energy} below.  We use the same expression for weldings defined on other circles $C$ in the extended complex plane $\hatC$ (that is, circles or bi-infinite lines in $\C$), taking the $H^\half$-norm on $C$ instead of on $\R$.

The rooted welding energy thus defined, we find the following useful \Mob covariance, which holds for $h:C \rightarrow C$ defined on any circle $C \subset \hatC$.

\begin{thm}\label{Thm:WMobiusInvariance}
    Let $C_j$ be circles in $\hatC$, $h \in \ophom{C_2}$, and $S,T \in \PSL_2(\C)$ such that $T:C_1 \rightarrow C_2$. Then for any $y \in C_1$,
    \begin{align}\label{Eq:WMobiusInvariance}
        W_{S\circ h \circ T}(y) = W_{h}(T(y)).
    \end{align}
\end{thm}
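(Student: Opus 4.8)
The plan is to reduce the whole statement to two elementary identities for the quantity $U_h(x,y):=\frac{(h(x)-h(y))^2}{h'(x)(x-y)^2}$ that underlies the definition of $W_h(y)$, and then to invoke the conformal invariance of the $H^{1/2}$ seminorm. Writing $L_h(x,y):=\log|U_h(x,y)|$, both summands of $W_h(y)$ are $H^{1/2}$ seminorms in the variable $x$ of a function of the form $L_k(\cdot,\mathrm{root})$ with $k\in\{h,h^{-1}\}$: the forward summand is $\|L_h(\cdot,y)\|_{H^{1/2}}^2$, rooted at $y$, while the inverse summand is $\|L_{h^{-1}}(\cdot,h(y))\|_{H^{1/2}}^2$, understood as rooted at the image $h(y)$. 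I would therefore prove \eqref{Eq:WMobiusInvariance} by showing that each summand transforms correctly on its own.

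The engine is the cocycle identity, immediate from the chain rule:
\[ U_{\phi\circ\psi}(x,y)=U_\phi(\psi(x),\psi(y))\,U_\psi(x,y),\qquad\text{so}\qquad L_{\phi\circ\psi}(x,y)=L_\phi(\psi(x),\psi(y))+L_\psi(x,y). \]
Two consequences do all the work. First, for a \Mob map $M$, the elementary computation $M(x)-M(y)=\frac{x-y}{(cx+d)(cy+d)}$ and $M'(x)=(cx+d)^{-2}$ (normalizing $ad-bc=1$) gives $U_M(x,y)=M'(y)$, which is \emph{independent of $x$}; hence $L_M(\cdot,y)$ is constant in $x$. Feeding this into the cocycle shows that post-composing a map $k$ by a \Mob transformation alters $L_k(\cdot,y)$ only by an additive constant in $x$, while pre-composing $k$ by a \Mob map $T$ replaces $L_k(\cdot,y)$ by $L_k(T(\cdot),T(y))$, again up to an additive $x$-constant. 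Second, the $H^{1/2}$ seminorm on a circle is built from the kernel $|d\zeta|\,|d\omega|/|\zeta-\omega|^2$, which is \Mob-invariant; consequently the seminorm annihilates functions constant in $x$ and satisfies $\|F\circ T\|_{H^{1/2}(C_1)}=\|F\|_{H^{1/2}(C_2)}$ for every \Mob map $T:C_1\to C_2$. (This last invariance is the trace form of the conformal invariance of Dirichlet energy behind the Douglas formula \eqref{Eq:IntroDouglasFormula}.)

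Granting these, the forward summand is immediate: writing $g:=S\circ h\circ T$ and applying the cocycle twice, the contributions of $S$ and $T$ are constant in $x$ and are killed by the seminorm, leaving $L_g(\cdot,y)\equiv L_h(T(\cdot),T(y))$ modulo $x$-constants; the \Mob-invariance of the seminorm then yields $\|L_g(\cdot,y)\|_{H^{1/2}}^2=\|L_h(\cdot,T(y))\|_{H^{1/2}}^2$, the forward summand of $W_h(T(y))$. For the inverse summand I would run the identical argument on $g^{-1}=T^{-1}\circ h^{-1}\circ S^{-1}$. Here the outer \Mob factors are $T^{-1}$ (post) and $S^{-1}$ (pre), so the same reduction gives $\|L_{g^{-1}}(\cdot,g(y))\|_{H^{1/2}}^2=\|L_{h^{-1}}(\cdot,S^{-1}(g(y)))\|_{H^{1/2}}^2$; and since $g=S\circ h\circ T$, one has $S^{-1}(g(y))=h(T(y))$, which is precisely the inverse-root of $W_h(T(y))$. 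Adding the two identities gives \eqref{Eq:WMobiusInvariance}.

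The one point requiring genuine care is the bookkeeping of roots: the forward root is transported by $T$, whereas the inverse root is transported by $S$, because passing to $g^{-1}$ interchanges the pre- and post-composition roles of $S$ and $T$. It is precisely this interchange that makes the external map $S$ cancel and leaves $W_h(T(y))$ on the nose; arranging that the inverse root lands on $h(T(y))$ rather than on some $S^{-1}$-twisted point is the crux. The remaining, routine, care concerns degenerate configurations in which $y$, $h(y)$, or one of their \Mob images is $\infty$, where the literal quotient defining $U$ must be read in its limiting normalized form (as in \eqref{Eq:IntroWhInfty}); these are handled by continuity together with the conventions already fixed for $W_h$ at infinity, and both displayed identities persist in the limit.
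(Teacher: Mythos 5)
Your proposal is correct and follows essentially the same route as the paper: the cocycle identity for $L_h$ (the paper's Lemma \ref{Lemma:LGeneralComp}), the fact that $L_M(\cdot,y)$ is an $x$-constant for M\"obius $M$ (packaged in Lemma \ref{Lemma:LMobiusCovariance}), and the M\"obius invariance of the $H^{1/2}$ seminorm combine into the one-sided covariance $K_{S\circ h\circ T}(y)=K_h(T(y))$ (Theorem \ref{Thm:KMobiusInvariance}), which is then applied to both summands of $W$. Your root bookkeeping for the inverse summand, $S^{-1}(g(y))=h(T(y))$ with $g^{-1}=T^{-1}\circ h^{-1}\circ S^{-1}$, is exactly the step in the paper's proof of Theorem \ref{Thm:WMobiusInvariance}.
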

\noindent We illustrate Theorem \ref{Thm:WMobiusInvariance} numerically in Examples \ref{Eg:WNotConstant} and \ref{Eg:WConjugatedtoS1}.  This interaction with \Mob transforms, combined with an approximation argument, as well as careful use of the Douglas formula (see \S\ref{Sec:IntroProofStrategyForComparable} below for more details), yields our main result for how $I^L(\cdot)$ relates to $W_h(y)$.
\begin{thm}\label{Comparable}
    Let $\gamma$ be a $K$-quasicircle and $h:\wR \rightarrow \wR$ any associated conformal welding.  Then for any $y \in \wR$,
\begin{align}\label{Ineq:Main}
        \frac{1}{2}\Big(3+\frac{1}{K^2+K^{-2}}\Big)I^L(\gamma) \leq W_h(y) \leq \frac{1}{2}(3+K^2+K^{-2})\,I^L(\gamma).
\end{align}
In particular, $\frac{3}{2} I^L(\gamma) \leq W_h(y)$ for all quasicircles $\gamma$ and any $y \in \wR$.
\end{thm}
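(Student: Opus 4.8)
The plan is to first use the \Mob invariance of $I^L$ together with the covariance of Theorem~\ref{Thm:WMobiusInvariance} to reduce to the normalized case $y=\infty$, $h(\infty)=\infty$, in which $W_h(\infty)=\|\log|h'|\|_{H^{1/2}(\R)}^2+\|\log|(h^{-1})'|\|_{H^{1/2}(\R)}^2$. Writing $u:=\log|f'|$ and $v:=\log|g'|$ for the boundary traces on $\R$ of the harmonic functions $\log|f'|$ on $\H$ and $\log|g'|$ on $\H^*$, differentiating the welding relation $g\circ h=f$ gives the two identities $\log|h'|=u-v\circ h$ and $\log|(h^{-1})'|=v-u\circ h^{-1}$. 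Expanding the squares in the $H^{1/2}(\R)$ inner product (cf.\ the inner-product reformulation of Corollary~\ref{Thm:LoewnerIP}) then yields
\begin{align*}
W_h(\infty)=\|u\|^2+\|v\|^2+\|v\circ h\|^2+\|u\circ h^{-1}\|^2-2\langle u,v\circ h\rangle-2\langle v,u\circ h^{-1}\rangle,
\end{align*}
which I would compare against $I^L(\gamma)=2\|u\|^2+2\|v\|^2$ from \eqref{Eq:LERHHalf}.

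The crux is to control the composition terms and cross terms using only that $\gamma$ is a $K$-quasicircle. Since a $K$-quasicircle admits a $K^2$-quasiconformal reflection $\lambda$ across $\gamma$, I would set $\Theta:=g^{-1}\circ\lambda\circ f:\H\to\H^*$, a $K^2$-quasiconformal map whose boundary restriction is $h$. As $\log|g'|$ is harmonic on $\H^*$, the composition $\log|g'|\circ\Theta$ is an (in general non-harmonic) extension of $v\circ h$ to $\H$; the Dirichlet principle then bounds $\|v\circ h\|^2$ (the Dirichlet energy of the harmonic extension of $v\circ h$) by $\calD_\H(\log|g'|\circ\Theta)$, and quasi-invariance of the Dirichlet integral under $\Theta$ bounds this by a $K$-dependent multiple of $\calD_{\H^*}(\log|g'|)=\|v\|^2$. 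The factor I would aim to extract here is not the naive supremum dilatation $K^2$ but the \emph{averaged} Dirichlet distortion $\tfrac{1+|\mu|^2}{1-|\mu|^2}=\tfrac12(K^2+K^{-2})$ of the reflection, which is precisely the factor appearing in \eqref{Ineq:Main}; the two cross terms are handled by the same Dirichlet pairing, using the harmonicity of $\log|f'|$ and $\log|g'|$ so that pairing against $\Theta$-transported traces reduces to the $H^{1/2}$ inner product. Combining these estimates and diagonalizing the resulting quadratic form in $\|u\|$ and $\|v\|$ should produce the upper bound $\tfrac12(3+K^2+K^{-2})\,I^L(\gamma)$.

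The lower bound is of a more robust character: one cannot discard the cross terms, and I would instead use that $u$ and $v$ are \emph{anti}-correlated in $H^{1/2}$ (infinitesimally at a circle one has $\langle u,v\rangle=-\|u\|^2$, forcing $\|u-v\|^2$ to be large), so that $\|u-v\circ h\|^2+\|v-u\circ h^{-1}\|^2$ admits the uniform lower bound $\tfrac32\,I^L(\gamma)$, improved to the stated $\tfrac12\big(3+\tfrac1{K^2+K^{-2}}\big)\,I^L(\gamma)$ by the corresponding refined (lower) Dirichlet-distortion estimate for $\Theta$. All of the above manipulations I would first justify for smooth (say analytic) curves, where the traces are honest functions and the integrations by parts and change of variables are legitimate, and then pass to a general finite-energy $K$-quasicircle by approximation, using continuity of both $W_h(y)$ and $I^L(\gamma)$ along the approximating sequence.

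The step I expect to be the main obstacle is extracting the sharp constant $\tfrac12(K^2+K^{-2})$ rather than $K^2$ from the composition terms: a crude term-by-term Cauchy--Schwarz on the expansion above loses a square and does not reproduce the $3$ and the clean $K^2+K^{-2}$, so one must genuinely exploit harmonicity to replace the pointwise distortion $\tfrac{1+|\mu|^2+2|\mu|\cos\psi}{1-|\mu|^2}$ by its angular average, and must retain the cross terms with their correct sign rather than bounding them crudely. Establishing the requisite anti-correlation of $u$ and $v$ uniformly in the non-infinitesimal (quasiconformal) regime, needed for the lower bound, is the delicate companion difficulty.
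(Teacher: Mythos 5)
Your skeleton coincides with the paper's: reduce to the root $y=\infty$ via the M\"obius covariance of Theorem \ref{Thm:WMobiusInvariance}, expand $W_h(\infty)=\|u-v\circ h\|_{H^{1/2}(\R)}^2+\|v-u\circ h^{-1}\|_{H^{1/2}(\R)}^2$ with $u=\log|f'|$, $v=\log|g'|$, bound the composition norms by a pullback/distortion estimate, and pass from analytic curves to general ones by approximation. But the input that actually produces the constants in \eqref{Ineq:Main} is missing. The paper proves the \emph{exact} identities (Lemma \ref{lem:generalcrossterm}, equivalently Corollary \ref{Thm:LoewnerIP})
\begin{align*}
    \brac{u,\,v\circ h}_{H^{1/2}(\R)}=-\|v\|_{H^{1/2}(\R)}^2, \qquad \brac{v,\,u\circ h^{-1}}_{H^{1/2}(\R)}=-\|u\|_{H^{1/2}(\R)}^2,
\end{align*}
so that the cross-term contribution to $W_h(\infty)$ is exactly $+I^L(\gamma)$ by \eqref{Eq:LERHHalf}; this is what produces the ``$3$'' and is indispensable for the lower bound. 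You replace this with an ``anti-correlation'' of $u$ and $v$ justified only infinitesimally at the circle, and you yourself defer the uniform quasiconformal version as the ``delicate companion difficulty.'' That deferred step \emph{is} the core of the theorem: without the exact identity, retaining the cross terms ``with their correct sign'' has no mechanism — Cauchy--Schwarz yields an upper constant of the form $\frac{1}{2}\big(1+K^2+K^{-2}+2\sqrt{K^2+K^{-2}}\big)$, strictly worse than $\frac{1}{2}(3+K^2+K^{-2})$, and a lower bound that is vacuous (negative) for large $K$. The paper's route is Green's first identity plus the Cauchy--Riemann equations (equivalently, geodesic curvature) for analytic curves through $\infty$ (Lemma \ref{lem:smoothcrossterm}), then the equipotential approximation and Weil--Petersson convergence machinery of Sections \ref{Sec: topologicalconvergence}--\ref{Sec:MainProof}; note the approximation is needed precisely to extend this identity, not merely to invoke ``continuity along the sequence.'' Your Dirichlet-pairing remark does point the right way — harmonicity gives $\brac{u,v\circ h}_{H^{1/2}(\R)}=(\log|f'|,G)_{\nabla(\H)}$ for \emph{any} Dirichlet-finite extension $G$ of $v\circ h$ — but one must then integrate by parts and evaluate the boundary integral $-\frac{1}{2\pi}\int_\R (v\circ h)\,\partial_n\log|f'|\,ds$, which is where all the work lives and which your sketch does not carry out.

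On the composition terms: you assert that the averaged distortion $\frac{1+|\mu|^2}{1-|\mu|^2}=\frac{1}{2}(K^2+K^{-2})$ is ``precisely the factor appearing in \eqref{Ineq:Main}.'' It is not — the factor there is $K^2+K^{-2}$, which is the Nag--Sullivan bound $\|\calP_h\|^2\leq K^2+K^{-2}$ obtained by combining Proposition \ref{const.} with the fact (Lemma \ref{Lemma:K^2Optimal}) that the welding of a $K$-quasicircle has an optimal $K^2$-quasiconformal extension; the paper simply cites this (Corollary \ref{Cor:OperatorBound}) and needs no angular averaging at all. Your proposed factor is half the one actually used, so if your averaging argument (killing the phase term $2\Re\big((F_z)^2 w_z w_{\bar z}\big)$ in the pointwise distortion) could be completed, it would prove a \emph{stronger} inequality — but you flag exactly this step as your main obstacle and supply no argument, so as written it is a gap, not a refinement. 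Finally, no separate ``refined lower Dirichlet-distortion estimate'' is needed for the lower bound: once the cross-term identity is in hand, it follows from the same pullback bound applied in reverse, $\|u\|^2+\|v\|^2=\|u\circ h^{-1}\circ h\|^2+\|v\circ h\circ h^{-1}\|^2\leq(K^2+K^{-2})\big(\|u\circ h^{-1}\|^2+\|v\circ h\|^2\big)$, which is how the paper concludes.
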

\noindent We emphasize the lack of normalization requirement on $h$; the same comparison holds for any welding of $\gamma$. Of course, this ought to be plausible in light of Theorem \ref{Thm:WMobiusInvariance}.  Note also the coefficient of $I^L(\gamma)$ on the left-hand side of \eqref{Ineq:Main} is monotonically decreasing in $K$, while that on the right-hand side is monotonically increasing in $K$.  Hence $W_h(y) \asymp I^L(\gamma)$ on each $\Gamma_K$, with $\Gamma_K$ as in \eqref{Eq:Gamma_K}.

In Example \ref{Eg:WNotConstant} we give an explicit numerical example of the universal lower bound of Theorem \ref{Comparable}.

Combined with Lemma \ref{Lemma:WGeneralComp}, which bounds the welding energy $W_{h_1\circ h_2}(y)$ of a composition in terms of $W_{h_1}$ and $W_{h_2}$, Theorem \ref{Comparable} is our main tool for proving Theorems \ref{Thm:GeneralCompBound} and \ref{Thm:LEGrowth}.  In fact, with these tools in place, the proof of Theorem \ref{Thm:GeneralCompBound} is almost one line, while the argument for Theorem \ref{Thm:LEGrowth} is only slightly longer.  See \S\ref{Sec:ProofOfSubAdditivity} and \S\ref{Sec:ProofOfEntropy}, respectively. 

By leveraging the universal lower bound in \eqref{Ineq:Main} and invoking an estimate from \cite{ParametrizationpWP}, we also obtain the following.

\begin{cor}\label{Thm: boundedcor}
    For a Jordan curve $\g \subset \wC$ of finite Loewner energy with conformal welding $h: \hatR \rightarrow \hatR$ that fixes $0,1$ and $\infty$, $I^{L}(\g)$ is bounded above by a constant only depending on $d_{\textsc{WP}}(h, id)$.
\end{cor}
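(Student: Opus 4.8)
The plan is to sandwich $I^L(\gamma)$ between the universal lower bound of Theorem \ref{Comparable} and an estimate from \cite{ParametrizationpWP} controlling the welding energy by the Weil--Petersson distance. Since $\gamma$ has finite Loewner energy it is a quasicircle, so Theorem \ref{Comparable} applies, and because $h$ fixes $\infty$ I would evaluate the universal lower bound at the root $y = \infty$:
\begin{align*}
    I^L(\gamma) \leq \tfrac{2}{3}\, W_h(\infty) = \tfrac{2}{3}\Big( \big\|\log h'\big\|^2_{H^{1/2}(\R)} + \big\|\log (h^{-1})'\big\|^2_{H^{1/2}(\R)} \Big),
\end{align*}
using \eqref{Eq:IntroWhInfty} together with $h' > 0$ on $\R$ (as $h$ is increasing), so that $\log|h'| = \log h'$. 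It thus suffices to bound each $H^{1/2}$-norm by a function of $d_{\WP}(h, \mathrm{id})$ alone.

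For this I would invoke the estimate of \cite{ParametrizationpWP}, which, under the normalization fixing $0,1,\infty$, should supply a continuous increasing $F\colon [0,\infty) \to [0,\infty)$ with $\|\log h'\|_{H^{1/2}(\R)} \leq F\big(d_{\WP}(h, \mathrm{id})\big)$---reflecting that the coordinate $h \mapsto \log h'$ carries WP-bounded sets to $H^{1/2}$-bounded sets. Applying this first to $h$ and then to $h^{-1}$, the latter also fixing $0,1,\infty$ and satisfying $d_{\WP}(h^{-1}, \mathrm{id}) = d_{\WP}(h, \mathrm{id})$ by invariance of the WP metric under translation in $T_0(1)$, I would obtain
\begin{align*}
    I^L(\gamma) \leq \tfrac{4}{3}\, F\big(d_{\WP}(h, \mathrm{id})\big)^2,
\end{align*}
a quantity depending only on $d_{\WP}(h, \mathrm{id})$, as desired.

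The step I expect to be the main obstacle is the passage from the integrated geodesic distance $d_{\WP}$ to the pointwise $H^{1/2}$-norm of the coordinate $\log h'$. The WP metric is only \emph{infinitesimally} comparable to the $H^{1/2}$ inner product at $\mathrm{id}$, so a global estimate of the form $F$ supplied by \cite{ParametrizationpWP}---not a mere infinitesimal comparison---is what does the real work; locating and correctly quoting this estimate is the crux. By contrast, the inversion identity $d_{\WP}(h^{-1}, \mathrm{id}) = d_{\WP}(h, \mathrm{id})$ is immediate from one-sided translation-invariance together with symmetry of the distance (e.g.\ for right-invariance, set $a=h,\,b=\mathrm{id},\,g=h^{-1}$ in $d_{\WP}(ag,bg)=d_{\WP}(a,b)$), so no genuine difficulty arises from the appearance of $h^{-1}$.
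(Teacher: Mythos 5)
Your overall skeleton is the same as the paper's: reduce via the universal lower bound of Theorem \ref{Comparable} applied at the root $y=\infty$ (legitimate since $h$ fixes $\infty$), so that $I^L(\gamma) \leq \tfrac{2}{3}\big(\|\log h'\|_{\half}^2 + \|\log (h^{-1})'\|_{\half}^2\big)$, and handle $h^{-1}$ by the right-invariance identity $d_{\WP}(h^{-1},\mathrm{id}) = d_{\WP}(h,\mathrm{id})$, exactly as the paper does. The problem is the step you yourself flag as the crux: the global estimate $\|\log h'\|_{H^{1/2}(\R)} \leq F\big(d_{\WP}(h,\mathrm{id})\big)$ is not a result you can locate and quote from \cite{ParametrizationpWP}; it is precisely the statement that has to be \emph{proved}, and the paper proves it rather than cites it. What \cite[Lem.~6.8]{ParametrizationpWP} supplies is only local: there exist constants $C,\tau>0$ such that $\|\calP_h\|\leq C$ whenever $\|\log h'\|_{\half}\leq \tau$, and (by continuity of the coordinate $h\mapsto \log h'$ at the identity) $\|\log h'\|_{\half}\leq\tau$ holds only when $d_{\WP}(h,\mathrm{id})\leq\epsilon$. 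So, as written, your proof has a genuine gap at its central step.

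The paper closes this gap with a chaining argument which your proposal is missing. Join $\mathrm{id}$ to $h$ in $\WP(\R)$ by a path of Weil--Petersson length $< d_{\WP}(h,\mathrm{id})+1$ and subdivide it by points $\mathrm{id}=h_0,h_1,\dots,h_n=h$ with $d_{\WP}(h_j,h_{j-1})\leq\epsilon$, where $n$ depends only on $d_{\WP}(h,\mathrm{id})$. By right-translation invariance of $d_{\WP}$, each $H_j := h_j\circ h_{j-1}^{-1}$ satisfies $d_{\WP}(H_j,\mathrm{id})\leq\epsilon$, hence $\|\log H_j'\|_{\half}\leq\tau$ and $\|\calP_{H_j}\|\leq C$. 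Writing $h = H_n\circ\cdots\circ H_1$ and expanding by the chain rule,
\begin{align*}
    \|\log h'\|_{\half} \leq \sum_{j=1}^{n}\big\|\big(\calP_{H_1}\circ\cdots\circ\calP_{H_{j-1}}\big)\log H_j'\big\|_{\half} \leq \tau\big(1 + C + \cdots + C^{n-1}\big),
\end{align*}
a bound depending only on $d_{\WP}(h,\mathrm{id})$; the same argument applied to $h^{-1}$ finishes the proof. In other words, the ``function $F$'' you hope to import is manufactured from the local Lemma 6.8 by exactly this subdivision-and-telescoping device; unless you supply it (or an equivalent globalization), the reduction you carried out correctly does not yield the corollary.
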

\noindent This corollary is a counterpart to the more explicit bound 
\begin{align*}
    I^{L}(\gamma) \geq \frac{c}{\pi}(d_{\textsc{WP}}(h, id)-Kc)
\end{align*}
already known for the other direction \cite[Thm. 6.3]{UniversalLiouvilleaction}, which holds for some $0<\delta<1$, with $K=\sqrt{2}/(1-\delta)^2$, and $c$ any value satisfying $0<c<2\delta\sqrt{4\pi/3}$.

Combined, these two bounds are reminiscent of work of  Brock~\cite{Brock03WPmetricandvolume}. Let $Q(X, Y)$ be a quasi-Fuchsian manifold with conformal structures $X$ and $Y$ on the boundary at infinity. Brock showed the Weil--Petersson distance $d_{\textsc{WP}}(X,Y)$ between $X$ and $Y$ is quasi-comparable to the renormalized volume $V(C)$ of the convex core $C$ of $Q(X,Y)$, in the sense that
\begin{align*}
    \frac{1}{C_1} d_{\textsc{WP}}(X,Y)-C_2 \leq V(C) \leq C_1 d_{\textsc{WP}}(X,Y) + C_2
\end{align*}
for some constants $C_1$ and $C_2$. Inspired by this, we might expect a more explicit version of Corollary~\ref{Thm: boundedcor} to take the form $I^L(\gamma) \leq C_1 d_{\textsc{WP}}(h, id) +C_2$.


\subsubsection{Proof strategy for Theorem \ref{Comparable}}\label{Sec:IntroProofStrategyForComparable}

As Theorem \ref{Comparable} is our main tool for proving Theorems \ref{Thm:GeneralCompBound} and \ref{Thm:LEGrowth}, we proceed to sketch the idea of its proof, noting some consequences of our approach.

Recall that the Douglas formula turns Wang's Loewner energy formula \eqref{Eq:LEHNabla} into \eqref{Eq:LERHHalf}, which gave impetus for our definition of $W_h(\infty)$ in \eqref{Eq:IntroWhInfty}.  Na\"ively taking the derivative of the composition $h = g^{-1} \circ f$ in \eqref{Eq:IntroWhInfty} and expanding the norms using the $H^\half$-inner product, we find
\begin{align}
    W_h(\infty) &= \big\|\log |(g^{-1})'\circ f| + \log|f'|\big\|_{H^{1/2}(\R)}^{2} + \big\|\log |(f^{-1})'\circ g| + \log |g'|\big\|_{H^{1/2}(\R)}^{2} \notag\\
    &= \frac{1}{2}I^L(\gamma) + \|\log |(g^{-1})'\circ f|\|_{H^{1/2}(\R)}^2 + \|\log |(f^{-1})'\circ g| \|_{H^{1/2}(\R)}^{2} \label{Eq:IntroExpandWInfty}\\
    &\hspace{13.5mm} +2 \brac{\log |(g^{-1})'\circ f|, \log|f'|}_{H^{1/2}(\R)} + 2 \brac{\log |(f^{-1})'\circ g|, \log|g'|}_{H^{1/2}(\R)}\notag\\
    &= \frac{1}{2}I^L(\gamma) + \|\log |g'\circ h|\|_{H^{1/2}(\R)}^2 + \|\log |f'\circ h^{-1}| \|_{H^{1/2}(\R)}^{2} \label{Eq:IntroWInftyDealWithThis1}\\
    &\hspace{13.5mm} -2 \brac{\log |g' \circ h|, \log|f'|}_{H^{1/2}(\R)} - 2 \brac{\log |f'\circ h^{-1}|, \log|g'|}_{H^{1/2}(\R)},\label{Eq:IntroWInftyDealWithThis2}
\end{align}
where we have used \eqref{Eq:LERHHalf} to obtain the $I^L(\gamma)$ term in \eqref{Eq:IntroExpandWInfty}.  To proceed, we need to say something about the norm terms in \eqref{Eq:IntroWInftyDealWithThis1} as well as the inner products in \eqref{Eq:IntroWInftyDealWithThis2}.  We handle the former through the following Nag--Sullivan result on the norm of the pull-back operator $\calP_h(\varphi):= \varphi \circ h$ acting on $H^\half$.

\begin{prop}[\cite{Nag-SullivanTeichmuller}] \label{const.} 
   $\calP_h$ is a bounded operator on $H^{\half}(\R)$ if and only if $h$ is a quasisymmetric homeomorphism of $\R$. Moreover, if h extends to a K-quasiconformal homeomorphism of $\mathbb{H}$, then the operator norm of $\calP_h$ satisfies $\|\mathcal{P}_h\| \leq \sqrt{K+K^{-1}}$.
\end{prop}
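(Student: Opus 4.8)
The plan is to transfer the question from the composition operator on $H^{1/2}(\R)$ to a Dirichlet-energy distortion estimate by way of the Douglas formula \eqref{Eq:IntroDouglasFormula}, and then to exploit the Dirichlet principle together with the quasiconformal distortion of the Dirichlet integral. For $\varphi \in H^{1/2}(\R)$ write $u_\varphi$ for its harmonic (Poisson) extension to $\H$, so that $\|\varphi\|_{H^{1/2}(\R)}^2 = \calD_\H(u_\varphi)$. On the level of extensions, $\calP_h$ becomes precomposition by a quasiconformal self-map of $\H$, and the only thing to control is how such precomposition distorts Dirichlet energy.

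For sufficiency and the quantitative bound (proved together, since a quasisymmetric $h$ admits a quasiconformal extension to $\H$ by Beurling--Ahlfors), suppose $h$ is the boundary trace of a $K$-quasiconformal homeomorphism $F \colon \H \to \H$, as in the hypothesis, and fix $\varphi \in H^{1/2}(\R)$. Since $F$ restricts to $h$ on $\R$, the function $v := u_\varphi \circ F$ is an extension of $\calP_h \varphi = \varphi \circ h$ to $\H$; as the harmonic extension minimizes Dirichlet energy among all extensions with the same trace, the Dirichlet principle gives
\begin{align*}
    \|\calP_h \varphi\|_{H^{1/2}(\R)}^2 = \calD_\H\big(u_{\calP_h \varphi}\big) \leq \calD_\H(v) = \calD_\H(u_\varphi \circ F).
\end{align*}
It remains to invoke the classical distortion estimate under a quasiconformal change of variables: writing $\mu_F = F_{\bar z}/F_z$, the chain rule yields the pointwise bound $|\nabla(u_\varphi \circ F)|^2 \leq (|F_z| + |F_{\bar z}|)^2\,\big(|\nabla u_\varphi|^2 \circ F\big)$ almost everywhere, and the substitution $w = F(z)$, whose Jacobian is $|F_z|^2 - |F_{\bar z}|^2$, turns the weight into the pointwise dilatation
\begin{align*}
    \frac{(|F_z| + |F_{\bar z}|)^2}{|F_z|^2 - |F_{\bar z}|^2} = \frac{1 + |\mu_F|}{1 - |\mu_F|} \leq K.
\end{align*}
Hence $\calD_\H(u_\varphi \circ F) \leq K\,\calD_\H(u_\varphi) = K\,\|\varphi\|_{H^{1/2}(\R)}^2$, and feeding this into the previous display gives $\|\calP_h\| \leq \sqrt{K} \leq \sqrt{K + K^{-1}}$, which is the stated bound.

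For necessity I would argue by contraposition. Assuming $h$ is not quasisymmetric, there exist triples $x - t, x, x + t$ whose images have ratio $\big(h(x+t) - h(x)\big)/\big(h(x) - h(x-t)\big)$ tending to $0$ or $\infty$. I would then take $\varphi$ to be a fixed profile (say a smooth bump) adapted to the image configuration and normalized using the invariance of the homogeneous $H^{1/2}(\R)$ seminorm under affine maps of $\R$, so that $\|\varphi\|_{H^{1/2}(\R)}$ stays bounded while $\|\varphi \circ h\|_{H^{1/2}(\R)}$ diverges as the configuration degenerates, witnessing $\|\calP_h\| = \infty$.

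I expect the genuine obstacle to be this converse: quantifying the blow-up demands a careful estimate of the $H^{1/2}$ seminorm of the composed test functions on a degenerating three-point configuration, which is the delicate point in the Nag--Sullivan analysis. The quantitative bound --- the only part used in the sequel, to control the norm terms in \eqref{Eq:IntroWInftyDealWithThis1} --- is by contrast clean, the sole technical care being the justification of the a.e.\ chain rule and change-of-variables formulas for the merely ACL map $F$.
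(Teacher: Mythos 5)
Your sufficiency argument is correct, and it is essentially the route the paper itself points to: the paper does not prove this proposition but cites it, noting in a footnote that the Nag--Sullivan proof ``only uses the Dirichlet principle \ldots and the local inequality (40)''. Your chain --- Douglas formula, Dirichlet principle applied to the competitor $u_\varphi \circ F$, the pointwise bound $|\nabla(u_\varphi\circ F)|^2 \le (|F_z|+|F_{\bar z}|)^2\,(|\nabla u_\varphi|^2\circ F)$, and the change of variables with Jacobian $|F_z|^2-|F_{\bar z}|^2$ --- is sound, modulo the standard facts you flag, plus one you do not: that the nontangential trace of $u_\varphi\circ F$ really is $\varphi\circ h$, which uses that quasiconformal self-maps of $\H$ quasi-preserve nontangential approach regions. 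In fact your argument yields $\|\calP_h\|\le\sqrt{K}$, which is \emph{sharper} than the stated $\sqrt{K+K^{-1}}$: the weaker constant is exactly what appears if one replaces your sharp pointwise step by $(|F_z|+|F_{\bar z}|)^2\le 2(|F_z|^2+|F_{\bar z}|^2)$, since $2(1+|\mu_F|^2)/(1-|\mu_F|^2)\le K+K^{-1}$ when $\|\mu_F\|_\infty \le (K-1)/(K+1)$. So this half is fine and even improves the constant.

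The genuine gap is the necessity direction, and the specific construction you sketch would not work. A fixed bump profile carried onto the image configuration by an affine map has \emph{constant} $H^{1/2}$ seminorm, but nothing forces $\|\varphi\circ h\|_{H^{1/2}(\R)}$ to diverge: failure of quasisymmetry at the triple $(x-t,x,x+t)$ only says that $\rho := |h(J)|/|h(I)|$ degenerates (with $I=[x-t,x]$, $J=[x,x+t]$); it gives no control whatsoever on the preimage of the transition region of your bump, so $\varphi\circ h$ may transition over an interval comparable to $t$ and have bounded seminorm. What is needed is a \emph{conformally degenerating} family of test functions, i.e.\ a capacity argument. If $\|\calP_h\|\le M$, then admissible functions pull back to admissible functions, so $\mathrm{cap}(h^{-1}E,h^{-1}F)\le M^2\,\mathrm{cap}(E,F)$ for every condenser, where $\mathrm{cap}$ is the infimum of $\|\cdot\|_{H^{1/2}(\R)}^2$ over functions that are $\ge 1$ on one plate and $\le 0$ on the other. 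Taking $E=[h(x-t),h(x)]$ and $F=[h(x+t),+\infty)$, the image cross-ratio degenerates as $\rho\to\infty$ and $\mathrm{cap}(E,F)\asymp 1/\log(1+\rho)\to 0$ (the near-minimizers are logarithmic profiles, not fixed-shape bumps), while $\mathrm{cap}([x-t,x],[x+t,+\infty))$ is a universal positive constant; the case $\rho \to 0$ is symmetric. This forces $\log(1+\rho)\lesssim M^2$, i.e.\ boundedness implies quasisymmetry, quantitatively. Without this capacity (equivalently, cross-ratio/extremal-length) estimate, the converse remains unproved --- as you yourself anticipate, it is the delicate half of the Nag--Sullivan analysis, though it is also the half the paper never uses, since only the quantitative norm bound enters the sequel.
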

\noindent Combined with the fact that $h$ is the conformal welding of a $K$-quasicircle, and thus optimally has a $K^2$-quasiconformal extension to $\H$ (Lemma \ref{Lemma:K^2Optimal}), we can thereby bound the two norm terms in \eqref{Eq:IntroWInftyDealWithThis1} by
$(K^2 +K^{-2})I^L(\gamma)/2$ (again using \eqref{Eq:LERHHalf}), and thus have
\begin{align}\label{Eq:IntroWInftyDealWithThis3}
\begin{split}
    W_h(\infty) \leq \frac{1}{2}(1 &+ K^2 +K^{-2})I^L(\gamma)\\
    &-2 \brac{\log |g' \circ h|, \log|f'|}_{H^{1/2}(\R)} - 2 \brac{\log |f'\circ h^{-1}|, \log|g'|}_{H^{1/2}(\R)}.
\end{split}
\end{align}
The following lemma, another key step in our argument, handles the remaining cross terms.

\begin{lemma}\label{lem:generalcrossterm}
Let $\g \subset \hat{\mathbb{C}}$ be a Jordan curve of finite Loewner energy which passes through $\infty$, with $f: \mathbb{H} \rightarrow H$ and $g:\mathbb{H}^* \rightarrow H^*$ Riemann maps to either side of $\gamma$ which fix $\infty$, and $h := g^{-1} \circ f$ the corresponding conformal welding on $\mathbb{R}$. Then
\begin{align}\label{Eq:CrossTermGeneral}
\begin{split}
    \brac{\log |g'\circ h|,\log |f'|}_{H^{1/2}(\R)}+&\brac{\log |f'\circ h^{-1}|,\log |g'|}_{H^{1/2}(\R)}\\
    &\hspace{20mm}= - \brac{\log|f'|, \log|h'|}_{H^{1/2}(\R)} \\
    &\hspace{20mm}=-\langle\log|g'|, \log|(h^{-1})'|\rangle_{H^{1/2}(\R)}.
\end{split}
\end{align}
Furthermore, the inner products on the left side satisfy
\begin{align}
    \brac{\log |g'\circ h|,\log |f'|}_{H^{1/2}(\R)} &= -\frac{1}{2\pi}\| \nabla \log|g'| \|^2_{L^2(\H^*)}, \; \quad \text{and}\label{Eq:IPDEgGeneral}\\
    \brac{\log |f'\circ h^{-1}|,\log |g'|}_{H^{1/2}(\R)} &= -\frac{1}{2\pi}\| \nabla \log|f'| \|^2_{L^2(\H)}.\label{Eq:IPDEfGeneral}
\end{align}
\end{lemma}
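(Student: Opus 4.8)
The plan is to derive everything from two boundary chain-rule identities and the Douglas formula \eqref{Eq:IntroDouglasFormula}. Differentiating the welding relations $f = g\circ h$ and $g = f\circ h^{-1}$ on $\R$ and taking $\log|\cdot|$ gives, almost everywhere,
\begin{align*}
    \log|f'| = \log|g'\circ h| + \log|h'|, \qquad \log|g'| = \log|f'\circ h^{-1}| + \log|(h^{-1})'|.
\end{align*}
The decisive observation is that, since $h$ is an \emph{increasing} self-map of $\R$, its boundary derivative is real and positive, so $\arg h' \equiv 0$; hence $\arg f' = \arg g' \circ h$ and $\arg g' = \arg f' \circ h^{-1}$ on $\R$. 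This is what will let the composition with $h$ pass harmlessly through the argument functions in the computation below.

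To prove \eqref{Eq:IPDEgGeneral} I would evaluate $\brac{\log|g'\circ h|, \log|f'|}_{H^{1/2}(\R)}$ by the polarized Douglas formula, extending both factors harmonically to $\H$; the extension of $\log|f'|$ is $\log|f'|$ itself, since $\log f'$ is holomorphic in $\H$. Writing $\frac{1}{2\pi}\int_\H \nabla U \cdot \nabla \log|f'|\, dA$ as a boundary integral via Green's identity (the extension $U$ of $\log|g'\circ h|$ entering only through its trace) and converting the outward normal derivative of $\log|f'|$ into the tangential derivative of $\arg f'$ through the Cauchy--Riemann equations, I obtain
\begin{align*}
    \brac{\log|g'\circ h|, \log|f'|}_{H^{1/2}(\R)} = \frac{1}{2\pi}\int_\R \log|g'(h(x))|\,\dd\big(\arg f'(x)\big).
\end{align*}
Invoking $\arg f' = \arg g'\circ h$ and changing variables $t = h(x)$ (legitimate since $h$ is absolutely continuous in the Weil--Petersson class) turns the right side into $\frac{1}{2\pi}\int_\R \log|g'(t)|\,\dd(\arg g'(t))$. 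Running the same Green/Cauchy--Riemann computation on $\H^*$ for the holomorphic $\log g'$ identifies this integral with $-\frac{1}{2\pi}\|\nabla \log|g'|\|^2_{L^2(\H^*)}$, the minus sign arising from the reversed outward normal of $\H^*$; this is exactly \eqref{Eq:IPDEgGeneral}. Equation \eqref{Eq:IPDEfGeneral} follows verbatim with the roles of $f,g$ and $h,h^{-1}$ interchanged.

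The chain of equalities in the statement is then pure bookkeeping. Pairing the first boundary identity with $\log|f'|$ gives $\brac{\log|f'|,\log|h'|}_{H^{1/2}(\R)} = \|\log|f'|\|^2_{H^{1/2}(\R)} - \brac{\log|g'\circ h|,\log|f'|}_{H^{1/2}(\R)}$, and substituting \eqref{Eq:IPDEgGeneral} together with the Douglas identity $\|\log|f'|\|^2_{H^{1/2}(\R)} = \frac{1}{2\pi}\|\nabla\log|f'|\|^2_{L^2(\H)}$ and \eqref{Eq:LEHNabla} yields $\brac{\log|f'|,\log|h'|}_{H^{1/2}(\R)} = \tfrac{1}{2}I^L(\gamma)$; the symmetric computation gives $\brac{\log|g'|,\log|(h^{-1})'|}_{H^{1/2}(\R)} = \tfrac{1}{2}I^L(\gamma)$ as well. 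Adding \eqref{Eq:IPDEgGeneral} and \eqref{Eq:IPDEfGeneral} and applying \eqref{Eq:LEHNabla} shows the sum of the two cross terms equals $-\tfrac{1}{2}I^L(\gamma)$, matching both $-\brac{\log|f'|,\log|h'|}_{H^{1/2}(\R)}$ and $-\brac{\log|g'|,\log|(h^{-1})'|}_{H^{1/2}(\R)}$, as claimed.

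I expect the genuine difficulty to lie not in the algebra but in the analytic justification of the boundary integrals: that Green's identity contributes nothing from infinity (where $f$ and $g$ fix $\infty$), that the traces and their harmonic conjugates are regular enough for the Stieltjes integrals and the change of variables $t = h(x)$ to be meaningful, and that the harmonic extensions entering the Douglas formula have finite Dirichlet energy. I would address this by first establishing the identities for analytic curves $\gamma$, where every integration by parts is classical and all quantities decay at $\infty$, and then passing to a general finite-energy curve by approximation, using continuity of the $H^{1/2}$ semi-norms and of the Dirichlet energies along the approximating sequence.
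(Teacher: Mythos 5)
Your proposal is correct and follows essentially the same route as the paper: the core of both is the polarized Douglas formula, Green's first identity with decay control at $\infty$, the Cauchy--Riemann equations, the identity $\arg f' = \arg g' \circ h$ (i.e.\ $h' > 0$), and the change of variables $t = h(x)$, carried out first for analytic curves through $\infty$ and then extended to general finite-energy curves by a Weil--Petersson approximation argument (which in the paper is the substantial Lemma \ref{lem: curvesWPconvergence} on equipotentials). The only difference is organizational: you prove \eqref{Eq:IPDEgGeneral}--\eqref{Eq:IPDEfGeneral} first and recover \eqref{Eq:CrossTermGeneral} by expanding $\brac{\log|f'|,\log|f'|}_{H^{1/2}(\R)}$ along the chain rule $\log|f'| = \log|g'\circ h| + \log|h'|$, whereas the paper establishes the first equality of \eqref{Eq:CrossTermGeneral} by a direct boundary-integral manipulation and then proves the energy identities separately.
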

\noindent Before proceeding, we note this immediately yields two novel expressions for the Loewner energy.
\begin{cor}\label{Thm:LoewnerIP}
    Given the assumptions and notation of Lemma \ref{lem:generalcrossterm},
     \begin{align}\label{Eq:LoewnerIP}
          I^L(\gamma) = 2\brac{\log|f'|, \log h'}_{H^{1/2}(\R)} = 2\langle \log|g'|, \log (h^{-1})'\rangle_{H^{1/2}(\R)}.
     \end{align}
\end{cor}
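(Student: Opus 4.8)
The plan is to read the corollary directly off the two halves of Lemma \ref{lem:generalcrossterm} by a single addition, so the entire content is already front-loaded into that lemma. First I would add the two explicit evaluations \eqref{Eq:IPDEgGeneral} and \eqref{Eq:IPDEfGeneral} of the individual cross terms, obtaining
\begin{align*}
    \brac{\log |g'\circ h|,\log |f'|}_{H^{1/2}(\R)} + \brac{\log |f'\circ h^{-1}|,\log |g'|}_{H^{1/2}(\R)} = -\frac{1}{2\pi}\Big( \| \nabla \log|g'| \|^2_{L^2(\H^*)} + \| \nabla \log|f'| \|^2_{L^2(\H)} \Big).
\end{align*}
The right-hand side is precisely $-\tfrac12 I^L(\gamma)$ by the Dirichlet-integral form \eqref{Eq:LEHNabla} of Wang's energy formula, since the $\tfrac{1}{2\pi}$ normalization is exactly half of the $\tfrac{1}{\pi}$ appearing there.

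Next I would compare this sum with the first part of Lemma \ref{lem:generalcrossterm}, which identifies the very same left-hand side both with $-\brac{\log|f'|,\log|h'|}_{H^{1/2}(\R)}$ and, equivalently, with $-\brac{\log|g'|,\log|(h^{-1})'|}_{H^{1/2}(\R)}$. Equating either of these expressions with $-\tfrac12 I^L(\gamma)$, then clearing the common sign and the factor $\tfrac12$, yields both asserted identities at once. Finally, since $h$ and $h^{-1}$ are increasing homeomorphisms of $\R$, their derivatives are positive, so $\log|h'| = \log h'$ and $\log|(h^{-1})'| = \log (h^{-1})'$, which matches the notation in the statement of the corollary.

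I do not expect any genuine obstacle here: the corollary is pure bookkeeping of the sign and of the factor $2$ coming from the $\tfrac{1}{2\pi}$ versus $\tfrac{1}{\pi}$ conventions. The only point meriting a moment's care is ensuring the two Dirichlet integrals reassemble in the correct order --- the $\H^*$ integral pairing with $g$ and the $\H$ integral with $f$ in \eqref{Eq:LEHNabla} --- but the indexing already built into \eqref{Eq:IPDEgGeneral} and \eqref{Eq:IPDEfGeneral} respects exactly this pairing, so the two sides line up without rearrangement.
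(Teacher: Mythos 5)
Your proposal is correct and is essentially identical to the paper's own (one-line) proof: substitute \eqref{Eq:IPDEgGeneral} and \eqref{Eq:IPDEfGeneral} into \eqref{Eq:CrossTermGeneral} and identify the resulting Dirichlet sum as $-\tfrac12 I^L(\gamma)$ via \eqref{Eq:LEHNabla}. Your closing remark that $\log|h'|=\log h'$ because $h$ is increasing is a sensible bit of bookkeeping the paper leaves implicit.
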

\begin{proof}
    Recall \eqref{Eq:LEHNabla} and substitute \eqref{Eq:IPDEgGeneral} and \eqref{Eq:IPDEfGeneral} into \eqref{Eq:CrossTermGeneral}. 
\end{proof}
\noindent We also come up with three other formulas for the Loewner energy; see Theorems \ref{Thm:BoundaryLE} and \ref{Thm:InteriorLE} in \S\ref{Sec:IntroNPS} below, as well as Lemma \ref{Lemma:LEWeldingFactorization}.  This last result expresses the Loewner energy in terms of a factorization $h = H_g^{-1} \circ H_f$ of the welding $h$ by two increasing homeomorphisms $H_f$ and $H_g$ of $\R$, and thus is related to Question 4.1 of \cite{Yilin2024two}.

Returning to the argument for Theorem \ref{Comparable}, by \eqref{Eq:CrossTermGeneral} and Corollary \ref{Thm:LoewnerIP} we see the cross terms in \eqref{Eq:IntroWInftyDealWithThis3} sum to $I^L(\gamma)$, which yields the claimed upper bound in \eqref{Ineq:Main}.  The argument for the lower bound is similar.  

As we expend non-trivial effort on Lemma \ref{lem:generalcrossterm}, we also say a word regarding its proof.  We begin in \S\ref{Sec:MainProof} by using the Douglas formula and the divergence theorem (in the form of Green's first identity) to prove the result for analytic $\gamma$ passing through $\infty$.  We then approximate a general finite-energy $\gamma$ by the images $\gamma_n = f(C_n)$ under $f$ of larger and larger circles $C_n \subset \H$.  Using Proposition \ref{Thm: equiv convergence H} on modes of convergence in the Weil--Petersson \Teich space WP$(\R)$, combined with some observations on the ``normalized pre-Schwarzian'' derivative that we discuss below in \S\ref{Sec:IntroNPS}, we show that $\gamma_n \rightarrow \gamma$ in WP$(\R)$, and that this implies the inner products and norms also converge.  See Lemma \ref{lem: curvesWPconvergence} and following for these details.

\subsection{By-product I: welding energies}\label{Sec:IntroW}

While we utilize the welding energy $W_h(y)$ as a tool to obtain Theorems \ref{Thm:GeneralCompBound} and \ref{Thm:LEGrowth},   
Theorems \ref{Thm:WMobiusInvariance} and \ref{Comparable} suggest it may be an intriguing object in its own right.  We therefore initiate an investigation of $W_h(y)$ by proving several basic properties, as we proceed to summarize.  

At the outset, we note from \eqref{Ineq:Main} that $\gamma$ has finite Loewner energy if and only if $W_h(y) < \infty$ for some $y$, if and only if $W_h(y) < \infty$ for all $y$.  
\begin{thm}\label{Thm:WFiniteEquiv}
    Let $C$ be a circle in $\hatC$ and $h \in \ophom{C}$ a conformal welding for a Jordan curve $\gamma \subset \hatC$.  The following are equivalent:
    \begin{enumerate}[$(a)$]
        \item $I^L(\gamma)<\infty$.
        \item \label{Thm:WFiniteEquivOneY} There exists $y \in C$ such that $W_h(y) < \infty$.
        \item \label{Thm:WFiniteEquivEveryY}For every $y \in C$, $W_h(y) < \infty$.
    \end{enumerate}
\end{thm}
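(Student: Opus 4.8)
The plan is to reduce the general statement on an arbitrary circle $C$ to the half-line case, where Theorem \ref{Comparable} applies directly, and then to chain the implications $(c) \Rightarrow (b) \Rightarrow (a) \Rightarrow (c)$. The implication $(c) \Rightarrow (b)$ is trivial, so the content lies in $(b) \Rightarrow (a)$ and $(a) \Rightarrow (c)$. The key observation is that Theorem \ref{Comparable} already proves a quantitative two-sided comparison $W_h(y) \asymp_K I^L(\gamma)$ for weldings $h: \wR \to \wR$ and \emph{any} root $y \in \wR$, once we know $\gamma$ is a $K$-quasicircle. Since finite Loewner energy forces $\gamma$ to be a $K$-quasicircle by \cite[Prop. 3.6]{Loopenergy} (cited just after Theorem \ref{Thm:GeneralCompBound}), the hypotheses of Theorem \ref{Comparable} are met as soon as we are in the finite-energy regime.

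First I would reduce to $C = \wR$. Given the circle $C \subset \hatC$, choose any $T \in \PSL_2(\C)$ with $T: \wR \to C$, and set $\tilde h := T^{-1} \circ h \circ T \in \ophom{\wR}$. By Theorem \ref{Thm:WMobiusInvariance}, for every $y \in C$ we have $W_h(y) = W_{T^{-1} \circ h \circ T}(T^{-1}(y)) = W_{\tilde h}(T^{-1}(y))$, so finiteness of $W_h$ at a root $y \in C$ is equivalent to finiteness of $W_{\tilde h}$ at the corresponding root $T^{-1}(y) \in \wR$, and ``some $y$'' / ``every $y$'' statements transfer bijectively since $T^{-1}: C \to \wR$ is a bijection. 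Moreover $\tilde h$ is a conformal welding of the Jordan curve $\tilde\gamma$ obtained by post-composing the Riemann maps of $\gamma$ with $T^{-1}$; since \Mob maps are conformal, $\tilde\gamma$ is a quasicircle with the same quasiconformal constant as $\gamma$, and $I^L(\tilde\gamma) = I^L(\gamma)$ by the \Mob invariance of the Loewner energy. Thus it suffices to prove the equivalence for $\tilde h$ on $\wR$.

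Now I would run the three implications on $\wR$. For $(a) \Rightarrow (c)$: if $I^L(\tilde\gamma) < \infty$, then $\tilde\gamma$ is a $K$-quasicircle for some finite $K$ by \cite[Prop. 3.6]{Loopenergy}, so the upper bound in \eqref{Ineq:Main} gives $W_{\tilde h}(\tilde y) \leq \tfrac{1}{2}(3 + K^2 + K^{-2})\, I^L(\tilde\gamma) < \infty$ for \emph{every} root $\tilde y \in \wR$, which is exactly $(c)$. For $(b) \Rightarrow (a)$: suppose $W_{\tilde h}(\tilde y) < \infty$ for some $\tilde y$. Here I must be slightly careful, since the lower bound in \eqref{Ineq:Main} is stated for quasicircles, and a priori I do not yet know $\tilde\gamma$ is a quasicircle. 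However, any conformal welding $\tilde h$ is by definition the welding of \emph{some} Jordan curve, and finiteness of even one $W_{\tilde h}(\tilde y)$ should be leveraged: the cleanest route is to observe that the universal lower bound $\tfrac{3}{2} I^L(\tilde\gamma) \leq W_{\tilde h}(\tilde y)$ in the last sentence of Theorem \ref{Comparable} holds for all quasicircles, and to note that $\tilde\gamma$ being a quasicircle is automatic here because a welding with $\log|\tilde h'|$-type expressions lying in $H^{1/2}$ (which is what $W_{\tilde h}(\tilde y) < \infty$ encodes) forces $\tilde h$ to be quasisymmetric, hence $\tilde\gamma$ a quasicircle. Given that, $I^L(\tilde\gamma) \leq \tfrac{2}{3} W_{\tilde h}(\tilde y) < \infty$, yielding $(a)$.

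The main obstacle I anticipate is precisely this last point in $(b) \Rightarrow (a)$: justifying that the finiteness of the rooted welding energy at a single point already guarantees $\tilde h$ is quasisymmetric, so that Theorem \ref{Comparable}'s quasicircle hypothesis is met and its lower bound can be invoked. I would argue this by unwinding the definition of $W_{\tilde h}(\tilde y)$ and showing that $W_{\tilde h}(\tilde y) < \infty$ implies $\log|\tilde h'| \in H^{1/2}$ (and likewise for $\tilde h^{-1}$), since the \Mob-correction factors $(\tilde h(x) - \tilde h(\tilde y))^2 / (x - \tilde y)^2$ contribute a smooth, controllable term; then Proposition \ref{IntroProp:ShenWeldingH} identifies $\tilde h$ as a Weil--Petersson welding, which is in particular quasisymmetric, so $\tilde\gamma$ is a quasicircle. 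With quasisymmetry in hand the lower bound of Theorem \ref{Comparable} applies and closes the loop. If a fully self-contained treatment of this regularity step proves delicate, an alternative is to invoke Theorem \ref{Comparable} only after first establishing $\tilde h \in \mathrm{WP}(\R)$ directly from the definition of $W_{\tilde h}$, thereby sidestepping any circularity.
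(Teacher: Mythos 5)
Your reduction to $C=\wR$ via Theorem \ref{Thm:WMobiusInvariance} and your implication $(a)\Rightarrow(c)$ are fine: the upper bound in \eqref{Ineq:Main} is proved in the paper for finite-energy curves without any appeal to Theorem \ref{Thm:WFiniteEquiv}, so no circularity arises there. The genuine gap is in $(b)\Rightarrow(a)$, in the regularity step you flag yourself. You claim that $W_{\tilde h}(\tilde y)<\infty$ forces $\log|\tilde h'|\in H^{\half}(\R)$ because the cross-ratio factor $(\tilde h(x)-\tilde h(\tilde y))^2/(x-\tilde y)^2$ contributes a ``smooth, controllable'' term. This is false. Take $\tilde h(x)=-1/x\in\ophom{\wR}$ and any root $\tilde y$: then $W_{\tilde h}\equiv 0$ by Theorem \ref{Thm:WVanishesMobius}, yet $\log|\tilde h'(x)|=-2\log|x|$ has \emph{infinite} $H^{\half}(\R)$ seminorm (the seminorm is scale invariant and $\log|x|$ contributes a fixed positive amount on every dyadic scale, both near $0$ and near $\infty$). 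In this example the correction term equals $-2\log|x|$ as well, and the two infinite-seminorm pieces cancel exactly: it is the cancellation inside $L_{\tilde h}(\cdot,\tilde y)$, not the smallness of the correction, that makes the rooted functional finite. Moreover, Proposition \ref{IntroProp:ShenWeldingH} applies only to increasing homeomorphisms of $\R$ onto itself, i.e.\ weldings fixing $\infty$, which your $\tilde h$ need not do. The repair is exactly the paper's device: by the \Mob covariance of the rooted functional (Theorem \ref{Thm:KMobiusInvariance}), $K_h(y)=K_H(\infty)$ for $H:=T_{h(y)}^{-1}\circ h\circ T_y$, which \emph{does} fix $\infty$, and for such $H$ one has $K_H(\infty)=\|\log|H'|\|_{H^{1/2}(\R)}^2$ on the nose, with no correction term; then Shen's proposition applies.

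Once that step is repaired, your subsequent appeal to Theorem \ref{Comparable} is either circular or redundant. Circular as written: in the paper's logical order, Theorem \ref{Thm:WFiniteEquiv} is proved in \S\ref{Sec:welding energy} precisely so that the proof of Theorem \ref{Comparable} can begin with ``By Theorem \ref{Thm:WFiniteEquiv} \ldots we may assume $I^L(\gamma)<\infty$''; the lower bound of \eqref{Ineq:Main} for a quasicircle not yet known to have finite energy --- which is what your step $(b)\Rightarrow(a)$ invokes --- is exactly the part of Theorem \ref{Comparable} that the paper deduces \emph{from} Theorem \ref{Thm:WFiniteEquiv}. And redundant after the repair: once Shen's proposition shows $H$ is a Weil--Petersson welding, the welded curve $\gamma$ is Weil--Petersson, hence $I^L(\gamma)<\infty$ by Wang's theorem, with no need for \eqref{Ineq:Main} at all. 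This is in fact the paper's proof: it establishes the statement for the one-sided functional $K_h$ (Theorem \ref{Thm:KFiniteEquiv}) by \Mob conjugation plus Proposition \ref{IntroProp:ShenWeldingH}, and then obtains the statement for $W_h(y)=K_h(y)+K_{h^{-1}}(h(y))$ by applying Theorem \ref{Thm:KFiniteEquiv} to the complex-conjugate curve, which has finite Loewner energy if and only if $\gamma$ does.
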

\noindent While this immediately follows from Theorem \ref{Comparable}, we prove Theorem \ref{Thm:WFiniteEquiv} in \S\ref{Sec:welding energy} before we prove the former result, and thus give a different argument.  

Write $\WP(C)$ for the collection of all weldings $h:C\rightarrow C$ of Jordan curves $\gamma$ of finite energy (irrespective of normalization), and let $\PSL_2(\C) \supset \MOB(C) \simeq \PSL_2(\R)$ be the collection of \Mob transformations which preserve $C$, with $\MOB(C) \supset \MOB(C,y) \simeq \H$ the subgroup fixing a given $y \in C$.  Combining Theorems \ref{Thm:WMobiusInvariance} and \ref{Thm:WFiniteEquiv} yields that $W_h(y)$ is well-defined and finite on the double quotient
\begin{align}\label{Eq:WOnQuotientSpace}
    \MOB(C)\bs \!\WP(C) / \MOB(C,y).
\end{align}

The welding energy also has the following parallel with the Loewner energy.
\begin{thm}\label{Thm:WVanishesMobius}
    Let $C$ be a circle in $\hatC$ and $h \in \ophom{C}$.  There exists $y \in C$ such that $W_h(y)=0$ if and only if $h$ is a \Mob transformation.  In this case, $W_h(y) = 0$ for all $y \in C$.
\end{thm}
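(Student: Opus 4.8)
The plan is to split the equivalence into its two implications and to use the M\"obius covariance of Theorem \ref{Thm:WMobiusInvariance} to normalize both. The guiding observation, which I would record first, is that $W_{\mathrm{id}_C}(y) = 0$ for every $y \in C$: substituting $h = \mathrm{id}_C$ into the definition of $W_h(y)$ makes each argument of $\log|\cdot|$ identically $1$ (and for $y=\infty$ one reads this off \eqref{Eq:IntroWhInfty}).

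For the ``if'' direction together with the final ``$W_h(y)=0$ for all $y$'' assertion, I would suppose $h \in \MOB(C)$, write $h = h \circ \mathrm{id}_C \circ \mathrm{id}_C$, and apply Theorem \ref{Thm:WMobiusInvariance} with $S = h$, inner map $\mathrm{id}_C$, and $T = \mathrm{id}_C$. This gives $W_h(y) = W_{\mathrm{id}_C}(\mathrm{id}_C(y)) = 0$ for every $y \in C$, settling this direction and the ``all $y$'' claim simultaneously.

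For the ``only if'' direction, assume $W_h(y_0) = 0$ for some $y_0 \in C$. First I would normalize: choosing a M\"obius $T \colon \wR \to C$ with $T(\infty) = y_0$ and setting $\tilde h := T^{-1} \circ h \circ T$, Theorem \ref{Thm:WMobiusInvariance} yields $W_{\tilde h}(\infty) = W_h(y_0) = 0$, while $h$ is M\"obius exactly when $\tilde h$ is. It then suffices to work on $\wR$ with root $\infty$, where by \eqref{Eq:IntroWhInfty}
\begin{align*}
    W_{\tilde h}(\infty) = \big\| \log|\tilde h'| \big\|_{H^{1/2}(\R)}^2 + \big\| \log|(\tilde h^{-1})'| \big\|_{H^{1/2}(\R)}^2 = 0.
\end{align*}
Since $W_{\tilde h}(\infty) = 0 < \infty$, Theorem \ref{Thm:WFiniteEquiv} and Proposition \ref{IntroProp:ShenWeldingH} place $\tilde h$ in the Weil--Petersson class, so $\tilde h$ is locally absolutely continuous with $\log \tilde h' \in H^{1/2}(\R)$. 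Vanishing of the first term forces $\| \log \tilde h' \|_{H^{1/2}(\R)} = 0$; as the Gagliardo seminorm vanishes precisely on a.e.\ constant functions, $\log \tilde h' \equiv c$ a.e., whence $\tilde h' \equiv e^{c}$ and, by local absolute continuity, $\tilde h(x) = e^{c} x + d$ is affine, hence M\"obius.

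The routine part is the normalization bookkeeping: existence of $T$ from transitivity of the M\"obius group on pointed circles, and the index matching in Theorem \ref{Thm:WMobiusInvariance}. The one point requiring care—the main obstacle—is regularity: passing from ``$\log \tilde h'$ is a.e.\ constant'' to ``$\tilde h$ is affine'' needs local absolute continuity rather than merely a.e.\ differentiability, and this is exactly what finiteness of $W_{\tilde h}(\infty)$ supplies through the Weil--Petersson characterization. As a cross-check I would note a softer alternative for this direction: $W_h(y_0) = 0 < \infty$ gives a finite-energy quasicircle $\gamma$, so the universal lower bound $\tfrac{3}{2} I^L(\gamma) \le W_h(y_0) = 0$ of Theorem \ref{Comparable} forces $I^L(\gamma) = 0$; since the only zero-energy curves are round circles, whose weldings are M\"obius, $h$ is M\"obius. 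This trades the elementary affine step for the known classification of zero-energy curves.
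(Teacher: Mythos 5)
Your overall strategy (M\"obius covariance to normalize the root to $\infty$, then integrate an a.e.\ constant derivative) is essentially the paper's, which proves the statement by reducing $W_h$ to the one-sided functional $K_h$ and establishing the analogous claim there (Theorem \ref{Thm:KVanishesMobius}). However, your normalization step contains a genuine error. You set $\tilde h := T^{-1}\circ h \circ T$ for a \emph{single} M\"obius map $T$ with $T(\infty)=y_0$, and then evaluate $W_{\tilde h}(\infty)$ via \eqref{Eq:IntroWhInfty}. That formula is only valid when the homeomorphism fixes $\infty$; here $\tilde h(\infty) = T^{-1}(h(y_0))$, which equals $\infty$ only if $h(y_0)=y_0$, and nothing in the hypotheses provides a fixed point. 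For general $h$ the correct expression is $K_{\tilde h}(\infty) = \big\| \log\big|(\tilde h(\cdot)-\tilde h(\infty))^2/\tilde h'(\cdot)\big| \big\|_{H^{1/2}(\R)}^2$ by \eqref{Eq:Lyinfty}, so vanishing would give that $(\tilde h(x)-\tilde h(\infty))^2/\tilde h'(x)$ is a.e.\ constant --- a Riccati-type equation whose solutions are M\"obius but certainly not affine --- and in any case not the identity you displayed. So the step as written fails.

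The repair is immediate and is exactly what the paper does in the proof of Theorem \ref{Thm:KVanishesMobius}: Theorem \ref{Thm:WMobiusInvariance} permits \emph{independent} pre- and post-composition, so conjugate by two maps, $\tilde h := S^{-1}\circ h \circ T$ with $T(\infty)=y_0$ and $S(\infty)=h(y_0)$ (the paper's $H := T_{h(y)}^{-1}\circ h \circ T_y$). Then $\tilde h(\infty)=\infty$, covariance gives $W_{\tilde h}(\infty)=W_h(y_0)=0$, and your remaining steps (the Gagliardo seminorm vanishes iff the function is a.e.\ constant; local absolute continuity lets you integrate $\tilde h'\equiv e^c$ to conclude $\tilde h$ is affine) go through and coincide with the paper's argument. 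Two smaller points of hygiene: Theorem \ref{Thm:WFiniteEquiv} and Theorem \ref{Comparable} both assume that $h$ is a conformal welding of a Jordan curve, which is \emph{not} a hypothesis of the present theorem, so neither can be invoked as stated. For the main argument you do not need them: finiteness of $W_{\tilde h}(\infty)$ already forces $\tilde h \in \ACp(\hatR)$ by the very definition of $W$ (it is set to $\infty$ otherwise), which is all the regularity the integration step requires. Your ``softer alternative'' has the same issue --- before applying Theorem \ref{Comparable} one must first certify, via Proposition \ref{IntroProp:ShenWeldingH} applied to the correctly normalized $\tilde h$, that $h$ is a Weil--Petersson welding at all --- so it is not actually softer than the direct route.
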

\noindent These results might even lead one to wonder whether the welding energy $W_h(y)$ \emph{is} the Loewner energy, or perhaps a multiple of it.  The question comes down to the nature of the function $y \mapsto W_h(y)$.  If the welding energy were the Loewner energy (modulo constant multiple, say), this would be a constant function, as the energy would be root independent.  We show in \S\ref{Sec:WhyNotRootInvariant} by explicit numerical example, however, that $y \mapsto W_h(y)$ is not always constant.  Thus the welding energy, while in some sense a close kin of the Loewner energy, is also truly a distinct object.  

Despite $y \mapsto W_h(y)$ not generally being constant, we do show it is continuous.

\begin{thm}\label{Thm: WisContinuousatRoot}
    For $C \subset \hatC$ a circle and $h \in \ophom{C}$, $y \mapsto W_h(y)$ is continuous on $C$.

\end{thm}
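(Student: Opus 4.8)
Since the statement is vacuous when $I^L(\gamma) = \infty$ (then $W_h \equiv \infty$ on $C$ by Theorem \ref{Thm:WFiniteEquiv}, a continuous constant in the extended sense), I assume $I^L(\gamma) < \infty$. By the covariance of Theorem \ref{Thm:WMobiusInvariance}, continuity at a root $y_0 \in C$ is preserved under pre- and post-composing $h$ with elements of $\PSL_2(\C)$ and reindexing the root; since Loewner energy and the quasicircle constant are \Mob-invariant, I may conjugate so that $C = \R$ and both $y_0$ and $h(y_0)$ are finite. Writing $W_h(y) = \|\phi_y\|_{H^{1/2}(\R)}^2 + \|\psi_y\|_{H^{1/2}(\R)}^2$, where $\phi_y$ and $\psi_y$ are the two logarithmic integrands built from $h$ and $h^{-1}$, and using that $t \mapsto t^2$ is continuous, it suffices to show the Hilbert-space-valued maps $y \mapsto \phi_y$ and $y \mapsto \psi_y$ are continuous into $H^{1/2}(\R)$. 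The two are symmetric under $h \leftrightarrow h^{-1}$, $f \leftrightarrow g$, so I focus on $\phi_y$.

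The $y$-independent summand $-\log|h'|$ lies in $H^{1/2}(\R)$ (Proposition \ref{IntroProp:ShenWeldingH}) and cancels in differences, so I reduce to the continuity of $y \mapsto F(\cdot,y)$, where $F(x,y) := 2\log\big|(h(x)-h(y))/(x-y)\big|$ has a removable singularity at $x=y$ (with value $2\log|h'(y)|$) and decays at $\infty$. The plan is to split $F$ along the welding $h = g^{-1}\circ f$, with $f:\H\to H$, $g:\H^*\to H^*$ Riemann maps fixing $\infty$:
\begin{align*}
F(x,y) = \underbrace{2\log\Big|\tfrac{f(x)-f(y)}{x-y}\Big|}_{=:A(x,y)} \;+\; \underbrace{2\log\Big|\tfrac{g^{-1}(f(x))-g^{-1}(f(y))}{f(x)-f(y)}\Big|}_{=:B(x,y)}.
\end{align*}
The purpose of this split is that each piece extends harmonically to a half-plane on which the Douglas formula \eqref{Eq:IntroDouglasFormula} converts the $H^{1/2}$-norm into a Dirichlet energy. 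Since $f$ is conformal, $z\mapsto 2\log|(f(z)-f(y))/(z-y)|$ is harmonic on $\H$ and realizes the extension $\mathcal{A}_y$ of $A(\cdot,y)$, whence $\|A(\cdot,y)-A(\cdot,y_0)\|_{H^{1/2}(\R)}^2 = \calD_\H(\mathcal{A}_y - \mathcal{A}_{y_0})$. For $B$, note $B(\cdot,y) = \beta_y\circ f|_\R$ with $\beta_y(w) := 2\log|(g^{-1}(w)-g^{-1}(f(y)))/(w-f(y))|$ harmonic on $H^*$; setting $\rho := (\beta_y - \beta_{y_0})|_\gamma$ gives $B(\cdot,y)-B(\cdot,y_0) = (\rho\circ g)\circ h = \calP_h(\rho\circ g)$, so by Nag--Sullivan (Proposition \ref{const.}), conformal invariance of Dirichlet energy, and Lemma \ref{Lemma:K^2Optimal},
\[
\|B(\cdot,y)-B(\cdot,y_0)\|_{H^{1/2}(\R)}^2 \le (K^2+K^{-2})\,\calD_{H^*}(\beta_y-\beta_{y_0}).
\]
Thus everything reduces to the continuity of two Dirichlet energies.

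Both Dirichlet terms are instances of one statement: for a conformal $\varphi$ ($=f$ on $\H$, or $=g$ on $\H^*$) fixing $\infty$ with $\nabla\log|\varphi'|\in L^2$ (from $I^L(\gamma)<\infty$), the $L^2$-valued map $a \mapsto \Xi_a$ is continuous, where
\[
\Xi_a(\zeta) := \frac{\varphi'(\zeta)}{\varphi(\zeta)-\varphi(a)} - \frac{1}{\zeta-a},
\]
taking $a = y$ on $\H$ and $a = h(y)$ on $\H^*$; indeed $\calD_\H(\mathcal{A}_y-\mathcal{A}_{y_0}) = \tfrac{2}{\pi}\|\Xi_y-\Xi_{y_0}\|_{L^2(\H)}^2$ and $\calD_{H^*}(\beta_y-\beta_{y_0}) = \tfrac{2}{\pi}\|\Xi_{h(y)}-\Xi_{h(y_0)}\|_{L^2(\H^*)}^2$. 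Each $\Xi_a$ has a removable singularity at its own root $\zeta=a$, and by continuity of $\varphi$ up to the boundary one gets $\Xi_a - \Xi_{a_0} \to 0$ pointwise a.e. as $a\to a_0$ (recalling $h(y)\to h(y_0)$). The remaining task is to upgrade this to $L^2$-convergence.

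I expect this final step to be the main obstacle. The issue is not integrability for a fixed pair $a\ne a_0$, but uniform control as the two boundary singularities $\zeta = a$ and $\zeta = a_0$ merge: near each root $\Xi$ is bounded only after the explicit cancellation of its two $1/(\zeta-a)$-type poles, and one must rule out $L^2$-mass concentrating there in the limit. The plan is to dominate $|\Xi_a - \Xi_{a_0}|^2$ by an $a$-uniform integrable majorant assembled from $|\nabla\log|\varphi'||^2\in L^1$ together with the boundary distortion estimates of the $K$-quasicircle (controlling $\varphi$ near $\R$), and then apply dominated convergence; the $\infty$-normalization of $\varphi$ forces $\Xi_a(\zeta)=O(|\zeta|^{-2})$, handling the tail. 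Once this $L^2$-continuity is in hand, combining the two Dirichlet bounds yields $\|\phi_y - \phi_{y_0}\|_{H^{1/2}(\R)}\to 0$, hence continuity of $W_h$ at $y_0$, and the reduction of the first paragraph extends the conclusion to all of $C$.
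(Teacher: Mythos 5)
Your reduction is sound up to the last step, and in fact runs closer to the paper's machinery than it may appear: your $\Xi_a - \Xi_{a_0}$ is exactly $\calB_\varphi(\cdot,a)-\calB_\varphi(\cdot,a_0)$ for the paper's normalized pre-Schwarzian, and the M\"obius reduction, Douglas formula, and Nag--Sullivan bound are the paper's tools as well. The genuine gap is the step you yourself flag as the main obstacle: upgrading pointwise convergence of $\Xi_a$ to $L^2$ convergence. What you give there is a plan, not a proof, and the plan points in a direction that does not work. There is no reason an $a$-uniform integrable majorant for $|\Xi_a-\Xi_{a_0}|^2$ can be assembled from $|\nabla\log|\varphi'||^2$ plus quasiconformal distortion estimates: near the moving root the size of $\calB_\varphi(\cdot,a)$ is governed by the boundary behavior of $\varphi$ at $a$, which for a general Weil--Petersson curve is not pointwise controlled by $|\calA_\varphi|$, and quasisymmetry/H\"older distortion bounds are $L^\infty$-type estimates, too coarse to produce the $L^2$ cancellation needed as the two singularities merge. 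Your tail claim $\Xi_a(\zeta)=O(|\zeta|^{-2})$ has the same defect: it presupposes an expansion of $\varphi$ at $\infty$ of the form $b_{-1}z+b_0+b_1/z+\cdots$, which holds for analytic curves but not for general finite-energy ones --- this is precisely why the paper proves its key cross-term lemma first in the analytic category and then approximates.

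What actually closes your argument is an ingredient absent from your final step: the root-independence of the \emph{total} $\calB$-mass, i.e.\ Theorem \ref{Thm:BoundaryLE} (equivalently, M\"obius invariance of $I^L$). For every root $y$ one has $\|\calB_f(\cdot,y)\|_{L^2(\H)}^2 + \|\calB_g(\cdot,h(y))\|_{L^2(\H^*)}^2 = \tfrac{\pi}{4}I^L(\gamma)$, a constant. Your pointwise convergence plus Fatou on each half-plane gives $\liminf$ lower bounds for each summand; since the sum is constant, each summand's norm must in fact converge, and pointwise convergence together with convergence of $L^2$ norms yields strong $L^2$ convergence (Radon--Riesz, or equivalently the paper's generalized dominated convergence, Proposition \ref{Thm: DCL}, with majorants $2|\calB_\varphi(\cdot,a)|^2+2|\calB_\varphi(\cdot,a_0)|^2$ whose integrals converge by the constancy of the sum). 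This two-sided invariance trick is exactly the engine of the paper's proof of Theorem \ref{Thm: KisContinuousatRoot}: there the dominating functions are assembled over $\H$ and $\H^*$ simultaneously so that their integrals equal $4\pi I^L(\gamma)$ by \eqref{Eq:LEH}, and Proposition \ref{Thm: equiv convergence H} then transfers $L^2$ convergence of pre-Schwarzians to $H^{1/2}$ convergence of $\log h'$. If you replace your distortion-estimate step with this invariance argument, your proof closes, and it then becomes a legitimate, somewhat more hands-on variant of the paper's route that bypasses the normalized Weil--Petersson topology of Section \ref{Sec: topologicalconvergence}.
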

\noindent This result is not entirely trivial because $W_h(y)$ has differing expressions depending on whether $y$ and/or $h(y)$ is infinite.  Theorem \ref{Thm:WVanishesMobius} says continuity in the root always holds, irrespective of the different cases.

\subsubsection{M\"obius-invariant  welding energies}\label{Sec:IntroMobiusInvariantEnergies}

The continuity of Theorem \ref{Thm: WisContinuousatRoot} enables us to define three variants of $W_h(y)$ that are completely \Mob invariant.  

\begin{definition}\label{Def:UpperLowerW}
    Let $C$ be a circle in $\hatC$ and $h \in \ophom{C}$. The \emph{upper and lower welding energies of $h$} are, respectively,
    \begin{align*}
        \overline{W_h} := \max_{y \in C} W_h(y) \qquad \text{ and } \qquad \underline{W_h} := \min_{y \in C} W_h(y).
    \end{align*}
    When $W_h(y) < \infty$, the  \emph{welding energy gap} is 
    \begin{align*}
        \Delta W_h := \overline{W_h} - \underline{W_h}.
    \end{align*}
\end{definition}

\noindent The above properties of $W_h(\cdot)$ immediately yield the following for $\overline{W_h},  \underline{W_h}$, and $\Delta W_h$.

\begin{cor}\label{Cor:UpperLowerWProps}
    For $C$ a circle in $\hatC$ and $h \in \ophom{C}$, the following hold:
    \begin{enumerate}[$(i)$]
        \item \label{Cor:UpperLowerWPropsMobInv} $\overline{W_h}$, $\underline{W_h}$, and $\Delta W_h$ are all invariant under \Mob renormalization.  That is, for any $S,T \in \PSL_2(\C)$, 
    \begin{align*}
        \overline{W_{S\circ h \circ T}} = \overline{W_{h}} \quad \text{ and } \quad \underline{W_{S\circ h \circ T}} = \underline{W_h},
    \end{align*}
    and hence $\Delta W_{S\circ h \circ T} = \Delta W_h$ as well.
        \item\label{Cor:UpperLowerWPropsFinite} $\overline{W_h}<\infty$ if and only if $\underline{W_h} < \infty$ if and only if $h$ is the welding of a Jordan curve $\gamma$ of finite Loewner energy.
        \item\label{Cor:UpperLowerWPropsCircle} If $h$ is a welding for a circle $\gamma \subset \hatC$, then $\overline{W_h} = 0$ (and hence $\Delta W_h=0$).
    \end{enumerate}
\end{cor}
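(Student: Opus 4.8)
The plan is to obtain all three statements directly from the structural results already established for $W_h(\cdot)$, the only genuine preliminary being the observation that the extrema in Definition \ref{Def:UpperLowerW} are actually attained (so that $\overline{W_h}$ and $\underline{W_h}$ are honest $\max$/$\min$). For this I would first note that a circle $C \subset \hatC$ is compact, being homeomorphic to $\SSS$, and that $y \mapsto W_h(y)$ is continuous on $C$ by Theorem \ref{Thm: WisContinuousatRoot}. Thus in the finite-energy case $W_h$ is a continuous real-valued function on a compact space, hence attains its max and min; in the infinite-energy case Theorem \ref{Thm:WFiniteEquiv} forces $W_h \equiv \infty$, so both extrema are (trivially) attained as well.

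For part $(\ref{Cor:UpperLowerWPropsMobInv})$ I would fix \Mob maps $T : C_1 \to C$ and $S : C \to C_1$ so that $S \circ h \circ T \in \ophom{C_1}$ is again a welding. By Theorem \ref{Thm:WMobiusInvariance}, for every $y \in C_1$ we have $W_{S \circ h \circ T}(y) = W_h(T(y))$, an identity that notably does not involve $S$. Since $T : C_1 \to C$ is a bijection, $\{T(y) : y \in C_1\} = C$, so optimizing over $y \in C_1$ is the same as optimizing $W_h$ over $C$; this gives $\overline{W_{S \circ h \circ T}} = \overline{W_h}$ and $\underline{W_{S \circ h \circ T}} = \underline{W_h}$, and subtracting yields $\Delta W_{S \circ h \circ T} = \Delta W_h$.

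For part $(\ref{Cor:UpperLowerWPropsFinite})$ I would run the equivalences in a cycle. The implication $\overline{W_h} < \infty \Rightarrow \underline{W_h} < \infty$ is immediate from $\underline{W_h} \le \overline{W_h}$. If $\underline{W_h} < \infty$, then some root $y$ has $W_h(y) < \infty$, so Theorem \ref{Thm:WFiniteEquiv}, in the direction $(\ref{Thm:WFiniteEquivOneY}) \Rightarrow (a)$, gives $I^L(\gamma) < \infty$. Conversely, if $I^L(\gamma) < \infty$, then Theorem \ref{Thm:WFiniteEquiv}, in the direction $(a) \Rightarrow (\ref{Thm:WFiniteEquivEveryY})$, gives $W_h(y) < \infty$ for every $y$, and pairing this with continuity on the compact circle $C$ makes the maximum finite, i.e.\ $\overline{W_h} < \infty$. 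This closes the loop. Finally, for part $(\ref{Cor:UpperLowerWPropsCircle})$, if $h$ welds a round circle or line $\gamma$, then the two components of $\hatC \setminus \gamma$ are disks, so the Riemann maps $f,g$ are restrictions of \Mob transformations and $h = g^{-1}\circ f$ is itself \Mob; Theorem \ref{Thm:WVanishesMobius} then gives $W_h(y) = 0$ for all $y \in C$, whence $\overline{W_h} = \underline{W_h} = 0$ and $\Delta W_h = 0$.

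I would expect there to be no true obstacle here, only one point requiring care: the justification that the extrema are attained, since this is exactly what lets $\overline{W_h}$ and $\underline{W_h}$ interact cleanly with the bijective reparametrization $T$ in part $(\ref{Cor:UpperLowerWPropsMobInv})$ and with the finiteness dichotomy in part $(\ref{Cor:UpperLowerWPropsFinite})$. This rests on combining the continuity of Theorem \ref{Thm: WisContinuousatRoot} with compactness of $C$, together with the $W_h \equiv \infty$ alternative from Theorem \ref{Thm:WFiniteEquiv}. The one remaining input, used only in part $(\ref{Cor:UpperLowerWPropsCircle})$, is the standard fact that the welding of a circle is \Mob, after which Theorem \ref{Thm:WVanishesMobius} finishes the argument.
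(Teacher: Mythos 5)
Your proposal is correct and takes essentially the same route as the paper, which proves part $(\ref{Cor:UpperLowerWPropsMobInv})$ directly from Definition \ref{Def:UpperLowerW} and Theorem \ref{Thm:WMobiusInvariance}, part $(\ref{Cor:UpperLowerWPropsFinite})$ from Theorem \ref{Thm:WFiniteEquiv}, and part $(\ref{Cor:UpperLowerWPropsCircle})$ from Theorem \ref{Thm:WVanishesMobius} together with the fact that weldings of circles are \Mob transformations. Your additional care about attainment of the extrema (via Theorem \ref{Thm: WisContinuousatRoot} and compactness of $C$) makes explicit a point the paper leaves implicit in Definition \ref{Def:UpperLowerW}, but it does not constitute a different approach.
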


\begin{proof}
    Property $(\ref{Cor:UpperLowerWPropsMobInv})$ follows immediately from Definition \ref{Def:UpperLowerW} and Theorem \ref{Thm:WMobiusInvariance}, while $(\ref{Cor:UpperLowerWPropsFinite})$ follows immediately from Theorem \ref{Thm:WFiniteEquiv}.  Theorem \ref{Thm:WVanishesMobius} immediately yields $(\ref{Cor:UpperLowerWPropsCircle})$.
\end{proof}

The welding energy gap $\Delta W_h$ may be the most intriguing of the trio introduced in Definition \ref{Def:UpperLowerW}.  Theorem \ref{Comparable} yields the following.
\begin{cor}\label{Cor:EnergyGapBound}
    For a conformal welding $h$ of a Jordan curve $\gamma \subset \hatC$ of finite Loewner energy that is also a $K$-quasicircle, for $\tilde{K} := K^2 + K^{-2}$ we have
    \begin{align*}
        \Delta W_h \leq \frac{1}{2}\Big( \tilde{K} - \frac{1}{\tilde{K}}\Big) I^L(\gamma).
\end{align*}
\end{cor}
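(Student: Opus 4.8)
The plan is to derive Corollary \ref{Cor:EnergyGapBound} directly from the two-sided comparison in Theorem \ref{Comparable}. The key observation is that the bounds
\[
\frac{1}{2}\Big(3+\frac{1}{\tilde{K}}\Big)I^L(\gamma) \leq W_h(y) \leq \frac{1}{2}\big(3+\tilde{K}\big)\,I^L(\gamma)
\]
hold for \emph{every} root $y \in \wR$, with the \emph{same} constants on both sides, independent of $y$. This is precisely the feature that makes the estimate on the gap possible.

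First I would note that, since the two-sided bound in \eqref{Ineq:Main} is valid for all $y$, it must in particular hold at the root realizing the maximum and at the root realizing the minimum. Thus
\[
\overline{W_h} = \max_{y \in \wR} W_h(y) \leq \frac{1}{2}\big(3+\tilde{K}\big)\,I^L(\gamma)
\qquad\text{and}\qquad
\underline{W_h} = \min_{y \in \wR} W_h(y) \geq \frac{1}{2}\Big(3+\frac{1}{\tilde{K}}\Big)I^L(\gamma).
\]
(Here I am implicitly using that the max and min are attained, which follows from the continuity of $y \mapsto W_h(y)$ established in Theorem \ref{Thm: WisContinuousatRoot} together with compactness of the circle $C \cong \wR$; strictly speaking the conclusion $\Delta W_h \leq \tfrac12(\tilde K - \tilde K^{-1})I^L(\gamma)$ follows even if one works with $\sup$ and $\inf$, but existence of extrema is guaranteed anyway.)

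Next I would simply subtract these two inequalities. Since $\Delta W_h = \overline{W_h} - \underline{W_h}$, combining the upper bound on $\overline{W_h}$ with the lower bound on $\underline{W_h}$ gives
\[
\Delta W_h \;\leq\; \frac{1}{2}\big(3+\tilde{K}\big)\,I^L(\gamma) \;-\; \frac{1}{2}\Big(3+\frac{1}{\tilde{K}}\Big)I^L(\gamma)
\;=\; \frac{1}{2}\Big(\tilde{K} - \frac{1}{\tilde{K}}\Big)I^L(\gamma),
\]
where the $3$ terms cancel. This is exactly the claimed bound.

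There is essentially no obstacle here: the entire argument is a one-line subtraction once Theorem \ref{Comparable} is in hand, and the only subtlety worth flagging is that the comparison constants are genuinely root-independent, which is what permits bounding the maximum from above and the minimum from below by the same multiples of $I^L(\gamma)$. The cancellation of the additive constant $3$ is the reason the gap bound comes out clean, depending only on $\tilde K - \tilde K^{-1}$ rather than on $\tilde K + \tilde K^{-1}$.
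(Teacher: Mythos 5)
Your proof is correct and is exactly the argument the paper intends: Corollary \ref{Cor:EnergyGapBound} is presented as an immediate consequence of Theorem \ref{Comparable}, obtained by bounding $\overline{W_h}$ above and $\underline{W_h}$ below with the root-independent constants from \eqref{Ineq:Main} and subtracting, so that the additive $3$ cancels. Your flagging of the attainment of the extrema (via Theorem \ref{Thm: WisContinuousatRoot}) matches the paper's own justification for Definition \ref{Def:UpperLowerW}, so there is nothing to add.
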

\noindent In particular, we recover the fact in Corollary \ref{Cor:UpperLowerWProps}$(\ref{Cor:UpperLowerWPropsCircle})$ that $\Delta W_h$ vanishes for weldings $h$ of the circle, i.e. \Mob transformations.

\subsection{By-product II: A ``normalized pre-Schwarzian derivative," and two generalized Loewner-energy formulas}\label{Sec:IntroNPS}

\subsubsection{The normalized pre-Schwarzian derivative}
Recall that to prove Lemma \ref{lem:generalcrossterm} we approximate a general finite-energy $\gamma$ with analytic equipotentials $\gamma_n = f(C_n)$, where $C_n$ are large circles in $\H$.  In the process of proving $\gamma_n \rightarrow \gamma$ in $T_0(1)$, we discovered the usefulness of the differential operator
\begin{align}\label{Eq:IntroNPSDef}
    \calB_f(z,w) := \frac{f'(z)}{f(z)-f(w)} - \frac{1}{z-w} - \frac{1}{2} \frac{f''(z)}{f'(z)} = \frac{1}{2}\partial_z \log\bigg( \frac{(f(z)-f(w))^2}{f'(z)(z-w)^2} \bigg),
\end{align}
which we call the \emph{normalized pre-Schwarzian derivative}.  Despite its potentially-cumbersome appearance, $\calB_f$ turns out to have very elegant properties, so much so that it appears to be the appropriate 1-form analog to the Schwarzian derivative 
\begin{align}\label{Def:Schwarzian}
    \calS_f := \frac{f'''}{f'} - \frac{3}{2} \Big(\frac{f''}{f'}\Big)^2,
\end{align}
rather than the classical pre-Schwarzian 
\begin{align}\label{Def:Preschwarzian}
    \calA_f := \frac{f''}{f'}.
\end{align}
Table \ref{Table:ABS}, for instance, shows four respects in which $\calB_f$ has behavior entirely parallel to $\calS_f$, whereas in each instance $\calA_f$ does not.

\subsubsection{Boundary-normalized Loewner energy}
In particular, the first two rows of Table \ref{Table:ABS} yield the \Mob covariance 
\begin{align*}
         \calB_{S\circ f\circ T}(z,w) =\calB_{f}\big(T(z),T(w)\big)T'(z)
\end{align*} 
for $S,T \in \PSL_2(\C)$, which is very useful for $L^2$ integrals.  This leads to a formula for the Loewner energy that holds for any placement of $\gamma$ on $\hatC$, and any normalization of conformal maps that have domains that are the two sides of any disk on the sphere. A disk, as usual, is a planar ball $B_r(w) = \{\,z\,:\, d(z,w)<r\,\}$, where $d$ is either the Euclidean or spherical metric (in particular, half-planes in $\C$ are disks in $\hatC$).  For $\Omega \subsetneq \hatC$, set $\Omega^* := \hatC \bs \overline{\Omega}$.  

\begin{table}
\begin{align*}
\begin{array}{|c|c|c|c|}
    \hline
     & \calA & \calB & \mathcal{S}\\
     \hline
     f \mapsto f\circ T &  \calA_f(T(z))T'(z) + \calA_T(z) & \begin{array}{@{}c@{}} \calB_f(T(z),T(w))T'(z) \\  \end{array} &  \calS_f(T(z))(T'(z))^2\\
     \hline
     f \mapsto T\circ f & \calA_T(f(z))f'(z) + \calA_f(z) & \begin{array}{@{}c@{}} \calB_f(z,w) \\  \end{array} & \calS_f(z)\\
     \hline
     f \in \mathcal{M}(\Omega) & \calA_f  \in \mathcal{M}(\Omega) & \begin{array}{@{}c@{}} \calB_f(\cdot,w) \in \calH(\Omega) \\  \end{array} & \calS_f \in \calH(\Omega)\\
     \hline
     \text{Vanishes} & \text{Iff $f$ affine} & \begin{array}{@{}c@{}} \text{Iff $f$ \Mob} \\  \end{array} & \text{Iff $f$ \Mob}\\
     \hline
\end{array}
\end{align*}
\caption{\small
Four ways in which, in contrast to the pre-Schwarzian $\calA$, the normalized pre-Schwarzian $\calB$ of \eqref{Eq:IntroNPSDef} behaves analogously to the Schwarzian $\calS$.  Here $T \in \PSL(\C)$ is a \Mob transformation, and $\mathcal{M}(\Omega)$ and $\mathcal{H}(\Omega)$ are the collections of meromorphic and holomorphic functions on a domain $\Omega$, respectively. The top row of the table, for instance, says $\calA_{f \circ T}(z) = \calA_f(T(z))T'(z) + \calA_T(z)$, whereas $\calB_{f \circ T}(z,w) = \calB_f(T(z),T(w))T'(z)$, a 1-form analogue of the Schwarzian composition rule $\calS_{f \circ T}(z) = \calS_f(T(z))(T'(z))^2$.  See Lemma \ref{Lemma:NPSMobiusInvariance1} for this and the post-composition invariance of $\calB$, and Corollary \ref{Lemma:BFinite} and Lemma \ref{Lemma:BMobiusZero} for the properties in the third and fourth rows, respectively.  See Table \ref{Table:NPSProps} for other basic properties of $\calB$.}
\label{Table:ABS}
\end{table}

\begin{thm}[Boundary-normalized Loewner energy]\label{Thm:BoundaryLE}
    Let $\gamma \subset \widehat{\C}$ be any Jordan curve, $D \subsetneq \widehat{\C}$ any disk, and $f:D \rightarrow \Omega$ and $g: D^* \rightarrow \Omega^*$ any conformal maps to the two complementary components of $\widehat{\C}\bs \gamma = \Omega \cup \Omega^*$.  Then for any $w_0 \in \partial D$ with $w_1 := g^{-1}\circ f(w_0)$,
    \begin{align}\label{Eq:IntroBoundaryLE}
      I^L(\gamma) = &\frac{4}{\pi}\int_D |\calB_f(z,w_0)|^2 dA(z)+  \frac{4}{\pi}\int_{D^*} |\calB_g(z,w_1)|^2 dA(z).
    \end{align}
\end{thm}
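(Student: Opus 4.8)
The plan is to reduce the general statement to Wang's half-plane formula \eqref{Eq:LEH} by exploiting the \Mob covariance of $\calB$ recorded in Table \ref{Table:ABS}. First I would isolate the base case: if $f$ fixes $\infty$, then letting $w \to \infty$ in the definition \eqref{Eq:IntroNPSDef} kills the first two terms and leaves $\calB_f(z,\infty) = -\frac12 f''/f' = -\frac12 \calA_f(z)$. Hence $\frac{4}{\pi}\int_\H |\calB_f(z,\infty)|^2\,dA = \frac1\pi \int_\H |f''/f'|^2\,dA$, so for a curve $\gamma$ through $\infty$ with $f:\H\to\Omega$ and $g:\H^*\to\Omega^*$ both fixing $\infty$ (whence the welding fixes $\infty$ and both roots are $w_0 = w_1 = \infty$), the right-hand side of \eqref{Eq:IntroBoundaryLE} is exactly Wang's expression \eqref{Eq:LEH}, which equals $I^L(\gamma)$.

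The heart of the argument is then a normalization step turning an arbitrary configuration into this base case. Given a disk $D$, maps $f:D\to\Omega$, $g:D^*\to\Omega^*$, and a root $w_0\in\partial D$ with $w_1 := g^{-1}\circ f(w_0)$, set $p := f(w_0)$; the welding relation gives $g(w_1)=f(w_0)=p$, so $p$ is the common image of the two roots. I would choose $S\in\PSL_2(\C)$ with $S(p)=\infty$ and --- crucially using \emph{different} \Mob maps on the two sides --- $T_f:\H\to D$ with $T_f(\infty)=w_0$ together with $T_g:\H^*\to D^*$ with $T_g(\infty)=w_1$. Then $\tilde f := S\circ f\circ T_f$ and $\tilde g := S\circ g\circ T_g$ are conformal maps onto the two components of $\hatC\setminus S\gamma$ that both fix $\infty$ (since $S(f(w_0))=S(g(w_1))=\infty$), so $\tilde\gamma := S\gamma$ passes through $\infty$ and the base case applies: $\frac4\pi\int_\H|\calB_{\tilde f}(z,\infty)|^2\,dA + \frac4\pi\int_{\H^*}|\calB_{\tilde g}(z,\infty)|^2\,dA = I^L(\tilde\gamma)=I^L(\gamma)$, the last equality by \Mob invariance of the Loewner energy.

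It remains to identify each of these two standard integrals with the corresponding integral over $D$ and $D^*$ in \eqref{Eq:IntroBoundaryLE}. Combining the first two rows of Table \ref{Table:ABS} (Lemma \ref{Lemma:NPSMobiusInvariance1}) gives $\calB_{S\circ f\circ T_f}(z,w) = \calB_f(T_f(z),T_f(w))\,T_f'(z)$, with the postcomposition by $S$ dropping out entirely. Thus $\calB_{\tilde f}(z,\infty) = \calB_f(T_f(z),w_0)\,T_f'(z)$, and the change of variables $\zeta = T_f(z)$, whose Jacobian $|T_f'(z)|^2$ exactly cancels the $|T_f'(z)|^2$ coming from the $1$-form factor, yields $\frac4\pi\int_\H|\calB_{\tilde f}(z,\infty)|^2\,dA(z) = \frac4\pi\int_D|\calB_f(\zeta,w_0)|^2\,dA(\zeta)$. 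The identical computation with $T_g$ gives $\frac4\pi\int_{\H^*}|\calB_{\tilde g}(z,\infty)|^2\,dA = \frac4\pi\int_{D^*}|\calB_g(\zeta,w_1)|^2\,dA$, and summing the two completes the proof. The observation that the weight $T'$ produced by the covariance of $\calB$ is precisely the weight needed so that $|\calB_f(\cdot,w)|^2\,dA$ is a genuine (M\"obius-invariant) density is the conceptual engine of the whole argument, and it explains why no log term appears, in contrast to \eqref{Eq:LEDisk}.

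I expect the main obstacle to be bookkeeping at $\infty$: several of the points $w_0, w_1, p$ may be infinite (e.g.\ $w_0=\infty$ forces $D$ to be a half-plane, and $p=\infty$ forces $S$ affine), so I must check that the definition of $\calB_f(z,w_0)$, the covariance identities of Table \ref{Table:ABS}, and the area change of variables all remain valid in these degenerate cases --- which is exactly why the base-case limit $\calB_f(z,\infty)=-\frac12\calA_f$ is isolated first. A secondary point is that the identity should be read as $+\infty=+\infty$ when $I^L(\gamma)=\infty$; since every step above is an \emph{exact} equality of (possibly infinite) integrals under a conformal change of variables, this case needs no separate treatment beyond recalling that finiteness of the half-plane integrals characterizes finite energy.
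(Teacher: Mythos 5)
Your proof is correct and takes essentially the same route as the paper: both reduce to Wang's half-plane formula \eqref{Eq:LEH} and transport it using the \Mob covariance of $\calB$ (Lemma \ref{Lemma:NPSMobiusInvariance1}) together with the resulting change-of-variables identity (Corollary \ref{Lemma:BIntegralInvariance}), with the same bookkeeping at $\infty$. The only difference is organizational --- the paper first handles $D=\H$ with an arbitrary boundary root (Lemma \ref{Lemma:BoundaryLE1}) and then passes to a general disk by precomposition, whereas you normalize the disk and the root simultaneously using different pre-composing \Mob maps on the two sides --- but the underlying computation is identical.
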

\noindent Note that since $\partial D$ is the boundary of $D$ with respect to $\widehat{\C}$, $w_0$ or $w_1$ (or both) may be $\infty$.  Wang's formula \eqref{Eq:LEH} is the special case of $D = \H$, $f$ and $g$ normalized to both fix $\infty$, and $w_0 = w_1 = \infty$.\footnote{While our formula \eqref{Eq:IntroNPSDef} for $\calB_f$ assumes both $w$ and $f(w)$ are elements of $\C$, the normalized pre-Schwarzian is also defined when one or both is $\infty$; the corresponding term in the sum simply vanishes.  For instance, $\calB_f(z,\infty) = - \frac{1}{2} \frac{f''(z)}{f'(z)}$ if $w = f(w) = \infty$.  See \S\ref{Sec:NPS} for more details.}  The fresh insight provided by the theorem is how the integrand changes if we normalize at points $w_0 \in \R$ instead, and/or no longer require $f$ and/or $g$ to fix $\infty$.  For instance, in the same setting of $D = \H$ but when $f(\infty) = g(\infty) \in \C$, \eqref{Eq:IntroBoundaryLE} says
\begin{align*}
    I^L(\gamma) = &\frac{4}{\pi}\int_\H |\calB_f(z,\infty)|^2 dA(z)+  \frac{4}{\pi}\int_{\H^*} |\calB_g(z,\infty)|^2 dA(z)\\
    &= \frac{4}{\pi}\int_\H \bigg|\frac{f'(z)}{f(z)- f(\infty)} - \frac{1}{2}\frac{f''(z)}{f'(z)} \bigg|^2 dA(z)+  \frac{4}{\pi}\int_{\H^*} \bigg|\frac{g'(z)}{g(z)- g(\infty)} - \frac{1}{2}\frac{g''(z)}{g'(z)} \bigg|^2 dA(z).
\end{align*}

As alluded to above, the $\calB_f$ operator will also help us prove that certain analytic equipotentials $\gamma_n$ converge to a general Weil--Petersson curve $\gamma$ in the Weil--Petersson \Teich space.  Indeed, Theorem \ref{Thm:BoundaryLE} will give us an efficient approach to control several integrals by Loewner energies.  See the proof of Lemma \ref{lem: curvesWPconvergence} for these details.

While the identity \eqref{Eq:LEH} is an instance of \eqref{Eq:IntroBoundaryLE} with $D= \H$, what of Wang's other formula \eqref{Eq:LEDisk} for the disk?  The presence of the interior points $0 \in \D$ and $\infty \in \D^*$ in \eqref{Eq:LEDisk} suggests it may not neatly fall into the framework of Theorem \ref{Thm:BoundaryLE}. And indeed, a distinct formula handles this situation.   

\subsubsection{Interior-normalized Loewner energy}
In contrast to the boundary case of Theorem \ref{Thm:BoundaryLE}, normalizing in the interior gives us two $\C$-degrees of freedom $u \in D$ and $v \in D^*$.  We also need the Schwarz-reflection $u^* \in D^*$ of $u$ across $\partial D$, as well as two additional differential operators, namely
\begin{align}
    \calB_{f,g}^*(z,u,v) :=& \frac{f'(z)}{f(z)-g(v)} - \frac{1}{z-u^*} - \frac{1}{2}\frac{f''(z)}{f'(z)} 
    \label{Eq:IntroB*}
\end{align}
and
\begin{align}
    \calC_{f,g}(z,u,v) :=& \calB_f(z,u) - \calB^*_{f,g}(z,u,v)\notag \\
    =& \frac{f'(z)}{f(z)-f(u)} - \frac{f'(z)}{f(z)-g(v)} - \bigg(\frac{1}{z-u} - \frac{1}{z-u^*}  \bigg). \label{Intro:C}
\end{align}
Given the presence of the reflected point $u^*$, note that each of $\calB^*$ and $\calC$ implicitly depends on the ambient domain $D$, and thus we could write $\calB_{f,g,D}^*(z,u,v)$ and $\calC_{f,g,D}(z,u,v)$ for additional clarity. Also, as in the case of the $\calB_f$ formula \eqref{Eq:IntroNPSDef}, the expressions in \eqref{Eq:IntroB*} and \eqref{Intro:C} assume all the quantities in play are elements of $\C$.  However, the operators remain defined when any of $u,v,u^*,f(u)$, and $g(v)$ is infinite, in which case the corresponding term vanishes (see Example \ref{Eg:InteriorLE} immediately below, for instance).  
\begin{thm}[Interior-normalized Loewner energy]\label{Thm:InteriorLE}
Let $\gamma \subset \widehat{\C}$ be any Jordan curve, $D \subsetneq \widehat{\C}$ any disk, and $f:D \rightarrow \Omega$ and $g: D^* \rightarrow \Omega^*$ any conformal maps to the two complementary components of $\widehat{\C}\bs \gamma = \Omega \cup \Omega^*$.  Then for any $u \in D$ and any $v \in D^*$,
\begin{align}\label{Eq:IntroInteriorLE1}
\begin{split}
    I^L(\gamma) = \frac{4}{\pi}\int_D | \calB_{f}(z,u) |^2&dA(z) + \frac{4}{\pi}\int_{D^*} | \calB^*_{g,f}(z,v,u) |^2dA(z)\\ &- \frac{2}{\pi}\int_D | \calC_{f,g}(z,u,v) |^2dA(z) - \frac{2}{\pi}\int_{D^*} | \calC_{g,f}(z,v,u) |^2dA(z).
\end{split}
\end{align}
\end{thm}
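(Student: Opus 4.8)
The plan is to show that the right-hand side of \eqref{Eq:IntroInteriorLE1} is a Möbius-invariant functional that is moreover independent of the normalization points $u \in D$ and $v \in D^*$, and then to evaluate it by letting $(u,v)$ tend to the boundary, reducing to the already-established boundary formula, Theorem \ref{Thm:BoundaryLE}. First I would record the covariance of the three operators. Using the transformation rules for $\calB$ in Table \ref{Table:ABS} together with the definitions \eqref{Eq:IntroB*} and \eqref{Intro:C}, a direct computation shows that $\calB_f(z,u)$, $\calB^*_{f,g}(z,u,v)$, and $\calC_{f,g}(z,u,v)$ are unchanged under simultaneous post-composition $f\mapsto S\circ f$, $g\mapsto S\circ g$ with $S \in \PSL_2(\C)$, and transform with the factor $T'(z)$ under a change of disk $f \mapsto f\circ T$, $g\mapsto g\circ T$ (here one uses that Schwarz reflection is covariant, $T(u^*) = (T(u))^*$ whenever $T$ maps one circle to another). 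Since $dA$ picks up $|T'|^2$ and $I^L$ is Möbius invariant, both sides of \eqref{Eq:IntroInteriorLE1} are invariant under these operations, and I may assume $D = \D$ and, after a disk automorphism, $u = 0$.

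Key to the analysis is the representation $\calB_f(z,u) = \tfrac12\partial_z \log\big((f(z)-f(u))^2/(f'(z)(z-u)^2)\big)$ from \eqref{Eq:IntroNPSDef}. The integrands $\calB_f(\cdot,u)$, $\calB^*_{g,f}(\cdot,v,u)$, and the two $\calC$'s are all holomorphic on their respective disks (the apparent poles at $z=u$, $z=u^*$, etc. cancel), so each area integral is, via the Douglas formula \eqref{Eq:IntroDouglasFormula}, twice the Dirichlet energy of the real part of the corresponding logarithm — exactly the kind of quantity that produced $I^L$ in \eqref{Eq:LEH}. The central step is then to prove that the right-hand side of \eqref{Eq:IntroInteriorLE1} does not depend on $(u,v)$. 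Since $\calB_f(z,u)$ is holomorphic in $u$, and the remaining integrands depend holomorphically on $u$ and $v$ through $f(u)$, $g(v)$, and the reflected points $u^*,v^*$, I would differentiate each of the four integrals in $u$ and $v$ (and their conjugates) and apply Green's first identity, as in the proof of Lemma \ref{lem:generalcrossterm}, to convert each derivative of an area integral into a boundary integral over $\partial D$. The decompositions $\calB_f(\cdot,u) = \calB^*_{f,g}(\cdot,u,v) + \calC_{f,g}(\cdot,u,v)$ and its $D^*$-counterpart $\calB_g(\cdot,v) = \calB^*_{g,f}(\cdot,v,u) + \calC_{g,f}(\cdot,v,u)$ are precisely what tie the four boundary contributions together, and I expect them to cancel in pairs, yielding $\partial_u(\mathrm{RHS}) = \partial_v(\mathrm{RHS}) = 0$.

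With independence in hand, I would evaluate the (now constant) right-hand side along the coordinated boundary limit $u \to w_0 \in \partial D$, $v \to w_1 := g^{-1}\!\circ f(w_0)$. In this limit $u^* \to w_0$ and $v^* \to w_1$, so the two $\calC$-integrands tend to $0$ in $L^2$, while $\calB_f(\cdot,u) \to \calB_f(\cdot,w_0)$ and $\calB^*_{g,f}(\cdot,v,u) \to \calB_g(\cdot,w_1)$. Hence the right-hand side converges to $\tfrac4\pi\int_D|\calB_f(z,w_0)|^2\,dA + \tfrac4\pi\int_{D^*}|\calB_g(z,w_1)|^2\,dA = I^L(\gamma)$ by Theorem \ref{Thm:BoundaryLE}, which gives the claim for every $(u,v)$. (Alternatively, one may evaluate directly at $D=\D$, $u=0$, $v=\infty$ and match Wang's disk formula \eqref{Eq:LEDisk}, at the cost of computing several Bergman-type integrals.)

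I expect the main obstacle to be the independence step: controlling the $u,v$-derivatives of the integrands and verifying that the Green's-identity boundary terms cancel across the four pieces. The roles of $\calB^*$ and $\calC$, and in particular the reflected points $u^*,v^*$, appear to be engineered precisely to make this cancellation occur, but the bookkeeping is delicate. A secondary technical point is justifying the passage to the boundary limit, namely dominated convergence in $L^2$ near $w_0$; this is straightforward for analytic $\gamma$ and should extend to general finite-energy curves through the equipotential approximation $\gamma_n \to \gamma$ already developed for Lemma \ref{lem:generalcrossterm} (see Lemma \ref{lem: curvesWPconvergence}).
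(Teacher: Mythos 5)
Your reduction to $D=\D$ uses only the ``diagonal'' covariance $f \mapsto f\circ T$, $g \mapsto g\circ T$ with a \emph{single} M\"obius map $T$, which can normalize $u=0$ but leaves $v$ free; this is exactly what forces you into the two steps you yourself flag as obstacles, and both contain genuine gaps. The first step (independence of the right-hand side in $(u,v)$ via differentiation and Green's identity) is never carried out --- ``I expect them to cancel in pairs'' is the entire argument --- and for a general finite-energy curve you could not apply Green's identity on $\partial D$ directly in any case; you would need the analytic-case-then-approximate scheme, none of which is set up. More importantly, the step is unnecessary: Lemma \ref{Lemma:NPS*MobiusInvariance} gives covariance of $\calB^*$ and $\calC$ under pre-composition of $f$ and $g$ by \emph{different} M\"obius maps, $T_u:\D \rightarrow D$ with $T_u(0)=u$ and $T_v:\D^*\rightarrow D^*$ with $T_v(\infty)=v$ (together with a common post-composition $S$). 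This normalizes both interior points simultaneously, so the whole theorem collapses, after one change of variables, to the single case $D=\D$, $(u,v)=(0,\infty)$, which is Wang's formula \eqref{Eq:LEDisk} once the two $\calC$-integrals are identified with the term $4\log|f'(0)/g'(\infty)|$ via the generalized Grunsky equality \eqref{Eq:GrunskyID}, as in Example \ref{Eg:InteriorLE}. That is the paper's entire proof; no variational argument and no boundary limit are needed.

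The second gap is in your evaluation step. Passing the limit $(u,v)\rightarrow(w_0,w_1)$ through the four area integrals requires $L^2$-convergence $\calB_f(\cdot,u)\rightarrow\calB_f(\cdot,w_0)$, $\calB^*_{g,f}(\cdot,v,u)\rightarrow \calB_g(\cdot,w_1)$, and $\calC_{f,g}(\cdot,u,v)\rightarrow 0$, $\calC_{g,f}(\cdot,v,u)\rightarrow 0$. Pointwise convergence is easy, but there is no evident dominating function: as $u\rightarrow w_0$ the singularity of $f'(z)/(f(z)-f(u))$ migrates to the boundary point $w_0$, precisely where an $L^2$ majorant is hardest to produce. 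The paper itself (\S\ref{Sec:RelationBetweenTwoFormulas}) justifies this interchange of limit and integral only \emph{a posteriori}, as a consequence of Theorems \ref{Thm:BoundaryLE} and \ref{Thm:InteriorLE} both holding, and it notes that what undergirds their compatibility is the equivalence of Wang's formulas \eqref{Eq:LEDisk} and \eqref{Eq:LEH} --- a fact whose known proof is non-elementary (zeta-regularized determinants plus Weil--Petersson approximation), not routine complex analysis. Your plan uses the interchange to \emph{prove} the theorem, which reverses this logic; making it rigorous would require an independent convergence argument (analytic curves first, then equipotential approximation for each of the four integrals), a substantial body of work the proposal gestures at but does not contain.
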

\noindent As in Theorem \ref{Thm:InteriorLE}, we emphasize the theorem's total absence of requirements on $\gamma$, the domain (beyond being a disk), the normalization of the conformal maps, and the choice of $u \in D$ and $v \in D^*$.  The lack of symmetry in the first two integrals in \eqref{Eq:IntroInteriorLE1} may be surprising, but observe that the roles of $D$ and $D^*$ are interchangeable.

\begin{example}\label{Eg:InteriorLE}
Take $D = \D$, assume $g(\infty) = \infty$, and normalize at $u = 0$ and $v = u^* = \infty$.  Recalling the corresponding terms in our formulas for $\calB$, $\calB^*$ and $\calC$ vanish when an $\infty$ appears, Theorem \ref{Thm:InteriorLE} yields
\begin{align*}
    I^L(\gamma) &= \frac{4}{\pi}\int_D | \calB_{f}(z,0) |^2dA(z) + \frac{4}{\pi}\int_{D^*} | \calB^*_{g,f}(z,\infty,0) |^2dA(z)\\
    &\hspace{28mm} - \frac{2}{\pi}\int_D | \calC_{f,g}(z,0,\infty) |^2dA(z) - \frac{2}{\pi}\int_{D^*} | \calC_{g,f}(z,\infty,0) |^2dA(z)\\
    &= \frac{4}{\pi}\int_\D \bigg| \frac{f'(z)}{f(z)-f(0)} - \frac{1}{z} - \frac{1}{2}\frac{f''(z)}{f'(z)} \bigg|^2dA + \frac{4}{\pi}\int_{\D^*} \bigg| \frac{g'(z)}{g(z)-f(0)} - \frac{1}{z} - \frac{1}{2} \frac{g''(z)}{g'(z)} \bigg|^2dA\\
    &\hspace{28mm}-\frac{2}{\pi}\int_\D \bigg| \frac{f'(z)}{f(z)-f(0)} - \frac{1}{z} \bigg|^2dA - \frac{2}{\pi}\int_{\D^*} \bigg| \frac{g'(z)}{g(z)-f(0)}-\frac{1}{z} \bigg|^2dA.
\end{align*}
Recall from the second row of Table \ref{Table:ABS} (Lemma \ref{Lemma:NPSMobiusInvariance1}) that $\calB$, like the Schwarzian derivative, is invariant under post-composition by \Mob transformations.  We prove the same for $\calB^*$ and $\calC$ in Lemma \ref{Lemma:NPS*MobiusInvariance}, and thus post composing the conformal maps by $z \mapsto z-f(0)$ yields
\begin{align*}
    I^L(\gamma) &= \frac{4}{\pi}\int_\D \bigg| \frac{f'(z)}{f(z)} - \frac{1}{z} - \frac{1}{2}\frac{f''(z)}{f'(z)} \bigg|^2dA + \frac{4}{\pi}\int_{\D^*} \bigg| \frac{g'(z)}{g(z)} - \frac{1}{z} - \frac{1}{2} \frac{g''(z)}{g'(z)} \bigg|^2dA\\
    &\hspace{28mm}-\frac{2}{\pi}\int_\D \bigg| \frac{f'(z)}{f(z)} - \frac{1}{z} \bigg|^2dA - \frac{2}{\pi}\int_{\D^*} \bigg| \frac{g'(z)}{g(z)}-\frac{1}{z} \bigg|^2dA\\
    &= \frac{4}{\pi}\int_\D \bigg| \frac{f'(z)}{f(z)} - \frac{1}{z} - \frac{1}{2}\frac{f''(z)}{f'(z)} \bigg|^2dA + \frac{4}{\pi}\int_{\D^*} \bigg| \frac{g'(z)}{g(z)} - \frac{1}{z} - \frac{1}{2} \frac{g''(z)}{g'(z)} \bigg|^2dA + 4\log \frac{|f'(0)|}{|g'(\infty)|},
\end{align*}
where the last equality follows from the generalized Grunsky equality (see \cite[Ch.II Rmk. 2.2]{TTbook}), and $g'(\infty) := \lim\limits_{z \rightarrow \infty}g'(z)$.  If we reverse the roles of $\D$ and $\D^*$, calling the latter $D$ and the former $D^*$ instead, we find
\begin{align*}
    I^L(\gamma) &= \frac{4}{\pi}\int_{\D} | \calB_{f,g}^*(z,0,\infty) |^2dA(z) + \frac{4}{\pi}\int_{\D^*} | \calB_{g}(z,\infty) |^2dA(z)\\
    &\hspace{28mm} - \frac{2}{\pi}\int_{\D} | \calC_{f,g}(z,0,\infty) |^2dA(z) - \frac{2}{\pi}\int_{\D^*} | \calC_{g,f}(z,\infty,0) |^2dA(z)\\
    &= \frac{1}{\pi}\int_\D \bigg|\frac{f''(z)}{f'(z)} \bigg|^2dA + \frac{1}{\pi}\int_{\D^*} \bigg|\frac{g''(z)}{g'(z)} \bigg|^2dA + 4\log \frac{|f'(0)|}{|g'(\infty)|},
\end{align*}
showing Wang's formula \eqref{Eq:LEDisk} is a special case of Theorem \ref{Thm:InteriorLE}, as \eqref{Eq:LEH} was of Theorem \ref{Thm:BoundaryLE}.  

In Corollary \ref{Cor:InteriorLELog} we generalize the identity
\begin{align}\label{Eq:GrunskyID}
    - \frac{2}{\pi}\int_{\D} | \calC_{f,g}(z,0,\infty) |^2dA(z) - \frac{2}{\pi}\int_{\D^*} | \calC_{g,f}(z,\infty,0) |^2dA(z) = 4\log \frac{|f'(0)|}{|g'(\infty)|}
\end{align}
to give a version of the interior-normalized formula \eqref{Eq:IntroInteriorLE1} that looks more like \eqref{Eq:LEDisk}.

\end{example}

\subsubsection{Discussion and criticism}\label{Sec:IntroNPSDiscussion}

We conclude \S\ref{Sec:IntroNPS} by distilling the main contributions of our results on the $\calB$, $\calB^*$, and $\calC$ operators, as well as by answering an objection.

\begin{enumerate}
    \item Theorems \ref{Thm:BoundaryLE} and \ref{Thm:InteriorLE} show that difference between Wang's two Loewner-energy formulas \eqref{Eq:LEDisk} and \eqref{Eq:LEH} lies not in the placement of $\gamma$ on $\hatC$, nor in the choice of domains, nor in anything regarding the conformal maps, but rather in the fact that \eqref{Eq:LEDisk} normalizes at the interior points $u=0$ and $v=\infty$, while \eqref{Eq:LEH} normalizes at the boundary point $w_0=\infty$.
    
    \quad For the boundary-normalized expression \eqref{Eq:LEH}, we could just as easily, for instance, take $D = \D$ and assume $\gamma \subset \C$, as in Wang's version \eqref{Eq:LEDisk} of the interior-normalized formula. Choosing $w_0 = i$, say, and supposing $f(i) = g(i)$, Theorem \ref{Thm:BoundaryLE} then says
    \begin{align*}
        I^L(\gamma) = &\frac{4}{\pi}\int_\D |\calB_f(z,i)|^2 dA(z)+  \frac{4}{\pi}\int_{\D^*} |\calB_g(z,i)|^2 dA(z)\\
        =& \frac{4}{\pi}\int_\D \bigg| \frac{f'(z)}{f(z)-f(i)} - \frac{1}{z-i} - \frac{1}{2} \frac{f''(z)}{f'(z)} \bigg|^2 dA(z)\\
        & \hspace{27mm} +  \frac{4}{\pi}\int_{\D^*} \bigg| \frac{g'(z)}{g(z)-g(i)} - \frac{1}{z-i} - \frac{1}{2} \frac{g''(z)}{g'(z)} \bigg|^2 dA(z).
    \end{align*}
    Wang's formula \eqref{Eq:LEH} is cleaner because $\calB_f(z,\infty)$ and $\calB_g(z, \infty)$ are very simple given $f(\infty) = \infty = g(\infty)$. 

    \item An objection to the value of Theorems \ref{Thm:BoundaryLE} and \ref{Thm:InteriorLE} is that their proofs are elementary: both arguments begin with Wang's formula (\eqref{Eq:LEH} and \eqref{Eq:LEDisk}, respectively) and simply apply the \Mob covariance of the $\calB$, $\calB^*$, and $\calC$ operators (Lemmas \ref{Lemma:NPSMobiusInvariance1} and \ref{Lemma:NPS*MobiusInvariance}, respectively).  While we concur that the proofs are indeed trivial, we suggest the value of these results is in defining the operators in such a way as to obtain the amenable \Mob interaction.  The fact that $\calB$ appears to have thus far eluded explicit definition undergirds the value of the formulation \eqref{Eq:IntroNPSDef}.

    \item Given the desirable behavior of $\calB$ vis-a-vis the Schwarzian $\calS$, as summarized in Table \ref{Table:ABS} (see also Table \ref{Table:NPSProps}), the normalized pre-Schwarzian may be of independent interest. 
\end{enumerate}

\subsection{Motivation and informal  discussion}\label{Sec:IntroDiscussion}


Bishop has occasionally described the philosophy behind his plethora of geometric characterizations of Weil--Petersson curves \cite{Bishop2019weil} as, paraphrasing, ``Find a reasonable definition of curvature, and show an $L^2$ version of it characterizes Weil--Petersson curves."  Our paper echoes this philosophy.  The genesis of the normalized pre-Schwarzian and our welding energy lies in the familiar fact that \Mob transformations $T$ preserve the cross-ratio, i.e.
\begin{align}\label{Eq:MobiusPreservesCR}
    \frac{T(z_1)-T(z_4)}{z_1-z_4} \cdot \frac{T(z_2)-T(z_3)}{z_2-z_3} \cdot \frac{z_1-z_3}{T(z_1)-T(z_3)} \cdot \frac{z_2-z_4}{T(z_2)-T(z_4)} = 1
\end{align}
whenever $z_j \in \C$ are distinct.  ``Fusing'' two pairs of points by sending $z_3 \rightarrow z_1=:x$ and $z_2\rightarrow z_4 =:y$ yields
\begin{align}\label{Eq:MobDifferenceQuotient}
    \frac{(T(x)-T(y))^2}{T'(x)T'(y)(x-y)^2 } = 1,
\end{align}
and thus $\log \frac{(T(x)-T(y))^2}{T'(x)T'(y) (x-y)^2 }$ pointwise vanishes (and, in fact, by integrating one sees this identity characterizes \Mob transformations).  The ``degree of non-vanishing'' of this expression thus gives a measurement of how non-\Mob a conformal map is, and we could consider this as an indirect definition of curvature of the resulting boundary curve. Our generalized Loewner energy formulas \eqref{Eq:IntroBoundaryLE} and \eqref{Eq:IntroInteriorLE1} loosely say that, for $f: D \rightarrow \Omega$, the boundary curve $f(\partial D) = \gamma$ is Weil--Petersson if and only if 
\begin{align*}
    \partial_z \log  \frac{(f(z)-f(w))^2}{f'(z)f'(w)(z-w)^2 }    =  \nabla_z \log \bigg| \frac{(f(z)-f(w))^2}{f'(z)f'(w)(z-w)^2 } \bigg|   \in L^2(D)
\end{align*}
for any $w$.  The Douglas formula then says this is equivalent to 
\begin{align*}
    \log \bigg| \frac{(f(\cdot)-f(w))^2}{f'(\cdot )f'(w)(\cdot -w)^2 } \bigg| \in H^{\half}(\partial D),
\end{align*}
which can be viewed as parallel (though not identical) to our Theorem \ref{Thm:WFiniteEquiv} characterizing Weil--Petersson curves via the welding energy $W_h(y)$.  Thus, at some level, these seventy pages examine the identity \eqref{Eq:MobDifferenceQuotient} from an appropriate $L^2$-vantage point.

\subsection{Organization}\label{Sec:Organization}
We begin with a thorough background discussion on Weil--Petersson \Teich space, Loewner energy, and other preliminaries in Section~\ref{Sec:Preliminaries}.  We also adapt some known results to our setting.  
In Section~\ref{Sec:welding energy} we construct our welding energy $W_h(y)$ through several steps and study its properties. Section~\ref{Sec: BoundayNormalizedforLE} introduces the normalized pre-Schwarzian and proves the two generalized Loewner-energy formulas, Theorems \ref{Thm:BoundaryLE} and \ref{Thm:InteriorLE}.  In Section~\ref{Sec: topologicalconvergence} we recall the arc-length parametrization of $T_0(1)$ and show many different characterizations of convergence in the normalized Weil--Petersson class. We begin to combine all these tools in Section~\ref{Sec:MainProof}, where we prove Theorems~\ref{Comparable} and \ref{Thm:LoewnerIP}, first for the analytic case, followed by the general case via approximation. These results allow us to prove  Theorems~\ref{Thm:GeneralCompBound} and~\ref{Thm:LEGrowth} in Section~\ref{sec: entropypart}.

\bigskip

\subsection{Acknowledgements}\label{Sec:Acknowledgements}
We thank Fredrik Viklund and Yilin Wang for helpful discussions.  Several passing conversations with Mario Bonk, during the latter's occasional sojourns in Beijing, were also inspiring. The authors are also grateful to Prof. Jinsong Liu for his support, as well as to the Beijing International Center for Mathematical Research and Prof. Zhiqiang Li for their support.

\section{Preliminaries}\label{Sec:Preliminaries}

In this section, we present some preliminaries to facilitate our investigation of Loewner energy and welding energy. We recall the universal \Teich space and Weil--Petersson \Teich space in terms of their four models in \S\ref{Sec:T(1)FourModels} and \S\ref{Sec:T0(1)FourModels}, respectively. In \S\ref{Pre: LE}, we review the Loewner energy and its \Mob invariance, which will be a key property we use in this work. Finally, \S\ref{Sec: Honehalfspcae}, \S\ref{Sec: Douglasformula}, and \S\ref{Pre: Pullbackoperator} establish the foundation for us to define welding energy and investigate its properties.

Introduced by Bers~\cite{Bers61, Bers65}, $T(1)$
bridges between spaces of univalent functions and general \Teich spaces, forming an infinite-dimensional complex Banach manifold. 
As the largest \Teich space (modded by the Fuchsian group $\textrm{id}$), $T(1)$ naturally contains the \Teich spaces of all hyperbolic Riemann surfaces as embedded submanifolds. The universal properties inherent in $T(1)$ have motivated profound mathematical investigations. Moreover, $T(1)$ provides a promising geometric framework for the non-perturbative formulation of bosonic string theory, rendering it a compelling object of study in mathematical physics~\cite{BowickRajeev87,UTS95}. We recommend~\cite{Lehtobook,TTbook,LecturesonUTS14Sergeev} to the interested readers for a more detailed introduction to universal \Teich space.

\subsection{Four models of Universal \Teich space}\label{Sec:T(1)FourModels}

Let $U, V \subset \C$ be open. A mapping $f: U \to V$ is \emph{$K$-quasiconformal} if it is homeomorphism whose gradient, interpreted in the sense of distributions, belongs to $L_{\textrm{loc}}^{2}(\mathbb C)$ and satisfies $\|\mu\|_{\infty}=k < 1$. Here $\mu$ is $f$'s a.e.-defined \emph{complex dilatation}
\[\mu := \partial_{\bar{z}}f/ \partial_{z}f,\] and $K:=\frac{1+k}{1-k} \geq 1$.  As usual, $\partial_z := \frac{1}{2}(\partial_x - i \partial_y)$, and $\partial_{\bar{z}} := \frac{1}{2}(\partial_x + i \partial_y)$.  A map is \emph{quasiconformal} (abbreviated \emph{QC}) if it is $K$-quasiconformal for some $K$.

We begin by reviewing four equivalent definitions of the universal \Teich space $T(1)$ based on the model space of the upper half-plane $\H$.  

\subsubsection{Beltrami differentials}\label{Sec:PrelimBeltramiDiff}

\noindent Starting from a so-called \emph{Beltrami differential} 
\begin{align}\label{Eq:BeltramiSet}
    \mu \in L^{\infty}_{1}(\H^*):= \left\{ \mu \in L^{\infty}(\H^*): \|\mu\|_{\infty}<1 \right\},
\end{align}
extend $\mu$ to $\C$ via the reflection
\begin{align}\label{eq: BeltramiCoe.H}
   \mu(z)=\overline{\mu(\bar{z})}. 
\end{align}
By the measurable Riemann mapping theorem, let $w_{\mu}: \C \to \C$ be the unique solution to the Beltrami equation \begin{align}\label{Eq:Beltrami}
    \partial_{\bar{z}} w = \mu \partial_{z} w
\end{align}
which fixes $0,1,\infty$. Then $w_{\mu}$ satisfies $w_{\mu}(z)= \overline{w_{\mu}(\bar{z})}$ and fixes domains $\H, \H^*$, and thus the real line $\R$. For $\mu,\nu \in L^{\infty}_{1}(\H^*)$, we define the equivalent relation as $\mu \sim \nu$ if $w_{\mu}|_{\R}=w_{\nu}|_{\R}$.
The universal \Teich space $T(1)$ is then $T(1):= L^{\infty}_{1}(\H^*) / \sim.$

\subsubsection{Univalent functions}\label{Sec:PrelimUnivalentFunc}

\noindent An alternative extension of $\mu \in L^{\infty}_{1}(\H^*)$ is to set $\mu \equiv 0$ on $\H$. Then the unique solution $w^{\mu}$ to \eqref{Eq:Beltrami} which fixes $0,1,\infty$ is holomorphic on $\H$. For any $\mu, \nu \in L^{\infty}_{1}(\H^*)$, the above equivalent relation transfers into $\mu \sim \nu$ if and only if $w^{\mu}=w^{\nu}$ on $\H$ (see \cite[Ch.2 Lem. 4]{LecturesonUTS14Sergeev} for the details), then the map $[\mu] \mapsto w^{\mu}|_{\H}$ yields
\begin{align}\label{Eq:T1UnivalentFunc}
    T(1) \simeq \left\{ \text{conformal }f:\H \to \C \text{ fixing} \, 0,1, \text{and}\, \infty, \, \text{and extendable to a QC map of } \C \right\}.
\end{align}
For us, a \emph{conformal map} is a holomorphic (or meromorphic if $\infty$ is in the image of $f$) and univalent function.  

\subsubsection{Quasicircles}\label{Sec:PrelimQuasicircles}
\noindent 
A Jordan curve $\gamma$ is a \emph{$K$-quasicircle} if it is an image of a circle $C \subset \hatC$ under a $K$-quasiconformal map of $\C$, and a \emph{quasicircle} if it is a $K$-quasicircle for some $K$.  The images $\gamma = f(\R)$ for all $f$ belonging to the right-hand side of \eqref{Eq:T1UnivalentFunc} are thus oriented, normalized quasicircles. Conversely, given any such $\gamma$, we can find an associated normalized conformal map $f$.  Thus sending $[\mu] \mapsto \gamma_{\mu}:=w^{\mu}(\R)$ yields the equivalent description
\begin{align}\label{Fact: QuasicirclesModelofT01}
    T(1) \simeq \left\{ \text{all oriented quasicircles $\gamma$ passing through} \, 0,1, \text{and}\, \infty \right\}.
\end{align}
The orientation of $\gamma$ can be a detail that is easy to overlook.  Note, for instance, that $\gamma$ and its complex conjugate $\overline{\gamma}$ are not the same points in $T(1)$, even though both curves pass through $0,1,$ and $\infty$.

\subsubsection{Quasisymmetric homeomorphisms of the real line}

\noindent Let $\ophom{C}$ be the orientation-preserving homeomorphisms of $C$, where $C$ is a circle on the Riemann sphere $\hatC$.  Most commonly for us, $C \in \{\R,\SSS\}$. An element $h:C \to C$ of $\ophom{C}$ is \emph{$M$-quasisymmetric} if there exists $M\geq 1$ such that for all adjacent arcs $I,J$ of equal length, $|h(I)|/|h(J)| \leq M$, where $| \cdot |$ denotes arc-length.\footnote{There is also an equivalent definition in terms of quasisymmetry which can be generalized to general metric spaces; see \cite{Heinonen01Lecturebook}.}  We say $h$ is \emph{quasisymmetric} (abbreviated QS) if it is $M$-quasisymmetric for some $M$. Let $\operatorname{QS}(C)$ be set of orientation-preserving quasisymmetric homeomorphisms of $C$.  As quasisymmetric maps are closed under composition, $\operatorname{QS}(C)$ forms a group \cite[Ch.III~\S1.2]{Lehtobook}.  

Ahlfors and Beurling showed that quasisymmetric $h: \R \rightarrow \R$ are precisely the boundary values of quasiconformal maps on $\H$ (or $\D$) \cite{BAextension} (or \cite[Thm. 5.1]{Lehtobook}), and thus the map $[\mu] \mapsto w_\mu|_\R$ yields identification
\begin{align}\label{Eq:T1NormalizedWelding} 
    T(1) \simeq \left\{\, h \in \QS(\R) \; : \; h(0) = 0 = h(1)-1 \,\right\} \simeq \PSL_2(\R) \bs \QS(\R).
\end{align}
Note that since such $h$ are increasing homeomorphisms, we may also regard them as fixing all three points $0,1,$ and $\infty$.  As we will see in the next subsection, elements of \eqref{Eq:T1NormalizedWelding} are precisely the normalized conformal weldings of the quasicircles of \S\ref{Sec:PrelimQuasicircles}.

\subsubsection{Moving between the models}\label{Sec:PrelimMovingBetweenModels}
The equivalence of these four models is standard, and can be found in, for instance,~\cite[Ch.III \S1]{Lehtobook},~\cite[\S2.1]{RandomCW}, and~\cite[\S2.1]{LecturesonUTS14Sergeev}. We proceed to discuss, however, how one can move between the quasicircle model and the QS homeomorphism model fairly explicitly through conformal welding.  

First, given a quasicircle $\gamma$, denote the two components of $\C \bs \gamma$ by $H$ and $H^*$. Since $\gamma$ is a quasicircle, the uniformizing map $w^{\mu}$ from $\H$ to $H$ can be quasiconformally extended to $\C$ with Beltrami differential $\mu$ on $\C$.  With $w_{\mu}$ constructed from $\mu$ as in \S\ref{Sec:PrelimBeltramiDiff} above, set $g_{\mu}:=w^{\mu}\circ w_{\mu}^{-1}|_{\H^*}$, which is conformal on $\H^*$.
The \emph{conformal welding} associated to $\gamma_{\mu}$ is then
$g_{\mu}^{-1} \circ f_{\mu}|_{\R}= w_{\mu}|_{\R}$, which is an element of $\QS(\R)$ thanks to the Beurling-Ahlfors' theorem~\cite{BAextension}. Note that this argument also enables one to move from univalent map model of $T(1)$ to the $\QS(\R)$ model.

Conversely, given $h \in \ophom{\R}$, extend $h$ quasiconformally to $w:\H^* \to \H^*$ fixing $0,1$, and $\infty$ such that $w|_{\R}=h$. Setting $\mu := \partial_{\bar{z}}w/\partial_z w$ as the corresponding Beltrami differential, let $w_{\mu}$ and $w^{\mu}$ be as in \S\ref{Sec:PrelimBeltramiDiff} and \S\ref{Sec:PrelimUnivalentFunc} above. Then $f:=w^{\mu}|_{\H}$ and $g:=w^{\mu}\circ w_{\mu}^{-1}|_{\H^*}$ are the two desired normalized conformal maps and $\gamma=f(\R)$ is the quasicircle corresponding to $h$. Note that the uniqueness of normalized solutions to the Beltrami equation yields that $f$ and $g$ are unique, regardless of what extension $w$ of $h$ we begin with. We refer the interested readers to~\cite[\S2.1]{LecturesonUTS14Sergeev}, \cite[Ch.III \S1]{Lehtobook}, and~\cite[\S2.1]{RandomCW}) for more details.

\subsubsection{Further comments on welding}
In the absence of the normalization conditions in $T(1)$, weldings $h$ and Jordan curves $\gamma$ can be considered as elements of equivalence classes.  Since $h$ is invariant under post-composing $\gamma$ by an automorphism of $\widehat{\C}$, we can view $\gamma$ as an element of  $\PSL_2(\C) \bs \left\{\text{Jordan curves}\right\}$ from the vantage point of welding. Similarly, pre-composing the conformal maps $f$ and $g$ by automorphisms of $\H$ and $\H^*$, respectively, does not change $\gamma$, and thus we may view (an un-normalized) welding as an element of 
\[ \PSL_{2}(\R) \bs \ophom{\R} / \PSL_{2}(\R).\]

It has been observed that the group $\PSL_2(\R) \times \PSL_2(\R)$ describing the freedom of choice can be viewed as the collection of orientation-preserving and time-preserving isometries of $\operatorname{AdS}^3$ space. Furthermore, the graph of the welding homeomorphism can be viewed as a space-like curve in $\partial_{\infty}\operatorname{AdS}^3$. See~\cite{Survey20AdSandTeich, Yilin2024two} for further details.

\subsubsection{Some intuition}

As evident from the nature of the Beltrami differentials \eqref{Eq:BeltramiSet} used to construct $T(1)$, the universal \Teich space is a type of $L^\infty$ class.  This intuition is also evident from the \emph{Bers' embedding} 
\begin{align}\label{Eq:BersEmbedding}
   \beta([\mu]):= \calS_{w^{\mu}|_{\H}},  
\end{align}
which embeds $T(1)$ into the Banach space
\begin{align}
    A_{\infty}^1(\mathbb{H}) &:= \big\{\, \phi\in \calH(\H) \; \big| \;  \ \|\phi\|_{A_{\infty}^1(\mathbb{H})}:=\sup_{z\in \H} |\phi(z)| \rho_\H^{-1/2}(z) \,< \infty \,\big\},\label{Def:A^1infty}
\end{align}
another $L^\infty$-type description.  Here $\calH(\Omega)$ is the collection of holomorphic functions on $\Omega$, and $\rho_{\mathbb{H}} (z) = 1/y^2$ is hyperbolic area density in $\mathbb{H}$.\footnote{We define parallel function spaces such as $A^1_\infty(\D)$ by using the corresponding hyperbolic metric $\rho_\D$ for that domain.}  (We largely follow the notation of \cite{TTbook} for function space definitions.)  Pulling back the complex structure on $A^1_\infty(\H)$ via $\beta$ equips $T(1)$ with an infinite-dimension Banach manifold structure.

\subsection{Weil--Petersson \Teich space}\label{Sec:T0(1)FourModels}

We can view the \emph{Weil--Petersson \Teich space $T_0(1)$} as an $L^2$-subclass of $T(1)$. Parallel to the four models of $T(1)$ in \S\ref{Sec:T(1)FourModels} above, we have the following four descriptions of $T_0(1)$.  Recall the equivalence relation for Beltrami differentials $\mu$ defined in \S\ref{Sec:PrelimBeltramiDiff} above.

\subsubsection{Beltrami differentials}

Cui~\cite{Cui} and Takhtajan-Teo~\cite{TTbook} showed $[\mu] \in T_0(1)$ if and only if it has a representative $\mu \in L_1^{\infty}(\H^*) \cap L^2(\H^*,dA_\rho)$, where $dA_\rho$ is hyperbolic area measure.  That is, the representative $\mu$ satisfies
\begin{align}\label{Eq:WPBeltrami}
    \int_{\H^*} |\mu(z)|^2 \rho_{\H^*}(z) \,dA(z) < \infty.
\end{align}
While Cui's and Takhtajan-Teo's results were formulated for the underlying domain of $\D$ or $\D^*$, they immediately transfer to $\H$ or $\H^*$ by a change of variables; similarly below.  (See \S\ref{Sec:WPHistory} for a more precise accounting of Cui and Takhtajan-Teo's contributions.)

\subsubsection{Univalent functions}\label{Sec:PrelimUnivalentFuncWP}

From the point of view of univalent functions, $T_0(1)$ can be viewed as those normalized conformal maps $f:\H \rightarrow \C$ which have a quasi-conformal extension to $\C$ and whose Schwarzian derivative \eqref{Def:Schwarzian} belongs to the separable Hilbert space
\begin{align}
    A_2(\mathbb{H}) := \big\{\, \phi \in \calH(\H) \; \big| \;  \ \|\phi\|^2_{A_2(\mathbb{H})}:=\int_{\mathbb{H}}|\phi(z)|^2 \rho^{-1}_{\mathbb{H}}(z) \,d A(z)< \infty \,\big\}. \label{Def:A_2}
\end{align}
See \cite[Thm. 2]{Cui} or \cite[Ch.I Cor. 3.2]{TTbook}. As in \S\ref{Sec:PrelimUnivalentFunc}, here ``normalized" means $f$ fixes $0,1,$ and $\infty$. Phrased in terms of the Bers' embedding \eqref{Eq:BersEmbedding}, this says $T_0(1)$ are those elements $[\mu] \in T(1)$ satisfying $\beta([\mu]) \in A_2(\H)$.  The Hilbert manifold structure on $T_0(1)$ introduced in~\cite[Ch.I~\S2]{TTbook} is modeled on $A_2(\mathbb{H})$ via the Bers' embedding. 

We can also characterize the univalent functions corresponding to $T_0(1)$ via the pre-Schwarzian derivative \eqref{Def:Preschwarzian}.  Namely, they are those normalized conformal maps $f: \H \rightarrow \C$ which have QC extension to $\C$ and satisfy $\calA_f \in L^2(\H)$ \cite[Prop. 1]{Cui}, \cite[Ch.II Thm.~1.12]{TTbook}, \cite[Thm. 4.4]{STWrealline}.  Following \cite{TTbook}'s notation, this says $\calA_f \in A^1_2(\H)$, where
\begin{align}\label{Def:A^1_2}
    A^1_2(\H) := \calH(\H)\cap L^2(\H) = \big\{\, \phi \in \calH(\H) \; \big| \;  \|\phi\|^2_{A_2^1(\mathbb{H})}:=\int_{\mathbb{H}}|\phi|^2  \, d A < \infty \,\big\}.
\end{align}
We remark that $A_2^1(\mathbb{H})$ continuously embeds in $A_{\infty}^1(\mathbb{H})$~\cite{Zhu2007operator}.

\subsubsection{Quasicircles}

\noindent As $T(1)$ quasicircles  are the images of $\R$ under the corresponding univalent functions $f$, so $T_0(1)$ quasicircles are images of $\R$ under the normalized univalent maps whose Schwarzian lies in \eqref{Def:A_2}.  In particular, these are oriented quasicircles which pass through $0,1,$ and $\infty$. 

In general, we say a Jordan curve $\gamma$ is \emph{Weil--Petersson} if it is $\PSL_2(\C)$-equivalent to such a quasicircle.

\subsubsection{Quasisymmetric homeomorphisms}

As we noted in \S\ref{Sec:PrelimMovingBetweenModels} above, the quasisymmetric homeomorphism model of $T(1)$ consists of the normalized conformal weldings of quasicircles.  Similarly, the QS homeomorphism model of $T_0(1)$ consists of the normalized weldings of $T_0(1)$ quasicircles.  If we set $\WP(C)$ to be the collection of non-normalized conformal weldings of Weil--Petersson quasicircles $\gamma$, then 
\begin{align*}
    T_0(1) \simeq \PSL_2(\R) \bs \WP(\R).
\end{align*}
We recall from the introduction that Shen and his collaborators characterized the $h \in \WP(C)$ for $C \in \{\SSS, \R\}$ as essentially those satisfying $\log h' \in H^{\half}(C)$.  See Propositions \ref{IntroProp:ShenWeldingD} and \ref{IntroProp:ShenWeldingH}.

\subsubsection{Historical sketch}\label{Sec:WPHistory}

The study of Weil--Petersson \Teich space appears to have been initiated by Cui~\cite{Cui}, although under the name of ``integrable asymptotic affine homeomorphisms." Cui was the first to study how Beltrami differentials $\mu$ satisfying \eqref{Eq:WPBeltrami} relate to the corresponding univalent maps, and gave the characterizations in \S\ref{Sec:PrelimUnivalentFuncWP} above in the setting of $\D$. He also showed that the space of these homeomorphisms is the completion of $\mathcal{M}= \text{M\"ob}(\SSS) \bs \operatorname{Diff}(\SSS)$ under the Weil--Petersson metric.  We recall that \cite{BowickRajeev87} had previously showed $\mathcal{M}$ has a unique K\"ahler metric up to scaling, although it was known that $\mathcal{M}$ was not complete under this metric.  Understanding the completion of $\mathcal{M}$ appears to have been a key motivation for Cui.

Takhtajan and Teo~\cite{TTbook} introduced a new Hilbert structure on the universal Teichm\"uller space $T(1)$ which divides $T(1)$ into uncountably-many components, and showed that the connected component of the identity $T_0(1)$ coincides with Cui's integrable asymptotic affine homeomorphism space. They also gave other novel characterizations of the space, including in terms of the universal Liouville action and Hilbert--Schmidt operators, for instance.  Their work includes a different proof that $T_0(1)$ is the completion of $\text{M\"ob}(\SSS) \backslash \text{Diff}(\SSS)$ under the Weil--Petersson metric. 

It was Shen~\cite{ShenWP} who characterized $T_0(1)$ in terms of the quasisymmetric homeomorphism model, and appears to be the first to call $T_0(1)$ the ``Weil--Petersson \Teich space." He and his collaborators~\cite{WPII,WPIII} investigated the space from the perspective of harmonic analysis and showed continuous dependence between these weldings and the corresponding Riemann maps. 

Motivated by probability theory, Wang and her collaborators studied the Loewner energy and showed that finite-energy curves are precisely Weil--Petersson quasicircles, thus opening a surprising door between SLE theory and \Teich theory~\cite{Yilin19,Yilinequivalent,Interplay,LDP24EveliinaYilin}. Bishop~\cite{Bishop2019weil} further expanded the horizons by giving many equivalent descriptions of Weil-Petersson quasicircles in terms of Sobolev spaces, geometric measure theory, and hyperbolic geometry.

The cumulative effect of these works has been to generate sustained interest in the Weil--Petersson \Teich space, which now finds itself at the interface of complex analysis, probability theory, hyperbolic geometry, and mathematical physics.


\subsection{Loewner energy}\label{Pre: LE}

Consider a simple curve $\gamma\colon [0, T) \rightarrow \bar{\mathbb{H}}$, where $T \in (0, \infty]$, satisfying $\gamma(0)=0, \, \gamma(0,T)\subset \mathbb{H}$.  For each $t \in[0, T)$, we can  associate $\gamma[0,t]$ with a unique conformal map $g_t: \mathbb{H} \backslash \gamma_{[0, t]} \rightarrow \mathbb{H}$ with the normalization condition near $\infty$: $g_t(z)= z +\frac{2 t}{z} +o\left(\frac{1}{z}\right)$.
By Carathedory extension theorem, $g_t$ can be continuously extended to the prime ends, thus we can define the driving function $W(t): =g_t(\gamma_t)$. This construction uniquely determines a family of conformal maps $\{g_t(z)\}_{0 \leq t<T}$
governed by the so-called Loewner differential equation

\begin{align}\label{Intro: chordalLE}
   \partial_t g_t(z) =\frac{2}{g_t(z)-W_t}, \quad g_0(z)=z.
\end{align}

Emerging from Loewner's equation, the \emph{chordal Loewner energy} of a simple curve $\g \subset \H$ with target points $0$ and $\infty$, introduced independently in~\cite{ShekharFriz17} and~\cite{Yilin19}, is the Dirichlet energy of its driving function, i.e.
\begin{align*}
 I^{C}(\gamma): =\frac{1}{2}\int_0^\infty W'(t)^2 dt.
\end{align*}
The driving function $W(t)$ encodes a simply connected domain, or its uniformizing conformal map, via a real-valued driving function of its boundary, and $I^{C}(\gamma)$ measures how far away $\gamma$ deviates from the hyperbolic geodesic.

Rohde-Wang~\cite{Loopenergy} later extended this from chords connecting distinct boundary points of a domain $\Omega \subsetneq \C$ to  Jordan curves $\gamma \subset \hatC$ as follows.  Let $\gamma: \left[0,1\right] \mapsto \C$ be an oriented Jordan curve with root $\gamma(0)= \gamma(1)$. For arbitrary $\epsilon>0$, set  $\gamma \left[\epsilon,1\right]$ be a chord connecting $\gamma(\epsilon)$ and $\gamma(1)$ in the simply connected domain $\widehat{\C} \bs \gamma \left[0, \epsilon\right]$, the \emph{loop Loewner energy} is defined to be
\[I^{L}(\gamma, \gamma(0)):= \lim\limits_{\epsilon \to 0} I_{\widehat{\C}\bs \gamma[0,\epsilon],\gamma(\epsilon),\gamma(0)}(\gamma[\epsilon,1]).\]
Furthermore, they showed this energy is root-invariant, which is highly nontrivial.

One consequence of their work is that the Loewner energy is completely conformally invariant, i.e.
\begin{align}\label{Eq: MobinvariantofLE}
    I^{L}(S \circ \gamma)=I^{L}(\gamma),
\end{align}
for any $S \in \PSL_2(\C)$.  This is a non-trivial property but immediately follows from Rohde-Wang's definition of the loop Loewner energy in \cite[\S3]{Loopenergy}.\footnote{Another way to see $I^{L}(\cdot)$ is \Mob invariant is that we can express Loewner energy $I^{L}(\cdot)$ by means of zeta-regularized determinants of Laplacians~\cite[Thm. 7.3]{Yilinequivalent} and then use Polyakov-Alvarez conformal anomaly formula~\cite{Polyakov81,Alvarez83,OsgoodPhillipsSarnak88} to show it is the universal Liouville action \eqref{Eq:LEDisk} in smooth case. Since these determinants are conformally invariant, the \Mob invariance for $I^{L}(\cdot)$ follows. This approach, however, is rather indirect.}

Wang~\cite{Yilinequivalent} proceeded to give the surprising equivalent descriptions of Loewner energy \eqref{Eq:LEDisk}, \eqref{Eq:LEH}, which made the first contact between $\SLE$ theory and \Teich theory. We comment that showing the equivalence between \eqref{Eq:LEDisk} and \eqref{Eq:LEH}, while having the appearance of a perhaps-routine problem in complex function theory, appears to be non-trivial.  Wang's approach in \cite[\S7]{Yilinequivalent} is to begin with smooth curves and use zeta-regularized determinants of Laplacians.  She then approximates a general curve via smooth use and continuity results in the Weil--Petersson \Teich space established by Takhtajan-Teo~\cite[Cor. A.4, Cor. A.6]{TTbook}, along with the lower semicontinuity of the Loewner energy.

\subsection{A fractional Sobolev space}\label{Sec: Honehalfspcae}

For $C$ a circle in $\hatC$, the space $H^{\half}(C)$ consists of all $u \in L^1_{loc}(C)$ such that
\begin{equation}\label{Def:H1/2SemiNorm}
    \|u\|_{H^{1/2}(C)}^2 := \frac{1}{4\pi^2} \int_C \int_{C} \frac{\abs{u(\zeta) - u(\xi)}^2}{\abs{\zeta - \xi}^2} \,|\dd \zeta|\, |\dd \xi| < \infty,
\end{equation}
where integration is with respect to arc length.  We will mostly use either $C=\R$ or $C=\SSS$.  The semi-norm \eqref{Def:H1/2SemiNorm} arises from the inner product
\begin{align}\label{Eq:H1/2IP}
    \brac{f,g}_{H^{1/2}(C)} := \frac{1}{4 \pi^2}\int_{C}\int_{C} \frac{(f(\zeta)-f(\xi))(\overline{g(\zeta)-g(\xi))}}{|\zeta-\xi|^2} \, |\dd \zeta|\,|\dd \xi|,
\end{align}
and upon modding out by constants $H^\half(C)$ becomes a Hilbert space.  We often simplify notation by writing $\|\cdot \|_{\half(C)}$ or even $\|\cdot \|_{\half}$ when the domain $C$ is contextually clear, and similarly for $\brac{f,g}_{H^{1/2}(C)}$.

\Mob transformations act as isometries with respect to the $H^\half$ semi-norm, in the sense that if $\PSL_2(\C) \ni T: C_1 \rightarrow C_2$ and $u:C_2 \rightarrow \C$, then $\| u \|_{H^{1/2}(C_2)} = \| u \circ T \|_{H^{1/2}(C_1)}$.  We recall this fact in Lemma \ref{Lem: pullbackoperatorMob} below.

\subsection{The Douglas formula}\label{Sec: Douglasformula}

For a domain $\Omega \subset \C$, set 
\begin{align*}
    \calD(\Omega) := \big\{\, F \in C^1(\Omega) \; | \; \mathcal{D}_\Omega (F) := \frac{1}{2\pi}\int_\Omega |\nabla F(z)|^2 \dd x\,\dd y<\infty \,\big\}.
\end{align*}
(We note that some authors define the Dirichlet energy as $\frac{1}{\pi}\| \nabla F\|^2_{L^2(\H)}$, while others as $\frac{1}{2\pi}\| \nabla F\|^2_{L^2(\H)}$.  We follow the latter convention in order to have the trace map be an isometry from the Dirichlet space to $H^{\half}$, as in \eqref{Eq: DouglasformulaFirstDisk} below.) The Dirichlet energy $\calD_\Omega(F)$ is invariant under pushforward $f_*(F) = F \circ f^{-1}$ by conformal maps $f$, in the sense that if $f: \Omega_1 \rightarrow \Omega_2$ is conformal and $F \in C^1(\Omega_1)$, then $\calD_{\Omega_2}(f_*(F)) = \calD_{\Omega_1}(F)$. Write $\mathcal{E}_{\text{harm}}(\Omega) \subset \mathcal{D}(\Omega)$ for the sub-collection of harmonic elements of $\calD(\Omega)$. 

In his study of Plateau problem, Douglas~\cite{Douglas} showed the Dirichlet energy of $U \in \mathcal{E}_{\text{harm}}(\D)$ satisfies
\begin{align} \label{Eq: DouglasformulaFirstDisk}
    \|U\|_{\calD(\D)}^2 := \calD_{\D}(U) &= \frac{1}{16\pi^2} \int_{0}^{2\pi}\int_{0}^{2\pi} \frac{|u(e^{i\alpha})-u(e^{i\beta}) |^2}{ \sin^2\!\big(\frac{\alpha- \beta}{2}\big)} \,\dd \alpha \, \dd \beta \notag\\
    &= \frac{1}{4\pi^2} \int_{\SSS}\int_{\SSS} \left|\frac{u(z_1)-u(z_2)}{z_1-z_2}\right|^2 |\dd z_1||\dd z_2|=\|u\|_{H^{1/2}(\SSS)}^2,
\end{align}
where $u = \mathrm{tr}(U)$ is the boundary trace of $U$, defined by taking non-tangential limits. In particular we see that $\mathrm{tr}:\mathcal{E}_{\text{harm}}(\D) \rightarrow H^{\half}(\SSS)$ is an isometry.

In the other direction, beginning with boundary data $u \in L^1(\SSS)$, one obtains a harmonic function $U$ through the Poisson integral $P_\D(u)$, and when $u \in H^{\half}(\SSS)$, computation~\cite[Theorem 2.5]{AhlforsCI} shows 
\begin{align}\label{Eq: Douglasformuladisk}
    \calD_{\D}(P_{\D}(u))= \|u\|_{H^{1/2}(\SSS)}^2.
\end{align}
See~\cite[Sec.4.3.1]{Julien09} for an alternative proof using the Neumann jump operator $N$.  Thus $P_\D:H^\half(\SSS) \rightarrow \mathcal{E}_{\text{harm}}(\D)$ is likewise an isometry.  

By \Mob invariance of the Dirichlet energy and the $H^\half$ semi-norm (see Lemma \ref{Lem: pullbackoperatorMob} for the latter), these results carry over to $\H$ and $\partial \H = \R$.  That is, $\calD_{\H}(U)=\|u\|_{H^{1/2}(\R)}^2$ for $U \in \calD(\H)$ and $u = \mathrm{tr}(U)$, and $\calD_{\H}(P_{\H}(u))= \|u\|_{H^{1/2}(\R)}^2$ when $u \in H^\half(\R)$.  Here $P_\H$ is, of course, the Poisson extension of $u$ to $\H$.  

Note that Wang's Loewner energy formula \eqref{Eq:LEH} for $\H$ thus translates as
\begin{align}
    I^L(\gamma) &= 2\mathcal{D}_\mathbb{H}(\log|f'(z)|) + 2\mathcal{D}_{\mathbb{H}^*}(\log|g'(z)|) \notag\\
    &= 2\big\| \log|f'(x)| \big\|_{H^{1/2}(\R)}^2 + 2\big\| \log|g'(x)| \big\|_{H^{1/2}(\R)}^2.\label{Eq:LoewnerDirichletHalf}
\end{align}

\subsubsection{Recovering a function from its boundary values}

For any $U \in \mathcal{E}_{\text{harm}}(\H)$, $U \circ C^{-1} \in \mathcal{E}_{\text{harm}}(\D)$ by \Mob invariance of the harmonic Dirichlet space, where $C: \H \to \D $ is the Cayley transformation. Thus $U \circ C^{-1}$ has nontangential limit $u \circ C^{-1}$ almost everywhere on $\SSS$. Furthermore, since $\mathcal{E}_{\text{harm}}(\D)$ is a subspace of harmonic Hardy space $h^2(\D)$, $U$ can be recovered by its boundary values $u$. That is, 
\[
    U \circ C^{-1}= P_{\D}(u \circ C^{-1})
\]
(see~\cite[Page 256]{Zhu2007operator} or \cite[\S2]{Dirichletchordarc25} for details). Moving back to $\H$, it follows that $U$ can be recovered from its boundary values $u$ by the Poisson integral, i.e.
\begin{align}\label{Eq:PoissonLeftInverse}
    U = P_{\D}(u \circ C^{-1}) \circ C =P_{\H}(u) = P_\H(\mathrm{tr}(U)).
\end{align}

In particular, for Weil--Petersson $\gamma$ passing through $\infty$ and associated conformal maps $f: \H \to H$ and $g: \H^* \to H^*$ which fix $\infty$, we see from \eqref{Eq:LoewnerDirichletHalf} that $\log|f'|\in \mathcal{E}_{\text{harm}}(\H)$ and $\log|g'| \in \mathcal{E}_{\text{harm}}(\H^*)$.  Thus we may recover these functions by the Poisson integrals $P_\H(\log|f'(x)|)$ and $P_{\H^*}(\log|g'(x)|)$ of their boundary traces $\log|f'(x)|, \log|g'(x)|$ on $\R$, a fact we will repeatedly use in \S\ref{Sec:MainProof}.\footnote{In the case of bounded Weil--Petersson curves $\gamma \subset \C$, we obtain the same result for the conformal maps $f:\D \rightarrow \Omega$, $g: \D^* \rightarrow \Omega^*$ from the fact that $\Omega, \Omega^*$ are chord-arc domains, and hence Smirnov domains.  The latter, we recall, are precisely the set of simply-connected domains $\Omega$ with $\partial \Omega \subset \C$ where $\log|f'|$ is identical to the Poisson extension of its boundary values.  See \cite[Section 7.3]{Pommerenke}.}

We can also apply \eqref{Eq:PoissonLeftInverse} to the Dirichlet and $H^\half$ inner products.  Writing
\begin{align*}
    (F,G)_{\nabla(\Omega)} := \frac{1}{2\pi}\int_{\Omega} \nabla F \cdot \nabla G \, dx\,dy
\end{align*}
for the former, we thus have by the polarization identity, \eqref{Eq: Douglasformuladisk}, and \eqref{Eq:PoissonLeftInverse}, that for any  $F,G \in \mathcal{E}_{\text{harm}}(\H)$,
\begin{align}\label{Eq:Parallel}
    \brac{\mathrm{tr}(F), \mathrm{tr}(G)}_{H^{1/2}(\R)}= \big(P_\H(\mathrm{tr}(F)), P_\H(\mathrm{tr}(G))\big)_{\nabla(\mathbb{H})}= (F, G)_{\nabla(\mathbb{H})},
\end{align}
and similarly for $F,G \in \mathcal{E}_{\text{harm}}(\H^*)$. This identity will also prove useful in \S~\ref{Sec:MainProof}.

\subsection{The pullback operator}\label{Pre: Pullbackoperator}

Given functions $h:X_1 \rightarrow X_2$ and $\varphi:X_2 \rightarrow X_3$, we define the pullback operator as
 $\calP_h(\gvp) :=\gvp \circ h.$  In our context, $\varphi$ will typically be a function on some circle $C_2 \subset \hatC$, and $h$ will be a homeomorphism $h:C_1 \rightarrow C_2$ of circles or an element of $\ophom{C_2}$.  As already mentioned in the introduction, in this latter setting Nag and Sullivan \cite{Nag-SullivanTeichmuller} proved that $\QS(C_2)$ consists precisely of those functions for which $\mathcal{P}_h$ acts boundedly on $H^{\half}(C_2)$.  While their original result was for $C_2 = \SSS$, we formulate this in terms of $C_2 = \R$.\footnote{The Nag-Sullivan proof essentially only uses the Dirichlet principle (the Dirichlet energy is minimized by the Poisson extension) and the local inequality \cite[(40)]{Nag-SullivanTeichmuller}, which hold in any disk in $\hatC$. We also note the definition of $\mathcal{P}_h$ in \cite{Nag-SullivanTeichmuller} includes subtracting off the average value of the composition $\varphi \circ h$, but that does not affect $\| \cdot \|_{\half}$.}  We repeat this result from the introduction for the reader's convenience.
\begin{prop}[\cite{Nag-SullivanTeichmuller}] 
   $\calP_h$ is a bounded operator on $H^{\half}(\R)$ if and only if $h \in \operatorname{QS}(\R)$. Moreover, $\|\mathcal{P}_h\| \leq \sqrt{K+K^{-1}}$ if $h$ has a K-quasiconformal extension to $\mathbb{H}$.
\end{prop}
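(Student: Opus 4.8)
The plan is to transfer everything to Dirichlet energy via the Douglas formula and then combine the Dirichlet principle with the quasiconformal distortion of energy. For the quantitative (``moreover'') claim, suppose $h \in \QS(\R)$ has a $K$-quasiconformal extension $w : \H \to \H$ with $w|_\R = h$, Beltrami coefficient $\mu = \partial_{\bar z} w / \partial_z w$, and $k := \|\mu\|_\infty$, so $K = (1+k)/(1-k)$. Taking real and imaginary parts, it suffices to treat real-valued $\varphi \in H^{\half}(\R)$; set $\Phi := P_\H(\varphi)$, so that $\calD_\H(\Phi) = \|\varphi\|^2_{H^{1/2}(\R)}$ by the Douglas formula. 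The competitor $\Phi \circ w$ is $C^1$ on $\H$, has boundary trace $\varphi \circ h = \calP_h(\varphi)$, and (as the estimate below shows) finite Dirichlet energy. Since the trace of any finite-energy function lies in $H^{\half}(\R)$ with $H^{\half}$-norm equal to the Dirichlet energy of its harmonic extension, and since the Dirichlet principle bounds the latter by the energy of any function sharing that trace, we obtain $\calP_h(\varphi) \in H^{\half}(\R)$ together with
$$\|\calP_h(\varphi)\|^2_{H^{1/2}(\R)} = \calD_\H\big(P_\H(\varphi \circ h)\big) \le \calD_\H(\Phi \circ w).$$
Everything now reduces to bounding $\calD_\H(\Phi \circ w)$ by a multiple of $\calD_\H(\Phi)$, which I expect to be the computational crux.

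This bound is purely local. Writing $a := \partial_\zeta \Phi$, $p := \partial_z w$, $q := \partial_{\bar z} w$, using that $\Phi$ is real (so $\partial_{\bar\zeta}\Phi = \bar a$), and applying the chain rule for Wirtinger derivatives, one finds pointwise
$$\big|\nabla(\Phi \circ w)\big|^2 = 4\,|a \circ w|^2\big(|p|^2 + |q|^2\big) + 8\,\Re\!\big((a\circ w)^2\, p q\big), \qquad |\nabla \Phi|^2 = 4|a|^2,$$
while the Jacobian of $w$ is $J_w = |p|^2 - |q|^2$. Estimating the cross term by Cauchy--Schwarz gives $|\nabla(\Phi\circ w)|^2 \le 8\,|a\circ w|^2(|p|^2+|q|^2)$ pointwise; changing variables $\zeta = w(z)$, under which $\dd A(z) = J_w^{-1}\,\dd A(\zeta)$ (justified since $w$ lies in $W^{1,2}_{loc}$ and the area formula applies), then yields
$$\calD_\H(\Phi\circ w) \le \frac{1}{2\pi}\int_\H 8\,|a(\zeta)|^2\,\frac{1+|\mu|^2}{1-|\mu|^2}\,\dd A(\zeta) \le (K+K^{-1})\,\calD_\H(\Phi),$$
where we used $|\mu|\le k$ a.e.\ and the identity $2\frac{1+k^2}{1-k^2} = K + K^{-1}$. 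Combining with the previous display gives $\|\calP_h\| \le \sqrt{K+K^{-1}}$, and in particular $\calP_h$ is bounded; the finiteness of $\calD_\H(\Phi\circ w)$ presumed above is a by-product of this same chain of inequalities. (A sharper treatment of the cross term, bounding $|\nabla(\Phi\circ w)|^2 \le 4|a\circ w|^2(|p|+|q|)^2$ rather than by $8|a\circ w|^2(|p|^2+|q|^2)$, even improves the constant to $\sqrt{K}$, consistent with the stated bound since $\sqrt{K}\le\sqrt{K+K^{-1}}$.)

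For the full equivalence, the direction $h \in \QS(\R) \Rightarrow \calP_h$ bounded follows from the above together with the Beurling--Ahlfors theorem, which furnishes a quasiconformal extension $w$ of any quasisymmetric $h$. For the converse, I would argue by contraposition: if $h \notin \QS(\R)$, then there are adjacent intervals $I, J \subset \R$ of equal length with $|h(I)|/|h(J)|$ arbitrarily large, and one constructs normalized test functions $\varphi$ --- localized $H^{\half}$ bumps adapted to the images $h(I), h(J)$ --- whose $H^{\half}(\R)$-norms stay comparable to $1$ while $\|\calP_h(\varphi)\|_{H^{1/2}(\R)} \to \infty$, contradicting boundedness. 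I expect this converse to be the main obstacle: the sufficiency direction is essentially the single local energy-distortion inequality above fed into the Dirichlet principle, whereas the converse requires explicitly designing test functions that simultaneously control $\|\varphi\|_{H^{1/2}}$ from above and force $\|\varphi\circ h\|_{H^{1/2}}$ to blow up, so that unbounded quasisymmetry ratios are genuinely detected by the $H^{\half}$ norm.
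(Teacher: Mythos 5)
First, a point of reference: the paper does not prove this proposition at all --- it is quoted from Nag--Sullivan, and the paper's only commentary (the footnote in \S\ref{Pre: Pullbackoperator}) is that the original proof ``essentially only uses the Dirichlet principle\dots and the local inequality [(40)]'' of that paper. Your treatment of the quantitative claim is precisely that argument, and it is correct: the Douglas formula converts $H^{\half}$ seminorms into Dirichlet energies, the Dirichlet principle gives $\|\varphi\circ h\|^2_{H^{1/2}(\R)}\leq \calD_\H(\Phi\circ w)$, and your pointwise chain-rule identity, Cauchy--Schwarz on the cross term, and the change of variables with Jacobian $|p|^2-|q|^2$ produce the factor $2(1+k^2)/(1-k^2)=K+K^{-1}$. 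Your parenthetical sharpening to $\sqrt{K}$ via $(|p|+|q|)^2/(|p|^2-|q|^2)=(1+|\mu|)/(1-|\mu|)\leq K$ is also correct; it is the classical quasi-invariance of Dirichlet energy. Two repairs are needed where you wrote ``$C^1$'': a quasiconformal $w$ is only a.e.\ differentiable, so one must instead argue $\Phi\circ w\in W^{1,2}_{loc}$ with the a.e.\ chain rule and Lusin's condition (N) for the area formula; and the identification of the boundary trace of $\Phi\circ w$ with $\varphi\circ h$ requires an argument (nontangential limits of Dirichlet-finite harmonic functions exist off a set of capacity zero, and quasiconformal boundary correspondence preserves such sets and cone approach --- measure-theoretic reasoning alone fails here, since quasisymmetric maps need not be absolutely continuous). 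Both are standard but are exactly where the $C^1$ shortcut breaks.

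The genuine gap is the converse: boundedness of $\calP_h$ implies $h\in\QS(\R)$. You acknowledge it is a sketch, but the sketch as described --- ``localized $H^{\half}$ bumps adapted to the images $h(I),h(J)$'' --- cannot be carried out directly, because a fixed-profile bump placed on $h(I)$ pulls back under $h$ to a bump whose transition length inside the adjacent interval is completely uncontrolled (you would need quantitative information about $h^{-1}$, which is what you are trying to prove), and the scale invariance of the $H^{\half}$ seminorm means no single bump of unknown transition scale forces blow-up. The mechanism that works is capacity-theoretic. Boundedness gives the one-sided inequality $\mathrm{cap}(A,B)\leq\|\calP_h\|^2\,\mathrm{cap}(h(A),h(B))$ for all disjoint closed boundary arcs, obtained by pulling back near-extremal condenser potentials --- these extremals are logarithmic profiles, not bumps, and their norms are computed by conformal invariance. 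One then applies this inequality to \emph{both} pairs of opposite sides of the quadrilateral with vertices $x-t,x,x+t,\infty$ and uses the reciprocity between the modulus of a quadrilateral and that of its conjugate to upgrade the one-sided inequality into two-sided bounds on $\mathrm{cap}\big(h([x-t,x]),[h(x+t),\infty]\big)$. Since that capacity is a strictly monotone function of the ratio $|h([x,x+t])|/|h([x-t,x])|$, degenerating at $0$ and $\infty$, two-sided capacity bounds are exactly quasisymmetry. Both ingredients --- the extremal (logarithmic) test functions and the conjugate-quadrilateral step that converts one-sided into two-sided control --- are absent from your sketch, so the ``only if'' half of the equivalence is not established as written.
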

\noindent Our $h$ of interest will be weldings of $K$-quasicircles, for which the proposition yields the following.
\begin{cor}\label{Cor:OperatorBound}
    Let $h \in \ophom{\R}$ be the conformal welding of a $K$-quasicircle $\gamma$.  Then 
    \begin{align*}
        1 \leq \max\{ \|\mathcal{P}_h\|, \| \mathcal{P}_{h^{-1}}\|\} \leq \sqrt{K^2+K^{-2}}.
    \end{align*}
\end{cor}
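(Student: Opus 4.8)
The plan is to derive both inequalities directly from the Nag--Sullivan bound (Proposition \ref{const.}) together with the fact, recorded in Lemma \ref{Lemma:K^2Optimal}, that the welding of a $K$-quasicircle admits a $K^2$-quasiconformal extension to $\H$. The upper bound will be an application of Proposition \ref{const.} with the quasiconformal constant $K^2$ in place of $K$, applied to both $h$ and $h^{-1}$; the lower bound will be the purely formal observation that $\calP_h$ and $\calP_{h^{-1}}$ are mutually inverse operators on $H^{\half}(\R)$.

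For the upper bound, I would first invoke Lemma \ref{Lemma:K^2Optimal} to obtain a $K^2$-quasiconformal extension $\tilde h$ of $h$ to $\H$. Since the inverse of a $K^2$-quasiconformal map is again $K^2$-quasiconformal, $\tilde h^{-1}$ is a $K^2$-quasiconformal extension of $h^{-1}$. Applying Proposition \ref{const.} with $K$ replaced by $K^2$ then yields $\|\calP_h\| \leq \sqrt{K^2+K^{-2}}$ and $\|\calP_{h^{-1}}\| \leq \sqrt{K^2+K^{-2}}$ simultaneously, whence $\max\{\|\calP_h\|, \|\calP_{h^{-1}}\|\} \leq \sqrt{K^2+K^{-2}}$. (As a consistency check, since $K \geq 1$ the right-hand side is at least $\sqrt{2}$, comfortably above the lower bound.)

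For the lower bound, I would observe that for any $\varphi \in H^{\half}(\R)$,
\begin{align*}
    \calP_h\big(\calP_{h^{-1}}(\varphi)\big) = (\varphi \circ h^{-1})\circ h = \varphi \circ (h^{-1}\circ h) = \varphi,
\end{align*}
so that $\calP_h \circ \calP_{h^{-1}} = \mathrm{Id}$ on $H^{\half}(\R)$ (modulo constants). By submultiplicativity of the operator norm, $1 = \|\mathrm{Id}\| \leq \|\calP_h\|\,\|\calP_{h^{-1}}\|$, and since two factors both strictly less than $1$ would yield a product strictly less than $1$, at least one of $\|\calP_h\|, \|\calP_{h^{-1}}\|$ must be $\geq 1$. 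This gives $\max\{\|\calP_h\|, \|\calP_{h^{-1}}\|\} \geq 1$.

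The only non-formal input is Lemma \ref{Lemma:K^2Optimal}, which at the level of this corollary is assumed; thus there is no genuine obstacle here, the argument being a two-line consequence of the quoted results. Were one to unwind Lemma \ref{Lemma:K^2Optimal}, the substance would lie in factoring $h = b \circ a^{-1}$ through the boundary maps $a,b$ of the two $K$-quasiconformal half-plane restrictions of a global $K$-quasiconformal map carrying $\wR$ to $\gamma$, each of which extends (after reflection) to a $K$-quasiconformal map of $\H$, so that the composition is $K^2$-quasiconformal.
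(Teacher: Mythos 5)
Your proof is correct and follows essentially the same route as the paper: the lower bound from $\calP_h \circ \calP_{h^{-1}} = \mathrm{Id}$ together with submultiplicativity of the operator norm, and the upper bound by feeding a $K^2$-quasiconformal extension of $h$ (and of $h^{-1}$, obtained by inverting that extension rather than, as the paper does, passing to the conjugated curve $\overline{\gamma}$) into the Nag--Sullivan bound of Proposition \ref{const.}. The one caveat is organizational rather than mathematical: you cite Lemma \ref{Lemma:K^2Optimal}, but in the paper that lemma appears \emph{after} this corollary and its ``only if'' direction is proved by pointing back to this corollary's proof, so quoting it here creates a forward-reference loop; this is harmless in substance, since the direction you need follows directly from the fact that the conformal maps $f,g$ onto the two sides of a $K$-quasicircle admit $K^2$-quasiconformal extensions (Lehto, Ch.~I Lem.~6.2) — which is exactly the paper's argument and essentially the unwinding you sketch in your final paragraph.
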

\begin{proof}
    The lower bound is obvious, since $1=\|\mathcal{P}_h \circ \mathcal{P}_{h^{-1}}\| \leq \|\mathcal{P}_h \| \cdot \|\mathcal{P}_{h^{-1}}\|$.

    As $\gamma$ is a $K$-quasicircle, the conformal maps $f$ and $g$ from $\mathbb{H}$ and $\mathbb{H}^*$ to either side of $\gamma$ have $K^2$-quasiconformal extensions to $\mathbb{C}$ \cite[Ch.I Lem. 6.2]{Lehtobook}, and so the welding $h = g^{-1}\circ f$ has a $K^2$-quasiconformal extension to $\mathbb{C}$.  Since $h^{-1}$ corresponds to the complex-conjugated curve $\overline{\gamma}$, we see the conformal maps from $\mathbb{H}$ and $\mathbb{H}^*$ are $\overline{g(\bar{z})}$ and $\overline{f(\bar{z})}$, respectively, which have extensions of the same regularity as $g$ and $f$.  Hence the upper bound immediately follows from Proposition \ref{const.} above.
\end{proof}

\noindent We also remark that, so long as we are relying upon the Nag-Sullivan bound Proposition \ref{const.}, the appearance of $K^2$ in the previous bound, rather than $K$, is unavoidable.

\begin{lemma}\label{Lemma:K^2Optimal}
    An element $h\in \ophom{\R}$ welds a $K$-quasicircle $\gamma$ if and only if it has a $K^2$-quasiconformal extension to $\C$.
\end{lemma}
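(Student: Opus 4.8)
The plan is to prove the equivalence in Lemma \ref{Lemma:K^2Optimal} by establishing each direction separately, exploiting the standard relationship between the quasiconformal dilatation of a curve and the regularity of its uniformizing Riemann maps, together with composition properties of quasiconformal maps. Throughout, recall that a welding $h$ of $\gamma$ factors as $h = g^{-1} \circ f$, where $f$ and $g$ are the Riemann maps from $\H$ and $\H^*$ to the two sides of $\gamma$.

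First I would prove the forward direction: if $\gamma$ is a $K$-quasicircle, then $h$ has a $K^2$-quasiconformal extension to $\C$. This is essentially the computation already carried out inside the proof of Corollary \ref{Cor:OperatorBound}. By \cite[Ch.I Lem. 6.2]{Lehtobook}, each Riemann map $f:\H \to H$ and $g:\H^* \to H^*$ to a side of a $K$-quasicircle admits a $K^2$-quasiconformal extension to $\C$. Since quasiconformal dilatation is multiplicative under inversion (an inverse of a $K^2$-map is $K^2$) and composition (the composition of a $K_1$- and a $K_2$-map is $K_1 K_2$), one must be slightly careful: the naive bound for $h = g^{-1}\circ f$ would be $K^4$. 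The sharp factor $K^2$ comes instead from the observation that one may extend $f$ conformally across $\R$ so that its extension has dilatation supported only on one side; more precisely, since $\gamma$ is a $K$-quasicircle, the reflection-type extension furnishes a global quasiconformal map of dilatation $K^2$ whose boundary trace on $\R$ is exactly $h$. I would invoke the standard fact that the welding of a $K$-quasicircle extends to a $K^2$-quasiconformal self-map of the half-plane (equivalently of $\C$ after the usual reflection), which is what the Beurling--Ahlfors/Lehto theory gives.

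For the converse, suppose $h \in \ophom{\R}$ has a $K^2$-quasiconformal extension $w$ to $\C$. I would recover the curve $\gamma$ from $h$ exactly as in the model-switching procedure of \S\ref{Sec:PrelimMovingBetweenModels}: set $\mu := \partial_{\bar z} w / \partial_z w$ as the Beltrami coefficient (so $\|\mu\|_\infty = (K^2-1)/(K^2+1)$), solve the Beltrami equation to obtain $w^\mu$ conformal on $\H$, and let $\gamma = w^\mu(\R)$ be the associated quasicircle. The content is to show that this $\gamma$ is a $K$-quasicircle, i.e.\ that it is the image of $\R$ under a globally $K$-quasiconformal map, not merely a $K^2$-one. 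The key point is that the dilatation of $w^\mu$ is supported only on $\H^*$ (since $\mu \equiv 0$ on $\H$, where $w^\mu$ is conformal), so the extension of $w^\mu$ realizing $\gamma$ as a quasicircle can be taken with dilatation $\|\mu\|_\infty$, i.e.\ a map of maximal dilatation $K^2$ on one side only; the ``square-root'' gain to $K$ follows from the characterization that a Jordan curve is a $K$-quasicircle precisely when each of its two uniformizing maps extends with dilatation $K^2$, which is exactly the statement of \cite[Ch.I Lem. 6.2]{Lehtobook} read in reverse.

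The main obstacle is tracking the exponent $K$ versus $K^2$ correctly through the inversions and compositions, since several natural estimates lose a factor. The crux is that the ``$K^2$'' appearing on the extension of the welding $h$ and the ``$K$'' defining the quasicircle $\gamma$ are linked not by composing two generic quasiconformal maps (which would give $K^4$), but by the sharp one-sided statement in \cite[Ch.I Lem. 6.2]{Lehtobook} that a curve is a $K$-quasicircle iff its Riemann map extends with dilatation $K^2$. I would therefore organize both directions around this single lemma, so that the biconditional reduces to the already-known equivalence between the $K$-quasicircle condition on $\gamma$ and the $K^2$-dilatation bound on the extension of the Riemann maps (hence of their welding $h$), rather than attempting independent estimates that would inflate the constant.
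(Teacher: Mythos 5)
Your converse direction contains a genuine gap: the final ``square-root gain'' is not, as you claim, \cite[Ch.I Lem. 6.2]{Lehtobook} ``read in reverse.'' Lehto's lemma is a one-way implication (if $\gamma$ is a $K$-quasicircle, then its Riemann maps admit $K^2$-quasiconformal extensions to $\C$); it is not an equivalence, and its naive converse gives something strictly weaker. After you solve the Beltrami equation to obtain $w^\mu$, conformal on $\H$ with dilatation $\mu$ on $\H^*$ and $\|\mu\|_\infty = (K^2-1)/(K^2+1)$, the only conclusion that follows from general principles is that $\gamma = w^\mu(\R)$ is the image of $\R$ under a $K^2$-quasiconformal map of $\C$, i.e.\ a $K^2$-quasicircle. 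The improvement from $K^2$ to $K$ — available precisely because the dilatation of $w^\mu$ is supported on one side of $\R$ only — is Smirnov's theorem \cite[Thm. 4(ii)]{Smirnov}, a deep result (the same mechanism behind his bound on the dimension of quasicircles), and it is exactly what the paper invokes at this step. The paper itself stresses later (when introducing Lemma \ref{prop: compositionofK}) that Lemma \ref{Lemma:K^2Optimal} follows from Smirnov's result. Without an ingredient of this strength, your argument proves only that a $K^2$-quasiconformally extendable $h$ welds a $K^2$-quasicircle, which is not the statement.

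Your forward direction is essentially the paper's (it is the computation inside Corollary \ref{Cor:OperatorBound}): extend $f$ and $g$ via \cite[Ch.I Lem. 6.2]{Lehtobook} and compose. You rightly flag that naive composition would give $K^4$, but your resolution is garbled: extending $f$ alone ``with dilatation supported on one side'' does not produce a map whose boundary trace is $h$, and the ``standard fact'' you then invoke is precisely the claim being proved. The correct observation is that the extension of $f$ is conformal on $\H$ (dilatation supported in $\H^*$), while the extension of $g^{-1}$ is conformal on $\Omega^* = g(\H^*)$ (dilatation supported in $\Omega = f(\H)$); hence at every point of $\C$ at most one factor of the composition $g^{-1}\circ f$ is non-conformal, so the composition is $K^2$-quasiconformal rather than $K^4$. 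With that sentence supplied, your forward direction matches the paper's; the converse, however, cannot be repaired without citing Smirnov's theorem or an equivalent one-sided-dilatation result.
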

\begin{proof}
    The ``only if'' direction is in the proof of Corollary \ref{Cor:OperatorBound} above; simply use the $K^2$-QC extensions of both $f$ and $g^{-1}$.  Conversely, suppose $h$ extends $K^2$-quasiconformally to $\C$, with the extension $H$ having complex dilatation $\mu:\C \rightarrow \D$.  Let $f$ be a solution of the Beltrami equation with dilatation 0 in $\H$ and $\mu$ in $\H^*$.  Then $f$ and $g:= f\circ H^{-1}$ are conformal maps of $\H$ and $\H^*$, respectively, to either sides of the quasicircle $\gamma := f(\widehat{\R})$.  Since $f$ is conformal in $\H$ and extends $K^2$-quasiconformally in $\H^*$, $\gamma$ is a $K$-quasicircle \cite[Thm. 4(ii)]{Smirnov}.  
\end{proof}

We also comment that much more than Proposition \ref{const.} is true when $h$ is M\"obius, as the following well-known lemma asserts.
\begin{lemma}\label{Lem: pullbackoperatorMob}
    Let $C_1, C_2 \subset \hatC$ be circles.  If $T:C_1 \rightarrow C_2$ is an element of $\PSL_2(\C)$, then $\calP_{T}: H^\half(C_2) \rightarrow H^\half(C_1)$ is an isometry. 
\end{lemma}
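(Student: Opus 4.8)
The plan is to unfold both semi-norms as double integrals and transport the integral over $C_2 \times C_2$ to one over $C_1 \times C_1$ via the change of variables $\zeta = T(z)$, $\xi = T(w)$, exploiting one algebraic identity special to Möbius maps that produces an exact cancellation. Starting from the definition \eqref{Def:H1/2SemiNorm}, I would substitute $\zeta = T(z)$, $\xi = T(w)$, using the conformal scaling of arc length $\abs{\dd\zeta} = \abs{T'(z)}\,\abs{\dd z}$ together with $\varphi(T(z)) = \calP_T(\varphi)(z)$, to obtain
\[
\|\varphi\|_{H^{1/2}(C_2)}^2 = \frac{1}{4\pi^2}\int_{C_1}\int_{C_1} \frac{\abs{\calP_T(\varphi)(z)-\calP_T(\varphi)(w)}^2}{\abs{T(z)-T(w)}^2}\,\abs{T'(z)}\,\abs{T'(w)}\,\abs{\dd z}\,\abs{\dd w}.
\]
The whole proof then reduces to showing that the geometric factor multiplying the difference quotient is identically $\abs{z-w}^{-2}$.

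This is the key step, and it is a one-line Möbius computation. Writing $T(z) = (az+b)/(cz+d)$ with $ad-bc = 1$, a direct expansion gives $T(z)-T(w) = (z-w)/[(cz+d)(cw+d)]$ and $T'(z) = (cz+d)^{-2}$, whence
\[
\frac{\abs{T'(z)}\,\abs{T'(w)}}{\abs{T(z)-T(w)}^2} = \frac{\abs{cz+d}^{-2}\,\abs{cw+d}^{-2}}{\abs{z-w}^2\,\abs{cz+d}^{-2}\,\abs{cw+d}^{-2}} = \frac{1}{\abs{z-w}^2}.
\]
Substituting this back collapses the integral to $\|\calP_T(\varphi)\|_{H^{1/2}(C_1)}^2$, which is exactly the asserted isometry. (The prefactor $1/(4\pi^2)$ is common to both sides and plays no role.)

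The only real obstacle is bookkeeping at the point at infinity, which I expect to be routine but worth a sentence. When $c \neq 0$ the map $T$ has a single pole at $z_0 = -d/c$, and the identities above hold off this point; if $z_0 \in C_1$ then $T(z_0) = \infty$, so $C_2$ is a line, and conversely a line $C_1$ contributes the value $T(\infty) = a/c$. In every case the exceptional set is a single point, hence arc-length-null on $C$, so its removal does not affect either integral; and $T$ is genuinely conformal on $\hatC$ minus its pole, so the arc-length substitution $\abs{\dd\zeta} = \abs{T'(z)}\,\abs{\dd z}$ is valid there. Thus the integrand identity holds almost everywhere on $C_1 \times C_1$, which suffices, and both integrals are finite or infinite together. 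Alternatively, since $\calP_{T_2\circ T_1} = \calP_{T_1}\circ \calP_{T_2}$ makes the isometry property transitive under composition, one may first conjugate by a rotation of the sphere moving the configuration off $\infty$ and invoke the finite case, but the measure-zero argument makes even this reduction unnecessary.
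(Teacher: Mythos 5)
Your proof is correct and follows essentially the same route as the paper's: both unfold the $H^{1/2}$ semi-norms as double integrals, change variables under $T$, and cancel the arc-length Jacobian factors against $\abs{T(z)-T(w)}^{-2}$ via the \Mob identity $\abs{T'(z)}\abs{T'(w)}/\abs{T(z)-T(w)}^2 = \abs{z-w}^{-2}$ (the paper simply cites its earlier equation \eqref{Eq:MobDifferenceQuotient} instead of re-deriving it from the $ad-bc=1$ normalization, and works with $T^{-1}$ rather than $T$). Your additional measure-zero bookkeeping at the pole of $T$ is a harmless extra precaution that the paper leaves implicit.
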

\begin{proof}
    Using \eqref{Eq:MobDifferenceQuotient} we see 
    \begin{align*}
        4\pi^2 \| \varphi \circ T \|_{H^{1/2}(C_1)}^2 = &\int_{C_1} \int_{C_1} \frac{\left| \varphi \circ T (z_1) - \varphi \circ T (z_2) \right|^2}{\left|z_1 - z_2\right|^2} |\dd z_1| |\dd z_2|\\
       =& \int_{C_2} \int_{C_2}  \frac{\left|\gvp(w_1)-\gvp(w_2)\right|^2}{ \left| T^{-1}(w_1) -T^{-1}(w_2) \right|^2} |(T^{-1})'(w_1)||(T^{-1})'(w_2)| |\dd w_1| |\dd w_2| \\
       =& \int_{C_2} \int_{C_2} \frac{\left|\gvp(w_1)-\gvp(w_2)\right|^2}{\left|w_1-w_2\right|^2} |\dd w_1| |\dd w_2| = 4\pi^2 \| \varphi \|_{H^{1/2}(C_2)}^2. \qedhere
    \end{align*}
\end{proof}

\subsubsection{The pullback operator acting on other classes of regularity}
Between the quasisymmetric case of Proposition \ref{const.} and the \Mob case of Lemma \ref{Lem: pullbackoperatorMob} is the Weil--Petersson case, and here the nature of $\calP_h$ has also been investigated.  While we will not use these results we summarize them for the reader's convenience.  Hu-Shen~\cite{Shen12OnQS} proved that a projection $P_h^{-}$ of $\calP_h$ is Hilbert--Schmidt if and only if $h\in \WP(\SSS)$. More recently, by translating $\calP_h$ into matrix $\Pi( h)$, Fan-Sung~\cite[Lem. 2.7]{FanJinwoo2025quasiinvariancesleweldingmeasures} showed $\Pi( h)\Pi( h)^*-I$ is Hilbert--Schmidt if and only if $h\in \WP(\SSS)$, and related this operator to the quasi-invariance of the log-correlated Gaussian ﬁeld and $\text{GMC}$ measure on $\SSS$. 

With respect to the norm of $\calP_h$, \cite[Prop. 6.9]{WeiMatsuzaki23} showed $\|\calP_h\|$ depends only on the Weil--Petersson distance from $h$ to $id$. We comment that what appears to be absent in the literature (and what would be useful for us) is a sharpening of Proposition \ref{const.} for Weil-Petersson $h$.  We suspect should be possible but have not attempted it for this study.

The nature of $\calP_h$ has also been examined for $h$ in larger function spaces.  For instance, $h$ is a strongly quasisymmetric homeomorphism from $\R$ to $\R$ if and only if $\calP_{h}$ is an automorphism on $\operatorname{BMO}(\R)$ \cite{PullbackBMO83, TeichandBMOA91}.

\subsection{Absolutely-continuous functions on the extended real line}\label{Pre: AC}

We recall that a function $h$ is \emph{locally absolutely continuous on $I \subset \R$}, denoted $h \in \ACloc(I)$, if it is absolutely continuous (AC) on each compact subset of $I$.  That is, $h \in \AC(K)$ for all compacts $K$ of $I$.  We need a version of this for functions defined on the extended reals $\hatR$.
\begin{definition}\label{Def:ACOnHatR}
    We say an element $h \in \ophom{\wR}$ is locally absolutely continuous, denoted $h \in \ACp(\hatR)$, if there exist \Mob transformations $S,T \in \PSL_2(\R)$ such that $S \circ h \circ T \in \ophom{\R}\cap \ACloc(\R)$.
\end{definition}
\noindent Note that, in particular, the \Mob conjugation $S \circ h \circ T$ in the definition fixes $\infty$.  We claim that the definition is independent of the choice of $S$ and $T$.  This is not too hard to see, but perhaps merits comment, and towards that end we first observe the removability of points for monotone AC functions.  We recall absolute continuity on the circle is defined with the same $\epsilon-\delta$ definition as on the real line, but where arc length $d$ replaces the Euclidean metric for both the domain and range.  
\begin{lemma}\label{Lemma:ACPtRemovable}
    Let $L \in \{\R,\SSS\}$ and $K \subset L$ be compact with $x_0 \in K$.  If $h:K \rightarrow L$ is an element of $\ACloc(K \bs \{x_0\})$ that is continuous at $x_0$ and monotone in a neighborhood of $x_0$, then $h \in \AC(K)$.
\end{lemma}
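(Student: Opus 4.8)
The plan is to verify the $\epsilon$–$\delta$ definition of absolute continuity directly, exploiting the fact that the only potential obstruction is concentrated at the single point $x_0$, where monotonicity and continuity together supply the needed control. Since absolute continuity and all the hypotheses are local, I may reduce to the case where $K = [a,b]$ is a closed interval (respectively a closed arc when $L = \SSS$) and treat the interior case $x_0 \in (a,b)$; the endpoint cases are analogous and simpler. First I would record that $h$ is in fact continuous on all of $K$: it is continuous on $K \setminus \{x_0\}$ because every point there has a compact neighborhood in $K \setminus \{x_0\}$ on which $h$ is AC (hence continuous), and it is continuous at $x_0$ by hypothesis.

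Next, fix $\epsilon > 0$ and choose the splitting radius. Let $(x_0 - r, x_0 + r)$ be a neighborhood on which $h$ is monotone, say increasing (the decreasing case is identical). Using continuity of $h$ at $x_0$, pick $\eta \in (0,r)$ with $|h(x_0 + \eta) - h(x_0 - \eta)| < \epsilon/2$. The role of monotonicity is exactly this: for \emph{any} finite family of disjoint open subintervals $(a_k,b_k) \subset [x_0 - \eta, x_0 + \eta]$, the image intervals $[h(a_k), h(b_k)]$ are pairwise disjoint and contained in $[h(x_0 - \eta), h(x_0 + \eta)]$, so $\sum_k |h(b_k) - h(a_k)| \le |h(x_0 + \eta) - h(x_0 - \eta)| < \epsilon/2$, regardless of the total length of the family. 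This is precisely the estimate that would fail for a merely continuous (non-monotone) function oscillating with small amplitude but large total variation.

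Away from $x_0$ the function is genuinely absolutely continuous, since $[a, x_0 - \eta]$ and $[x_0 + \eta, b]$ are compact subsets of $K \setminus \{x_0\}$; thus $h \in \AC([a, x_0 - \eta])$ and $h \in \AC([x_0 + \eta, b])$. Hence there is $\delta > 0$ such that any finite disjoint family of subintervals of $[a, x_0 - \eta] \cup [x_0 + \eta, b]$ of total length $< \delta$ has image-length sum $< \epsilon/2$. Now given an arbitrary finite disjoint family in $[a,b]$ of total length $< \delta$, I would refine it by cutting every interval at the points $x_0 - \eta$ and $x_0 + \eta$ whenever it crosses them; by the triangle inequality this can only increase $\sum_k |h(b_k) - h(a_k)|$, while keeping the total length unchanged and forcing each resulting interval to lie in exactly one of the three regions $[a, x_0 - \eta]$, $[x_0 - \eta, x_0 + \eta]$, $[x_0 + \eta, b]$. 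Adding the near-$x_0$ contribution ($< \epsilon/2$ by monotonicity) to the two away contributions ($< \epsilon/2$ jointly by the choice of $\delta$) yields total $< \epsilon$, establishing $h \in \AC(K)$.

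The only genuinely delicate point is the straddling-interval bookkeeping in the final step; everything else is a matter of combining continuity at $x_0$ with the monotone oscillation bound. For $L = \SSS$ the same argument runs verbatim with arc length replacing the Euclidean metric, since all the estimates are local and monotonicity is read off in an arc-length chart near $x_0$.
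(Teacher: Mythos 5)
The paper never writes this proof out --- it is explicitly dismissed as ``an exercise in the $\epsilon$--$\delta$ definition of absolute continuity'' --- and your argument is precisely that exercise carried out: the split at $x_0\pm\eta$, the telescoping bound that monotonicity supplies on the middle region, the AC estimates on the two outer compacta, and the refinement of straddling intervals. For $K$ a closed interval, a closed arc, or all of $\SSS$ --- which is every case the paper actually invokes downstream --- your proof is complete and correct.

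The one genuine soft spot is the opening reduction: ``since absolute continuity and all the hypotheses are local, I may reduce to the case $K=[a,b]$.'' The lemma is stated for an \emph{arbitrary} compact $K\subset L$, and $h$ is defined only on $K$; the obstruction to the reduction is not locality but connectedness. If $K$ is not an interval, the cut points $x_0\pm\eta$ need not belong to $K$, so ``cutting every interval at $x_0\pm\eta$'' produces intervals whose endpoints lie outside the domain of $h$, and the triangle-inequality step has no meaning. To justify the reduction honestly you would need to extend $h$ to $[\min K,\max K]$ (say by linear interpolation across the gaps of $K$) and verify that this preserves absolute continuity on compacta --- true, but itself a small lemma requiring an argument. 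A cheaper repair runs your proof directly on $K$, replacing the cutting device by a counting one: first shrink $\eta$ (with $\eta<r$) so that $|h(x)-h(x_0)|<\epsilon/8$ for all $x\in K\cap[x_0-2\eta,\,x_0+2\eta]$, and take $\delta\le\eta$. Then an interval with endpoints in $K$ and length $<\delta$ can straddle at most one of the two cut levels (they are $2\eta$ apart), at most two intervals of a disjoint family can straddle at all (each cut level lies in at most one of them), and any straddling interval has both endpoints in $K\cap[x_0-2\eta,\,x_0+2\eta]$, hence contributes less than $\epsilon/4$ by the continuity window alone. The same window feeds the monotone telescoping bound for the intervals with both endpoints in $K\cap[x_0-\eta,\,x_0+\eta]$, and the outer intervals are handled by the AC estimates exactly as you wrote; the rest of your proof goes through unchanged.
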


\noindent The proof is an exercise in the $\epsilon-\delta$ definition of absolute continuity, and the same logic extends to any finite collection of points $\{x_0, \ldots, x_n\} \subset K$.  Note the lemma is false without the assumption of monotonicity nearby $x_0$, as the example of $x \mapsto x \sin(1/x)$ shows.  

With the lemma in hand, we argue for the legitimacy of Definition \ref{Def:ACOnHatR}.  Indeed, suppose $H_1:=S_1 \circ h \circ T_1 \in \ophom{\R}\cap \ACloc(\R)$, and $S_2, T_2$ are other elements of $\PSL_2(\R)$ such that $H_2:=S_2 \circ h \circ T_2 \in \ophom{\R}$; we  show $H_2 \in \ACloc(\R)$ as well.  We have $H_2 = S \circ H_1 \circ T$ for $S:= S_2 \circ S_1^{-1}$ and $T:=T_1^{-1}\circ T_2$.  Consider two cases.
\begin{enumerate}[$(i)$]
    \item If $T$ fixes $\infty$ (i.e. is affine), then so does $S$, and thus $H_2 \in \ACloc(\R)$ since $H_1$ already is.
    \item Suppose there is some $x_0 \in \R$ such that $T(x_0) = \infty$.  Let $K\subset \R$ be compact.  Noting that $S^{-1}(\infty) \not\in H_1\circ T(K)$, the only issue is whether or not $x_0 \in K$. If $x_0 \notin K$, then $H_2$ is a composition of increasing AC functions on $K$ and thus belongs to $\AC(K)$.  If $x_0 \in K$, then we still have that $H_2$ is continuous, increasing at $x_0$, and an element of $\ACloc(K\bs\{x_0\})$, and thereby belongs to $\AC(K)$ by Lemma \ref{Lemma:ACPtRemovable}.
\end{enumerate}  
We conclude membership in $\ophom{\R}\cap \ACloc(\R)$ is independent of the choice of \Mob transformations in Definition \ref{Def:ACOnHatR}, as claimed.

The following lemma will be useful for translating results between $\hatR$ and $\SSS$.  Its main significance is that the local absolute continuity condition in the definition of $\ACp(\hatR)$ is equivalent to full absolute continuity on $\SSS$ upon a change of variables. 
\begin{lemma}\label{Lemma:ACHatREquivalences}
    The following are equivalent:
    \begin{enumerate}[$(i)$]
        \item $h \in \ACp(\hatR)$,
        \item $C \circ h \circ C^{-1} \in \AC(\SSS)$, where $C: \H \rightarrow \D$ is the Cayley transform $C(z) = \frac{z-i}{z+i}$.
    \end{enumerate}
\end{lemma}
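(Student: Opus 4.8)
The plan is to conjugate everything to the circle via the Cayley transform $C$ and thereby reduce the whole statement to controlling a single exceptional point, namely $C(\infty)=1$, where Lemma \ref{Lemma:ACPtRemovable} mediates between local and global absolute continuity. Two elementary facts will do the bulk of the work. First, $C$ conjugates $\PSL_2(\R)$ into the group of \Mob automorphisms of $\D$, each of which restricts to a real-analytic (in particular bi-Lipschitz) diffeomorphism of $\SSS$; since pre- and post-composition with a bi-Lipschitz diffeomorphism of $\SSS$ preserves membership in $\AC(\SSS)$, such conjugating factors are harmless. Second, on any compact subset of $\R$ (which necessarily avoids $\infty$), $C$ is a smooth diffeomorphism onto a compact arc of $\SSS\setminus\{1\}$, so that conjugation by $C$ faithfully transfers $\ACloc(\R)$ behavior to local absolute continuity on $\SSS\setminus\{1\}$, and conversely.

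For $(i)\Rightarrow(ii)$, I would choose $S,T\in\PSL_2(\R)$ with $H:=S\circ h\circ T\in\ophom{\R}\cap\ACloc(\R)$ and set $\tilde H:=C\circ H\circ C^{-1}$, an orientation-preserving homeomorphism of $\SSS$ fixing $1=C(\infty)$. Because $H$ fixes $\infty$ and is a homeomorphism of $\R$, it sends compact subsets of $\R$ to compact subsets of $\R$; together with the smoothness of $C$ on such compacts and the local absolute continuity of $H$, this gives $\tilde H\in\ACloc(\SSS\setminus\{1\})$. Since $\tilde H$ is continuous and monotone near $1$, Lemma \ref{Lemma:ACPtRemovable} (applied with $L=\SSS$, $K=\SSS$, $x_0=1$) upgrades this to $\tilde H\in\AC(\SSS)$. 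Finally, writing $h=S^{-1}\circ H\circ T^{-1}$ gives $C\circ h\circ C^{-1}=(C S^{-1}C^{-1})\circ\tilde H\circ(C T^{-1}C^{-1})$, and as the two outer factors are bi-Lipschitz diffeomorphisms of $\SSS$ we conclude $C\circ h\circ C^{-1}\in\AC(\SSS)$.

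For $(ii)\Rightarrow(i)$, I would take $T:=\mathrm{id}$ and pick $S\in\PSL_2(\R)$ with $S(h(\infty))=\infty$ (with $S=\mathrm{id}$ if $h(\infty)=\infty$), so $H:=S\circ h\in\ophom{\R}$ fixes $\infty$. Then $C\circ H\circ C^{-1}=(C S C^{-1})\circ(C\circ h\circ C^{-1})$ is the post-composition of an element of $\AC(\SSS)$ with a bi-Lipschitz diffeomorphism, hence lies in $\AC(\SSS)$ and fixes $1$. Reversing the previous paragraph—for any compact $K\subset\R$, both $C(K)$ and $C(H(K))=(C\circ H\circ C^{-1})(C(K))$ are compact arcs of $\SSS\setminus\{1\}$, on which $C^{-1}$ is smooth—shows $H=C^{-1}\circ(C\circ H\circ C^{-1})\circ C\in\ACloc(\R)$. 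Thus $S\circ h\circ T\in\ophom{\R}\cap\ACloc(\R)$, i.e. $h\in\ACp(\hatR)$.

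The main obstacle, and the only genuinely non-formal point, is the behavior at $1=C(\infty)$: conjugation by $C$ only respects absolute continuity on compact pieces bounded away from $\infty$, so a priori the argument delivers merely local absolute continuity on $\SSS\setminus\{1\}$. Promoting this to honest $\AC(\SSS)$ is exactly the content of the monotone point-removability in Lemma \ref{Lemma:ACPtRemovable}, and it is essential that $\tilde H$ be monotone in a neighborhood of $1$—which holds precisely because $h$, and hence $\tilde H$, is an orientation-preserving homeomorphism. (The example $x\mapsto x\sin(1/x)$ noted after that lemma shows monotonicity cannot be dropped.)
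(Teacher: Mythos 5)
Your proof is correct and follows essentially the same route as the paper's: conjugate by the Cayley transform, use smoothness of $C$ and $C^{-1}$ on compacts bounded away from $\infty$ (respectively away from $1$) to transfer local absolute continuity, invoke Lemma \ref{Lemma:ACPtRemovable} at the single exceptional point $1 = C(\infty)$, and absorb the \Mob conjugating factors since they preserve $\AC(\SSS)$. The only cosmetic difference is that in $(ii)\Rightarrow(i)$ you normalize on the real-line side by choosing $S \in \PSL_2(\R)$ with $S(h(\infty)) = \infty$, whereas the paper normalizes on the circle side by a rotation $R$ with $R \circ H(1) = 1$; these are mirror-image maneuvers.
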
  

\noindent In item $(ii)$, we could replace $\SSS$ with any circle $C = \partial D \subset \C$ and the Cayley transform by any \Mob transformation $S:\H \rightarrow D$.
\begin{proof}
        $(ii) \Rightarrow (i)$.  Suppose $H := C \circ h \circ C^{-1} \in \AC(\SSS)$.  By post-composing with a rotation $R$ of $\SSS$, we have that $R \circ H(1) = 1$, and therefore the map
        \begin{align*}
            C^{-1}\circ R \circ H \circ C = C^{-1} \circ R \circ C \circ h =: S \circ h   
        \end{align*}
        fixes $\infty$.  It thereby suffices to show $S\circ h \in \ACloc(\R)$.  And indeed, for $K \subset \R$ compact, $L:=R \circ H \circ C(K)$ is bounded away from 1, and so $C^{-1}$ is absolutely continuous on $L$ as a map from $(\SSS,d)$ to $(\R,|\cdot|)$, and hence $C^{-1}\circ R \circ H \circ C$ is a composition of monotone, absolutely-continuous functions on $K$.  We conclude $S \circ h \in \AC(K)$.

        \noindent $(i) \Rightarrow (ii)$. Here we have $S \circ h \circ T \in \ophom{\R}\cap \ACloc(\R)$ for some $S,T \in \PSL_2(\R)$, and thus $H := C\circ S \circ h \circ T \circ C^{-1} \in \ACloc(\SSS\bs \{1\})$.  As $H$ is monotone and continuous on $\SSS$, we thus see via Lemma \ref{Lemma:ACPtRemovable} that $H \in \AC(\SSS)$.  It follows that
        \begin{align*}
            C \circ h \circ C^{-1} = C \circ S^{-1} \circ C^{-1} \circ H \circ C \circ T^{-1} \circ C^{-1} = S_2 \circ H \circ T_2 \in \AC(\SSS),
        \end{align*}
        since $S_2,T_2 \in \Aut(\D)$ are M\"obius. 
    \end{proof}

\noindent Lemma \ref{Lemma:ACHatREquivalences} yields that $\ACp(\hatR)$ is closed under composition.  
\begin{cor}\label{Cor:ACHatRGroup}
    $\ACp(\hatR)$ is a monoid.  
\end{cor}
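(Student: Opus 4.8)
The plan is to verify the three monoid axioms, of which the only substantive one is closure under composition. Associativity is inherited from ordinary function composition, and the identity element is the identity map $\mathrm{id}$, which lies in $\ACp(\hatR)$: taking $S=T=\mathrm{id}$ in Definition \ref{Def:ACOnHatR} gives $S\circ \mathrm{id}\circ T=\mathrm{id}\in \ophom{\R}\cap \ACloc(\R)$. It therefore remains to show that if $h_1,h_2\in \ACp(\hatR)$ then $h_1\circ h_2\in\ACp(\hatR)$. That $h_1\circ h_2$ is again an orientation-preserving homeomorphism of $\hatR$ is immediate, so the real content is its local absolute continuity.

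First I would transport the problem to the circle via Lemma \ref{Lemma:ACHatREquivalences}. Writing $H_j:=C\circ h_j\circ C^{-1}$ for the Cayley transform $C$, that lemma gives $H_1,H_2\in\AC(\SSS)$, and since
\begin{align*}
    C\circ(h_1\circ h_2)\circ C^{-1}=(C\circ h_1\circ C^{-1})\circ(C\circ h_2\circ C^{-1})=H_1\circ H_2,
\end{align*}
the direction $(ii)\Rightarrow(i)$ of Lemma \ref{Lemma:ACHatREquivalences} reduces everything to showing $H_1\circ H_2\in\AC(\SSS)$.

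The hard part — and the point where one must be careful — is that the composition of two absolutely continuous functions is \emph{not} in general absolutely continuous, so one cannot simply chain the two hypotheses. What rescues the argument is that $H_1$ and $H_2$ are orientation-preserving homeomorphisms of $\SSS$, hence monotone on arcs. I would invoke the Banach--Zaretsky characterization: after cutting $\SSS$ at a point (harmless by Lemma \ref{Lemma:ACPtRemovable}), a continuous function of bounded variation on an arc is absolutely continuous if and only if it satisfies Luzin's condition (N), that null sets map to null sets. Now $H_1\circ H_2$ is continuous, and as a composition of orientation-preserving homeomorphisms it is itself one, hence monotone on each arc and so of bounded variation. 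For condition (N), let $E\subset\SSS$ be null: since $H_2\in\AC(\SSS)$ it satisfies (N), so $H_2(E)$ is null, and since $H_1\in\AC(\SSS)$ it too satisfies (N), whence $(H_1\circ H_2)(E)=H_1(H_2(E))$ is null. By Banach--Zaretsky, $H_1\circ H_2\in\AC(\SSS)$, which via Lemma \ref{Lemma:ACHatREquivalences} completes the proof.

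I would close with a brief remark explaining why one obtains only a monoid rather than a group: the inverse of a monotone absolutely continuous homeomorphism need not be absolutely continuous, since its derivative may vanish on a set of positive measure in the image, so $\ACp(\hatR)$ is not closed under inversion.
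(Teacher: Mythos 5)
Your proof is correct and follows essentially the same route as the paper: both transport the problem to $\SSS$ via Lemma \ref{Lemma:ACHatREquivalences} and then use that the composition of monotone absolutely continuous circle homeomorphisms is absolutely continuous. The only difference is that the paper asserts this closure fact in one clause, whereas you justify it carefully via the Banach--Zaretsky theorem (continuity, monotonicity hence bounded variation, and Luzin's condition (N) passing through the composition), which is a worthwhile filling-in of the paper's implicit step rather than a different argument.
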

\begin{proof}
    It is clear that composition is associative and that there is an identity element, so we need to show that $h_1 \circ h_2 \in \ACp(\hatR)$, whenever both $h_1$ and $h_2$ are.  Applying the equivalences of Lemma \ref{Lemma:ACHatREquivalences}, we know $C\circ h_j \circ C^{-1}$, $j=1,2$, are monotone elements of $\AC(\SSS)$, and hence their composition $C \circ h_1 \circ h_2 \circ C^{-1}$ likewise is. Lemma \ref{Lemma:ACHatREquivalences} then yields the result.
\end{proof}

\subsection{Miscellaneous}
The following variant of the Dominated Convergence Theorem will be a convenient tool at several points in our argument.
\begin{prop}[Generalized Dominated Convergence~\cite{Follandbook}]\label{Thm: DCL}
If $f_n, g_n, f, g \in L^1$ satisfy $f_n \rightarrow f$ a.e., $g_n \rightarrow g$ a.e., $|f_n|\leq g_n$ a.e., and $\int g_n \rightarrow \int g$, then $\int f_n \rightarrow \int f$. 

\end{prop}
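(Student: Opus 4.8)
The plan is to reduce everything to Fatou's lemma, applied to nonnegative auxiliary sequences manufactured from $g_n$ and $f_n$. This is the classical route to the dominated convergence theorem; the only novelty here is that the dominating function varies with $n$, and absorbing this variation is exactly what the extra hypothesis $\int g_n \to \int g$ is for. As preliminary bookkeeping I would note that $|f_n| \le g_n$ a.e. forces $g_n \ge 0$ a.e., and passing to a.e. limits gives $g \ge 0$ and $|f| \le g$ a.e. Consequently the sequences used below are nonnegative a.e., Fatou's lemma legitimately applies to them, and all integrals in sight are finite.

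Assume first the functions are real-valued. Applying Fatou's lemma to $g_n + f_n \ge 0$ and using $f_n \to f$, $g_n \to g$ a.e. gives
\begin{align*}
\int g + \int f = \int \liminf\,(g_n + f_n) \le \liminf \int (g_n + f_n) = \int g + \liminf \int f_n,
\end{align*}
where the final equality uses the convergence $\int g_n \to \int g$ to split off the $g_n$-integral from the $\liminf$. Subtracting the finite quantity $\int g$ yields $\int f \le \liminf \int f_n$. Running the identical argument on $g_n - f_n \ge 0$ produces $\int g - \int f \le \int g - \limsup \int f_n$, i.e. $\limsup \int f_n \le \int f$. Chaining the two bounds forces $\liminf \int f_n = \limsup \int f_n = \int f$, which is the assertion.

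For complex-valued $f_n$ and $f$ I would instead feed Fatou's lemma the single nonnegative sequence $g_n + g - |f_n - f| \ge 0$, whose nonnegativity follows from $|f_n - f| \le |f_n| + |f| \le g_n + g$. Since it tends a.e. to $2g$, Fatou together with $\int g_n \to \int g$ gives $2\int g \le 2\int g - \limsup \int |f_n - f|$, whence $\int |f_n - f| \to 0$, and then $\bigl| \int f_n - \int f \bigr| \le \int |f_n - f| \to 0$, which is in fact slightly stronger than the stated conclusion. The single load-bearing step throughout — and the only place the argument departs from the fixed-dominator proof — is the use of $\int g_n \to \int g$ to detach the $\int g_n$ terms from the $\liminf$/$\limsup$; I expect no genuine obstacle beyond verifying that the auxiliary sequences are honestly nonnegative a.e. (guaranteed by $|f_n| \le g_n$) and that $g \in L^1$, so that subtracting $\int g$ is permitted.
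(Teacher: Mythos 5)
Your proof is correct. The paper does not prove this proposition at all --- it simply cites Folland's textbook, where the result appears as an exercise whose standard solution is exactly your argument: Fatou's lemma applied to $g_n + f_n$ and $g_n - f_n$ in the real case (with the hypothesis $\int g_n \to \int g$ used, as you correctly isolate, to detach $\int g_n$ from the $\liminf$/$\limsup$), and the nonnegative sequence $g_n + g - |f_n - f|$ in the complex case, which even yields the stronger conclusion $\int |f_n - f| \to 0$.
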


\section{Welding energies}\label{Sec:welding energy}
In this section we introduce a rooted M\"{o}bius-covariant welding energy $W_h(y)$, where $h: C \rightarrow C$ is a conformal welding defined on a circle $C \subset \hat{\C}$, and $y \in C$.  Up to affine changes of coordinates, all possible circles are covered by the bounded case $C = \partial \D = \SSS$ and the unbounded case $C = \hatR$, and thus our primary focus falls upon $C \in \{\SSS,\hatR\}$.  

Our construction of $W_h(y)$ proceeds in three stages.  First, we define a point-wise welding functional $L_h(\cdot, \cdot): C\times C \rightarrow (-\infty,\infty]$ in \S\ref{Sec:LFunctional} and examine its basic properties.  We take an $H^\half$ norm of $L_h$ in \S\ref{Sec:KFunctional} and study the resulting functional $K_h(y):= \| L_h(x,y) \|^2_{H^{1/2}(C),dx}$.  In \S\ref{Sec:W_h} we consider the more symmetric version $K_h(y) + K_{h^{-1}}(h(y))$, defining this to be $W_h(y)$.  We then prove the properties of $W_h(y)$ listed in the introduction.  

In \S\ref{Sec:WhyNotRootInvariant} we show by example that $y \mapsto W_h(y)$ is not constant, which is to say, the welding energy is not root-invariant.  In \S\ref{Sec:UpperLowerWeldingEnergy} we develop several variations of $W_h(y)$ that are, in effect, root invariant, as well as completely \Mob invariant.

\subsection{A point-wise welding functional}\label{Sec:LFunctional}

Let $C \in \{\SSS,\hatR\}$ and $h:C \rightarrow C$ a conformal welding, or more generally an element of $\ophom{C}$.  Motivated by the cross-ratio property \eqref{Eq:MobDifferenceQuotient} for \Mob transformations, we define
\begin{align}\label{Eq:L}
    L_h(x,y) := \log \bigg|\frac{(h(x) - h(y))^2}{h'(x)(x -y)^2} \bigg| \in (-\infty,\infty]
\end{align}
when $x\neq y \in \C$, $h'(x)$ exists, and both $h(x)$ and $h(y)$ are finite.  
We interpret $\log(\infty)$ as $\infty$, although this does not matter much, since for Weil--Petersson $h$ the derivative will vanish on a set of measure zero and $x$ will be a variable of integration.  We omit $h'(y)$ in the denominator to make $L_h(x,y)$ defined for all $y$, not just at those where $h'(y)$ exists, and thus $L_h(x,y)$ differs from $\log \Big|\frac{(h(x) - h(y))^2}{h'(x)h'(y)(x -y)^2} \Big|$ at generic points by a constant independent of $x$.  This difference will disappear when we hold $y$ fixed and take the $H^\half$-norm in $x$.

When $x=y \in \C$, we take a limit in \eqref{Eq:L} and set 
    \begin{align}\label{Eq:Lx=y}
        L_h(x,x) := \log|h'(x)|.
    \end{align}
In the unbounded case $C=\hatR$, we also wish to define $L_h(x,y)$ at any $y \in \widehat{\R}$, including when one or both of $y$ and $h(y)$ is infinite.  In words, the corresponding factor in \eqref{Eq:L} becomes unity when either $y$ or $h(y)$ are infinite.  That is:
\begin{itemize}
    \item   If $y \neq \infty = h(y)$, 
\begin{align}\label{Eq:Lhinfty}
    L_h(x,y) := \log \bigg|\frac{1}{h'(x)(x -y)^2} \bigg|.
\end{align}
    \item If $y = \infty \neq h(y)$, 
\begin{align}\label{Eq:Lyinfty}
    L_h(x,\infty) := \log \bigg|\frac{(h(x) - h(\infty))^2}{h'(x)} \bigg|.
\end{align}
    \item If $y = \infty = h(y)$, 
\begin{align}\label{Eq:Lhyinfty}
    L_h(x,\infty) := \log \bigg|\frac{1}{h'(x)} \bigg|.
\end{align}
\end{itemize}
In the these definitions we have continued assuming that $x \in \R$ is a generic point where $h(x) \in \R$ and $h'(x)$ exists. These seemingly-peculiar choices arise from \eqref{Eq:MobiusPreservesCR}.  For \eqref{Eq:Lhinfty}, for instance, send $z_4 \rightarrow y \neq \infty$.  Then if $T(y) = \infty$ and we fuse $z_2 \rightarrow y$ and $z_3 \rightarrow x = z_1$, we find
\begin{align*}
    \frac{C}{T'(x)(x-y)^2} = 1.
\end{align*}
Taking the logarithm and the $H^\half$-norm in $x$ makes $C$ irrelevant, and so we select 1 for convenience.  The other two expressions \eqref{Eq:Lyinfty} and \eqref{Eq:Lhyinfty} arise similarly.

As thus defined, our $L$ functional satisfies a type of chain rule, as stated in the following lemma.  While elementary, this will turn out to be vital to understanding how the welding energy interacts with composition, which in turn will be important for proving our main inequalities.

\begin{lemma}\label{Lemma:LGeneralComp}
    Let $C_j$ be circles in $\hatC$ and $f:C_1 \rightarrow C_2$ and $g:C_2 \rightarrow C_3$ be injective.  Then for all $y \in C_1$ and all $x\in C_1$ such that $x, f(x), g(f(x)) \in \C$ and such that both $f'(x)$ and $g'(f(x))$ exist and are non-zero,
    \begin{align}\label{Eq:LGeneralComp}
        L_{g \circ f}(x,y) = L_{g}\big(f(x),f(y)\big) + L_{f}(x,y).
    \end{align}
\end{lemma}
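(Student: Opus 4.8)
The plan is to recognize \eqref{Eq:LGeneralComp} as the chain rule dressed up with a single telescoping cancellation, and then to check that the boundary conventions \eqref{Eq:Lhinfty}--\eqref{Eq:Lhyinfty} were chosen precisely so that this cancellation survives when points run off to $\infty$.

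First I would dispatch the generic interior case, in which $x \neq y$ and all of $y$, $f(y)$, $g(f(y))$ are finite (so no boundary convention is triggered). The cleanest device is to expand $L$ additively: directly from \eqref{Eq:L},
\[
    L_h(x,y) = 2\log|h(x)-h(y)| - \log|h'(x)| - 2\log|x-y|,
\]
valid whenever $h(x),h(y)$ are finite and $h'(x)$ exists and is nonzero. Since $f$ and $g$ are injective and $x \neq y$, we have $f(x) \neq f(y)$ and $g(f(x)) \neq g(f(y))$, so every logarithm of a difference is finite. The chain rule $(g \circ f)'(x) = g'(f(x))\,f'(x)$ gives $\log|(g\circ f)'(x)| = \log|g'(f(x))| + \log|f'(x)|$, and when I add the expansions of $L_g(f(x),f(y))$ and $L_f(x,y)$ the two copies of $2\log|f(x)-f(y)|$ occur with opposite signs and cancel. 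What is left is exactly the expansion of $L_{g\circ f}(x,y)$, giving \eqref{Eq:LGeneralComp}. The diagonal case $x=y$ is even quicker: by \eqref{Eq:Lx=y} and the chain rule, $L_{g\circ f}(x,x) = \log|(g\circ f)'(x)| = \log|g'(f(x))| + \log|f'(x)| = L_g(f(x),f(x)) + L_f(x,x)$.

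The remaining cases are the boundary ones, where one or more of $y$, $f(y)$, $g(f(y))$ equals $\infty$; this is where the only real bookkeeping lies. The key point is that each convention \eqref{Eq:Lhinfty}--\eqref{Eq:Lhyinfty} simply replaces by unity exactly the factor that would otherwise blow up, and that the $2\log|f(x)-f(y)|$ telescoping still goes through term by term. As a representative instance, if $y$ is finite but $f(y)=\infty$, then $L_f(x,y)$ is given by \eqref{Eq:Lhinfty}, while $L_g(f(x),f(y))$ is given by \eqref{Eq:Lyinfty} or \eqref{Eq:Lhyinfty} according to whether $g(\infty)$ is finite or $\infty$; in either subcase the sum collapses to the correct convention for $L_{g\circ f}(x,y)$ (the generic formula when $g(\infty)$ is finite, and \eqref{Eq:Lhinfty} when $g(\infty)=\infty$). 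I would verify each of the finitely many such combinations by the same one-line substitution, obtaining exact pointwise equality, not merely equality up to an $x$-independent constant.

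I expect the main obstacle to be purely organizational rather than analytic: one must ensure that every combination of infinite values among $y$, $f(y)$, $g(f(y))$ is matched against the correct clause of \eqref{Eq:Lhinfty}--\eqref{Eq:Lhyinfty}, and that injectivity is invoked to keep all difference-logarithms finite. No estimate or limiting argument is needed; the entire lemma reduces to the chain rule together with the deliberate design of the boundary conventions.
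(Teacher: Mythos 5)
Your proposal is correct and takes essentially the same route as the paper's proof: establish the identity in the all-finite case by splitting the logarithm (your additive telescoping of $2\log|f(x)-f(y)|$ is the same computation written out term by term), then check the finitely many conventions \eqref{Eq:Lhinfty}--\eqref{Eq:Lhyinfty} case by case, which is exactly how the paper disposes of the up-to-eight infinite configurations. Your explicit treatment of the diagonal case $x=y$ via \eqref{Eq:Lx=y} is a small point the paper passes over silently, but otherwise the two arguments coincide.
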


\begin{proof}
    When all the $C_j$ are circles in $\C$ (the bounded case), we immediately compute
    \begin{align*}
            L_{g \circ f}(x,y) &= \log \bigg| \frac{\big( g(f(x)) - g(f(y)) \big)^2}{g'(f(x))f'(x)(x-y)^2} \bigg|\\
            &= \log \bigg| \frac{\big( g(f(x)) - g(f(y)) \big)^2}{g'(f(x))( f(x)-f(y))^2} \bigg| + \log \bigg| \frac{( f(x) - f(y) )^2}{f'(x)(x-y)^2} \bigg|\\
            &= L_{g}\big(f(x),f(y)\big) + L_{f}(x,y),
        \end{align*}
    as claimed.  If some or all of the $C_j$ are unbounded, there are up to eight cases to verify, corresponding to the different definitions \eqref{Eq:Lhinfty}, \eqref{Eq:Lyinfty}, and \eqref{Eq:Lhyinfty} of $L$ for when infinity appears among $y, f(y)$, and $g(f(y))$.  The beauty of the definitions is that \eqref{Eq:LGeneralComp} still holds in every instance.  We do two cases to give a sense for the elementary argument, and leave the remainder for the interested reader. 
    \begin{itemize}
        \item $f(y), g(f(y)) \in \C$ but $y=\infty$.  Here \eqref{Eq:Lyinfty} says
        \begin{align*}
            L_{g \circ f}(x,y) &= \log \bigg| \frac{\big( g(f(x)) - g(f(y)) \big)^2}{g'(f(x))f'(x)} \bigg|\\
            &= \log \bigg| \frac{\big( g(f(x)) - g(f(y)) \big)^2}{g'(f(x))( f(x)-f(y))^2} \bigg| + \log \bigg| \frac{( f(x) - f(y) )^2}{f'(x)} \bigg|\\
            &= L_{g}\big(f(x),f(y)\big) + L_{f}(x,y)
            \end{align*}
        by \eqref{Eq:L} and \eqref{Eq:Lyinfty}.
    \item $g(f(y)) \in \C$ while $f(y) = y = \infty$.  We use \eqref{Eq:Lyinfty} and \eqref{Eq:Lhyinfty} to see
    \begin{align*}
            L_{g \circ f}(x,y) &= \log \bigg| \frac{\big( g(f(x)) - g(f(y)) \big)^2}{g'(f(x))f'(x)} \bigg|\\
            &= \log \bigg| \frac{\big( g(f(x)) - g(f(y)) \big)^2}{g'(f(x))} \bigg| + \log \bigg| \frac{1}{f'(x)} \bigg| = L_{g}\big(f(x),f(y)\big) + L_{f}(x,y).
            \end{align*}
    \end{itemize}
    The outstanding cases are similar rearrangements.
\end{proof}

The compositions we are interested in are changes of coordinates $S \circ h \circ T$ of weldings $h:C \rightarrow C$, where $S,T \in \PSL_2(\C)$ are M\"obius.  As above, $x$ continues to be a generic point at which all expressions involving it are both finite and well defined.

\begin{lemma}\label{Lemma:LMobiusCovariance}
    Let $C_j$ be circles in $\hatC$, $h:C_2 \rightarrow C_2$ injective, and $S,T \in \PSL_2(\C)$ such that $T:C_1 \rightarrow C_2$.  Let $x \in C_1$ such that $x, T(x), h(T(x))$, and $S(h(T(x))$ all belong to $\C$, and such that $T'(x), h'(T(x)),$ and $S'(h(T(x)))$ all exist and are non-zero.  Then for any $y \in C_1$,
    \begin{align}\label{Eq:LMobiusCovariance}
        L_{S\circ h \circ T}(x,y) = A(S,y) + L_h(T(x),T(y)) + A(T,y),
    \end{align}
    where $A(R,y)$ is a constant that only depends on $R$ and $y$.
\end{lemma}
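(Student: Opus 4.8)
The plan is to reduce the statement to two ingredients already at hand: the chain rule of Lemma~\ref{Lemma:LGeneralComp}, and the single observation that, for a \Mob transformation $R$, the functional $L_R(a,b)$ is independent of its first argument $a$. Granting the observation, the proof is short. Writing $S \circ h \circ T = S \circ (h \circ T)$ and applying Lemma~\ref{Lemma:LGeneralComp} once with $f = h\circ T$ and $g = S$, then applying it a second time to $L_{h\circ T}(x,y)$ with $f = T$ and $g = h$, yields
\begin{equation*}
    L_{S\circ h \circ T}(x,y) = L_S\big(h(T(x)),\,h(T(y))\big) + L_h\big(T(x),T(y)\big) + L_T(x,y).
\end{equation*}
The hypotheses on $x$ --- that $x, T(x), h(T(x)), S(h(T(x)))$ all lie in $\C$ and that $T'(x), h'(T(x)), S'(h(T(x)))$ all exist and are non-zero --- are precisely what is needed to invoke Lemma~\ref{Lemma:LGeneralComp} in each factorization (note $(h\circ T)'(x) = h'(T(x))T'(x)$ is then non-zero). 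The first and third summands are $L$-functionals of the \Mob maps $S$ and $T$; by the observation they do not depend on $x$, so I may set $A(S,y) := L_S(\,\cdot\,, h(T(y)))$ and $A(T,y) := L_T(\,\cdot\,, y)$, each a constant in $x$ depending only on the indicated \Mob map and, through the fixed data $h,T$, on the root $y$. Substituting gives \eqref{Eq:LMobiusCovariance}.

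To justify the observation, first treat the generic case in which $a,b,R(a),R(b) \in \C$ with $R'(a),R'(b)$ existing and non-zero. Here the fused cross-ratio identity \eqref{Eq:MobDifferenceQuotient} gives $\frac{(R(a)-R(b))^2}{R'(a)R'(b)(a-b)^2} = 1$, hence
\begin{equation*}
    L_R(a,b) = \log\bigg| \frac{(R(a)-R(b))^2}{R'(a)(a-b)^2} \bigg| = \log|R'(b)|,
\end{equation*}
manifestly independent of $a$. For the degenerate configurations I write $R(z) = \frac{\alpha z + \beta}{\gamma z + \delta}$ with $\alpha\delta-\beta\gamma \neq 0$ and check each special definition against the second argument $b$. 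When $b \in \C$ but $R(b)=\infty$ (so $\gamma \neq 0$ and $b=-\delta/\gamma$ is the pole), a direct computation gives $R'(a)(a-b)^2 = (\alpha\delta-\beta\gamma)/\gamma^2$, so \eqref{Eq:Lhinfty} reduces to the constant $\log|\gamma^2/(\alpha\delta-\beta\gamma)|$; when $b=\infty$ with $R(\infty)\in\C$ one finds $(R(a)-R(\infty))^2/R'(a) = (\alpha\delta-\beta\gamma)/\gamma^2$, so \eqref{Eq:Lyinfty} is again constant in $a$; and when $b=\infty=R(\infty)$ (so $\gamma=0$ and $R$ is affine) we have $1/R'(a) = \delta/\alpha$, making \eqref{Eq:Lhyinfty} constant as well. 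In every case the $a$-dependence cancels and $L_R(a,b)$ depends only on $R$ and $b$.

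The only genuine obstacle is the bookkeeping in this last step: because the defining expression for $L$ switches from \eqref{Eq:L} to one of \eqref{Eq:Lhinfty}, \eqref{Eq:Lyinfty}, or \eqref{Eq:Lhyinfty} according to whether $b$ and/or $R(b)$ is infinite, one cannot merely quote \eqref{Eq:MobDifferenceQuotient} but must verify the cancellation of the $a$-dependence by hand in the three degenerate configurations. Once the observation is established for all configurations, the two applications of the chain rule carry the remaining weight and no estimates are required; the infinity cases for the root $y$ are handled uniformly by the observation, since there we take $a = h(T(x))$ (resp.\ $a=x$), which is finite with finite, non-zero-derivative image by the standing hypotheses on $x$.
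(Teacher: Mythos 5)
Your proposal is correct and follows essentially the same route as the paper: both decompose $L_{S\circ h\circ T}(x,y)$ into $L_S + L_h + L_T$ via the chain rule of Lemma~\ref{Lemma:LGeneralComp} and then verify, case by case according to which of the root's images is infinite, that the $L_S$ and $L_T$ terms are constants in $x$. Your ``observation'' that $L_R(a,b)$ is independent of its first argument for M\"obius $R$ is exactly the content of the paper's case analysis of $L_T(x,y)$ (the paper normalizes $ad-bc=1$ where you keep $\alpha\delta-\beta\gamma$ general, which is immaterial).
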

\noindent The utility of the lemma for us is that, upon taking the $H^\half$-norm in $x$, the constants vanish, leaving only the $L_h(T(x),T(y))$ term.

\begin{proof}
    Lemma \ref{Lemma:LGeneralComp} immediately yields
    \begin{align*}
        L_{S\circ h \circ T}(x,y) = L_S\big( h(T(x)),h(T(y)) \big) + L_h\big( T(x),T(y) \big) + L_T(x,y).
    \end{align*}
Consider the last term.  When $y, T(y) \in \C$, $L_T(x,y) = \log|T'(y)|$ by \eqref{Eq:L} and \eqref{Eq:MobDifferenceQuotient}. There are three other cases to consider, based on whether one or both of $y$ and $T(y)$ is infinite.  Write $T(z) = \frac{az+b}{cz+d}$ where $ad-bc=1$.
\begin{itemize}
    \item $y=\infty \neq T(y)$.  Using \eqref{Eq:Lyinfty} and computing, we find
    \begin{align*}
        L_T(x,y) = \log\bigg| \frac{(T(x)-T(\infty))^2}{T'(x)} \bigg| = -2\log|c|.
    \end{align*}
    \item $y \neq \infty = T(y)$.  Here we may write $T(z) = \frac{az+b}{c(z-y)}$, where $c=-1/(ay+b)$.  Using \eqref{Eq:Lhinfty}, we thus see
    \begin{align*}
        L_T(x,y) = \log\bigg| \frac{1}{T'(x)(x-y)^2} \bigg| = 2\log|c|.
    \end{align*}
    \item $y = \infty = T(y)$.  Here $T(z) = z+b$, and by \eqref{Eq:Lhyinfty} we have $L_T(x,y) = -\log|T'(x)| = 0$.
\end{itemize}
Hence in all cases we obtain a constant only depending on $T$ and $y$. Since $S$ is likewise M\"obius, this analysis also applies to $L_S( h(T(x)),h(T(y)) )$.
\end{proof}

\subsection{A rooted welding functional}\label{Sec:KFunctional}

We define a rooted welding functional $K_h(y)$ as the $H^{\half}$-norm of $L_h(\cdot,y)$.  We recall $L_h$ is defined above in  \S\ref{Sec:LFunctional}.
\begin{definition}\label{Def:WeldingK}
    For $C$ a circle in $\hatC$, $h \in \ophom{C}$ and $y\in C$, we set
    \begin{align*}
        K_h(y) := \big\| L_h(\cdot, y) \big\|_{H^{1/2}(C)}^2
    \end{align*}
    if $h \in \AC(C)$ when $C \subset \C$ or $h \in \ACp(C)$ when $C$ is unbounded, and $K_h(y) := \infty$ otherwise.  
\end{definition}
\noindent We recall Definition \ref{Def:ACOnHatR} for $\ACp(\hatR)$, with $\ACp(C)$ defined analogously for other unbounded $C$.  Again, the two cases of interest for us are $C = \SSS$ and $C = \hatR$.

As an example, if $C = \hatR$ and $h \in \ACp(\hatR)$ such that $h(\infty)=\infty$, rooting at $y=\infty$ yields $$K_h(\infty) = \big\| \log|h'| \big\|_{H^{1/2}(\R)}^2$$ by \eqref{Eq:Lhyinfty}.  We will often abbreviate the $H^{\half}(C)$ semi-norm by $\|\cdot \|_{\half(C)}$ or simply $\|\cdot \|_{\half}$ when there is no confusion about the circle in question.

The \Mob covariance of Lemma \ref{Lemma:LMobiusCovariance} translates to $K_h(\cdot)$ as follows, which will prove to be very useful for us.

\begin{thm}\label{Thm:KMobiusInvariance}
    Let $C_j$ be circles in $\hatC$, $h \in \ophom{C_2}$, and $S,T \in \PSL_2(\C)$ such that $T:C_1 \rightarrow C_2$. Then for any $y \in C_1$,
    \begin{align*}
        K_{S\circ h \circ T}(y) = K_{h}(T(y)).
    \end{align*}
\end{thm}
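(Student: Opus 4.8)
The plan is to assemble the statement from two facts already in hand: the pointwise \Mob covariance of the functional $L_h$ (Lemma \ref{Lemma:LMobiusCovariance}) and the fact that \Mob pullback is an $H^{\half}$-isometry (Lemma \ref{Lem: pullbackoperatorMob}). The guiding observation is that $K_h(y)$ is nothing but an $H^{\half}$ semi-norm of $L_h(\cdot,y)$, and a semi-norm annihilates additive constants; so the two ``junk'' constants produced by Lemma \ref{Lemma:LMobiusCovariance} will simply drop out, after which the isometry finishes the job.

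First I would dispose of the absolute-continuity bookkeeping. Since $K_h(y)$ is defined to be $\infty$ exactly when the relevant AC hypothesis fails, I must check that $S \circ h \circ T$ is absolutely continuous (in the sense of $\AC(C_1)$ or $\ACp(C_1)$) if and only if $h$ is. Because $S$ and $T$ are \Mob, hence smooth diffeomorphisms of the circles away from their poles, this is precisely the content of the invariance results of \S\ref{Pre: AC}: Lemma \ref{Lemma:ACHatREquivalences} and Corollary \ref{Cor:ACHatRGroup}, with the removability Lemma \ref{Lemma:ACPtRemovable} absorbing the single point sent to or from $\infty$. Thus either both $K$'s are infinite, and the equality is trivial, or both quantities are genuine $H^{\half}$ semi-norms.

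In the latter case, fix $y \in C_1$ and apply Lemma \ref{Lemma:LMobiusCovariance} with the root $y$ held fixed: for a.e. $x \in C_1$,
\begin{align*}
    L_{S \circ h \circ T}(x, y) = L_h(T(x), T(y)) + c, \qquad c := A(S,y) + A(T,y),
\end{align*}
where $c$ depends on $S,T,y$ but is constant in the integration variable $x$. Since the semi-norm \eqref{Def:H1/2SemiNorm} sees only differences $u(\zeta)-u(\xi)$, it kills $c$, giving $K_{S \circ h \circ T}(y) = \big\| L_h(T(\cdot), T(y)) \big\|_{H^{1/2}(C_1)}^2$. Setting $\varphi := L_h(\cdot, T(y)): C_2 \rightarrow \C$, so that $L_h(T(x), T(y)) = \calP_T(\varphi)(x)$, Lemma \ref{Lem: pullbackoperatorMob} then yields
\begin{align*}
    K_{S \circ h \circ T}(y) = \| \varphi \circ T \|_{H^{1/2}(C_1)}^2 = \| \varphi \|_{H^{1/2}(C_2)}^2 = \big\| L_h(\cdot, T(y)) \big\|_{H^{1/2}(C_2)}^2 = K_h(T(y)),
\end{align*}
which is the claim.

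I expect the only genuine obstacle to be the absolute-continuity step, since it must be carried out uniformly across the cases where $y$, $T(y)$, $h(T(y))$, or $S(h(T(y)))$ lands at $\infty$; but the $\ACp$ framework of \S\ref{Pre: AC} is tailored to exactly this, so the difficulty is bookkeeping rather than substance. The core identity is a two-line consequence of the cited lemmas. One routine point worth noting is that the identity from Lemma \ref{Lemma:LMobiusCovariance} holds only off a null set (the finitely many preimages of $\infty$, together with the set where $h'$ fails to exist or vanishes), which does not affect the double integral; and since both the constant shift and the pullback isometry preserve the semi-norm irrespective of finiteness, the equality persists even when both sides are infinite.
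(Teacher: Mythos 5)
Your proposal is correct and follows essentially the same route as the paper: apply Lemma \ref{Lemma:LMobiusCovariance}, let the $H^{\half}$ semi-norm annihilate the constants $A(S,y)$ and $A(T,y)$, and then use \Mob invariance of the semi-norm (the paper performs the change of variables inline via \eqref{Eq:MobDifferenceQuotient}, which is exactly the content of Lemma \ref{Lem: pullbackoperatorMob} that you cite). Your extra care with the absolute-continuity bookkeeping and the infinite case is a harmless refinement of what the paper leaves implicit.
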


\begin{proof}
    Using Lemma \ref{Lemma:LMobiusCovariance} we see
    \begin{align*}
        K_{S\circ h \circ T}(y) = \big\| A(S,y) + L_h(T(\cdot), T(y)) + A(T,y) \big\|_{H^{1/2}(C_1)}^2 = \big\| L_h(T(\cdot), T(y)) \big\|_{H^{1/2}(C_1)}^2
    \end{align*}
    since $y$ is fixed. Writing $L(\cdot) := L_h(\cdot, T(y))$ and recalling \eqref{Eq:MobDifferenceQuotient}, we have
    \begin{align*}
        \big\| L_h(T(\cdot), T(y)) \big\|_{H^{1/2}(C_1)}^2 &= \frac{1}{4\pi^2}\int_{C_1}\int_{C_1} \frac{|L(T(u))-L(T(v))|^2}{|u-v|^2} |\dd u||\dd v|\\
        &= \frac{1}{4\pi^2}\int_{C_1}\int_{C_1} \frac{|L(T(u))-L(T(v))|^2}{|T(u)-T(v)|^2} |T'(u)||\dd u||T'(v)||\dd v|\\
        &= \frac{1}{4\pi^2}\int_{C_2}\int_{C_2} \frac{|L(u)-L(v)|^2}{|u-v|^2} |\dd u||\dd v|\\
        &= \big\| L_h(\cdot, T(y)) \big\|_{H^{1/2}(C_2)}^2 = K_h(T(y)). \qedhere
    \end{align*}
\end{proof}

In the remainder of this section, for $y \in \hatR$ we set $T_y$ to be any fixed element of $\PSL_2(\R)$ which satisfies 
\begin{align}\label{Eq:T_y}
    T_y(\infty) = y.
\end{align}
The two degrees of freedom in the choice of $T_y$ are immaterial for us. The \Mob covariance of Theorem \ref{Thm:KMobiusInvariance} along with Proposition \ref{IntroProp:ShenWeldingH} yield the following.

\begin{thm}\label{Thm:KFiniteEquiv}
     Let $C$ be a circle in $\hatC$ and $h \in \ophom{C}$ a conformal welding for a Jordan curve $\gamma \subset \hatC$.  The following are equivalent:
    \begin{enumerate}[$(a)$]
        \item $I^L(\gamma)<\infty$.
        \item There exists $y \in C$ such that $K_h(y) < \infty$.
        \item For every $y \in C$, $K_h(y) < \infty$.
    \end{enumerate}
\end{thm}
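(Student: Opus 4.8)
The plan is to reduce everything to a single biconditional: for \emph{each} fixed $y \in C$,
\[
    K_h(y) < \infty \iff I^L(\gamma) < \infty.
\]
The three stated equivalences then follow at once, since $(a)\Rightarrow(c)$ is this biconditional's ``only if'' applied at every root, $(c)\Rightarrow(b)$ is trivial because $C \neq \emptyset$, and $(b)\Rightarrow(a)$ is the ``if'' applied at the witnessing root. So I only need to establish the displayed equivalence for an arbitrary $y$.

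First I would normalize using the \Mob covariance of Theorem \ref{Thm:KMobiusInvariance}. Given $y\in C$, pick $T\in\PSL_2(\C)$ with $T:\hatR\to C$ and $T(\infty)=y$, and $S\in\PSL_2(\C)$ with $S:C\to\hatR$ and $S(h(y))=\infty$; both exist since $\PSL_2(\C)$ acts transitively on (circle, marked-point) pairs. Set $\tilde h:=S\circ h\circ T\in\ophom{\hatR}$, so that $\tilde h(\infty)=\infty$ and, by Theorem \ref{Thm:KMobiusInvariance}, $K_{\tilde h}(\infty)=K_{S\circ h\circ T}(\infty)=K_h(T(\infty))=K_h(y)$. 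Because $\tilde h$ fixes $\infty$, Definition \ref{Def:WeldingK} together with \eqref{Eq:Lhyinfty} gives
\[
    K_{\tilde h}(\infty) = \big\| \log|\tilde h'| \big\|^2_{H^{1/2}(\R)}
\]
when $\tilde h\in\ACp(\hatR)$, and $K_{\tilde h}(\infty)=\infty$ otherwise; moreover, since $\tilde h$ fixes $\infty$, taking $S=T=\mathrm{id}$ in Definition \ref{Def:ACOnHatR} shows that $\tilde h\in\ACp(\hatR)$ is exactly the statement that $\tilde h$ is a locally absolutely continuous increasing self-homeomorphism of $\R$.

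Next I would identify the curve welded by $\tilde h$. Writing $h=g^{-1}\circ f$ for conformal maps $f,g$ onto the two complementary components $\Omega,\Omega^*$ of $\gamma$, and putting $M:=S\circ T$, one checks $M:\hatR\to\hatR$, and
\[
    \tilde h = M\circ\big(T^{-1}\circ h\circ T\big) = \big((g\circ T)\circ M^{-1}\big)^{-1}\circ(f\circ T).
\]
Choosing orientations so that $M\in\PSL_2(\R)$ (hence $M^{-1}$ preserves $\H^*$), the maps $f\circ T:\H\to\Omega$ and $(g\circ T)\circ M^{-1}:\H^*\to\Omega^*$ are conformal, so $\tilde h$ is a conformal welding of $\gamma$ itself; if instead the orientation of $T$ swaps the two sides, $\tilde h$ welds the conjugate $\bar\gamma$, a curve of the same Loewner energy. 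In either case $\tilde h\in\WP(\R)$ if and only if $\gamma$ is Weil--Petersson, which by Wang's theorem (finite-energy curves are precisely the Weil--Petersson curves) is equivalent to $I^L(\gamma)<\infty$.

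Finally I would invoke Shen's characterization, Proposition \ref{IntroProp:ShenWeldingH}: the increasing self-homeomorphism $\tilde h$ of $\R$ lies in $\WP(\R)$ if and only if it is locally absolutely continuous with $\log \tilde h'\in H^{1/2}(\R)$. As $\tilde h$ is increasing, $\log \tilde h'=\log|\tilde h'|$, so this is exactly the finiteness of $K_{\tilde h}(\infty)$ recorded above: the absolute-continuity clause matches the $\ACp$ requirement in Definition \ref{Def:WeldingK}, and the $H^{1/2}$ clause matches finiteness of the norm. Chaining these gives $K_h(y)=K_{\tilde h}(\infty)<\infty\iff \tilde h\in\WP(\R)\iff I^L(\gamma)<\infty$, completing the argument. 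The step I expect to be the main obstacle is the middle one: confirming that the $\infty$-fixing normalization $\tilde h$ genuinely welds $\gamma$ (up to conjugation) rather than an unrelated curve, and checking that the absolute-continuity hypotheses transfer correctly across the \Mob conjugation, so that the two ``$=\infty$'' conventions (in Definition \ref{Def:WeldingK} and in Shen's statement) line up. The remaining steps are direct applications of the cited results.
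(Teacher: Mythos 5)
Your proof is correct and follows essentially the same route as the paper's: use the M\"obius covariance of $K_h$ (Theorem \ref{Thm:KMobiusInvariance}) to replace $(h,y)$ by an $\infty$-fixing welding on $\hatR$ with the same functional value, then invoke Shen's characterization (Proposition \ref{IntroProp:ShenWeldingH}) together with the identification of finite-energy curves with Weil--Petersson curves. The only cosmetic difference is that you perform the reduction in one combined conjugation $S \circ h \circ T$ (handling the $\gamma$-versus-$\bar\gamma$ orientation issue explicitly), whereas the paper first treats $C = \hatR$ with $\PSL_2(\R)$ conjugations and then transfers to general circles.
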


\begin{proof}
We show $(b) \Rightarrow (a) \Rightarrow (c) \Rightarrow (b)$, first assuming $C= \hatR$.

\noindent $(b) \Rightarrow (a)$.  Suppose $K_h(y) < \infty$ for some $y \in \wR$.  If $y=\infty$ then $\|\log|h'|\|_{\half} < \infty$, showing $I^L(\gamma) < \infty$ by Proposition \ref{IntroProp:ShenWeldingH}.  If $y \in \R$, set $H:= T_{h(y)}^{-1}\circ h \circ T_y$, and use Theorem \ref{Thm:KMobiusInvariance} to note
    \begin{align*}
        K_H(\infty) = K_{T_{h(y)}^{-1} \circ h \circ T_y}(\infty) = K_h(y) < \infty
    \end{align*}
    by assumption.  As $H \in \ophom{\R}$ is a conformal welding for $\gamma$, $I^L(\gamma) < \infty$ by Proposition \ref{IntroProp:ShenWeldingH}.

    \noindent $(a) \Rightarrow (c)$.  Choose $y \in \hatR$ and again consider $H:= T_{h(y)}^{-1}\circ h \circ T_y$, a welding for $\gamma$ which fixes $\infty$. Our assumption, Proposition \ref{IntroProp:ShenWeldingH}, and Theorem \ref{Thm:KMobiusInvariance} yield
    \begin{equation*}
        \infty > K_H(\infty) = K_{T_{h(y)}^{-1} \circ h \circ T_y}(\infty) = K_h(y). 
    \end{equation*}
    The last implication $(c) \Rightarrow (b)$ is trivial.
    
    If $C \neq \hatR$, take $T \in \PSL_2(\C)$ such that $T:\hatR \rightarrow C$.  Then for $y \in C$, by Theorem \ref{Thm:KMobiusInvariance}
    \begin{align}\label{Eq:hMobConjugate}
        K_h(y) = K_{T^{-1}\circ h \circ T}(T^{-1}(y)),
    \end{align}
    and so the equivalences in this case immediately follow from the above.
\end{proof}
As we noted in the introduction in \eqref{Eq:WOnQuotientSpace} for the welding energy $W_h(y)$ (formally defined below), Theorems \ref{Thm:KMobiusInvariance} and \ref{Thm:KFiniteEquiv} yield that, for fixed $y \in C$, 
\begin{align*}
    K_{(\cdot)}(y): \MOB(C)\bs \WP(C) / \MOB(C,y) \rightarrow [0,\infty).
\end{align*}

\begin{thm}\label{Thm:KVanishesMobius}
    Let $C$ be a circle in $\hatC$ and $h \in \ophom{C}$.  There exists $y \in C$ such that $K_h(y)=0$ if and only if $h$ is a \Mob transformation.  In this case, $K_h(y) = 0$ for all $y \in C$.
\end{thm}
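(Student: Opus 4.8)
The plan is to reduce the whole statement to the single normalization $C=\hatR$, $y=\infty$, $h(\infty)=\infty$ by means of the \Mob covariance of Theorem~\ref{Thm:KMobiusInvariance}, in which case $K_h(\infty)$ collapses (via \eqref{Eq:Lhyinfty}) to the clean expression $\|\log|h'|\|_{H^{1/2}(\R)}^2$. The central observation driving everything is that the $H^{1/2}$ semi-norm vanishes precisely on a.e.-constant functions, so that for a fixed root $K_h(y)=0$ is equivalent to $x\mapsto L_h(x,y)$ being a.e.\ constant. Moreover, because $K_h(y)=0<\infty$, Definition~\ref{Def:WeldingK} forces $h\in\ACp(C)$ (respectively $h\in\AC(C)$ when $C\subset\C$), so $h$ is genuinely (locally) absolutely continuous and the fundamental theorem of calculus is available — this is what prevents pathologies such as a Cantor-type function from sneaking in.

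For the ``if'' direction — and simultaneously for the final assertion that $K_h(y)=0$ for \emph{all} $y$ once $h$ is \Mob — I would invoke the computation already performed inside the proof of Lemma~\ref{Lemma:LMobiusCovariance}. There, for a \Mob $T$ and any $y$, the quantity $L_T(x,y)$ was shown to reduce to a constant depending only on $T$ and $y$ (and not on $x$), in each of the cases distinguished by whether $y$ and $T(y)$ are finite or infinite. Applying this with $T=h$ shows $L_h(\cdot,y)$ is constant in $x$ for every $y\in C$, hence its $H^{1/2}$ semi-norm vanishes, giving $K_h(y)=0$ for all $y$.

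For the ``only if'' direction, suppose $K_h(y_0)=0$ for some $y_0\in C$. First, conjugating by a \Mob $T\colon\hatR\to C$ and using Theorem~\ref{Thm:KMobiusInvariance} exactly as in the proof of Theorem~\ref{Thm:KFiniteEquiv}, I reduce to $C=\hatR$. Then, with $T_{y_0}$ and $T_{h(y_0)}$ as in \eqref{Eq:T_y}, set $H:=T_{h(y_0)}^{-1}\circ h\circ T_{y_0}$, which fixes $\infty$ and satisfies $K_H(\infty)=K_h(y_0)=0$ by Theorem~\ref{Thm:KMobiusInvariance}. By \eqref{Eq:Lhyinfty} we have $L_H(x,\infty)=-\log|H'(x)|$, so $\|\log|H'|\|_{H^{1/2}(\R)}=0$ and therefore $\log|H'|$, and hence $H'$, equals a.e.\ a single positive constant $c$ (positive since $H$ is increasing). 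Because $K_H(\infty)<\infty$ forces $H\in\ACp(\hatR)$ and $H$ fixes $\infty$, the independence discussion following Definition~\ref{Def:ACOnHatR} gives $H\in\ophom{\R}\cap\ACloc(\R)$; the fundamental theorem of calculus then yields $H(x)=cx+H(0)$, an affine and hence \Mob map. Undoing the conjugation, $h=T_{h(y_0)}\circ H\circ T_{y_0}^{-1}$ is a composition of \Mob transformations, so $h$ is \Mob, and the ``if'' direction already proved then gives $K_h(y)=0$ for every $y$.

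I do not expect a serious obstacle here; the work is essentially bookkeeping. The only points requiring care are confirming that finiteness of $K_h$ supplies exactly the absolute continuity needed to upgrade ``$H'=c$ a.e.'' to ``$H$ affine,'' and checking that the several infinite cases in the definition of $L_h$ are consistent with the chosen reduction. Both are already handled by the machinery in place: Definition~\ref{Def:WeldingK}, the $\ACp(\hatR)$ results of \S\ref{Pre: AC} (in particular the \Mob-invariance of local absolute continuity), and the explicit \Mob computation embedded in Lemma~\ref{Lemma:LMobiusCovariance}.
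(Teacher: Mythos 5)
Your proposal is correct and follows essentially the same route as the paper: reduce to $C=\hatR$ by \Mob covariance, conjugate to a welding $H$ fixing $\infty$ so that $K_H(\infty)=\|\log|H'|\|_{H^{1/2}(\R)}^2=0$, conclude $H'$ is a.e.\ constant, and use the local absolute continuity forced by finiteness of $K_H$ to integrate and get $H$ affine. The only cosmetic difference is in the ``if'' direction, where you verify directly from the case analysis inside Lemma~\ref{Lemma:LMobiusCovariance} that $L_h(\cdot,y)$ is constant for \Mob $h$, while the paper gets the same conclusion by applying Theorem~\ref{Thm:KMobiusInvariance} to $h\circ\mathrm{id}$; these rest on the identical computation.
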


\begin{proof}
    As we have \eqref{Eq:hMobConjugate} for any $T \in \PSL_2(\C)$, we may assume $C=\hatR$. 
    
    Suppose $K_h(y) =0$ for some $y$.  By Theorem \ref{Thm:KMobiusInvariance}, the welding $H:= T_{h(y)}^{-1} \circ h \circ T_y$ satisfies $K_H(\infty) = K_h(y)=0$.  We conclude that $\log |H'(x)|$, and therefore $H'(x)$ or $-H'(x)$ itself ($H$ is monotone), is almost everywhere a constant.  Since $H$ is locally absolutely continuous, we integrate and find $H$ is an affine function.  Hence $h = T_{h(y)} \circ H \circ T_y^{-1}$ is M\"{o}bius.  

    Conversely, if $h$ is M\"{o}bius, Theorem \ref{Thm:KMobiusInvariance} yields $K_h(y) = K_{h \circ \text{id}}(y) = K_{\text{id}}(y) = 0$.
    \end{proof}

\noindent Although we defer the proof until Section~\ref{Sec: cor1}, we note the continuity of the welding functional in the root.
\begin{thm}\label{Thm: KisContinuousatRoot}
    For $C \subset \hatC$ a circle and $h \in \ophom{C}$, $y \mapsto K_h(y)$ is continuous on $C$.
\end{thm}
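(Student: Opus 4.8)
The plan is to use the Möbius covariance of Theorem~\ref{Thm:KMobiusInvariance} to move the root under consideration, together with its image, to finite points, and then to prove continuity there by separate lower- and upper-semicontinuity arguments on the double integral defining $K_h$. First the trivial case: if $I^L(\gamma)=\infty$ then Theorem~\ref{Thm:KFiniteEquiv} gives $K_h\equiv\infty$ on $C$, a (continuous) constant, so we may assume $I^L(\gamma)<\infty$ and hence $K_h(y)<\infty$ for every $y$. Fix $y_0\in C$. Using \eqref{Eq:hMobConjugate} we reduce to $C=\hatR$, and then choose $S,T\in\PSL_2(\R)$ with $\tilde y_0:=T^{-1}(y_0)$ and $S(h(y_0))$ both finite, setting $\tilde h:=S\circ h\circ T$. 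By Theorem~\ref{Thm:KMobiusInvariance} we have $K_h=K_{\tilde h}\circ T^{-1}$ with $T^{-1}$ a homeomorphism of $\hatR$, so it suffices to show $\tilde y\mapsto K_{\tilde h}(\tilde y)$ is continuous at $\tilde y_0$. The purpose of this normalization is that, by continuity of $\tilde h$ and finiteness of $\tilde h(\tilde y_0)$, there is a neighborhood $N\ni\tilde y_0$ on which both $\tilde y$ and $\tilde h(\tilde y)$ are finite; throughout $N$ the single expression \eqref{Eq:L} governs $L_{\tilde h}(\cdot,\tilde y)$, so none of the case distinctions \eqref{Eq:Lhinfty}, \eqref{Eq:Lyinfty}, \eqref{Eq:Lhyinfty} arise. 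This is precisely what dissolves the difficulty noted just before the statement.

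Write $\Phi_{\tilde y}(x,x'):=|L_{\tilde h}(x,\tilde y)-L_{\tilde h}(x',\tilde y)|^2/|x-x'|^2$, so that $4\pi^2K_{\tilde h}(\tilde y)=\iint_{\R\times\R}\Phi_{\tilde y}$. As $\tilde y\to\tilde y_0$, continuity of $\tilde h$ gives $L_{\tilde h}(x,\tilde y)\to L_{\tilde h}(x,\tilde y_0)$ for a.e.\ $x$, hence $\Phi_{\tilde y}\to\Phi_{\tilde y_0}$ a.e. Since $\Phi_{\tilde y}\ge 0$, Fatou's lemma immediately yields the lower bound $\liminf_{\tilde y\to\tilde y_0}K_{\tilde h}(\tilde y)\ge K_{\tilde h}(\tilde y_0)$. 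It therefore remains only to establish the matching upper semicontinuity $\limsup_{\tilde y\to\tilde y_0}K_{\tilde h}(\tilde y)\le K_{\tilde h}(\tilde y_0)$.

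Upper semicontinuity is the crux, and it resists a direct attack on $\R\times\R$: writing $L_{\tilde h}(\cdot,\tilde y)=2\log|\tilde h(\cdot)-\tilde h(\tilde y)|-\log|\tilde h'|-2\log|\cdot-\tilde y|$, the individual pieces (e.g.\ $\log|\cdot-\tilde y|$) are not in $H^{\half}(\R)$---their boundary singularities cancel only in the full combination---so no termwise integrable majorant exists, and bounding $\Phi_{\tilde y}$ by a multiple of $\Phi_{\tilde y}+\Phi_{\tilde y_0}$ is circular. The plan is to pass to the harmonic side. By the Douglas formula \eqref{Eq: DouglasformulaFirstDisk} transferred to $\H$, $K_{\tilde h}(\tilde y)=\calD_\H(P_\H L_{\tilde h}(\cdot,\tilde y))$; and applying the composition rule Lemma~\ref{Lemma:LGeneralComp} to $\tilde h=\tilde g^{-1}\circ\tilde f$ lets one represent $P_\H L_{\tilde h}(\cdot,\tilde y)$ through the normalized pre-Schwarzian $\calB_{\tilde f}(\cdot,\tilde y)$ of \eqref{Eq:IntroNPSDef} and its $\tilde g$-counterpart, exactly as in the proof of the boundary-normalized formula Theorem~\ref{Thm:BoundaryLE}. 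This exhibits $K_{\tilde h}(\tilde y)$ as a combination of Dirichlet integrals and inner products of harmonic functions whose $\tilde y$-dependence enters only through $\calB_{\tilde f}(z,\tilde y)=\tilde f'(z)/(\tilde f(z)-\tilde f(\tilde y))-1/(z-\tilde y)-\frac{1}{2}\calA_{\tilde f}(z)$ (and the analogous $\tilde g$ object on $\H^*$). On $\H$ this dependence is benign: $\calB_{\tilde f}(z,\tilde y)\to\calB_{\tilde f}(z,\tilde y_0)$ pointwise, the combination $\tilde f'(z)/(\tilde f(z)-\tilde f(\tilde y))-1/(z-\tilde y)$ stays bounded as $z\to\tilde y$ (removable singularity), and $\frac{1}{2}\calA_{\tilde f}\in L^2(\H)$ is independent of $\tilde y$. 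One can thus construct a $\tilde y$-uniform integrable majorant---using joint continuity on $\{|z-\tilde y_0|\ge 2\delta\}$, uniform local boundedness near $\tilde y_0$, and the finiteness of the energy integrals at $\tilde y_0$ to control the tail---and invoke the generalized dominated convergence theorem, Proposition~\ref{Thm: DCL}, to pass to the limit in each integral. This gives $K_{\tilde h}(\tilde y)\to K_{\tilde h}(\tilde y_0)$, i.e.\ continuity at $\tilde y_0$; transporting back via $K_h=K_{\tilde h}\circ T^{-1}$ proves continuity of $K_h$ at the arbitrary point $y_0$, and hence on all of $C$.

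I expect the main obstacle to be exactly this last domination: producing an integrable majorant for $|\calB_{\tilde f}(z,\tilde y)|^2$ (and its $\tilde g$-analogue) valid uniformly for $\tilde y$ near $\tilde y_0$, both near the boundary point $\tilde y_0$ and near $\infty$, where the $\tilde y$-dependence couples to the conformal map's behavior. It is the holomorphic structure of $\calB_{\tilde f}$---in particular the removable singularity at $z=\tilde y$---that keeps this step tractable.
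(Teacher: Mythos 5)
Your reduction to finite roots via Theorem~\ref{Thm:KMobiusInvariance}, the handling of the infinite-energy case through Theorem~\ref{Thm:KFiniteEquiv}, and the Fatou argument for lower semicontinuity are all correct. The gap is in the upper-semicontinuity step, which you rightly call the crux but then resolve by asserting a representation that does not exist. Writing $\tilde h=\tilde g^{-1}\circ\tilde f$ and using Lemma~\ref{Lemma:LGeneralComp} gives $L_{\tilde h}(\cdot,\tilde y)=L_{\tilde f}(\cdot,\tilde y)-L_{\tilde g}(\tilde h(\cdot),\tilde h(\tilde y))$, and expanding the $H^{\half}$ norm produces (i) the term $\| L_{\tilde g}(\tilde h(\cdot),\tilde h(\tilde y))\|^2_{H^{1/2}(\R)}$, in which the welding sits inside the argument, and (ii) cross terms. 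For (i), the $H^{\half}$ seminorm is not invariant under the quasisymmetric change of variables $x\mapsto\tilde h^{-1}(x)$ --- only bounded, with the Nag--Sullivan constant of Corollary~\ref{Cor:OperatorBound} --- so this term cannot be rewritten as a Dirichlet integral of $\calB_{\tilde g}$ over $\H^*$, and its $\tilde y$-dependence is emphatically not ``only through $\calB$''. For (ii), identifying the cross terms with Dirichlet-type quantities is precisely the content of Lemma~\ref{lem:generalcrossterm}, which the paper proves first for analytic curves via Green's identity and then extends by the approximation machinery of Section~\ref{Sec:MainProof}; it is not a consequence of Theorem~\ref{Thm:BoundaryLE}. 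And even granting all of that, what comes out are the two-sided \emph{inequalities} of Theorem~\ref{Comparable}, not an exact formula for $K_{\tilde h}(\tilde y)$ to which Proposition~\ref{Thm: DCL} could be applied, so the dominated-convergence step has no identity to act on.

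What is missing is a bridge from welding-side convergence to conformal-map-side convergence, where integrable majorants are actually available. The paper's proof supplies exactly this: it re-roots at $\infty$ by curve-dependent normalizations, estimates $|K_h(y_n)-K_h(y)|\le\big(\|\log h_n'\|_{\half}+\|\log h_y'\|_{\half}\big)\,\|\log h_n'-\log h_y'\|_{\half}$ with the first factor controlled by Theorem~\ref{Comparable}, and then invokes the Shen--Wu equivalence of topologies (Proposition~\ref{Thm: equiv convergence H}) to replace the goal $\|\log h_n'-\log h_y'\|_{\half}\to 0$ by $\|\calA_{f_n}-\calA_{f_y}\|_{L^2(\H)}\to 0$; the latter is then proved by Proposition~\ref{Thm: DCL}, with majorants whose integrals are Loewner energies so that conformal invariance makes $\int G_n\to\int G$ exact. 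If you want to keep your structure, you must import Proposition~\ref{Thm: equiv convergence H} (or re-prove something equivalent) at the point where you currently assert the $\calB$-representation; without that bridge the upper bound does not close.
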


\noindent Lastly, we use Corollary \ref{Cor:OperatorBound} and Lemma \ref{Lemma:LGeneralComp} to control the welding functional of a composition.

\begin{lemma}\label{Lemma:KGeneralComp}
    Let $C$ be a circle in $\hatC$ and $h_1, h_2 \in \ophom{C}$, with $h_2$ the conformal welding of a $K_2$-quasicircle.  Then for all $y \in C$,
    \begin{align}\label{Ineq:KGeneralComp}
        K_{h_1 \circ h_2}(y) \leq 2(K_2^2 + K_2^{-2})K_{h_1}(h_2(y)) + 2K_{h_2}(y).
    \end{align}
\end{lemma}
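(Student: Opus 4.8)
The plan is to reduce the statement to the cocycle identity for $L_h$ from Lemma~\ref{Lemma:LGeneralComp} together with the Nag--Sullivan pullback estimate from Corollary~\ref{Cor:OperatorBound}; the factor $2$ will come from an elementary application of $\|u+v\|^2 \le 2\|u\|^2 + 2\|v\|^2$, and the factor $K_2^2 + K_2^{-2}$ will be exactly the squared operator norm $\|\calP_{h_2}\|^2$.

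First I would dispose of the degenerate case. If the right-hand side of~\eqref{Ineq:KGeneralComp} is infinite there is nothing to prove, so I may assume $K_{h_1}(h_2(y)) < \infty$ and $K_{h_2}(y) < \infty$. By Definition~\ref{Def:WeldingK} this forces $h_1$ and $h_2$ to be (locally) absolutely continuous in the appropriate sense, whence Corollary~\ref{Cor:ACHatRGroup} (transported to a general circle via the conjugation in Lemma~\ref{Lemma:ACHatREquivalences}) guarantees $h_1 \circ h_2$ is as well. Thus $K_{h_1\circ h_2}(y)$ is genuinely the $H^{\half}$-norm of $L_{h_1\circ h_2}(\cdot,y)$ rather than being set to $\infty$ by fiat.

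Next, Lemma~\ref{Lemma:LGeneralComp} applied with $f = h_2$ and $g = h_1$ gives, for a.e.\ $x \in C$,
$$ L_{h_1\circ h_2}(x,y) = L_{h_1}\big(h_2(x), h_2(y)\big) + L_{h_2}(x,y), $$
the derivatives $h_2'(x)$ and $h_1'(h_2(x))$ existing and being nonzero a.e.\ because $h_1,h_2$ are weldings of quasicircles. Taking $H^{\half}(C)$-norms, squaring, and applying $\|u+v\|_{\half}^2 \le 2\|u\|_{\half}^2 + 2\|v\|_{\half}^2$ yields
$$ K_{h_1\circ h_2}(y) \le 2\big\| L_{h_1}(h_2(\cdot), h_2(y))\big\|_{\half}^2 + 2\big\| L_{h_2}(\cdot, y)\big\|_{\half}^2, $$
and the second summand is precisely $2K_{h_2}(y)$ by Definition~\ref{Def:WeldingK}.

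The crux is the first summand. Setting $\varphi := L_{h_1}(\cdot, h_2(y))$, the function $x \mapsto L_{h_1}(h_2(x), h_2(y))$ is exactly the pullback $\calP_{h_2}\varphi = \varphi \circ h_2$, so
$$ \big\| L_{h_1}(h_2(\cdot), h_2(y))\big\|_{\half}^2 = \|\calP_{h_2}\varphi\|_{\half}^2 \le \|\calP_{h_2}\|^2\,\|\varphi\|_{\half}^2 = \|\calP_{h_2}\|^2\,K_{h_1}(h_2(y)). $$
Since $h_2$ welds a $K_2$-quasicircle, Corollary~\ref{Cor:OperatorBound} gives $\|\calP_{h_2}\|^2 \le K_2^2 + K_2^{-2}$, and substituting produces~\eqref{Ineq:KGeneralComp}. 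The only point requiring care is that Corollary~\ref{Cor:OperatorBound} is phrased on $\hatR$: for a general circle $C$ the same bound holds either by the footnote to Proposition~\ref{const.} (the Nag--Sullivan argument is valid on any disk in $\hatC$) or by conjugating $h_2$ to $\hatR$ with a \Mob map and invoking the isometry of Lemma~\ref{Lem: pullbackoperatorMob}, under which $\|\calP_{h_2}\|$ is unchanged. This transfer, rather than any genuine analytic difficulty, is the main thing to verify; everything else is the cocycle identity plus the triangle inequality.
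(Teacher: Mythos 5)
Your proposal is correct and is essentially the paper's own argument: the cocycle identity of Lemma~\ref{Lemma:LGeneralComp}, the elementary bound $\|u+v\|_{\half}^2 \le 2\|u\|_{\half}^2+2\|v\|_{\half}^2$, and the pullback estimate of Corollary~\ref{Cor:OperatorBound}, with the finiteness/absolute-continuity bookkeeping handled just as in the paper. The one point to tighten is that Corollary~\ref{Cor:OperatorBound} concerns $h \in \ophom{\R}$, i.e.\ weldings fixing $\infty$, so if $h_2$ (or its M\"obius conjugate to $\hatR$) does not fix $\infty$, conjugation alone is not enough; you must also factor off a M\"obius map, e.g.\ $\calP_{h_2} = \calP_{T_{x_0}^{-1}\circ h_2}\circ \calP_{T_{x_0}}$ with $T_{x_0}(\infty)=h_2(\infty)$, and use the isometry of Lemma~\ref{Lem: pullbackoperatorMob} --- which is exactly how the paper deals with this, by writing $h_1\circ h_2 = (h_1\circ T_x)\circ(T_x^{-1}\circ h_2)$ and invoking Theorem~\ref{Thm:KMobiusInvariance}.
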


\begin{proof}
    If either of $h_1$ and $h_2$ do not weld Weil--Petersson quasicircles $\gamma_1$ and $\gamma_2$, the right-hand side of \eqref{Ineq:KGeneralComp} is $+\infty$ by Theorem \ref{Thm:KFiniteEquiv} and the inequality is trivial.  Thus we may assume $\max\{I^L(\gamma_1), I^L(\gamma_2)\} < \infty$.  
    
    As we wish to apply Corollary \ref{Cor:OperatorBound} to composition by $h_2$, we first suppose that $C = \hatR$  and that $h_2(\infty) = \infty$.  By changing coordinates for $h_1$ via $S \circ h_1 \circ T =:H_1$, we have that $H_1(\infty) = \infty$ as well, and thus both $H_1$ and $h_2$ are locally absolutely continuous on $\R$ by Proposition \ref{IntroProp:ShenWeldingH}.  Thus $h_1 \in \ACp(\hatR)$ (recall Definition \ref{Def:ACOnHatR}), and so $h_1 \circ h_2 \in \ACp(\hatR)$ as well by Corollary \ref{Cor:ACHatRGroup}. Applying Lemma \ref{Lemma:LGeneralComp} and Corollary \ref{Cor:OperatorBound} then yields
    \begin{align*}
       K_{h_1 \circ h_2}(y) &= \big\| L_{h_1}\big(h_2(\cdot),h_2(y)\big) + L_{h_2}(\cdot, y) \big\|^2_{\half(\R)}\\
       &\leq 2 \big\| L_{h_1}\big(h_2(\cdot),h_2(y)\big) \big\|^2_{\half(\R)} + 2\| L_{h_2}(\cdot, y) \|^2_{\half(\R)}\\
       &\leq 2 \|\calP_{h_2}\|^2 \| L_{h_1}(\cdot,h_2(y)) \|^2_{\half(\R)} + 2K_{h_2}(y)\\
       &\leq 2(K_2^2 + K_2^{-2}) K_{h_1}(h_2(y)) + 2K_{h_2}(y). 
    \end{align*}
    If $h_2(\infty) = x \in \R$, we consider $T_x^{-1}\circ h_2$ with $T_x$ as in \eqref{Eq:T_y}. Then the above along with Theorem \ref{Thm:KMobiusInvariance} yield
    \begin{align*}
        K_{h_1 \circ h_2}(y) = K_{(h_1 \circ T_x) \circ (T_x^{-1} \circ h_2)}(y) &\leq 2(K_2^2 + K_2^{-2}) K_{h_1\circ T_x}\big(T_x^{-1} \circ h_2(y) \big) + 2K_{T_x^{-1} \circ h_2}(y)\\
        &= 2(K_2^2 + K_2^{-2}) K_{h_1}(h_2(y)) + 2K_{h_2}(y).
    \end{align*}

    In the case of a bounded circle $C$, we have each $h_j \in \AC(C)$, and thus the conjugates $T^{-1} \circ h_j \circ T =:H_j \in \ACp(\hatR)$, where $T\in\PSL_2(\C)$ takes $\hatR$ to $C$.  Thus by Theorem \ref{Thm:KMobiusInvariance} and the above, 
    \begin{align*}
        K_{h_1 \circ h_2}(y) = K_{H_1 \circ H_2}(T^{-1}(y)) &\leq 2(K_2^2 + K_2^{-2}) K_{H_1}\big(H_2(T^{-1}(y))\big) + 2K_{H_2}(T^{-1}(y))\\
        &= 2(K_2^2 + K_2^{-2}) K_{h_1}(h_2(y)) + 2K_{h_2}(y). \qedhere
    \end{align*}
\end{proof}

\subsection{The rooted welding energy}\label{Sec:W_h}

The \emph{rooted welding energy} $W_h(\cdot)$, a more symmetric version of $K_h(\cdot)$, will be one of our main tools.  We recall $L_h$ is defined in \S\ref{Sec:LFunctional}, and $K_h$ in Definition \ref{Def:WeldingK}.  
\begin{definition}
    For $C$ a circle in $\hatC$, $h \in \ophom{C}$ and $y\in C$, the \emph{(boundary-normalized) welding energy of $h$ rooted at $y$} is
    \begin{align}\label{Def:W}
        W_h(y) := K_h(y) + K_{h^{-1}}(h(y)) = \big\| L_h(\cdot, y) \big\|_{H^{1/2}(\R)}^2 + \big\| L_{h^{-1}}(\cdot, h(y)) \big\|_{H^{1/2}(\R)}^2
    \end{align}
     if $h \in \AC(C)$ when $C \subset \C$ or $h \in \ACp(C)$ when $C$ is unbounded, and $K_h(y) := \infty$ otherwise.  
\end{definition}
\noindent We recall Definition \ref{Def:ACOnHatR} for $\ACp(\hatR)$, with $\ACp(C)$ defined analogously for other unbounded $C$.  Again, the two cases of interest for us are $C = \SSS$ and $C = \hatR$.  

As an example, if $C = \hatR$ and $h \in \ACp(\hatR)$ such that $h(\infty)=\infty$,
\begin{align*}
    W_h(\infty) = \big\| \log |h'| \big\|_{\half(\R)}^2 + \big\| \log     |(h^{-1})'| \big\|_{\half(\R)}^2.
\end{align*}
Note that $W_h(y) = W_{h^{-1}}(h(y))$.

The properties of $W_h(y)$ stated in the introduction are now simple consequences of the corresponding properties of $K_h(y)$ proved above in \S\ref{Sec:KFunctional}.

\begin{proof}[Proof of Theorem \ref{Thm:WMobiusInvariance}]
    We have
    \begin{multline*}
        W_{S\circ h \circ T}(y) = K_{S\circ h \circ T}(y) + K_{{T^{-1}\circ h^{-1} \circ S^{-1}}}({S\circ h \circ T}(y))\\
        = K_h(T(y)) + K_{h^{-1}}(h(T(y))) = W_h(T(y))
    \end{multline*}
    by Theorem \ref{Thm:KMobiusInvariance}.
\end{proof}


\begin{proof}[Proof of Theorem \ref{Thm:WFiniteEquiv}]
    Since $\gamma$ has finite Loewner energy if and only if its complex-conjugate curve $\gamma^*$ does, Theorem \ref{Thm:KFiniteEquiv} shows $K_h(y) < \infty$ if and only if $K_{h^{-1}}(h(y)) < \infty$.  The result then immediately follows from Theorem \ref{Thm:KFiniteEquiv}.
\end{proof}

\begin{proof}[Proof of Theorem \ref{Thm:WVanishesMobius}]
    This immediately follows from the definition of $W_h$ and Theorem \ref{Thm:KVanishesMobius}.
\end{proof}

While we are still deferring the proof of Theorem \ref{Thm: KisContinuousatRoot} on the continuity of $y \mapsto K_h(y)$ until \S\ref{Sec:WContinuous}, we note that Theorem \ref{Thm: WisContinuousatRoot} is a clear consequence.
\begin{proof}[Proof of Theorem \ref{Thm: WisContinuousatRoot}]
    This immediately follows from Theorem~\ref{Thm: KisContinuousatRoot}, the continuity of $h^{-1}$, and the definition \eqref{Def:W} of $W_h(y)$.
\end{proof}

\noindent Lastly, we adapt Lemma \ref{Lemma:KGeneralComp} to the setting of the welding energy.  This will be a key ingredient to proving our main inequalities, Theorems \ref{Thm:GeneralCompBound} and \ref{Thm:LEGrowth}.
\begin{lemma}\label{Lemma:WGeneralComp}
    Let $C$ be a circle in $\hatC$ and $h_1, h_2 \in \ophom{C}$ such that $h_1,h_2 \in \AC(C)$ if $C \subset \C$ is a bounded circle, or $h_1,h_2 \in \ACp(C)$ if $\infty \in C$.  Further suppose that each $h_j$ welds a $K_j$-quasicircle.  Then for all $y \in C$,
    \begin{align}\label{Ineq:WGeneralCompDiffK}
        W_{h_1 \circ h_2}(y) \leq 2(K_2^2 + K_2^{-2}) W_{h_1}(h_2(y)) +  2(K_1^2 + K_1^{-2})W_{h_2}(y).
    \end{align}
    In particular, if both $h_j$ weld $K$-quasicircles,
    \begin{align}\label{Ineq:WGeneralCompSameK}
        W_{h_1 \circ h_2}(y) \leq 2(K^2 + K^{-2})\big( W_{h_1}(h_2(y)) + W_{h_2}(y)  \big).
    \end{align}
\end{lemma}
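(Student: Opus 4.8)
The plan is to reduce the statement to the composition bound for $K_h$ already proved in Lemma \ref{Lemma:KGeneralComp}, applied twice: once to $h_1 \circ h_2$ and once to its inverse. First I would unfold the definition \eqref{Def:W}. Since $(h_1 \circ h_2)^{-1} = h_2^{-1} \circ h_1^{-1}$ and $W_h(y) = K_h(y) + K_{h^{-1}}(h(y))$, we get
\begin{align*}
    W_{h_1 \circ h_2}(y) = K_{h_1 \circ h_2}(y) + K_{h_2^{-1} \circ h_1^{-1}}\big( h_1(h_2(y)) \big).
\end{align*}
Both summands are compositions, so each is eligible for Lemma \ref{Lemma:KGeneralComp}.

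For the first summand the inner map is $h_2$, which welds a $K_2$-quasicircle, so Lemma \ref{Lemma:KGeneralComp} gives
\begin{align*}
    K_{h_1 \circ h_2}(y) \leq 2(K_2^2 + K_2^{-2}) K_{h_1}(h_2(y)) + 2 K_{h_2}(y).
\end{align*}
For the second summand the inner map is $h_1^{-1}$. Here I would invoke the observation used in the proof of Corollary \ref{Cor:OperatorBound} that $h_1^{-1}$ is the welding of the complex-conjugated curve $\overline{\gamma_1}$, which is again a $K_1$-quasicircle because complex conjugation is anticonformal. Applying Lemma \ref{Lemma:KGeneralComp} with inner constant $K_1$ at the root $z = h_1(h_2(y))$, so that $h_1^{-1}(z) = h_2(y)$, yields
\begin{align*}
    K_{h_2^{-1} \circ h_1^{-1}}\big( h_1(h_2(y)) \big) \leq 2(K_1^2 + K_1^{-2}) K_{h_2^{-1}}(h_2(y)) + 2 K_{h_1^{-1}}\big( h_1(h_2(y)) \big).
\end{align*}

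The last step is to regroup the four resulting terms into $W_{h_1}(h_2(y)) = K_{h_1}(h_2(y)) + K_{h_1^{-1}}(h_1(h_2(y)))$ and $W_{h_2}(y) = K_{h_2}(y) + K_{h_2^{-1}}(h_2(y))$. The coefficients do not match term-by-term: within the pair forming $W_{h_1}(h_2(y))$ the two $K$-terms carry coefficients $2(K_2^2 + K_2^{-2})$ and $2$, and symmetrically for the pair forming $W_{h_2}(y)$. To reconcile this I would use the elementary AM--GM bound $K_j^2 + K_j^{-2} \geq 2$, which lets me dominate the smaller coefficient $2$ by $2(K_j^2 + K_j^{-2})$ and bound each pair by its larger coefficient. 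This gives
\begin{align*}
    W_{h_1 \circ h_2}(y) \leq 2(K_2^2 + K_2^{-2}) W_{h_1}(h_2(y)) + 2(K_1^2 + K_1^{-2}) W_{h_2}(y),
\end{align*}
which is \eqref{Ineq:WGeneralCompDiffK}; specializing to $K_1 = K_2 = K$ immediately yields \eqref{Ineq:WGeneralCompSameK}. Finiteness is automatic: if either $h_j$ fails to weld a finite-energy Weil--Petersson curve, the right-hand side is $+\infty$ by Theorem \ref{Thm:WFiniteEquiv} and there is nothing to prove, exactly as in the proof of Lemma \ref{Lemma:KGeneralComp}.

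I do not expect a serious obstacle, since the substantive analytic content lives in Lemma \ref{Lemma:KGeneralComp}. The only points needing care are purely bookkeeping: correctly tracking the root under inversion (that $h_1^{-1}$ evaluated at $h_1(h_2(y))$ returns $h_2(y)$, and likewise that the outer map of the inverse composition is $h_2^{-1}$), and handling the coefficient mismatch, which the bound $K_j^2+K_j^{-2}\ge 2$ resolves cleanly.
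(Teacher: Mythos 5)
Your proposal is correct and follows essentially the same route as the paper's proof: unfold $W_{h_1\circ h_2}(y)$ into $K_{h_1\circ h_2}(y) + K_{h_2^{-1}\circ h_1^{-1}}(h_1\circ h_2(y))$, apply Lemma \ref{Lemma:KGeneralComp} to each summand (using that $h_1^{-1}$ welds the complex-conjugated $K_1$-quasicircle), and regroup. Your explicit use of $K_j^2+K_j^{-2}\geq 2$ to absorb the coefficient mismatch is exactly what the paper does implicitly in its ``collecting the $h_1$ and $h_2$ terms'' step.
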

\begin{proof}
    Using the definition \eqref{Def:W} and Lemma \ref{Lemma:KGeneralComp}, we have
    \begin{align*}
        W_{h_1 \circ h_2}(y) &= K_{h_1 \circ h_2}(y) + K_{h_2^{-1}\circ h_1^{-1}}(h_1\circ h_2(y))\\
        &\leq 2(K_2^2+K_2^{-2})K_{h_1}(h_2(y))  + 2K_{h_2}(y)\\
        & \hspace{28mm} + 2(K_1^2+K_1^{-2})K_{h_2^{-1}}(h_2(y)) + 2K_{h_1^{-1}}(h_1\circ h_2(y)),
    \end{align*}
    where we are using the fact that $h_1^{-1}$ is a welding of the complex-conjugated curve $\gamma_1^*$, which is still a $K_1$-quasicircle.  Collecting the $h_1$ and $h_2$ terms, we have
    \begin{align*}
        W_{h_1\circ h_2}(y) &\leq 2(K_2^2+K_2^{-2})\big( K_{h_1}(h_2(y)) + K_{h_1^{-1}}(h_1\circ h_2(y)) \big)\\
        & \hspace{35mm} + 2(K_1^2+K_1^{-2})\big( K_{h_2}(y) + K_{h_2^{-1}}(h_2(y))\big)\\
        &= 2(K_2^2+K_2^{-2})W_{h_1}(h_2(y)) + 2(K_1^2+K_1^{-2})W_{h_2}(y). \qedhere
    \end{align*}
\end{proof}

\subsection{Two numerical examples}\label{Sec:WhyNotRootInvariant}


Given that the welding energy is defined \eqref{Def:W} in terms of a root $y$, and that the explicit expression of the energy varies depending on the choice of $y$ (recall \eqref{Eq:L}, \eqref{Eq:Lx=y}, \eqref{Eq:Lhinfty}, \eqref{Eq:Lyinfty}, and \eqref{Eq:Lhyinfty}), it is natural to ask how $W_h(y)$ varies in $y$.  Thus far, all we know is that $y \mapsto W_h(y)$ is continuous, Theorem \ref{Thm: WisContinuousatRoot}.  We might wonder whether this function is, in fact, constant.  

Recall that this is the case with the Loewner energy.  As originally defined for Jordan curves $\gamma \subset \hatC$, the Loewner energy is \emph{rooted} at a point $\gamma(0)$ on the curve $\gamma$ \cite[Prop. 3.5]{Loopenergy}.  After defining it as such, however, Rohde and Wang prove the deep result that the energy is, in fact, root independent \cite[Thm. 1.2]{Loopenergy}.  

The situation with $W_h(\cdot)$ appears more complicated, and our first example gives strong evidence that $y \mapsto W_h(y)$ is not generally a constant function.

\begin{example}\label{Eg:WNotConstant}
    Consider the piecewise-\Mob welding $h: \hatR \rightarrow \hatR$ defined by 
    \begin{align*}
        h(x) := \begin{cases}
            7x & x \leq 0,\\
            \frac{7x}{6x+1} & 0 < x \leq 1,\\
            \frac{2x-9}{3x-10} & 1 < x \leq 3,\\
            7x-18 & 3 < x.
        \end{cases}
    \end{align*}
    See Figure \ref{Fig:WNotConstantEg1}.  Using \eqref{Eq:MobDifferenceQuotient}, one sees that there is a one-parameter family of $C^{2-\epsilon}$ piecewise-\Mob maps which fix $0,1,3,$ and $\infty$, with $h$ being the unique member satisfying $h'(\infty)=7$.  According to \cite[\S4.1, Cor. 4.1]{PiecewiseGeodesic}, $h$ is the welding of a Loewner energy minimizer through four points on $\hatC$, as also illustrated in Figure \ref{Fig:WNotConstantEg1}.   

    \begin{figure}
        \includegraphics[scale=0.65]{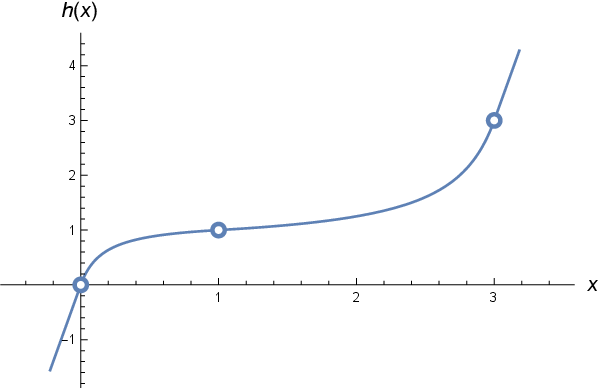} \qquad 
        \includegraphics[scale=0.6]{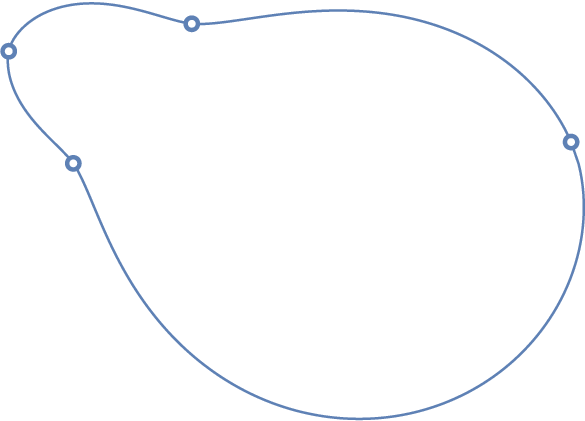}
        \caption{\small On the left, an explicit piecewise-\Mob welding $h \in C^{2-\epsilon}$ for which $y \mapsto W_h(y)$ is not numerically constant;    see Example \ref{Eg:WNotConstant} and Figure \ref{Fig:WNotConstantEg2}. On the right we simulate the welded Jordan curve $\gamma$, created with the help of Don Marshall's zipper algorithm software \cite{ZipperWebsite}.}
        \label{Fig:WNotConstantEg1}
    \end{figure}

    Recalling \eqref{Eq:L} for the definition of $L_h$, we use Mathematica to numerically integrate
    \begin{multline*}
        y \mapsto W_h(y) = \frac{1}{4\pi^2}\int_\R \int_\R \frac{\big(L_h(u,y)-L_h(v,y) \big)^2}{(u-v)^2}\,\dd u\,\dd v\\ + \frac{1}{4\pi^2}\int_\R \int_\R \frac{\big(L_{h^{-1}}(u,h(y))-L_{h^{-1}}(v,h(y)) \big)^2}{(u-v)^2}\,\dd u\,\dd v
    \end{multline*}
    over a mesh in $-1.5 \leq y \leq 5.5$ of width $\Delta y= 0.05$, and display the results in Figure \ref{Fig:WNotConstantEg2}.  Mathematica gives a warning for slow convergence of the numerical integral for five of the 141 values of $y$; these points are highlighted in the figure.  Overall, the discrepancy between the extremes of $W_h(y)$ seems far too large to attribute exclusively to numerical error.  We conclude that there is therefore compelling evidence for $W_h(\cdot)$'s root dependence.  

    Recalling Definition \ref{Def:UpperLowerW}, we have by Theorem \ref{Comparable} that the Loewner energy is dominated by $2/3$ of the lower welding energy, $I^L(\gamma) \leq \frac{2}{3}\underline{W_h} \approx 2.962$.  Allowing for some numerical error, we therefore have 
    \begin{align*}
        I^L(\gamma) \leq 2.97.
    \end{align*}
    We do not know of many other explicit computations or estimates of Loewner energy in the literature.\footnote{An interesting instance of several explicit estimates is in \S23 of an earlier version of Bishop's paper \cite{Bishop2019weil}, which is available on \url{https://www.math.stonybrook.edu/~bishop/papers/}.  See the ``earlier version (Nov 2020)'' of \emph{Weil-Petersson curves, beta-numbers, and minimal surfaces} there.}

    We discuss some further details of this example below in Example \ref{Eg:WNotConstant2}.  See also Example \ref{Eg:WConjugatedtoS1} for the energy of $h$ after a change of coordinates.

\begin{figure}[ht]
        \centering
        \includegraphics[scale=0.8]{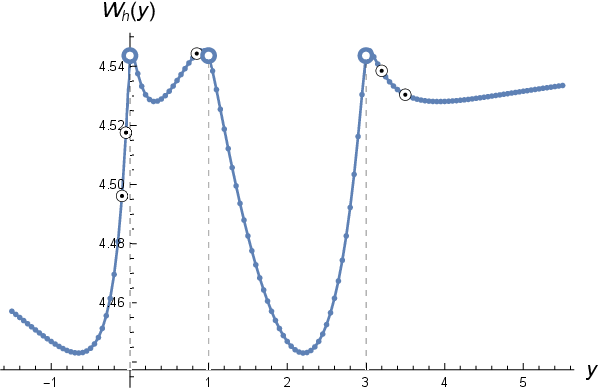}
        \caption{\small Numerical estimation of the function $y \mapsto W_h(y)$ for $h$ given in Example \ref{Eg:WNotConstant}.  Values of $W_h(y)$ at the glue points $y \in \{0,1,3\}$ are highlighted.  We also use ``$\odot$" to highlight the five points $y \in \{ -0.1,-0.05,0.85,3.2,3.5\}$ where Mathematica issues a warning for numerical convergence.  In each case, the  message is the same ``NIntegrate::slwcon'' warning about slow convergence of the numerical integral.  Although lacking an explanation for these errors, we conclude this is strong evidence of the non-constant nature of $y \mapsto W_h(y)$.}
        \label{Fig:WNotConstantEg2}
    \end{figure}

\end{example}
The one case where we know $y \mapsto W_h(y)$ is constant is when $W_h(y) \equiv 0$ for the welding $h$ of a circle, Theorem \ref{Thm:WVanishesMobius}.  Are there $h$ with $W_h(y)>0$ for which the function is likewise constant?  Translating to the energy gap $\Delta W_h$ of Definition \ref{Def:UpperLowerW}, this is equivalent to asking if $\Delta W_h$ vanishes for any non-\Mob $h$.  We anticipate a negative answer.

\begin{con}\label{Conj:EnergyGap}
    If $\Delta W_h = 0$, then $h$ is a welding for a circle.  
\end{con}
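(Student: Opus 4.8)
The plan is to first convert the hypothesis $\Delta W_h=0$, i.e.\ constancy of $y\mapsto W_h(y)$, into a statement about a single nonnegative ``excess'' functional, and then to attack that via a first-variation computation in the root. Combining the \Mob covariance of Theorem~\ref{Thm:WMobiusInvariance} with the expansion underlying Theorem~\ref{Comparable} (namely Lemma~\ref{lem:generalcrossterm} and Corollary~\ref{Thm:LoewnerIP}), one obtains, for the welding normalized so that the root is $\infty$,
\begin{equation*}
  W_h(\infty)=\tfrac{3}{2}I^L(\gamma)+\big\|\log|g'\circ h|\big\|_{H^{1/2}(\R)}^2+\big\|\log|f'\circ h^{-1}|\big\|_{H^{1/2}(\R)}^2 .
\end{equation*}
By Theorem~\ref{Thm:WMobiusInvariance} the same identity holds at every root $y$ after replacing $f,g,h$ by their re-rooted normalizations $f_y,g_y,H_y$ (which weld a curve \Mob-equivalent to $\gamma$, hence of the same energy). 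Thus $W_h(y)=\tfrac32 I^L(\gamma)+E(y)$ with $E(y)\ge 0$ and $I^L(\gamma)$ independent of $y$, so $\Delta W_h=0$ is equivalent to constancy of $E$, and it suffices to show that a constant $E$ forces $\gamma$ to be a circle.

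The degenerate case $E\equiv 0$ is easy and serves as a sanity check: then $\log|g'\circ h|$ and $\log|f'\circ h^{-1}|$ are $H^{1/2}$-constant, and since these are boundary traces of the harmonic functions $\log|g'|$ and $\log|f'|$, uniqueness of harmonic extensions forces $f,g$ to be affine, whence $\gamma$ is a circle (recovering the $\underline{W_h}=0$ half of Theorem~\ref{Thm:WVanishesMobius}). The substance is therefore to rule out $E\equiv c>0$. To exploit constancy I would differentiate in the root. Symmetrizing the kernel to $\widetilde L_h(x,y):=\log\big|\tfrac{(h(x)-h(y))^2}{h'(x)h'(y)(x-y)^2}\big|$ alters $L_h$ only by an additive constant in $x$, hence leaves $K_h(y)=\|\widetilde L_h(\cdot,y)\|_{H^{1/2}(\R)}^2$ unchanged, and a direct computation identifies its root-derivative with the normalized pre-Schwarzian of \eqref{Eq:IntroNPSDef}:
\begin{equation*}
  \partial_y \widetilde L_h(x,y)=2\,\calB_h(y,x).
\end{equation*}
Differentiating $W_h(y)=K_h(y)+K_{h^{-1}}(h(y))$ then yields (modulo justifying differentiation under the integral for Weil--Petersson $h$, where $\log h'\in H^{1/2}$ and $h'>0$ a.e.)
\begin{equation*}
  \tfrac14 W_h'(y)=\big\langle \widetilde L_h(\cdot,y),\,\calB_h(y,\cdot)\big\rangle_{H^{1/2}(\R)}+h'(y)\big\langle \widetilde L_{h^{-1}}(\cdot,h(y)),\,\calB_{h^{-1}}(h(y),\cdot)\big\rangle_{H^{1/2}(\R)} .
\end{equation*}
Constancy of $W_h$ is then equivalent to the vanishing of this right-hand side for a.e.\ $y$, and the goal is to deduce $\calB_h(y,\cdot)\equiv 0$, which by the fourth row of Table~\ref{Table:ABS} (Lemma~\ref{Lemma:BMobiusZero}) says precisely that $h$ is \Mob.

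The hard part is this last deduction, and I expect it to be the real obstacle. The two inner products carry the transparent covariances $\widetilde L_{h^{-1}}(h(x),h(y))=-\widetilde L_h(x,y)$ and $h'(y)\,\calB_{h^{-1}}(h(y),h(x))=-\calB_h(y,x)$, so the second pairing is formally the $h$-pullback of the first; were $\|\cdot\|_{H^{1/2}}$ invariant under precomposition by $h$, the two terms would simply cancel and give no information. But $H^{1/2}$ is invariant only under \Mob maps (Lemma~\ref{Lem: pullbackoperatorMob}), and for non-\Mob $h$ the pullback genuinely distorts the pairing (Proposition~\ref{const.}), so isolating $\calB_h\equiv 0$ requires quantitative control of this distortion. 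Two routes seem worth pursuing: (i) pair the vanishing identity against a well-chosen family of test roots, or differentiate once more to bring in $\partial_y\calB_h$, so as to generate enough independent relations to force $\calB_h=0$; and (ii) rewrite $E(y)$ as the Dirichlet energy of a transmission-type harmonic function on $\Omega\cup\Omega^*$ whose conformal data is transported across $\gamma$ by the welding and normalized at the curve point $\gamma(y)=f_y(\infty)$, and then study directly how this geometric energy depends on the basepoint $\gamma(y)$, a dependence that should be non-constant unless $\gamma$ is a circle. I regard route (ii)---making the basepoint dependence of a single energy explicit---as the more promising, with the non-invariance of $H^{1/2}$ under $\calP_h$ being the principal difficulty obstructing the purely variational route (i).
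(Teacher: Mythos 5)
This statement is Conjecture \ref{Conj:EnergyGap} of the paper: it is posed as an \emph{open problem}, not a theorem. The authors only write that they ``anticipate a negative answer'' to whether $\Delta W_h$ can vanish for non-\Mob $h$, supported by the numerical evidence of Examples \ref{Eg:WNotConstant} and \ref{Eg:WNotConstant2}; there is no proof in the paper to compare against. Your proposal does not close the conjecture either, and you essentially say so yourself. Your reduction is sound: the identity $W_h(y)=\tfrac32 I^L(\gamma)+E(y)$ with $E(y)\ge 0$ does follow from Theorem \ref{Thm:WMobiusInvariance} combined with Lemma \ref{lem:generalcrossterm} and Corollary \ref{Thm:LoewnerIP} (it is exactly the source of the universal lower bound $\tfrac32 I^L(\gamma)\le W_h(y)$ in Theorem \ref{Comparable}), and your treatment of the degenerate case $E\equiv 0$ is fine. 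But the decisive step---ruling out $E\equiv c>0$, equivalently deducing $\calB_h(y,\cdot)\equiv 0$ from the vanishing of your first variation---is replaced by two ``routes worth pursuing,'' neither of which is carried out. That step is precisely where the content of the conjecture lives, so what you have is a plausible framework plus a restatement of the problem, not a proof.

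Two issues with the variational setup itself deserve flagging. First, a regularity obstruction you underplay: your Euler--Lagrange pairing involves $\calB_h(y,\cdot)$, which contains $h''(y)/h'(y)$, but a Weil--Petersson welding need only satisfy $h\in\ACp(\hatR)$ with $\log h'\in H^{1/2}(\R)$ (Proposition \ref{IntroProp:ShenWeldingH}); $h''$ need not exist at any point, so even defining $W_h'(y)$ requires a genuine argument (note that merely proving continuity of $y\mapsto W_h(y)$, Theorem \ref{Thm: WisContinuousatRoot}, already takes the paper all of \S\ref{Sec:WContinuous}). Second, your heuristic that under hypothetical $\calP_h$-invariance of $H^{1/2}$ ``the two terms would simply cancel and give no information'' contains a sign error: using $\widetilde L_{h^{-1}}(h(x),h(y))=-\widetilde L_h(x,y)$ and $h'(y)\,\calB_{h^{-1}}(h(y),h(x))=-\calB_h(y,x)$, the two minus signs cancel in the bilinear pairing, so the second term would \emph{equal} the first rather than cancel it; constancy would then force $\langle\widetilde L_h(\cdot,y),\calB_h(y,\cdot)\rangle_{H^{1/2}(\R)}=0$ for a.e.\ $y$, which is nontrivial information. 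This does not rescue the argument---the invariance genuinely fails for non-\Mob $h$, which is the crux---but it means the stated reason for abandoning your route (i) is not the correct one, and that route may be less hopeless than you suggest.
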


\begin{figure}
        \centering
        \includegraphics[scale=0.65]{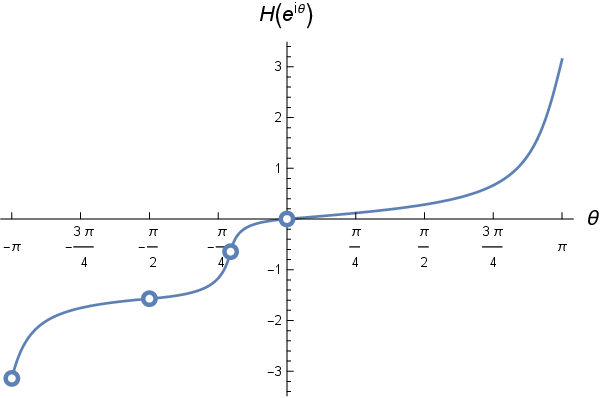} \qquad \includegraphics[scale=0.65]{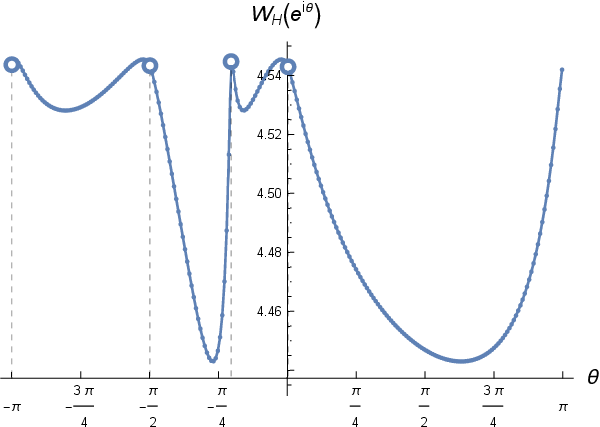}
        \caption{\small 
        On the left, the conjugation $H$ of the welding $h$ of Example \ref{Eg:WNotConstant} to $\SSS$.   On the right, the welding energy function $y \mapsto W_H(y)$ on $\SSS$.  The corresponding welded curve $\gamma$, of course, is still as in Figure \ref{Fig:WNotConstantEg1}.   Comparing with Figure \ref{Fig:WNotConstantEg2}, the interval $[-\pi,-\pi/2]$ here corresponds to $[0,1]$ there, with subsequent intervals likewise matched according to orientation. Mathematica had much more difficulty with the integral on $\SSS\times \SSS$ compared to on $\R \times \R$; here some 86\% of the points generated numerical integration error warnings (and thus we do no highlight all of them, as in Figure \ref{Fig:WNotConstantEg2}).
        }
        \label{Fig:hConjugated}
    \end{figure}
For our second example we change coordinates on welding of Example \ref{Eg:WNotConstant} to illustrate the \Mob covariance of Theorem \ref{Thm:WMobiusInvariance}. 
\begin{example}\label{Eg:WConjugatedtoS1}
     Consider the conjugation $H := C \circ h \circ C^{-1} \in \ophom{\SSS}$ of $h$ from Example \ref{Eg:WNotConstant} under the Cayley transform $C(z) = \frac{z-i}{z+i}$, as pictured on the left of Figure \ref{Fig:hConjugated}.  We again use Mathematica to numerically integrate
    \begin{multline*}
        y \mapsto W_H(y) = \frac{1}{4\pi^2}\int_\SSS \int_\SSS \frac{\big|L_h(u,y)-L_h(v,y) \big|^2}{|u-v|^2}\,|\dd u|\,|\dd v|\\ + \frac{1}{4\pi^2}\int_\SSS \int_\SSS \frac{\big|L_{h^{-1}}(u,h(y))-L_{h^{-1}}(v,h(y)) \big|^2}{|u-v|^2}\,|\dd u|\,|\dd v|
    \end{multline*}
    over a fine mesh on the circle, and display the results on the right of Figure \ref{Fig:hConjugated}.  Comparing with Figure \ref{Fig:WNotConstantEg2}, we see the invariance of the values of $W_H(y)$ and the covariance in the root expressed in Theorem \ref{Thm:WMobiusInvariance}.
\end{example}

\subsection{M\"obius-invariant welding energies}\label{Sec:UpperLowerWeldingEnergy}

Theorem \ref{Thm: WisContinuousatRoot} leads to three M\"obius-invariant welding energies $\overline{W_h}, \underline{W_h}$, and $\Delta W_h$, defined above in Definition \ref{Def:UpperLowerW}.  See also Corollary \ref{Cor:UpperLowerWProps} for basic properties of this trio.  Here we note  that Lemma \ref{Lemma:WGeneralComp} translates to $\overline{W_h}$ as follows.

\begin{cor}\label{Cor:UpperWGeneralComp} 
    Let $C$ be a circle in $\hatC$ and $h_1, h_2 \in \ophom{C}$ such that $h_1,h_2 \in \AC(C)$ if $C \subset \C$ is a bounded circle, or $h_1,h_2 \in \ACp(C)$ if $\infty \in C$.  Further suppose that each $h_j$ welds a $K$-quasicircle.  Then
        \begin{align*}
            \overline{W_{h_1\circ h_2}} \leq 2(K^2+K^{-2})\big( \overline{W_{h_1}} + \overline{W_{h_2}}  \big).
        \end{align*}
\end{cor}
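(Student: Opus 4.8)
The plan is to reduce this directly to the pointwise composition bound of Lemma \ref{Lemma:WGeneralComp}, since $\overline{W_h}$ is merely the maximum of $W_h(\cdot)$ over the compact circle $C$. First I would fix an arbitrary root $y \in C$ and invoke the equal-$K$ version \eqref{Ineq:WGeneralCompSameK} of Lemma \ref{Lemma:WGeneralComp}, which applies because both $h_1$ and $h_2$ weld $K$-quasicircles and lie in the requisite absolute-continuity class ($\AC(C)$ or $\ACp(C)$, according to whether $C$ is bounded). This gives
\begin{align*}
    W_{h_1 \circ h_2}(y) \leq 2(K^2 + K^{-2})\big( W_{h_1}(h_2(y)) + W_{h_2}(y) \big).
\end{align*}

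Next I would bound each summand on the right by the corresponding upper welding energy. Since $h_2(y) \in C$, the definition $\overline{W_{h_1}} = \max_{z \in C} W_{h_1}(z)$ yields $W_{h_1}(h_2(y)) \le \overline{W_{h_1}}$, and likewise $W_{h_2}(y) \le \overline{W_{h_2}}$. Substituting, we obtain
\begin{align*}
    W_{h_1 \circ h_2}(y) \leq 2(K^2+K^{-2})\big( \overline{W_{h_1}} + \overline{W_{h_2}} \big)
\end{align*}
for every $y \in C$. Taking the maximum over $y$ on the left-hand side then gives the claimed inequality. The maximum defining $\overline{W_{h_1 \circ h_2}}$ is genuinely attained because $C$ is compact and $y \mapsto W_{h_1 \circ h_2}(y)$ is continuous by Theorem \ref{Thm: WisContinuousatRoot}; here one uses that $h_1 \circ h_2$ itself lies in the absolute-continuity class (via Corollary \ref{Cor:ACHatRGroup} in the unbounded case, and the analogous closure on bounded circles), so that its welding energy is well-defined at every root.

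I expect essentially no serious obstacle: the entire analytic content has already been packaged into Lemma \ref{Lemma:WGeneralComp}, so passing from the rooted inequality to the M\"obius-invariant one is purely a matter of taking maxima. The only points meriting a word are that $h_2(y)$ ranges over all of $C$ as $y$ does (so that $\overline{W_{h_1}}$ really dominates $W_{h_1}(h_2(y))$) and that the relevant maxima are attained, both of which follow from compactness of $C$ together with the continuity furnished by Theorem \ref{Thm: WisContinuousatRoot}.
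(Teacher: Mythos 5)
Your proof is correct and matches the paper's approach: the paper states this corollary as an immediate translation of Lemma \ref{Lemma:WGeneralComp}, obtained exactly as you do by fixing a root $y$, bounding $W_{h_1}(h_2(y))$ and $W_{h_2}(y)$ by $\overline{W_{h_1}}$ and $\overline{W_{h_2}}$, and maximizing over $y$. Your added remarks on attainment of the maximum (compactness of $C$ in $\hatC$ plus Theorem \ref{Thm: WisContinuousatRoot}) and on $h_1\circ h_2$ lying in the absolute-continuity class are sound and consistent with the paper's setup.
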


\begin{example}\label{Eg:WNotConstant2}
    Revisiting the piecewise-\Mob welding of Example \ref{Eg:WNotConstant}, we might guess from Figure \ref{Fig:WNotConstantEg2} that $\upcalW{h} = W_h(y)$ for any glue-point $y \in \{0,1,3,\infty\}$.  However, zooming in around these $y$-values shows that the welding energy actually peaks nearby the glue-point, but not at it.  See Figure \ref{Fig:WNotConstantZoom}.
    
    \begin{figure}
        \includegraphics[scale=0.65]{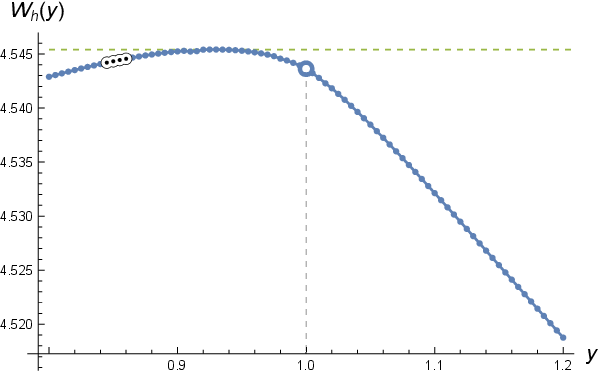} \qquad 
        \includegraphics[scale=0.65]{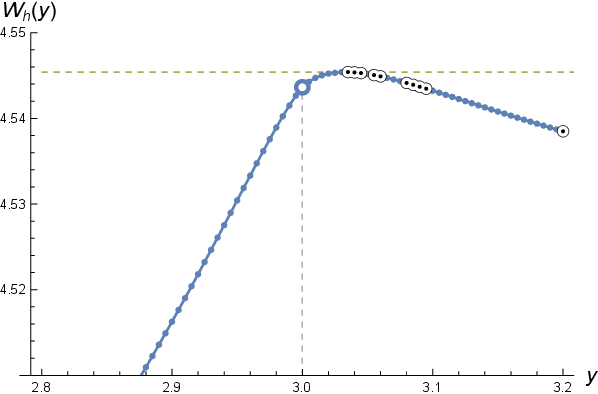}
        \caption{\small Zooming in around the glue points of the piecewise-\Mob welding of Example \ref{Eg:WNotConstant}, we see that $W_h(y)$ does not peak at the glue-points themselves, but only nearby.  The picture around $y=0$ is similar, with the max occurring to the right of $y=0$.  Numerically, each of these local maxima are the same to three decimal places.  (Here we follow Figure \ref{Fig:WNotConstantEg2} and use the nested circles $\odot$ to show points for which Mathematica issued the ``NIntegrate::slwcon" warning.  In the left plot there were four such (consecutive) points, and a total of ten such points in the right.)  }
        \label{Fig:WNotConstantZoom}
    \end{figure}
\end{example}

\section{Normalized Loewner Energy Formulas}\label{Sec: BoundayNormalizedforLE}
In this section we prove Theorems~\ref{Thm:BoundaryLE} and \ref{Thm:InteriorLE}, giving generalized Loewner energy formulas. 

\subsection{The normalized pre-Schwarzian derivative and the boundary-normalized formula}

We first prove Theorem \ref{Thm:BoundaryLE} in the case $D = \H$, in which case the statement becomes the following.

\begin{lemma}\label{Lemma:BoundaryLE1}
    Let $\gamma \subset \widehat{\C}$ be any Jordan curve, and $f:\H \rightarrow \Omega$ and $g: \H^* \rightarrow \Omega^*$ any conformal maps to the two complementary components of $\widehat{\C}\bs \gamma$.  Then for any $x \in \partial \H$ with $y := g^{-1}\circ f(x)$,
    \begin{multline*}
      I^L(\gamma) =\frac{4}{\pi}\int_{\H} \left| \frac{f'(z)}{f(z)-f(x)} - \frac{1}{z-x} - \frac{1}{2} \frac{f''(z)}{f'(z)} \right|^2 \dd A(z)\\  + \frac{4}{\pi}\int_{\H^*} \left| \frac{g'(z)}{g(z)-g(y)} - \frac{1}{z-y} - \frac{1}{2} \frac{g''(z)}{g'(z)} \right|^2 \dd A(z)
    \end{multline*}
    when each of $x, y, f(x),$ and $f(y)$ is finite.  If any of these are infinite, the corresponding term in the integrand disappears.
\end{lemma}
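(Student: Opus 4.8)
The plan is to recognize the two integrands as $|\calB_f(z,x)|^2$ and $|\calB_g(z,y)|^2$, where $\calB$ is the normalized pre-Schwarzian derivative of \eqref{Eq:IntroNPSDef}, and then to reduce the general normalization to Wang's formula \eqref{Eq:LEH} by exploiting the \Mob covariance of $\calB$ recorded in Table \ref{Table:ABS} (Lemma \ref{Lemma:NPSMobiusInvariance1}). In other words, the identity to be proved is
\begin{align*}
    I^L(\gamma) = \frac{4}{\pi}\int_{\H} |\calB_f(z,x)|^2 \,\dd A(z) + \frac{4}{\pi}\int_{\H^*} |\calB_g(z,y)|^2 \,\dd A(z).
\end{align*}
Since all the analytic content already lives in Wang's formula, the work is purely one of transporting base points correctly.

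First I would record that each integral is unchanged under the relevant changes of coordinates. Post-composition $f \mapsto S\circ f$ with $S \in \PSL_2(\C)$ leaves the integrand untouched, as $\calB_{S\circ f} = \calB_f$ (second row of Table \ref{Table:ABS}). For pre-composition by an automorphism $T \in \PSL_2(\R)$ of $\H$, the first row gives $\calB_{f\circ T}(z,\infty) = \calB_f(T(z),T(\infty))\,T'(z)$; choosing $T$ so that $T(\infty)=x$ and substituting $w=T(z)$, with $\dd A(w) = |T'(z)|^2\,\dd A(z)$ and $T(\H)=\H$, yields $\int_\H |\calB_{f\circ T}(z,\infty)|^2\,\dd A = \int_\H |\calB_f(w,x)|^2\,\dd A(w)$. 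The same computation applies verbatim on $\H^*$.

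Next is the reduction to Wang's setting. Let $p := f(x) = g(y)$ be the common image on $\gamma$, and pick $S \in \PSL_2(\C)$ with $S(p)=\infty$ together with $T,\tilde T \in \PSL_2(\R)$ (automorphisms of $\H$ and $\H^*$) satisfying $T(\infty)=x$ and $\tilde T(\infty)=y$. Then $\hat f := S\circ f\circ T$ and $\hat g := S\circ g\circ \tilde T$ are conformal maps of $\H$ and $\H^*$ onto the two sides of $S(\gamma)$, both fixing $\infty$, and $S(\gamma)$ passes through $\infty$. Wang's formula \eqref{Eq:LEH} therefore applies, and combined with the identity $\calB_{\hat f}(z,\infty) = -\tfrac12\,\hat f''/\hat f'$ (and likewise for $\hat g$) it reads $I^L(S(\gamma)) = \frac{4}{\pi}\int_\H |\calB_{\hat f}(z,\infty)|^2\,\dd A + \frac{4}{\pi}\int_{\H^*}|\calB_{\hat g}(z,\infty)|^2\,\dd A$. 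By the \Mob invariance \eqref{Eq: MobinvariantofLE} of the Loewner energy, $I^L(S(\gamma))=I^L(\gamma)$, while the invariances of the previous paragraph (post-composition by $S$, then pre-composition by $T$ and $\tilde T$ with matched roots) turn the two right-hand integrals back into $\int_\H |\calB_f(z,x)|^2$ and $\int_{\H^*}|\calB_g(z,y)|^2$. This gives the claim, valid as an identity in $[0,\infty]$.

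Finally I would dispatch the degenerate cases, where one or more of $x$, $y$, or $p=f(x)=g(y)$ is infinite, using the conventions for $\calB$ when an argument is infinite, so that the offending term in the integrand simply drops out; the reduction above still runs, taking $S$, $T$, or $\tilde T$ to be the identity (or any affine map) whenever the relevant point already sits at $\infty$. The only real care required—and the main thing to verify—is the bookkeeping that the roots transform consistently ($S(p)=\infty$, $T(\infty)=x$, $\tilde T(\infty)=y$), since this is exactly what lets the covariance of $\calB$ be applied with matching base points.
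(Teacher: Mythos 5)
Your proof is correct, and its core is the same as the paper's: renormalize by M\"obius maps so that both conformal maps fix $\infty$, apply Wang's formula \eqref{Eq:LEH} together with the M\"obius invariance \eqref{Eq: MobinvariantofLE} of $I^L$, and pull the two integrals back to the original maps. The one real difference is how that last step is carried out: the paper expands the pre-Schwarzian $\calA_F$ of the renormalized map $F = T\circ f\circ T_x$ via the composition rule and changes variables by hand, whereas you delegate the bookkeeping to the M\"obius covariance of $\calB$ (Lemma \ref{Lemma:NPSMobiusInvariance1}, in integral form Corollary \ref{Lemma:BIntegralInvariance}). This is legitimate and not circular, since the covariance lemma is proven in Appendix \ref{Appendix:NPS} independently of Lemma \ref{Lemma:BoundaryLE1}; indeed it is exactly the mechanism the paper itself uses immediately afterwards to pass from $D=\H$ to a general disk in Theorem \ref{Thm:BoundaryLE}, so your argument effectively merges those two steps into one. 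What you lose is only expository: in the paper, Lemma \ref{Lemma:BoundaryLE1} precedes and motivates the definition of $\calB$, so its proof is deliberately kept $\calB$-free, while yours presupposes that machinery; what you gain is a shorter, more modular argument in which the degenerate cases ($x$, $y$, or $f(x)=g(y)$ infinite) are absorbed into the conventions already built into $\calB$ and its covariance, rather than checked case by case.
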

\noindent Here we are taking the boundary of $\H$ with respect to $\hatC$, and so $x$ or $y$ (or both) could be infinite.  For example, if $x = \infty$, $f(x) \in \C$, while $y \in \R$ and $g(y) = \infty$, the formula becomes
\begin{align*}
    I^L(\gamma) =\frac{4}{\pi}\int_{\H} \left| \frac{f'(z)}{f(z)-f(\infty)}  - \frac{1}{2} \frac{f''(z)}{f'(z)} \right|^2 \dd A(z)  + \frac{4}{\pi}\int_{\H^*} \left|  \frac{1}{z-y} + \frac{1}{2} \frac{g''(z)}{g'(z)} \right|^2 \dd A(z).
\end{align*}
The case of $x = f(x) = y = g(y) = \infty$ gives Wang's formula \eqref{Eq:LEH}.

The proof of Lemma \ref{Lemma:BoundaryLE1} is an elementary change of variables.

\begin{proof}
    Supposing first that each of $x, f(x), y$, and $g(y)$ is finite, we define the \Mob transformations $T(z):= -1/(z-f(x)) \in \PSL_2(\C)$ and $T_w(z) := wz/(z+1/w) \in \PSL_2(\R)$.  The function $F:= T \circ f \circ T_x$ is then a conformal map from $\mathbb{H}$ to one component of $\wC \bs T(\gamma)$ which fixes $\infty$.  Similarly, $G:= T \circ g \circ T_y$ maps $\H^*$ to the other component, also fixing $\infty$, and thus
    \begin{align}\label{Eq:LEStandard}
        I^L(T(\gamma)) = I^L(\gamma) = \frac{1}{\pi}\int_{\H} \left| \frac{F''(z)}{F'(z)} \right|^2 \dd A(z) + \frac{1}{\pi}\int_{\H^*} \left| \frac{G''(z)}{G'(z)} \right|^2 \dd A(z).
    \end{align}
    Consider the first integral on the right.  Using the pre-Schwarzian composition rule (see Table \ref{Table:ABS}) and a change of variables, we have
    \begin{align}
        \frac{1}{\pi}\int_{\H} \left| \frac{F''(z)}{F'(z)} \right|^2 \dd A(z) &= \frac{1}{\pi}\int_{\H} \left| \frac{-2f'(T_x(z))T_x'(z)}{f\circ T_x(z) - f(x)} + \calA_f(T_x(z))T_x'(z) + \calA_{T_x}(z) \right|^2 \dd A(z) \notag\\
        &= \frac{1}{\pi}\int_{\H} \left| \frac{-2f'(z)}{f(z) - f(x)} + \calA_f(z) + \frac{\calA_{T_x}(T_x^{-1}(z))}{T_x'(T_x^{-1}(z))} \right|^2 \dd A(z). \label{Eq:EnergyExpansion1}
    \end{align}
    Recalling
    \begin{align*}
        \frac{\calA_{T_x}(T_x^{-1}(z))}{T_x'(T_x^{-1}(z))} = -\calA_{T_x^{-1}}(z) = \frac{2}{z-x},
    \end{align*}
    we see that \eqref{Eq:EnergyExpansion1} gives the claimed integral for $f$.  Combined with the same reasoning applied to the $G$ integral in \eqref{Eq:LEStandard}, we obtain the claimed formula.
    
    The situation is similar when one or multiple of $x, f(x),$ and $y$ are infinite.  For instance, if $f(x)=g(y) = \infty$, we need to show 
    \begin{align}\label{Eq:LENew1}
        I^L(\gamma) =\frac{4}{\pi}\int_{\H} \left| \frac{1}{z-x} + \frac{1}{2} \frac{f''(z)}{f'(z)} \right|^2 \dd A(z) + \frac{4}{\pi}\int_{\H^*} \left| \frac{1}{z-y} + \frac{1}{2} \frac{g''(z)}{g'(z)} \right|^2 \dd A(z).
    \end{align}
    Normalized maps $F$ and $G$ for \eqref{Eq:LEStandard} in this case are $F:=f\circ T_x$ and $G = g\circ T_y$, and  expanding the pre-Schwarzian derivative and doing a change of variables again yields \eqref{Eq:LENew1}.  
    
    As the interested reader may verify, doing the appropriate pre- and post-compositions with $T_w$ and $T$ to handle the remaining outstanding cases always yields the claimed formula.
\end{proof}

\subsubsection{The normalized pre-Schwarzian derivative}\label{Sec:NPS}
The integrands of Lemma \ref{Lemma:BoundaryLE1} inspire the following definition.
\begin{definition}\label{Def:NPS} Let $f: \Omega \rightarrow \wC$ a conformal map on a domain $\Omega \subset \wC$ such that $f$ extends continuously to $\overline{\Omega}$.  We define the \emph{normalized pre-Schwarzian derivative of $f$} as the function $\calB_f(z,w)$ on $\Omega \times \overline{\Omega}$ given by
\begin{align}\label{Eq:NPSDef}
    \calB_f(z,w) := \frac{f'(z)}{f(z)-f(w)} - \frac{1}{z-w} - \frac{1}{2} \frac{f''(z)}{f'(z)} = \frac{1}{2}\partial_z \log\bigg( \frac{(f(z)-f(w))^2}{f'(z)(z-w)^2} \bigg).
\end{align}
Here we assume that each of $z,w,f(z)$, and $f(w)$ are finite, and that $z \neq w$.   We define $\calB_f$ by limits when when $\infty$ appears.  More explicitly, first assuming that $z, f(z) \in \C$, we have that if $w \neq \infty$ but $f(w) = \infty$, 
\begin{align}\label{Eq:NPSfInfty}
    \calB_f(z,w) := \lim_{w_1 \rightarrow w} \calB_f(z,w_1) = - \frac{1}{z-w} - \frac{1}{2} \frac{f''(z)}{f'(z)}.
\end{align}
If $w = \infty$ but $f(\infty) \neq \infty$,
\begin{align}\label{Eq:NPSwInfty}
    \calB_f(z,\infty) := \lim_{w \rightarrow \infty} \calB_f(z,w) = \frac{f'(z)}{f(z)-f(\infty)} - \frac{1}{2} \frac{f''(z)}{f'(z)},
\end{align}
and if $w =\infty = f(w)$,
\begin{align}\label{Eq:NPSfwInfty}
    \calB_f(z,\infty) := \lim_{w \rightarrow \infty} \calB_f(z,w) = -\frac{1}{2}\mathcal{A}_f(z).
\end{align}
Thus the term in \eqref{Eq:NPSDef} containing the infinite quantity simply vanishes.

Having thus defined $\calB_f(z,w)$ for all $w \in \overline{\Omega}$ when $z,f(z) \in \C$, we also take limits when either $z$ or $f(z)$ is infinite. If $\infty \in \Omega$, then $\calB_f(\infty,w) := \lim_{z \rightarrow \infty} \calB_f(z,w)$, while if $z$ is a pole of $f$, $\calB_f(z,w) := \lim_{z_1 \rightarrow z} \calB_f(z_1,w)$.  We will see in Lemma \ref{Lemma:NPSMobiusInvariance1} below that poles are invisible to $\calB_f$, in the sense that $\calB_{\frac{1}{z} \circ f}(z,w) = \calB_f(z,w)$.  Furthermore, Corollary \ref{Lemma:BFinite} will show $\calB_f(\infty,\cdot) \equiv 0$. For us $z$ is typically a variable of integration, and so we can often simply ignore the finite set of points where $z$ or $f(z)$ is infinite. 

We also define the case $z=w$ through limits; see the first row of Table \ref{Table:NPSProps} below and Lemma \ref{Lemma:NPSDiagonal}.
\end{definition}

Continuity of $f$ on $\partial \Omega$ in Definition \ref{Def:NPS} is with respect to the spherical metric if $\infty \in \partial \Omega$.  Recall also that $f$ extends continuously to $\partial \Omega$ if and only if $\partial \Omega$ is locally connected \cite[Thm. 2.1]{Pommerenke}.  For our purposes it would suffice to assume that $\partial\Omega$ is Jordan (and hence $f$ extends homeomorphically to $\overline{\Omega}$ \cite[Thm. 2.6]{Pommerenke}).  The only reason to make this assumption on $\partial \Omega$ is so that $\calB_f(z,w)$ is well defined for all $w \in \partial \Omega$.


\begin{remark}
    We observe 
    \begin{align*}
        \partial_w \calB_f(z,w) = \frac{f'(z)f'(w)}{(f(z)-f(w))^2}- \frac{1}{(z-w)^2},
    \end{align*}
    an integral kernel which has frequently appeared in the study of univalent functions.  See, for instance, \cite[\S3.1]{Osgood1998} or the $K_1(z,w)$ kernel in \cite[Ch.2 \S2.1]{TTbook}.  (Compare also \cite[(1.9)]{Anderson} with the definition \eqref{Eq:NPSDef} of $\calB_f$.)
\end{remark}

We wish to study some elementary properties of $\calB_f$, with the goal of showing it behaves superior to the classical pre-Schwarzian $\calA_f$ (recall Table \ref{Table:ABS}).  The proofs can be tedious, because there are many cases to check, owing to the different definitions \eqref{Eq:NPSDef}, \eqref{Eq:NPSfInfty}, \eqref{Eq:NPSwInfty}, and \eqref{Eq:NPSfwInfty} of $\calB_f$ above.  For this reason we have sequestered away the details into Appendix \ref{Appendix:NPS}.  We summarize the basic properties we prove in  Table \ref{Table:NPSProps}, and then use these to prove Theorem \ref{Thm:BoundaryLE}, the boundary-normalized Loewner energy formula.

\begin{table}[ht]
\renewcommand{\tabcolsep}{0.2cm}
\renewcommand{\arraystretch}{1.5}
\centering
\begin{tabular}{|p{0.2\linewidth}|p{0.5\linewidth}|p{0.2\linewidth}|}
    \hline
     \emph{Idea} & \emph{Mathematical summary} & \emph{Precise statement}\\
     \hline
     Value on diagonal & $\calB_f(z,w) = -\frac{1}{6}S_f(w)(z-w) + O(z-w)^2$, \qquad $z \rightarrow w \in \C$ & Lemma \ref{Lemma:NPSDiagonal}\\

     \hline
     Composition rule & $\calB_{f\circ g}(z,w) = \calB_{f}\big(g(z),g(w)\big)g'(z) + \calB_g(z,w)$ & Lemma \ref{Lemma:NPSGeneralComp}\\
     
     \hline
     Vanishes iff $f$ is \Mob & $z \mapsto \calB_f(z,w) \equiv 0$ iff $f \in \PSL_2(\C)$ & Lemma \ref{Lemma:BMobiusZero}\\
     
     \hline
     \Mob covariance & $\calB_{S\circ f\circ T}(z,w) =\calB_{f}\big(T(z),T(w)\big)T'(z)$ for all $S,T \in \PSL_2(\C)$ & Lemma \ref{Lemma:NPSMobiusInvariance1}\\

     \hline
     Integral invariance under \Mob precomposition & $\int_\Omega |\calB_f(\cdot,w)|^2 = \int_{T^{-1}(\Omega)}|\calB_{f \circ T}(\cdot, T^{-1}(w))|^2$ for all $T \in \PSL_2(\C)$ & Corollary \ref{Lemma:BIntegralInvariance}\\

     \hline
     Holomorphic in $z$ & For fixed $w \in \overline{\Omega}$, $\calB_f(\cdot, w) \in \calH(\Omega)$ & Corollary \ref{Lemma:BFinite}\\

     \hline
     Value at $\infty$ & $\calB_f(\infty,w) = 0$ to at least order two & Corollary \ref{Lemma:BFinite}\\     
     \hline
\end{tabular}

\caption{\small Properties of the normalized pre-Schwarzian that we state and prove in Appendix \ref{Appendix:NPS}. For us the most important is the \Mob covariance of Lemma \ref{Lemma:NPSMobiusInvariance1}, and the associated integral invariance of Corollary \ref{Lemma:BIntegralInvariance}.}
\label{Table:NPSProps}
\end{table}

\begin{proof}[Proof of Theorem \ref{Thm:BoundaryLE}]
The case $D = \mathbb{H}$ is the content of Lemma \ref{Lemma:BoundaryLE1}.  For a different disk $D \subset \wC$, take a \Mob transformation $T$ mapping $\H$ onto $D$.  Combining Lemma \ref{Lemma:BoundaryLE1} and Corollary \ref{Lemma:BIntegralInvariance} yields
\begin{equation*}
    \begin{split}I^L(\gamma) = &\frac{4}{\pi}\int_{\mathbb{H}} |\calB_{f \circ T}(z,T^{-1}(w_0))|^2 \dd A(z)+  \frac{4}{\pi}\int_{\H^*} |\calB_{g\circ T}(z,T^{-1}(w_1))|^2 \dd A(z)\\
    = &\frac{4}{\pi}\int_{D} |\calB_f(z,w_0)|^2 \dd A(z)+  \frac{4}{\pi}\int_{D^*} |\calB_g(z,w_1)|^2 \dd A(z). \qedhere
    \end{split}
\end{equation*}
\end{proof}

\subsection{The interior-normalized formula}\label{Sec:InteriorNormalizedLE}
In this subsection we prove Theorem \ref{Thm:InteriorLE}, which generalizes Wang's result \eqref{Eq:LEDisk}.  As mentioned in the introduction, the formula will require two additional differential operators. In contrast to $\calB_f$, they will depend upon a pair of conformal maps on complementary disks in $\widehat{\C}$, combined with a pair of interior points in these disks.

\subsubsection{Two additional differential operators}\label{Sec:B*C}

Fix a disk $D \subsetneq \widehat{\C}$ with $D^* := \overline{D}^c$, and let $f:D \rightarrow \Omega$ and $g:D^* \rightarrow \Omega^*$ be conformal maps to the two complementary components $\Omega, \Omega^*$ of a Jordan curve $\gamma \subset \widehat{\C}$.  Let $u \in \overline{D}$ and $v \in \overline{D^*}$.  We define
\begin{align}\label{Eq:NPS*}
    \calB_{f,g}^*(z,u,v) = \calB_{f,g,D}^*(z,u,v) := \frac{f'(z)}{f(z)-g(v)} - \frac{1}{z-u^*} - \frac{1}{2}\frac{f''(z)}{f'(z)}
\end{align}
when $z, f(z), g(v)$ and $u^*$ are all finite.  Here $u^* := j(u)= j_D(u)$ is the image of $u$ under the anti-holomorphic involution of $\widehat{\C}$ given by Schwarz reflection across $\partial D$.  (The ``$^*$'' decorating the operator name reflects this and helps distinguish $\calB^*$ from $\calB$; it is unrelated to an adjoint.)  

As in Definition \ref{Def:NPS}, appearances of $\infty$ in \eqref{Eq:NPS*} are handled by limits.  Assuming $z, f(z) \in \C$, we take a limit $u_1 \rightarrow u$ and/or $v_1 \rightarrow v$ if $u^* = \infty$ or $g(v) = \infty$, and the corresponding term in \eqref{Eq:NPS*} simply disappears, as for $\calB_f$.  We also define $\calB_{f,g}^*(z,u,v)$ when either $z$ or $f(z)$ is infinite through limits ($z$ again is typically a variable of integration, and so we can ignore these points).  Note that when $u\in \partial D$ and $v$ is the image $h(u)$ of $u$ under the conformal welding $h:=g^{-1} \circ f$, 
\begin{align}\label{Eq:NPSCaseOfNPS*}
    \calB_{f,g}^*(z,u,h(u)) = \calB_f(z,u),
\end{align}
and so $\calB^*$ generalizes and extends $\calB$.

With $f,g,D,D^*,u,v$ as above, we also define
\begin{align}
    \calC_{f,g}(z,u,v) = \calC_{f,g,D}(z,u,v) :=& \calB_f(z,u) - \calB^*_{f,g}(z,u,v)\label{Eq:CalCDef}\\
    =& \frac{f'(z)}{f(z)-f(u)} - \frac{f'(z)}{f(z)-g(v)} - \bigg(\frac{1}{z-u} - \frac{1}{z-u^*}  \bigg),\notag
\end{align}
when each of $z, f(z), u, u^*, f(u)$, and $g(v)$ is finite.  As above, $u^* := j_D(u)$ is the Schwarz reflection of $u$ across $\partial D$, and if any of $f(u), g(v), u,$ and $u^*$ is infinite, the corresponding term in \eqref{Eq:CalCDef} disappears.  We likewise extend to $z=\infty$ or a pole of $f$ by limits.

In order to prove our interior-normalized Loewner energy formula, Theorem \ref{Thm:InteriorLE}, we also need to understand how the $\calB^*$ and $\calC$ operators relate to pre- and post-composition by elements of $\PSL_2(\C)$.  As for the normalized $\calB_f$, we hide these details away in Appendix \ref{Appendix:NPS}, but summarize in Table \ref{Table:NPS*Props}.

\begin{table}
\renewcommand{\tabcolsep}{0.2cm}
\renewcommand{\arraystretch}{1.5}
\centering
\begin{tabular}{|p{0.115\linewidth}|p{0.605\linewidth}|p{0.19\linewidth}|}
    \hline
     \emph{Idea} & \emph{Mathematical summary} & \emph{Precise \qquad statement}\\
     
     \hline
     \Mob covariance & 
     $\calB^*_{S\circ f \circ T_u, S \circ g \circ T_v} (z,u,v) = \calB^*_{f,g}\big(T_u(z),T_u(u), T_v(v)\big)T_u'(z)$
     & Lemma \ref{Lemma:NPS*MobiusInvariance}\\

     \hline
     \Mob covariance & $\calC_{S\circ f \circ T_u, S \circ g \circ T_v} (z,u,v) = \calC_{f,g}\big(T_u(z),T_u(u), T_v(v)\big)T_u'(z)$ & Lemma \ref{Lemma:NPS*MobiusInvariance}\\
     
     
     \hline
\end{tabular}

\caption{\small Properties of the $\calB^*$ and $\calC$ found in Appendix \ref{Appendix:NPS}.}
\label{Table:NPS*Props}
\end{table}

\subsubsection{The interior-normalized formula}
Taking Lemma \ref{Lemma:NPS*MobiusInvariance} for granted, we can prove our interior-normalized energy formula.
\begin{proof}[Proof of Theorem \ref{Thm:InteriorLE}]
    As the roles of $D$ and $D^*$ are interchangeable, it suffices to prove
\begin{multline*}
    I^L(\gamma) = \frac{4}{\pi}\int_D | \calB_{f,g}^*(z,u,v) |^2\dd A(z) + \frac{4}{\pi}\int_{D^*} | \calB_{g}(z,v) |^2\dd A(z) \\ 
    - \frac{2}{\pi}\int_D | \calC_{f,g}(z,u,v) |^2\dd A(z) - \frac{2}{\pi}\int_{D^*} | \calC_{g,f}(z,v,u) |^2\dd A(z).
\end{multline*}
    In the case of $D= \D$, $u = 0$, $v= \infty$, $\gamma \subset \widehat{\C}\bs\{0,\infty\}$, and $f$ and $g$ the corresponding conformal maps normalized with $f(0) = 0 = 1/g(\infty)$, this is Wang's formula \eqref{Eq:LEDisk}, as we saw in Example \ref{Eg:InteriorLE}.

    Now let $D \subsetneq \widehat{\C}$ be a general disk with $u\in D$, $v \in D^*$, and let $\gamma \subset \widehat{\C}$ be a general Jordan curve. Take \Mob $T_u: \D \rightarrow D$ with $T_u(0)=u$, \Mob $T_v: \D^* \rightarrow D^*$ with $T_v(\infty)=v$, and \Mob $S$ such that $S(f(u)) = 0 = 1/S(g(v))$.  Considering $F:= S \circ f \circ T_u$ and $G := S \circ g \circ T_v$, we apply the previous paragraph, Lemmas \ref{Lemma:NPSMobiusInvariance1} and \ref{Lemma:NPS*MobiusInvariance}, and a change of variables to see
    \begin{align*}
        I^L(\gamma) &= \frac{4}{\pi}\int_{\D} | \calB^*_{F,G}(z,0,\infty) |^2\dd A(z) + \frac{4}{\pi}\int_{\D^*} | \calB_{G}(z,\infty) |^2\dd A(z)\\ & \hspace{8mm}- \frac{2}{\pi}\int_{\D} | \calC_{F,G}(z,0,\infty) |^2\dd A(z) - \frac{2}{\pi}\int_{\D^*} | \calC_{G,F}(z,\infty,0) |^2\dd A(z)\\
            &= \frac{4}{\pi}\int_{\D} | \calB^*_{f,g}(T_u(z),u,v) |^2|T_u'(z)|^2\dd A(z) + \frac{4}{\pi}\int_{\D^*} | \calB_{g}(T_v(z),v) |^2|T_v'(z)|^2\dd A(z)\\ & \hspace{8mm}- \frac{2}{\pi}\int_{\D} | \calC_{f,g}(T_u(z),u,v) |^2|T_u'(z)|^2\dd A(z) - \frac{2}{\pi}\int_{\D^*} | \calC_{g,f}(T_v(z),v,u) |^2|T_v'(z)|^2\dd A(z)\\
        &= \frac{4}{\pi}\int_{D} | \calB^*_{f,g}(z,u,v) |^2\dd A(z) + \frac{4}{\pi}\int_{D^*} | \calB_{g}(z,v) |^2\dd A(z)\\ & \hspace{8mm}- \frac{2}{\pi}\int_{D} | \calC_{f,g}(z,u,v) |^2\dd A(z) - \frac{2}{\pi}\int_{D^*} | \calC_{g,f}(z,v,u) |^2\dd A(z). \hfill \qedhere
    \end{align*}     
\end{proof}
We can generalize \eqref{Eq:GrunskyID} to give a version of the interior-normalized formula that replaces the $\calC$ integrals with a log term, similar to Wang's identity \eqref{Eq:LEDisk}.  We will do so in the more specific setting of a curve $\gamma$ not passing through $\infty$, and where $D$ is a bounded disk.  Furthermore, we assume $f$ maps to the bounded component of $\C \bs \gamma$, and $v = u^*$. 
\begin{cor}\label{Cor:InteriorLELog}
    Let $\gamma \subset \C$ be a Jordan curve, $D \subset \C$ a bounded disk, and $f:D \rightarrow \Omega$ and $g: D^* \rightarrow \Omega^*$ any conformal maps to the two complementary components of $\widehat{\C}\bs \gamma = \Omega \cup \Omega^*$, where $\Omega$ is the bounded component of $\C \bs \gamma$.  Then for any $u \in D$,
\begin{align}\label{Eq:InteriorLELog}
    I^L(\gamma) &= \frac{4}{\pi}\int_D | \calB_{f}(z,u) |^2\dd A(z) + \frac{4}{\pi}\int_{D^*} | \calB^*_{g,f}(z,u^*,u) |^2\dd A(z) +4 \log \bigg|  \frac{f'(u) \tilde{g}'(\tilde{u}^*)(u-u^*)^2}{(f(u)-g(u^*))^2}  \bigg|,
\end{align}
where the fraction inside the logarithm is interpreted as follows: if $u^*=\infty$, then $(u-u^*)^2$ is replaced by 1, and if $g(u^*)=\infty$, then $(f(u)-g(u^*))^2$ is replaced by 1.  Here $\tilde{g}'(\tilde{u}^*)$ is the derivative of $g$ at $u^*$ in the coordinate charts given by $j(z)=1/z$, when either $u^* = \infty$ or $g(u^*) = \infty$ (or both).
\end{cor}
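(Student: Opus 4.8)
The plan is to reduce \eqref{Eq:InteriorLELog} to the standard disk case and then invoke the Grunsky identity \eqref{Eq:GrunskyID}. Comparing \eqref{Eq:InteriorLELog} with the interior-normalized formula of Theorem \ref{Thm:InteriorLE} specialized to $v = u^*$ (the $\calB$ and $\calB^*$ terms are then literally identical in the two statements), it suffices to prove the \emph{replacement identity}
\begin{equation*}
    - \frac{2}{\pi}\int_D | \calC_{f,g}(z,u,u^*) |^2\,\dd A - \frac{2}{\pi}\int_{D^*} | \calC_{g,f}(z,u^*,u) |^2\,\dd A = 4 \log \bigg| \frac{f'(u)\,\tilde g'(\tilde u^*)\,(u-u^*)^2}{(f(u)-g(u^*))^2} \bigg|.
\end{equation*}
The crucial feature of the hypothesis $v = u^*$ is that a \emph{single} \Mob transformation does all the work: any $T \in \PSL_2(\C)$ with $T(\D) = D$ and $T(0) = u$ automatically satisfies $T(\infty) = u^*$, since \Mob maps conjugate Schwarz reflection across $\partial \D$ to Schwarz reflection across $\partial D$. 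Thus $T$ carries the quadruple $(\D, \D^*, 0, \infty)$ onto $(D, D^*, u, u^*)$.

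First I would record that the two $\calC$-integrals are \Mob invariant, the $\calC$-analogue of Corollary \ref{Lemma:BIntegralInvariance}. Indeed, for any $S \in \PSL_2(\C)$, Lemma \ref{Lemma:NPS*MobiusInvariance} with $T_u = T_v = T$ gives $\calC_{S\circ f\circ T,\,S\circ g\circ T}(z,0,\infty) = \calC_{f,g}(T(z),u,u^*)\,T'(z)$ --- note the $S$ disappears on the right, reflecting post-composition invariance --- so the substitution $w = T(z)$ yields
\begin{equation*}
    \frac{2}{\pi}\int_\D |\calC_{S\circ f\circ T,\,S\circ g\circ T}(z,0,\infty)|^2\,\dd A = \frac{2}{\pi}\int_D |\calC_{f,g}(w,u,u^*)|^2\,\dd A,
\end{equation*}
and likewise for the $\calC_{g,f}$ integral over $D^*$. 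Writing $F := S\circ f\circ T$ and $G := S\circ g\circ T$, the left-hand side of the replacement identity therefore equals the left-hand side of \eqref{Eq:GrunskyID} written for $F,G$ on $\D,\D^*$. Choosing $S(w) = \lambda(w-f(u))/(w-g(u^*))$ with $\lambda \neq 0$ forces $F(0) = 0$ and $G(\infty) = \infty$ (using $f(u) \neq g(u^*)$, which holds since these lie in complementary components), which is exactly the normalization under which \eqref{Eq:GrunskyID} holds; it gives that this common value is $4\log(|F'(0)|/|G'(\infty)|)$.

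It remains to identify the ratio $F'(0)/G'(\infty)$. Writing $T(z) = (az+b)/(cz+d)$ with $ad-bc = 1$, so that $T'(0) = 1/d^2$ and $u - u^* = T(0) - T(\infty) = -1/(cd)$, the chain rule gives $F'(0) = S'(f(u))\,f'(u)\,T'(0)$ with $S'(f(u)) = \lambda/(f(u)-g(u^*))$, while a leading-order expansion of $G = S\circ g\circ T$ at $\infty$ (using $T(z) - u^* = -1/(c(cz+d))$ and $g(T(z)) = g(u^*) + g'(u^*)(T(z)-u^*) + \cdots$) produces $G'(\infty) = \lambda(f(u)-g(u^*))\,c^2/g'(u^*)$. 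The decisive cancellation is
\begin{equation*}
    \frac{F'(0)}{G'(\infty)} = \frac{f'(u)\,g'(u^*)}{(f(u)-g(u^*))^2}\cdot\frac{T'(0)}{c^2} = \frac{f'(u)\,g'(u^*)\,(u-u^*)^2}{(f(u)-g(u^*))^2},
\end{equation*}
where the last step uses $T'(0)/c^2 = 1/(c^2 d^2) = (u-u^*)^2$. Taking $4\log|\cdot|$ recovers precisely the right-hand side of the replacement identity, with $\tilde g'(\tilde u^*) = g'(u^*)$, which completes the proof whenever $u^*$ and $g(u^*)$ are both finite.

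The main obstacle is the bookkeeping in the two degenerate configurations: $u^* = \infty$, which occurs exactly when $u$ is the Euclidean center of $D$, and $g(u^*) = \infty$, which occurs when $u^* = g^{-1}(\infty)$. In these cases the ordinary derivative $g'(u^*)$ must be replaced by the chart derivative $\tilde g'(\tilde u^*)$ computed in the coordinate $j(z) = 1/z$, and the conventions replacing $(u-u^*)^2$ or $(f(u)-g(u^*))^2$ by $1$ take effect, exactly paralleling the treatment of $\infty$ in Definition \ref{Def:NPS}. For each case I would rerun the leading-order expansion of $F$ and $G$ in the appropriate chart --- for instance, when $g(u^*) = \infty$ one takes $S$ affine and reads off $G'(\infty)$ from $1/G(1/w)$ near $0$ --- and verify that the same cancellation recovers the stated logarithm. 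These verifications are routine but must be carried out case by case; they are the only part of the argument not immediately subsumed by the \Mob invariance of the $\calC$-integrals together with \eqref{Eq:GrunskyID}.
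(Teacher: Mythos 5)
Your proposal is correct and follows essentially the same route as the paper's proof: both reduce to the normalized configuration $(\D,0,\infty)$ via the \Mob covariance of the $\calC$-operator (Lemma \ref{Lemma:NPS*MobiusInvariance}), invoke the Grunsky identity \eqref{Eq:GrunskyID} for the resulting maps $F,G$ with $F(0)=0$, $G(\infty)=\infty$, and finish by computing the ratio $F'(0)/G'(\infty)$. The only difference is organizational: you carry out the reduction with a single \Mob map $T:\D \rightarrow D$, $T(0)=u$ (using the symmetry principle to get $T(\infty)=u^*$), whereas the paper first treats $D=\D$ with $T\in \Aut(\D)$ and then passes to a general bounded disk by an affine change of variables; both treatments defer the degenerate cases where $u^*$ or $g(u^*)$ is infinite to routine case-by-case expansions.
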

\noindent More explicitly, if both $u^*$ and $g(u^*)$ are finite, the log term in \eqref{Eq:InteriorLELog} is 
\begin{align*}
    4 \log \bigg|  \frac{f'(u) g'(u^*)(u-u^*)^2}{(f(u)-g(u^*))^2}  \bigg|.
\end{align*}
If $u^* \neq \infty = g(u^*)$, we obtain
\begin{align*}
    4 \log \big|  f'(u) \tilde{g}'(u^*)(u-u^*)^2  \big|
\end{align*}
for $\tilde{g} = j \circ g$, while if $u^* = \infty \neq g(u^*)$, the expression is
\begin{align*}
    4 \log \bigg|  \frac{f'(u) \tilde{g}'(0)}{(f(u)-g(\infty))^2}  \bigg|,
\end{align*}
where $\tilde{g} = g \circ j$.  Lastly, if $u^* = g(u^*) = \infty$, we have $4 \log |  f'(0) \tilde{g}'(0)|$ for $\tilde{g} = j \circ g \circ j$.  Since 
\begin{align}\label{Eq:WangsIDCase}
    4 \log |  f'(0) \tilde{g}'(0)| = 4 \log \bigg| \frac{f'(0)}{\lim\limits_{z \rightarrow \infty} g'(z)} \bigg| = 4 \log \bigg| \frac{f'(0)}{g'(\infty)} \bigg|,
\end{align}
this cases recovers Wang's identity \eqref{Eq:LEDisk}, as we saw in Example \ref{Eg:InteriorLE}.  

As an aside, we note that the logarithm term in \eqref{Eq:InteriorLELog} is thus similar in both structure and behavior to our point-wise welding functional $L_h(x,y)$ introduced in \S\ref{Sec:LFunctional}. 

Given Lemma \ref{Lemma:NPS*MobiusInvariance}, the proof of Corollary \ref{Cor:InteriorLELog} is elementary and essentially a computation; we include some details for completeness.
\begin{proof}
    We first assume $D = \D$.  By Theorem \ref{Thm:InteriorLE}, it suffices to show 
    \begin{align}\label{Eq:InteriorLELogNTS1}
        - \frac{2}{\pi}\int_{\D} | \calC_{f,g}(z,u,u^*) |^2\dd A(z) - \frac{2}{\pi}\int_{\D^*} | \calC_{g,f}(z,u^*,u) |^2\dd A(z) = 4 \log \bigg|  \frac{f'(u) \tilde{g}'(\tilde{u}^*)(u-u^*)^2}{(f(u)-g(u^*))^2}  \bigg|,
    \end{align}
    with the right-hand side interpreted as in the statement of the corollary.  There are four cases, depending on whether each of $u^*$ and $g(u^*)$ is infinite or not.  When $u^*=g(u^*) = \infty$, the log term is as in \eqref{Eq:WangsIDCase}, which is the desired equality by the Grunsky identity \eqref{Eq:GrunskyID} (see \cite[Ch.II Rmk. 2.2]{TTbook}).  
    
    We next consider the case $u^* \neq \infty$, $g(u^*) \neq \infty$, and leave the remaining two cases for the computation of the interested reader; the ideas are similar.  Given $u^* = 1/\bar{u} \neq \infty \neq g(u^*)$, we must show    \begin{align}\label{Eq:InteriorLELogNTS2}
        - \frac{2}{\pi}\int_{\D} | \calC_{f,g}(z,u,u^*) |^2\dd A(z) - \frac{2}{\pi}\int_{\D^*} | \calC_{g,f}(z,u^*,u) |^2\dd A(z) = 4 \log \bigg|  \frac{f'(u) {g}'({u}^*)(u-u^*)^2}{(f(u)-g(u^*))^2}  \bigg|.
    \end{align}
    Utilizing the \Mob transformations $S(z) := \frac{1}{z-g(u^*)}$ and $T(z) := \frac{z+u}{\bar{u}z+1} \in \text{Aut}(\D)$, we consider $F := S \circ f \circ T$ and $G := S \circ g \circ T$.  By the above-discussed $u^* = \infty = g(u^*)$ case,
    \begin{align}\label{Eq:InteriorLELogNTS3}
        - \frac{2}{\pi}\int_{\D} | \calC_{F,G}(z,0,\infty) |^2\dd A(z) - \frac{2}{\pi}\int_{\D^*} | \calC_{G,F}(z,\infty,0) |^2\dd A(z) = 4\log \frac{|F'(0)|}{|G'(\infty)|},
    \end{align}
    and here the left-hand side is the left-hand side of \eqref{Eq:InteriorLELogNTS2} by Lemma \ref{Lemma:NPS*MobiusInvariance} and a change of variables.  Turning to the right-hand side of \eqref{Eq:InteriorLELogNTS3}, computation yields
    \begin{align}\label{Eq:InteriorLELogF'(0)}
        |F'(0)| = \bigg| \frac{f'(u)(1-|u|^2)}{(f(u)-g(u^*))^2}  \bigg|,
    \end{align}
    and we find $G$ has expansion near $\infty$ given by
    \begin{align*}
        G(z) = \frac{1}{ g'(u^*)u^*(u-u^*)z^{-1} + O(z^{-2})} = \frac{z}{g'(u^*)u^*(u-u^*)} + O(1).
    \end{align*}
    Thus $\lim\limits_{z \rightarrow \infty} G'(z) = \frac{1}{g'(u^*)u^*(u-u^*)}$, which, combined with \eqref{Eq:InteriorLELogF'(0)}, yields the right-hand side of \eqref{Eq:InteriorLELogNTS2}.

    For a general bounded disk $D$, we pre-compose $f$ and $g$ with an affine mapping $A(z)=az+b$ of $\D$ onto $D$, normalizing at $A^{-1}(u)$ and $A^{-1}(u^*)$.  One then checks that both sides of \eqref{Eq:InteriorLELogNTS1} are invariant under this operation.  They are, and we spare the reader the tedious details of the four cases.  

\end{proof}

\subsubsection{Comment on the relation between the boundary-normalized and interior-normalized cases}\label{Sec:RelationBetweenTwoFormulas}

We conclude \S\ref{Sec: BoundayNormalizedforLE} by observing that there is a sense in which we can view the boundary-normalized formula of Theorem \ref{Thm:BoundaryLE} as a limiting case of the interior-normalized formula of Theorem \ref{Thm:InteriorLE}.  Indeed, take sequences $\{u_n\} \subset D$, $\{v_n\} \subset D^*$ such that $u_n \rightarrow w_0 \in \partial D$ and $v_n \rightarrow w_1 := g^{-1}\circ f(w_0)$.  By continuity of the conformal map up to $\partial D$, $\calB_{f}(z,u_n) \rightarrow \calB_{f}(z,w_0)$ point-wise a.e. $z \in D$, while, similarly,
    \begin{align*}
        \calB^*_{g,f}(z,v_n,u_n) = &\frac{g'(z)}{g(z)-f(u_n)} - \frac{1}{z-v_n^*} - \frac{1}{2}\frac{g''(z)}{g'(z)}\\
        \rightarrow & \frac{g'(z)}{g(z)-f(w_0)} - \frac{1}{z-w_1} - \frac{1}{2}\frac{g''(z)}{g'(z)} = \calB_g(z,w_1)
    \end{align*}
    (the formulas here assume $f(u_n), v_n^*, f(w_0),$ and $w_1$ all belong to $\C$, but the limit still holds by similar reasoning even if they do not).  Also,
    \begin{align*}
        \calC_{f,g}(z,u_n,v_n) = \frac{f'(z)}{f(z)-f(u_n)} - \frac{f'(z)}{f(z) - g(v_n)} - \frac{1}{z-u_n} + \frac{1}{z-u_n^*} \rightarrow 0,
    \end{align*}
    while similarly $\calC_{g,f}(z,v_n,u_n) \rightarrow 0$.  Thus the limit
    \begin{align*}
        I^L(\gamma) =  \lim_{n \rightarrow \infty}\bigg[ &\frac{4}{\pi}\int_D | \calB_{f}(z,u_n) |^2\dd A(z) + \frac{4}{\pi}\int_{D^*} | \calB^*_{g,f}(z,v_n,u_n) |^2\dd A(z)\\ 
    &\hspace{4mm}- \frac{2}{\pi}\int_D | \calC_{f,g}(z,u_n,v_n) |^2\dd A(z) - \frac{2}{\pi}\int_{D^*} | \calC_{g,f}(z, v_n, u_n ) |^2\dd A(z)\bigg],
    \end{align*}
    which by Theorem \ref{Thm:InteriorLE} is the limit of a constant sequence, is the same as
    \begin{align*}
        &\frac{4}{\pi}\int_D \lim_{n \rightarrow \infty}| \calB_{f}(z,u_n) |^2\dd A(z) + \frac{4}{\pi}\int_{D^*} \lim_{n \rightarrow \infty}|\calB^*_{g,f}(z,v_n,u_n) |^2\dd A(z)\notag\\ 
    &\hspace{4mm}- \frac{2}{\pi}\int_D \lim_{n \rightarrow \infty}| \calC_{f,g}(z,u_n,v_n) |^2\dd A(z) - \frac{2}{\pi}\int_{D^*} \lim_{n \rightarrow \infty}| \calC_{g,f}(z,v_n,u_n) |^2\dd A(z)\\
     =&\frac{4}{\pi}\int_D | \calB_{f}(z,w_0) |^2\dd A(z) + \frac{4}{\pi}\int_{D^*}|\calB_{g}(z,w_1) |^2\dd A(z) = I^L(\gamma),
    \end{align*}
    by \ref{Thm:BoundaryLE}.  In words, the equality of Wang's two expressions \eqref{Eq:LEDisk} and \eqref{Eq:LEH} for the Loewner energy (which undergirds the equality of our interior-normalized and boundary-normalized formulas) yields that we can interchange the limit and integral in our interior-normalized energy formula.

\section{Topology of the normalized Weil--Petersson class}\label{Sec: topologicalconvergence}

The point of this section is to review equivalent modes of convergence in the normalized Weil-Petersson \Teich space on $\H$, as summarized in Proposition~\ref{Thm: equiv convergence H}.  We start by recalling the arc-length parametrization and associated normalizations.

\subsection{Arc-length parametrization and normalization}



We first recall Bishop's characterization of bounded Weil-Petersson curves $\gamma \subset \C$ in terms of an arc-length parametrization $a: \SSS \to \gamma$.  

\begin{prop}\rm{\cite[Thm. 1.1]{Bishop2019weil}}
    A Jordan curve $\gamma \subset \C$ is Weil--Petersson if and only if it has an arclength parametrization $a: \SSS \to \gamma$ in the Sobolev space $H^{\frac{3}{2}}(\SSS)$, or equivalently $a' \in H^{\half}(\SSS)$.
\end{prop}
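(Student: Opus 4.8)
The plan is to reduce the statement to a condition on the \emph{tangent-angle function} of $\gamma$ and then transfer that condition between the arclength and conformal parametrizations using Shen's welding characterization (Proposition \ref{IntroProp:ShenWeldingD}) and the Nag--Sullivan pullback bound (Proposition \ref{const.}). First, the equivalence $a \in H^{3/2}(\SSS) \iff a' \in H^{1/2}(\SSS)$ is immediate from Fourier theory: writing $a(e^{i\theta}) = \sum_n \hat a_n e^{in\theta}$, the $H^{s}$ seminorm is comparable to $\sum_n |n|^{2s}|\hat a_n|^2$, and since $a'$ has coefficients $in\hat a_n$ one gets $\|a'\|_{H^{1/2}}^2 \asymp \sum_n |n|^3 |\hat a_n|^2 \asymp \|a\|_{H^{3/2}}^2$. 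So it suffices to treat the equivalence ``$\gamma$ Weil--Petersson $\iff a' \in H^{1/2}(\SSS)$.''

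Because $a$ is an arclength parametrization of a curve of length $L$, we have $|a'| \equiv L/2\pi$, so $a'$ has constant modulus and we may write $a'(e^{i\theta}) = \tfrac{L}{2\pi}e^{i\beta(\theta)}$ with $\beta$ the tangent angle. A unit-modulus function lies in $H^{1/2}$ exactly when its angle does, \emph{provided the angle is of bounded mean oscillation}; this is a standard lemma (the map $x\mapsto e^{ix}$ is bi-Lipschitz on bounded sets, and the remaining oscillation is controlled by John--Nirenberg). Every finite-energy curve is a quasicircle \cite[Prop. 3.6]{Loopenergy}, hence chord-arc, so $\beta\in\BMO$ on that side, while on the converse side $a'\in H^{1/2}\subset \VMO$ forces the same. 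Thus the theorem reduces to showing $\gamma$ is Weil--Petersson $\iff \beta \in H^{1/2}(\SSS)$.

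To connect $\beta$ with a Riemann map $f:\D\to\Om$, I would write $a = f\circ\psi$, where $\psi(e^{it})=e^{i\sigma(t)}\in\ophom{\SSS}$ is the reparametrization making $f\circ\psi$ unit speed. Differentiating yields the two identities $\beta(t) = \Im\log f'(e^{i\sigma(t)}) + \sigma(t) + \tfrac{\pi}{2}$ and $\log\sigma'(t) = \mathrm{const} - \Re\log f'(e^{i\sigma(t)})$, the latter being exactly the arclength normalization $|f'\circ\psi|\,\sigma' = L/2\pi$. Now for Weil--Petersson $\gamma$ one has $\log f'\in\calH(\D)$ with boundary trace in the Dirichlet/$H^{1/2}$ class (Shen's Proposition \ref{IntroProp:ShenWeldingD} together with \eqref{Eq:LERHHalf}), so both $\Re\log f'$ and its conjugate $\Im\log f'$ lie in $H^{1/2}(\SSS)$. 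Granting that $\psi$ is itself a Weil--Petersson homeomorphism, Proposition \ref{const.} makes $\calP_\psi$ bounded on $H^{1/2}$, so $\Im\log f'\circ\psi\in H^{1/2}$; since $\sigma-\mathrm{id}\in H^{1/2}$ as well (because $\log\sigma'\in H^{1/2}$), the first identity gives $\beta\in H^{1/2}$. The converse runs the same identities backwards, using boundedness of $\calP_{\psi^{-1}}$ to pull $\beta$ back to $\Im\log f'$ and then reconstructing $\log f'$ on $\D$ from its $H^{1/2}$ boundary data by harmonic extension, whence $\gamma$ is Weil--Petersson.

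The crux, and the step I expect to be genuinely hard, is the self-referential claim that the arclength reparametrization $\psi$ is a Weil--Petersson circle homeomorphism, i.e. $\log\sigma'\in H^{1/2}(\SSS)$. The difficulty is that $\sigma$ is defined implicitly by the ODE $\sigma' = (L/2\pi)/|f'\circ\sigma|$, so the relation $\log\sigma' = \mathrm{const}-\Re\log f'\circ\sigma$ has $\sigma$ on both sides. Establishing $\log\sigma'\in H^{1/2}$ therefore requires a fixed-point/bootstrapping argument: one starts from the \emph{a priori} regularity $\log|f'|\in H^{1/2}$ on the conformal side, shows this survives the quasisymmetric change of variables $\sigma$, and closes the loop. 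This is precisely the analytic heart of Bishop's theorem, which in \cite{Bishop2019weil} is handled via the beta-number and good-$\lambda$ machinery rather than the soft functional-analytic route sketched here; a self-contained proof along the present lines would need a direct estimate that pullback by $\psi$ does not destroy $H^{1/2}$-membership, which is essentially the statement one is trying to prove.
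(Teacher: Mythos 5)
The paper never proves this proposition at all: it is imported verbatim from Bishop \cite[Thm.\ 1.1]{Bishop2019weil}, so the only question is whether your argument stands on its own. It does not, and the gap is the one you concede in your final paragraph. Both directions of your reduction hinge on pulling $H^{\half}(\SSS)$ functions back and forth along the conformal-to-arclength reparametrization $\psi$, which via Proposition \ref{const.} requires at minimum that $\psi$ be quasisymmetric. In the forward direction this already presupposes that Weil--Petersson curves are chord-arc (so that $|f'|\,d\theta$ is a doubling measure and $\sigma$ is quasisymmetric) --- a nontrivial theorem in its own right, belonging to the same circle of results you are trying to prove. In the converse direction the situation is worse: starting only from $a' \in H^{\half}(\SSS)$ you know nothing about $\gamma$ --- not that it is a quasicircle, not that it is chord-arc, not that a Riemann map $f$ has any boundary regularity --- so Nag--Sullivan cannot even be invoked for $\calP_{\psi^{-1}}$. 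Because $\sigma$ is defined implicitly by $\log\sigma' = \mathrm{const} - \log|f'|\circ\sigma$, any regularity for $\sigma$ must be extracted from the very conclusion being sought; as you say yourself, closing this loop ``is essentially the statement one is trying to prove.'' An argument whose central step is acknowledged to be equivalent to the theorem is a reduction, not a proof.

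Two further concrete errors. First, ``every finite-energy curve is a quasicircle \cite[Prop.\ 3.6]{Loopenergy}, hence chord-arc'' is a non sequitur: quasicircles need not even be rectifiable (their Hausdorff dimension can exceed $1$), so chord-arc does not follow from quasicircle-ness; that Weil--Petersson curves are chord-arc is true but is itself a substantive result of Shen--Wu/Bishop, and it is exactly what licenses $\log|f'| \in \BMO$, the $A_\infty$ property of $|f'|$, and the quasisymmetry of $\sigma$ that your forward direction needs. Second, dismissing the lifting step as a ``standard lemma'' is too quick: the tangent map $e^{i\beta}$ of a closed Jordan curve has winding number one, and passing between unimodular $H^{\half}$ maps and real-valued $H^{\half}$ lifts (after factoring out the winding) is the borderline case $sp=1$ of the Bourgain--Brezis--Mironescu lifting problem, not a consequence of the pointwise inequality plus John--Nirenberg that you cite. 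This delicacy is precisely why the paper's Proposition \ref{Prop:ShenWuWPR} (Shen--Wu) states the unbounded-case criterion in terms of the \emph{existence of a lift} $b \in H^{\half}(\R)$ rather than in terms of $z'$ itself.
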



\noindent Shen and Wu~\cite{WPIII} addressed the case when $\infty \in \gamma$ in terms of an arc-length parametrization $z: \R \rightarrow \gamma$.  We recall that a rectifiable curve $\gamma$ is \emph{chord arc} if for all $x,y \in \gamma$, $\ell(\gamma(x,y)) \lesssim |x-y|$, where $\gamma(x,y)$ is the arc on $\gamma$ of shortest length connecting $x$ to $y$.


\begin{prop}\rm{\cite[Thm. 2.2]{WPIII}}\label{Prop:ShenWuWPR}
A Jordan curve $\gamma$ with $\infty \in \gamma$ is Weil--Petersson if and only if it is chord-arc and there exists some function $b$ in the real Sobolev class $H^{\half}(\R)/\R$ such that $z'(s)=e^{ib(s)}$.
\end{prop}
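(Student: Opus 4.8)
The plan is to realize the tangent angle $b$ as the boundary argument of a Riemann map, and then transfer the already-known Weil--Petersson characterizations from the conformal parametrization to the arc-length parametrization. Fix a Riemann map $f:\H \rightarrow \Omega$ onto one side of $\gamma$ with $f(\infty)=\infty$, so that \eqref{Eq:LEH} and its $H^\half$-form \eqref{Eq:LoewnerDirichletHalf} are available. First I would note that, since a chord-arc curve is rectifiable, $f|_\R$ is absolutely continuous and arc length along $\gamma$ is $\dd s = |f'(x)|\,\dd x$; writing $\sigma(x):=\int_0^x |f'(t)|\,\dd t$ for the arc-length reparametrization and $z(s):=f(\sigma^{-1}(s))$ for the arc-length parametrization, one computes $z'(s)=f'(x)/|f'(x)|=e^{i\arg f'(\sigma^{-1}(s))}$. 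Hence the tangent angle is exactly $b=\arg f'\circ\sigma^{-1}$, and the proposition reduces to comparing the regularity of $\arg f'$ (in the conformal variable $x$) with that of $b$ (in the arc-length variable $s$).

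Second, I would bridge modulus and argument. Because $\log f'=\log|f'|+i\arg f'$ is holomorphic on $\H$, the boundary trace $\arg f'$ is, up to an additive constant, the Hilbert transform of $\log|f'|$; as the Hilbert transform is the unimodular Fourier multiplier $-i\,\mathrm{sgn}(\xi)$, it is an isometry of the homogeneous $H^\half(\R)$, so $\log|f'|\in H^\half(\R)$ if and only if $\arg f'\in H^\half(\R)/\R$. A chord-arc curve has bounded turning and is therefore a quasicircle, so \eqref{Eq:LoewnerDirichletHalf} applies and yields the chain $\gamma$ Weil--Petersson $\iff I^L(\gamma)<\infty \iff \log|f'|\in H^\half(\R) \iff \arg f'\in H^\half(\R)/\R$. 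Running the identical argument with the Riemann map $g:\H^*\rightarrow\Omega^*$ supplies the matching condition on $\log|g'|$ automatically, since $b$ is intrinsic to $\gamma$ (the two sides give the same tangent direction up to the constant shift $\pi$, which is invisible mod $\R$).

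Third, and this is the crux, I would pass from $\arg f'\in H^\half(\R,\dd x)$ to $b=\arg f'\circ\sigma^{-1}\in H^\half(\R,\dd s)$. The chord-arc hypothesis is precisely what makes $|f'|$ an $A_\infty$ weight (classical Lavrentiev theory), and the indefinite integral of an $A_\infty$ weight is a quasisymmetric homeomorphism, so both $\sigma$ and $\sigma^{-1}$ lie in $\QS(\R)$. By the Nag--Sullivan boundedness criterion (Proposition~\ref{const.}), the pullback operators $\calP_\sigma$ and $\calP_{\sigma^{-1}}$ are then bounded on $H^\half(\R)$, and this transfers membership in both directions: $\arg f'\in H^\half(\R)$ if and only if $b\in H^\half(\R)/\R$. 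Chaining the three equivalences proves the proposition.

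I expect the main obstacle to be this third step: establishing that the arc-length reparametrization $\sigma$ is quasisymmetric and that this is exactly the content of the chord-arc condition. This is also where the unboundedness demands care, since $\gamma$ passes through $\infty$: the arc-length parameter $s$ ranges over all of $\R$, $\sigma$ is a genuine self-map of $\R$ rather than of a compact circle, and one must control the $A_\infty$/quasisymmetry statement and the boundary behavior of $\log f'$ uniformly out to $\infty$. An alternative is to reduce to Bishop's bounded-case characterization \cite{Bishop2019weil} by a \Mob change of coordinates sending $\infty$ to a finite point, at the cost of tracking how that change distorts both the arc-length reparametrization and the tangent angle. The remaining ingredients are the Douglas formula and the equivalences already recorded in the excerpt.
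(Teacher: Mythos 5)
This proposition is not proved in the paper at all: it is imported as background from Shen--Wu \cite[Thm.~2.2]{WPIII}, so your proposal has to stand on its own. The backward half of your outline (chord-arc together with $b\in H^{\half}(\R)/\R$ implies Weil--Petersson) is structurally sound, granted two classical inputs you invoke only in passing: the Lavrentiev/Jerison--Kenig--David theory giving $|f'|\in A_\infty$ and $\log f'\in \BMOA$ for chord-arc domains, and the fact that $\arg f'$ is the Poisson extension of its boundary trace. The latter cannot be skipped: a harmonic function on $\H$ whose trace lies in $H^{\half}(\R)$ need not have finite Dirichlet energy (take $v(x,y)=y$, whose trace is $0$), and it is the Dirichlet energy, not the trace norm, that feeds \eqref{Eq:LEHNabla}; here the $\BMOA$ representation rescues you, but only because chord-arc is a hypothesis in this direction.

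The genuine gap is in the forward direction, where chord-arc is a \emph{conclusion} of the proposition but a standing \emph{assumption} of your argument. The portion of your chain ``$\gamma$ Weil--Petersson $\iff \log|f'|\in H^{\half}(\R) \iff \arg f'\in H^{\half}(\R)/\R$'' is fine unconditionally (Wang's identity, Douglas, and the pointwise Cauchy--Riemann identity $\abs{\nabla \log|f'|}=\abs{\nabla \arg f'}$), but everything after it collapses: without chord-arc you cannot even define the arc-length parametrization $z$ and hence $b$ (a priori you do not know $\gamma$ is rectifiable), you cannot conclude $|f'|\in A_\infty$, hence cannot conclude $\sigma\in\QS(\R)$, and so the Nag--Sullivan transfer from $\arg f'$ to $b=\arg f'\circ\sigma^{-1}$ never gets off the ground. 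Nothing in your argument produces the chord-arc bound itself: rectifiability plus the quasicircle property do not imply chord-arc (there are rectifiable quasicircles, e.g.\ ones containing shrinking truncated-snowflake arcs, whose length-to-chord ratios blow up), so ``Weil--Petersson $\Rightarrow$ chord-arc'' is a substantive theorem in its own right --- indeed it is part of what Shen--Wu prove --- and your proof of the ``only if'' half cannot close without supplying it.
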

\noindent Thus for unbounded $\gamma$, the derivative $z'(s)=e^{ib(s)}$ itself is not necessarily in $H^\half(\R)$, but rather its lift is.  We follow Shen--Wu~\cite{WPIII} in normalizing Weil--Petersson curves as follows.

\begin{definition}\label{Def:NormalizedCurve}
    A Weil--Petersson curve $\gamma$ is \emph{normalized} if it passes through $0$, $x>0$, and $\infty$, where the arc length of $\gamma$ between zero and $x$ is exactly 1.  By $z: \R \rightarrow \gamma$ we denote the associated \emph{normalized arc-length parametrization} satisfying $z(0) =0$ and $z(1)=x$.  The \emph{left domain $H$ (right domain $H^*$)} of $\gamma$ is then the component of $\C \bs \gamma$ which is positively-oriented (negatively-oriented) with respect to traversing $\gamma$ by passing through $0, x,$ and $\infty$ in order.  We say conformal maps $f: \H \rightarrow H$ and $g: \H^* \rightarrow H^*$ are \emph{normalized} if they satisfy $f(s)=z(s) = g(s)$ for $s=0,1,\infty$.  The associated \emph{normalized conformal welding} is $h := g^{-1} \circ f$, and fixes 0,1, and $\infty$.
\end{definition}
\noindent By Proposition \ref{Prop:ShenWuWPR}, the arc-length parametrization $z$ of normalized $\gamma$ is of the form $z(s)=e^{ib(s)}$ for some $b \in H^\half(\R)$. Proposition \ref{Thm: equiv convergence H} below will describe convergence of normalized Weil--Petersson curves in terms of each of $f,g,h,$ and $b$.

\begin{remark}\label{rmk: TwoNormalizationDiff.}
The normalization of Definition \ref{Def:NormalizedCurve} differs slightly from the usual normalization \eqref{Fact: QuasicirclesModelofT01} of $T(1)$ in terms of oriented quasicircles passing through $0,1,$ and $\infty$ (or quasidisks, as in \cite[Ch.III  \S1.5]{Lehtobook}).  However, as noted by Shen-Wu, this normalization still preserves the equivalence relation on $T(1)$ \cite[\S2.2]{WPIII}. 
\end{remark}

An arc-length parametrization $z$ leads to two new elements of $\ophom{\R}$ through setting $H_f:= z^{-1} \circ f$ and $H_g:= z^{-1}\circ g$.  We may thereby factorize of the conformal welding as $h = H_g^{-1} \circ H_f$, and we can furthermore express the Loewner energy solely in terms of $H_f$ and $H_g$.

\begin{lemma}\label{Lemma:LEWeldingFactorization}
    Let $\gamma$ be a normalized Weil-Petersson curve with associated maps as in Definition \ref{Def:NormalizedCurve} and the previous paragraph.  Then
    \begin{align*}
        I^L(\gamma) = 2\big\|\log |H_f'| \big\|_{H^{1/2}(\R)}^2+ 2\big\|\log |H_g'| \big\|_{H^{1/2}(\R)}^2.
    \end{align*}
\end{lemma}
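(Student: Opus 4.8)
The plan is to reduce the claim directly to Wang's formula \eqref{Eq:LoewnerDirichletHalf}, which already expresses $I^L(\gamma)$ in terms of $\big\|\log|f'|\big\|_{H^{1/2}(\R)}^2$ and $\big\|\log|g'|\big\|_{H^{1/2}(\R)}^2$ for the normalized maps $f$ and $g$. The whole content of the lemma then rests on the two pointwise identities $\log|H_f'| = \log|f'|$ and $\log|H_g'| = \log|g'|$ holding a.e.\ on $\R$, after which the statement follows by substitution. Note that the normalization $f(\infty) = z(\infty) = \infty = g(\infty)$ of Definition \ref{Def:NormalizedCurve} guarantees that the $\H$-version \eqref{Eq:LoewnerDirichletHalf} is applicable.

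The key observation I would exploit is that $z$ is an \emph{arc-length} parametrization, so by Proposition \ref{Prop:ShenWuWPR} we have $z'(s) = e^{ib(s)}$ with $|z'(s)| \equiv 1$. Since $H_f = z^{-1}\circ f$, the relation $z(H_f(x)) = f(x)$ holds on $\R$; differentiating in $x$ yields
\begin{align*}
    z'\big(H_f(x)\big)\,H_f'(x) = f'(x),
\end{align*}
where $f'(x)$ denotes the boundary trace of the complex derivative, equivalently the tangent vector $\frac{d}{dx}f(x)$ to $\gamma$. Taking moduli and using $|z'|\equiv 1$ gives $H_f'(x) = |f'(x)|$, where we use that $H_f \in \ophom{\R}$ is increasing so that $H_f'(x) > 0$. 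Hence $\log|H_f'(x)| = \log|f'(x)|$ a.e., and the identical argument applied to $H_g = z^{-1}\circ g$ gives $\log|H_g'| = \log|g'|$. The essential mechanism is simply that composing with the unit-speed map $z^{-1}$ leaves $\log$ of the modulus of the derivative unchanged.

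The one point requiring care — and the step I expect to be the main obstacle, though it is a matter of regularity bookkeeping rather than a genuine difficulty — is justifying the differentiation above in the a.e.\ sense: that the boundary trace $f'$ exists a.e., that $H_f$ is differentiable a.e.\ with the chain rule valid, and that $\frac{d}{dx}f(x)$ agrees with the nontangential trace of the complex derivative. This follows from the Weil--Petersson regularity already in play: $\gamma$ is chord-arc, so $H$ and $H^*$ are Smirnov domains on which $\log|f'|$ and $\log|g'|$ are the Poisson extensions of their boundary traces, as recalled around \eqref{Eq:PoissonLeftInverse}; consequently $f|_\R$ is locally absolutely continuous and differentiable a.e.\ with $\frac{d}{dx}f(x) = f'(x)$, and $H_f \in \ACloc(\R)$.

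With these identities established, substitution into \eqref{Eq:LoewnerDirichletHalf} gives
\begin{align*}
    I^L(\gamma) = 2\big\|\log|f'|\big\|_{H^{1/2}(\R)}^2 + 2\big\|\log|g'|\big\|_{H^{1/2}(\R)}^2 = 2\big\|\log|H_f'|\big\|_{H^{1/2}(\R)}^2 + 2\big\|\log|H_g'|\big\|_{H^{1/2}(\R)}^2,
\end{align*}
which is the desired identity.
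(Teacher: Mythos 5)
Your proposal is correct and follows essentially the same route as the paper: the paper's proof simply observes that $|H_f'(x)| = |f'(x)|$ a.e.\ (since $z$ is unit-speed) and likewise for $H_g$, then substitutes into \eqref{Eq:LERHHalf}. Your write-up merely makes the chain-rule computation and the regularity bookkeeping explicit, which the paper leaves implicit.
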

\begin{proof}
    For a.e. $x \in \R$, $|H_f'(x)| = |f'(x)|$, and similarly for $H_g$, and so the formula immediately follows from \eqref{Eq:LERHHalf}.
\end{proof}

\subsection{Topological convergence of the normalized Weil--Petersson class}

The following result collects various equivalent perspectives on convergence of normalized Weil--Petersson curves from \cite{WPIII}, which will prove useful for our approximation arguments in \S\ref{Sec:ProofOfComparable} below.  We recall that \cite{WPIII} shows four different models of Weil--Petersson \Teich space, equipped with their corresponding real Hilbert manifold structures, are all topologically equivalent to the Takhtajan-Teo complex Hilbert manifold structure on $T_0(1)$~\cite{TTbook}.  In particular,  we obtain the following equivalences for convergence.  We recall from \eqref{Def:A^1_2} that $A^1_2(\H) = \calH(\H)\cap L^2(\H)$ is the holomorphic $L^2$ space, and $A^1_2(\H^*) = \calH(\H^*)\cap L^2(\H^*)$ is the analogous set of functions on $\H^*$.  As usual, $\calA_F$ is the pre-Schwarzian derivative \eqref{Def:Preschwarzian} of $F$.

\begin{prop}{\cite[Theorems 2.3 and 2.4]{WPIII}}\label{Thm: equiv convergence H}
      Let $\{\gamma_n\}_{n=1}^{\infty}$ be a sequence of normalized Weil--Petersson curves with associated normalized conformal maps $f_n$ and $g_n$, normalized weldings $h_n$, and normalized arc-length parametrizations $z_n = e^{ib_n}$, where normalizations are according to Definition \ref{Def:NormalizedCurve}, and $b_n$ is the corresponding element of $H^\half(\R)$.  Similarly, let $f, g, h$, and $z = e^{ib}$ be the normalized functions associated to Weil--Petersson $\gamma$.  Then the following are equivalent: 
   \begin{enumerate}[(i)]
        \item $b_n \rightarrow b$ in $H^\half(\R)$, 

         \item\label{Lemma:TopologyWelding} $\log h_n' \rightarrow \log h'$ in $H^\half(\R)$,
    
      \item\label{Lemma:TopologyPSH} $\mathcal{A}_{f_n} \rightarrow \calA_f$ in $A_2^1(\H)$,

      \item\label{Lemma:TopologyPSH*} $\mathcal{A}_{g_n} \rightarrow \calA_g$ in $A_2^1(\H^*)$.

   \end{enumerate}
   If these are satisfied, we say $\gamma_n \rightarrow \gamma$ in the normalized Weil--Petersson \Teich space.
\end{prop}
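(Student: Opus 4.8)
The plan is to route all four conditions through a single hub, namely the convergence of the boundary traces $\log|f_n'|\to\log|f'|$ and $\log|g_n'|\to\log|g'|$ in $H^{\half}(\R)$, which is exactly the Takhtajan--Teo topology as seen through Wang's formula $I^L(\gamma)=2\|\log|f'|\|_{H^{1/2}(\R)}^2+2\|\log|g'|\|_{H^{1/2}(\R)}^2$. First I would dispatch the equivalence of (iii) with ``$\log|f_n'|\to\log|f'|$ in $H^{\half}(\R)$''. Since $\H$ is simply connected and $f_n',f'$ are non-vanishing, $\log(f_n'/f')$ is a single-valued holomorphic function with $\calA_{f_n}-\calA_f=(\log(f_n'/f'))'$, and the Cauchy--Riemann equations give $|(\log(f_n'/f'))'|=|\nabla(\log|f_n'|-\log|f'|)|$ pointwise. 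Hence $\|\calA_{f_n}-\calA_f\|_{A_2^1(\H)}^2$ is a fixed multiple of $\calD_\H(\log|f_n'|-\log|f'|)$, and the Douglas formula $\calD_\H(U)=\|U|_\R\|_{H^{1/2}(\R)}^2$ identifies this with the $H^{\half}(\R)$ distance (additive constants being invisible to both sides). The identical argument on $\H^*$ gives (iv) $\Leftrightarrow$ ``$\log|g_n'|\to\log|g'|$''.

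Next I would connect the welding condition (ii). Differentiating $h_n=g_n^{-1}\circ f_n$ yields the coupling $\log h_n'=\log|f_n'|-\calP_{h_n}(\log|g_n'|)$, tying the three data together through the pull-back operator. Because all curves are uniform $K$-quasicircles, Corollary \ref{Cor:OperatorBound} bounds $\|\calP_{h_n}\|$ uniformly; I would upgrade this to $\calP_{h_n}(\log|g_n'|)\to\calP_h(\log|g'|)$ in $H^{\half}(\R)$, so that the identity can be read in either direction to give (ii) $\Leftrightarrow$ hub. Finally, (i) enters through the arc-length factorization $h=H_g^{-1}\circ H_f$ with $f=z\circ H_f$, $g=z\circ H_g$, and $z'=e^{ib}$. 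On $\R$ this gives $\log|f'|=\log H_f'$ and $\arg f'=b\circ H_f$, so the holomorphic boundary value $\log f'$ has real part $\log|f'|$ and imaginary part its conjugate function. Since the Hilbert transform is an isometry on $H^{\half}(\R)/\R$, control of $\log|f_n'|$ is equivalent to control of $b_n\circ H_{f_n}$, from which $b_n\to b$ is recovered after undoing the reparametrization $H_{f_n}$.

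The main obstacle is the bidirectional continuity of conformal welding concealed in the (ii) $\Leftrightarrow$ hub step and in the $b_n\circ H_{f_n}\mapsto b_n$ reduction: converting local-uniform convergence of the homeomorphisms $h_n$ (respectively $H_{f_n}$) into genuine $H^{\half}$-convergence of the compositions, since composition is \emph{not} automatically continuous on $H^{\half}$. I would handle this by combining the uniform bound on $\|\calP_{h_n}\|$ with an equicontinuity/no-escape-of-mass argument for the $H^{\half}$ double integrals, or by invoking the analytic-dependence results of Takhtajan--Teo \cite[Cor. A.4, A.6]{TTbook}. The uniform dilatation bound $k<1$ coming from the common $K$ is what keeps the whole sequence inside a fixed $\Gamma_K$ (as in \eqref{Eq:Gamma_K}) and prevents the norms from degenerating; absent such uniform control the equivalences would fail, so this quasiconformal uniformity is the load-bearing hypothesis throughout.
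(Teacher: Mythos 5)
First, a framing point: the paper does not prove this proposition at all — it is imported verbatim from Shen--Wu \cite[Thms. 2.3 and 2.4]{WPIII} — so your attempt is measured against that external proof, and the question is whether your outline could stand on its own. It cannot, though its skeleton is partly right.

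Your first reduction is correct and clean: $\log(f_n'/f')$ is holomorphic on $\H$, the Cauchy--Riemann equations give $|\calA_{f_n}-\calA_f|=|\nabla(\log|f_n'|-\log|f'|)|$ pointwise, and the Douglas formula then identifies $\|\calA_{f_n}-\calA_f\|_{A_2^1(\H)}^2$ with $2\pi\,\|\log|f_n'|-\log|f'|\|_{H^{1/2}(\R)}^2$; so (iii) and (iv) are equivalent to $H^{\half}(\R)$-convergence of the traces $\log|f_n'|$ and $\log|g_n'|$. The genuine gap is everything after that. The coupling identity $\log h_n'=\log|f_n'|-\calP_{h_n}(\log|g_n'|)$ cannot be ``read in either direction'': in the direction (ii) $\Rightarrow$ hub it is one equation in two unknowns, since both $\log|f_n'|$ and $\log|g_n'|$ vary with $n$, so convergence of $\log h_n'$ gives no control on either trace separately. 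Passing continuously from the welding to the Riemann maps is precisely the hard analytic core of Shen--Wu's theorem, and your two proposed escapes --- an unspecified ``equicontinuity/no-escape-of-mass'' argument, or invoking Takhtajan--Teo's analytic-dependence results --- are respectively not an argument and essentially a citation of a result of the same depth, which defeats the purpose of a proof. The same defect recurs at (i): from $b_n\circ H_{f_n}\to b\circ H_f$ in $H^{\half}(\R)$ you must ``undo the reparametrization,'' i.e.\ prove continuity of composition in $H^{\half}$, which is false for general homeomorphisms and is exactly the kind of statement you have deferred. Even the forward direction hub $\Rightarrow$ (ii) needs $\|\log|g'|\circ h_n-\log|g'|\circ h\|_{H^{1/2}(\R)}\to 0$ for the \emph{fixed} function $\log|g'|$, which does not follow from the uniform operator bound of Corollary \ref{Cor:OperatorBound} alone.

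A secondary but real error: your closing claim that a common quasicircle constant $K$ is ``the load-bearing hypothesis'' misreads the statement. No such hypothesis appears, and none can, since the proposition asserts the four conditions are equivalent for arbitrary normalized Weil--Petersson sequences. What your argument actually needs --- and must derive rather than assume --- is that whichever condition is taken as the hypothesis yields uniform norm bounds along the sequence (e.g.\ (iii) bounds $I^L(\gamma_n)$ uniformly via Wang's formula \eqref{Eq:LEH} / \eqref{Eq:LERHHalf}), hence uniform quasicircle constants $K_{\gamma_n}$ by Rohde--Wang \cite[Prop. 3.6]{Loopenergy}; only after that derivation may Corollary \ref{Cor:OperatorBound} be applied uniformly. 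As written, the proposal assumes a hypothesis the proposition does not grant and leaves unproved the two implications that carry all of the theorem's difficulty.
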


\noindent We can also obtain the following.  We recall $\calS_F$ is the Schwarzian derivative \eqref{Def:Schwarzian} of $F$, and $A_2(\H)$ is as in \eqref{Def:A_2}, with $A_2(\H^*)$ defined similarly.
\begin{lemma}\label{Lemma:SchwarziansLEConverge}
    Let $\gamma_n,\gamma$ be normalized Weil--Petersson curves such that $\gamma_n \rightarrow \gamma$ in the sense of Proposition \ref{Thm: equiv convergence H}.  Then $I^L(\gamma_n) \rightarrow I^L(\gamma)$, and the normalized conformal maps satisfy $\mathcal{S}_{f_n} \rightarrow \calS_f$ in $A_2(\H)$ and $\mathcal{S}_{g_n} \rightarrow \calS_g$ in $A_2(\H^*)$.
\end{lemma}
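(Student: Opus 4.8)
The plan is to combine Wang's energy formula \eqref{Eq:LEH} with the pre-Schwarzian convergence already supplied by Proposition \ref{Thm: equiv convergence H}, and then to upgrade from pre-Schwarzian to Schwarzian convergence by a pair of operator bounds. Since the $\gamma_n$ and $\gamma$ are normalized Weil--Petersson curves in the sense of Definition \ref{Def:NormalizedCurve}, they pass through $\infty$ with $f_n, g_n, f, g$ fixing $\infty$, so \eqref{Eq:LEH} and its reformulation \eqref{Eq:LoewnerDirichletHalf} give $I^L(\gamma) = \frac{1}{\pi}\|\calA_f\|_{A_2^1(\H)}^2 + \frac{1}{\pi}\|\calA_g\|_{A_2^1(\H^*)}^2$. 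By hypothesis $\calA_{f_n} \to \calA_f$ in $A_2^1(\H)$ and $\calA_{g_n} \to \calA_g$ in $A_2^1(\H^*)$ (items $(\ref{Lemma:TopologyPSH})$, $(\ref{Lemma:TopologyPSH*})$ of Proposition \ref{Thm: equiv convergence H}), so the reverse triangle inequality gives $\|\calA_{f_n}\|_{A_2^1(\H)} \to \|\calA_f\|_{A_2^1(\H)}$ and likewise for $g$; hence $I^L(\gamma_n) \to I^L(\gamma)$ immediately.

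For the Schwarzian, I would use the classical identity $\calS_f = \calA_f' - \tfrac12 \calA_f^2$, so that
\[
\calS_{f_n} - \calS_f = (\calA_{f_n} - \calA_f)' - \tfrac12 (\calA_{f_n} - \calA_f)(\calA_{f_n} + \calA_f),
\]
and it suffices to show each term tends to $0$ in $A_2(\H)$. For the linear (derivative) term, the key lemma is that $\phi \mapsto \phi'$ is bounded from $A_2^1(\H)$ into $A_2(\H)$, i.e. $\|\phi'\|_{A_2(\H)} \lesssim \|\phi\|_{A_2^1(\H)}$. I would prove this with a Cauchy estimate on the ball $B(z, y/2) \subset \H$ (for $z = x+iy$), namely $|\phi'(z)|^2 \lesssim y^{-4}\int_{B(z,y/2)}|\phi|^2 \, dA$, followed by Fubini: inserting the $y^2$ weight of the $A_2$ norm leaves $\int_\H y^{-2}\int_{B(z,y/2)}|\phi(w)|^2\,dA(w)\,dA(z)$, and swapping the order of integration shows that for each fixed $w$ the $z$-integral of $y^{-2}$ over $\{z : w \in B(z,y/2)\}$ is a universal constant, yielding the claim. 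Applying this to $\phi = \calA_{f_n} - \calA_f$ gives $\|(\calA_{f_n} - \calA_f)'\|_{A_2(\H)} \to 0$.

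For the quadratic term I would invoke the continuous embedding $A_2^1(\H) \hookrightarrow A_\infty^1(\H)$ noted after \eqref{Def:A^1_2}, which yields the pointwise bound $|\psi(z)| \lesssim \|\psi\|_{A_2^1(\H)}\, y^{-1}$. Since $\calA_{f_n} \to \calA_f$ in $A_2^1(\H)$, the norms $\|\calA_{f_n}\|_{A_2^1(\H)}$ are bounded by some $M$, so $|\calA_{f_n}(z) + \calA_f(z)| \lesssim M y^{-1}$ uniformly in $n$. Then
\[
\big\|\tfrac12(\calA_{f_n} - \calA_f)(\calA_{f_n} + \calA_f)\big\|_{A_2(\H)}^2 = \tfrac14\int_\H |\calA_{f_n} - \calA_f|^2\, |\calA_{f_n} + \calA_f|^2\, y^2 \, dA \lesssim M^2 \|\calA_{f_n} - \calA_f\|_{A_2^1(\H)}^2 \to 0,
\]
the $y^2$ weight cancelling against the $y^{-2}$ from the pointwise bound. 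Combining the two estimates yields $\calS_{f_n} \to \calS_f$ in $A_2(\H)$, and the identical argument on $\H^*$ (using $A_2^1(\H^*) \hookrightarrow A_\infty^1(\H^*)$ and the reflected Cauchy estimate) gives $\calS_{g_n} \to \calS_g$ in $A_2(\H^*)$. I expect the derivative-operator bound to be the main technical step, since the quadratic term follows almost formally from the cited embedding; the Fubini/Cauchy argument is routine but is where the weights must be tracked carefully.
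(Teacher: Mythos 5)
Your proposal is correct and takes essentially the same route as the paper: the paper also gets $I^L(\gamma_n)\rightarrow I^L(\gamma)$ from the reverse triangle inequality applied to items $(\ref{Lemma:TopologyPSH})$--$(\ref{Lemma:TopologyPSH*})$ of Proposition \ref{Thm: equiv convergence H}, and proves the Schwarzian convergence through the identity $\calS_f = \calL(\calA_f)$ with exactly your decomposition into the derivative term and the quadratic term, the latter handled by the same embedding $A^1_2(\H) \hookrightarrow A^1_\infty(\H)$ (see Lemma \ref{Lemma: holographyofL} and the proof of Lemma \ref{Lemma:SchwarziansLEConverge}). The only difference is that the key operator bound $\|\phi'\|_{A_2(\H)} \lesssim \|\phi\|_{A_2^1(\H)}$, which you prove directly by a Cauchy estimate on $B(z,y/2)$ plus Fubini, is simply cited in the paper as \eqref{eq: L1boundedness} from \cite[Prop.~8.3]{WPII}, so your argument is marginally more self-contained at that one step.
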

\noindent For the proof we will use an observation about the map 
\begin{align*}
    \mathcal{L}(f) := f' - \half f^2,
\end{align*} 
which is inspired by the identity $\mathcal{L}(\mathcal{A}_{f})=\mathcal{S}_{f}$, and plays an important role in connecting the pre-Schwarzian and Schwarzian models of the Weil--Petersson \Teich space.  While $\mathcal{L}$'s holomorphy has been shown in the disk case~\cite[Ch.II Lem. 1.5, and Lem. A.1]{TTbook}, we need a parallel result for $\H$.  It is known that $\calL: A^1_2(\H) \rightarrow A_2(\H)$ \cite[Thm. 4.4]{STWrealline}.
\begin{lemma}\label{Lemma: holographyofL}
    The map $\mathcal{L}: A_2^1(\H) \to A_2(\H)$ is a bounded and holomorphic mapping of Hilbert spaces.
\end{lemma}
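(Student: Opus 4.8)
The plan is to establish the two claimed properties of $\mathcal{L}$ separately: boundedness as a map of Hilbert spaces, and holomorphy. Since both $A_2^1(\H)$ and $A_2(\H)$ are Hilbert spaces, and $\mathcal{L}(f) = f' - \frac{1}{2}f^2$ is an affine-quadratic (hence polynomial of degree two) map, the cleanest approach to holomorphy is to exhibit $\mathcal{L}$ as the sum of a bounded linear operator $f \mapsto f'$ and a bounded quadratic operator $f \mapsto -\frac{1}{2}f^2$, since polynomial maps between Banach spaces that are continuous are automatically holomorphic in the Fr\'echet sense. Thus \emph{both} conclusions reduce to norm estimates, and I would organize the proof around two inequalities.

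First I would handle the linear term. The content here is that differentiation maps $A_2^1(\H) = \calH(\H) \cap L^2(\H)$ boundedly into $A_2(\H)$, i.e. that $\|f'\|_{A_2(\H)} \lesssim \|f\|_{A_2^1(\H)}$, where $\|f'\|_{A_2(\H)}^2 = \int_{\H} |f'(z)|^2 \rho_\H^{-1}(z)\,dA(z) = \int_\H |f'(z)|^2 y^2\,dA(z)$ and $\|f\|_{A_2^1(\H)}^2 = \int_\H |f|^2\,dA$. This is a standard Bergman-space interior-derivative estimate: for holomorphic $f$ the Cauchy integral formula over a disk $B(z, y/2) \subset \H$ gives a pointwise bound $|f'(z)| \lesssim y^{-1}\big(\int_{B(z,y/2)} |f|^2\,dA\big)^{1/2} \cdot y^{-1}$, and squaring, multiplying by $y^2$, and integrating with Fubini (each point is covered boundedly often) yields $\int_\H |f'(z)|^2 y^2\,dA(z) \lesssim \int_\H |f|^2\,dA$. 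I would cite or reproduce this submean-value argument; it is routine and the constant is explicit.

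The main obstacle is the quadratic term $f \mapsto f^2$, where I must show $\|f^2\|_{A_2(\H)} \lesssim \|f\|_{A_2^1(\H)}^2$, i.e. $\int_\H |f|^4 y^2 \,dA \lesssim \big(\int_\H |f|^2\,dA\big)^2$. This is the genuine inequality of the lemma and the place where the hyperbolic weight and holomorphy interact. The natural route is again the submean-value property: $|f(z)|^2 \lesssim y^{-2}\int_{B(z,y/2)}|f|^2\,dA$, so $|f(z)|^4 y^2 \lesssim y^{-2}\big(\int_{B(z,y/2)}|f|^2\,dA\big)^2 \le y^{-2}\big(\int_\H |f|^2\,dA\big)\int_{B(z,y/2)}|f|^2\,dA$, and then integrating in $z$ and using Fubini on the inner integral (bounding $\int_\H y^{-2}\mathbf{1}_{B(z,y/2)}(w)\,dA(z)$ against a constant for each fixed $w$) closes the estimate. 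I would verify this covering/weight computation carefully, as it is the crux. Alternatively, since the identical statement on $\D$ is proved in \cite[Ch.II Lem. 1.5, and Lem. A.1]{TTbook}, one may transfer via the Cayley transform $C: \H \to \D$, checking that the norms on $A_2^1$ and $A_2$ transform compatibly under $C$ (the weights $\rho_\H$ and $\rho_\D$ are conformally related, and $\mathcal{L}$ intertwines appropriately with the pre-Schwarzian cocycle under M\"obius changes of coordinate); I would mention this as the quickest alternative but prefer the direct submean-value argument for self-containedness.

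Having the two bounds $\|f'\|_{A_2(\H)} \lesssim \|f\|_{A_2^1(\H)}$ and $\|f^2\|_{A_2(\H)} \lesssim \|f\|_{A_2^1(\H)}^2$, I conclude: $\mathcal{L}$ is bounded since $\|\mathcal{L}(f)\|_{A_2(\H)} \le \|f'\|_{A_2(\H)} + \frac{1}{2}\|f^2\|_{A_2(\H)} \lesssim \|f\|_{A_2^1(\H)} + \|f\|_{A_2^1(\H)}^2$, which is finite and locally bounded. For holomorphy, the map $f \mapsto f'$ is bounded linear hence holomorphic, and $f \mapsto f^2$ is a bounded homogeneous polynomial of degree two (its associated symmetric bilinear form $(f,g)\mapsto fg$ is bounded by the same estimate via polarization), hence holomorphic; the sum $\mathcal{L}$ is therefore holomorphic as a map of Hilbert spaces. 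This completes the proof.
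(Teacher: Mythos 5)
Your proof is correct, and it reaches the two required norm estimates by the same decomposition as the paper (the derivative term and the square term), but the route differs in two respects worth noting. For the estimates themselves, the paper does not prove them from scratch: it cites \cite[Prop.~8.3]{WPII} for $\int_\H |\psi'|^2 y^2\, dA \lesssim \int_\H |\psi|^2\, dA$, and handles the quartic term via the continuous embedding $A_2^1(\H) \hookrightarrow A_\infty^1(\H)$ (citing Zhu), writing $\int_\H |\psi|^4 y^2\, dA \leq \|\psi\|_{A_\infty^1(\H)}^2 \int_\H |\psi|^2\, dA$. Your direct submean-value arguments are exactly the standard proofs behind those citations (indeed your pointwise bound $|f(z)|^2 \lesssim y^{-2}\int_{B(z,y/2)}|f|^2\,dA$ \emph{is} the embedding $A_2^1 \subset A_\infty^1$ in disguise), so this is a matter of self-containedness rather than substance; the covering/Fubini computations you flag as the crux do close correctly, since $|z-w| < y_z/2$ forces $y_z \asymp y_w$ and the fiber integral $\int y_z^{-2}\mathbf{1}_{B(z,y_z/2)}(w)\,dA(z)$ is bounded by an absolute constant. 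For holomorphy, the divergence is genuine: the paper verifies Lehto's criterion \cite[Ch.V Lem.~5.1]{Lehtobook}, computing the Gateaux derivative $D_{\psi+w_0\varphi}\calL = \varphi' - \psi\varphi - w_0\varphi^2$ explicitly and checking the difference quotient has error $\tfrac{|w-w_0|}{2}\|\varphi^2\|_{A_2(\H)}$, whereas you invoke the general fact that a continuous polynomial map between complex Banach spaces (bounded linear part plus bounded quadratic part, the latter bounded via polarization of $(f,g)\mapsto fg$) is automatically Fr\'echet-holomorphic. Your argument is shorter and conceptually cleaner — it even shows $\calL$ is entire — while the paper's hands-on computation has the minor virtue of producing the derivative formula explicitly and of staying within the toolkit standard in the \Teich-theory literature. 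Either proof is acceptable.
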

\begin{proof} 
    We first show $\mathcal{L}$ is bounded. For $\psi$ in the unit ball of $A_2^1(\mathbb{H}),$ 
    \begin{align}
        \|\mathcal{L}(\psi)\|_{A_2(\H)}^2&=\int_{\H} \big|\psi'(z)-\half \psi(z)^2\big|^2 y^2 dA(z) \notag\\ 
        &\leq 2 \int_{\H}|\psi'(z)|^2y^2 dA(z) + \half \int_{\H} |\psi(z)|^4y^2dA(z).\label{eq: L2estimate}
    \end{align}
    For the first term, \cite[Prop. 8.3]{WPII} shows
    \begin{align}\label{eq: L1boundedness}
        \int_{\H}|\psi'(z)|^2y^2 dA(z)\leq C_1 \int_{\H} |\psi(z)|^2dA(z)
    \end{align}
    for some constant $C_1>0$.\footnote{While \cite[Prop. 8.3]{WPII} further assumes $\psi(\infty)=0$, the bound \eqref{eq: L1boundedness} in the first half of their argument does not require this.} On the other hand, since $\psi \in A_2^1(\mathbb{H}) \subset A_{\infty}^1(\mathbb{H})$ and the inclusion map is continuous \cite{Zhu2007operator}, the second term in \eqref{eq: L2estimate} satisfies
    \[
    \int_{\H} |\psi(z)|^4y^2dA(z)\leq \|\psi\|_{A_{\infty}^1(\mathbb{H})}^2 \int_{\H}|\psi|^2dA \leq C_2^2 \| \psi \|_{A_2^1(\H)}^2 \int_{\H}|\psi|^2dA \leq C_2^2 \int_{\H}|\psi|^2dA
    \]
    since $\psi$ is in the unit ball, and where $A^1_\infty(\H)$ is as in \eqref{Def:A^1infty}.  We conclude $\|\mathcal{L}(\psi)\|_{A_2(\H)}^2 \lesssim \|\psi\|_{A_2^1(\H)}^2$, as claimed.

    To show holomorphy, according to \cite[Ch.V Lem. 5.1]{Lehtobook} it remains to prove that for any $\psi, \varphi \in A_2^1(\mathbb{H})$, the map $\mathbb{C} \ni w \mapsto \mathcal{L}(\psi+w\varphi)$ is holomorphic as a map from a neighborhood of $0 \in \mathbb{C}$ into $A_2(\H)$.  That is, for all $w_0 \in B_\varepsilon(0) \subset \C$, we must show there is a continuous linear map $D_{\psi + w_0\varphi}\calL: \C \rightarrow A_2(\H)$ such that
    \begin{align}\label{Lim:NTSHolomorphic}
        \lim_{w \rightarrow w_0} \frac{1}{|w-w_0|}\big\| \calL(\psi + w \varphi) - \calL(\psi + w_0\varphi) - (D_{\psi + w_0\varphi}\calL)(w-w_0) \big\|_{A_2(\H)} = 0.
    \end{align}
    We have
    \begin{align*}
        D_{\psi + w_0\varphi}\calL = \frac{\dd}{\dd w}\Big|_{w=w_0}\mathcal{L}(\psi+w\varphi) = \varphi' - \psi \varphi - w_0 \varphi^2,
    \end{align*}
    which we claim this is an element of $A_2(\H)$.  By \eqref{eq: L1boundedness}, $\varphi' \in A_2(\H)$, and we also observe 
    \begin{align*}
        \| \psi\varphi \|_{A_2(\H)} =  \bigg( \int_{\H} |\psi(z)\varphi(z)|^2 y^2 dA(z)\bigg)^{1/2}  \leq \|\psi\|_{A^1_\infty(\H)} \|\varphi\|_{A^1_2(\H)} < \infty,
    \end{align*}
    while similarly 
    \begin{align}\label{Ineq:varphi^2}
        \| \varphi^2 \|_{A_2(\H)} \leq \|\varphi\|_{A^1_\infty(\H)} \|\varphi\|_{A^1_2(\H)} < \infty.
    \end{align}
    Thus $w \mapsto D_{\psi + w_0\varphi}\calL \cdot w$ maps $\C$ into $A_2(\H)$, as claimed.  Furthermore, the difference quotient in \eqref{Lim:NTSHolomorphic} is 
    \begin{align*}
        \Big\|\frac{\mathcal{L}(\psi+w\varphi)-\mathcal{L}(\psi+w_0\varphi)}{w-w_0}- D_{\psi + w_0\varphi}\calL \Big\|_{A_2(\H)} = \frac{|w-w_0|}{2}\|\varphi^2\|_{A_2(\H)} = O(|w-w_0|)
     \end{align*}
     by \eqref{Ineq:varphi^2}.
\end{proof}

\begin{proof}[Proof of Lemma \ref{Lemma:SchwarziansLEConverge}]
    The reverse triangle inequality applied to Proposition \ref{Thm: equiv convergence H} $(\ref{Lemma:TopologyPSH})$ and $(\ref{Lemma:TopologyPSH*})$ immediately yields $I^L(\gamma_n) \rightarrow I^L(\gamma)$.  For convergence of the Schwarzians, we proceed similarly to \eqref{eq: L2estimate}, noting
    \begin{align*}
       \|\mathcal{S}_{f_n}-\mathcal{S}_f\|_{A_2(\mathbb{H})}&= \|\calL(\mathcal{A}_{f_n})-\calL(\mathcal{A}_f)\|_{A_2(\mathbb{H})}\\
       &\lesssim \|\mathcal{A}_{f_n}'-\mathcal{A}_f'\|_{A_2(\mathbb{H})} + \|(\mathcal{A}_{f_n}+\mathcal{A}_f) ( \mathcal{A}_{f_n}-\mathcal{A}_f )\|_{A_2(\mathbb{H})}\\
       &\leq \big\|\calL_1\big(\mathcal{A}_{f_n}-\mathcal{A}_f\big)\big\|_{A_2(\mathbb{H})} + \big(\| \calA_{f_n}\|_{A^1_\infty(\H)} + \| \calA_f \|_{A^1_\infty(\H)} \big)\|\mathcal{A}_{f_n}-\mathcal{A}_f \|_{A_2^1(\mathbb{H})}\\
       &\lesssim \|\mathcal{A}_{f_n}-\mathcal{A}_f\|_{A_2^1(\mathbb{H})} + \big(\| \calA_{f_n}\|_{A^1_2(\H)} + \| \calA_f \|_{A^1_2(\H)} \big)\|\mathcal{A}_{f_n}-\mathcal{A}_f \|_{A_2^1(\mathbb{H})} \rightarrow 0,
   \end{align*}
   where we have used the boundedness of both $\calL_1:A^1_2(\H) \rightarrow A_2(\H)$ and the inclusion map $\iota: A^1_2(\H) \rightarrow A^1_\infty(\H)$. By symmetry, $\|\mathcal{S}_{g_n}-\mathcal{S}_g\|_{A_2(\mathbb{H}^*)} \rightarrow 0$ as well. 
\end{proof}

\begin{remark}\label{Remark:SchwarzianEnergy}
    In the setting of $\D$, \cite[Cor. 5.12]{UniversalLiouvilleaction}\footnote{Note that numbering in the arXiv version may be different than in the published version.} explicitly links the Loewner energy and norms of the associated Schwarzians of $\gamma \in C^{5, \alpha}$ via the identity
    \begin{align}\label{Eq:BBPWLE}
        \frac{\pi}{4} I^{L}(\gamma)= V(\gamma)-\half \int_{\D} \left|\mathcal{S}_f(z)\right|^2\frac{(1-|z|^2)^2}{4} dA(z) - \half \int_{\D^*} \left|\mathcal{S}_g(z)\right|^2 \frac{(1-|z|^2)^2}{4} dA(z),
    \end{align}
    where 
    \[ V(\gamma) := \lim\limits_{\epsilon \to 0^{+}} \int_{\H^3} \varphi_{\gamma}^* \mathbf{1}_{\xi \geq \epsilon} \frac{vol_{eucl}}{\xi^3}\] 
    (see~\cite[\S 4.2]{UniversalLiouvilleaction} for further definitions). While the energy convergence $I^L(\gamma_n) \rightarrow I^L(\gamma)$ is generally much weaker than the equivalences in Proposition \ref{Thm: equiv convergence H}, it does not seem to be known if the Schwarzian convergence of Lemma \ref{Lemma:SchwarziansLEConverge} is equivalent.  
    \end{remark}

\section{Comparison of Loewner energy and welding energy}\label{Sec:MainProof}
In this section we prove Theorem~\ref{Comparable} comparing the Loewner and welding energies.  In Section~\ref{sec: proofcomparableUnderanalytic} we show that this holds for all analytic curves when rooting at $\infty$, and then in \S\ref{Sec:ProofOfComparable} we invoke tools from Sections~\ref{Sec:welding energy}, \ref{Sec: BoundayNormalizedforLE}, and \ref{Sec: topologicalconvergence} to generalize to any root and all Weil--Petersson $\gamma$.  

We use this result in Section~\ref{Sec:WContinuous} to prove the continuity of the welding energy $y \mapsto W_h(y)$, as previously claimed in Section~\ref{Sec:welding energy}.  In  Section~\ref{Sec: cor1} we prove Corollary \ref{Thm: boundedcor}, showing $I^{L}(\gamma)$ is bounded from above by a constant depending only on $d_{\WP}(h,\text{id})$.

\subsection{Proof of Theorem \ref{Comparable} for analytic curves rooted at infinity}\label{sec: proofcomparableUnderanalytic}

Recall that $\gamma \subset \mathbb{C}$ is an \emph{analytic Jordan curve} if there exists a parametrization $\varphi: \partial \mathbb{D} \rightarrow \gamma$ such that $\varphi$ is conformal in a neighborhood of $\partial \mathbb{D}$ in $\mathbb{C}$.  This is equivalent to saying any conformal map $f$ from $\mathbb{D}$ to the interior of the domain $\Omega$ bounded by $\gamma$ extends to be conformal in a neighborhood of the closure $\overline{\mathbb{D}}$ of the unit disk \cite[Prop. 3.1]{Pommerenke}. Similarly, any conformal $g: \mathbb{D}^* \rightarrow \Omega^*$ to the exterior domain extends conformally to a neighborhood of $\overline{\mathbb{D}^*}$.

We call $\gamma \subset \hat{\mathbb{C}}$ an \emph{analytic Jordan curve $\gamma$ passing through $\infty$} when $\infty \in \gamma$ and there exists a M\"obius image $M(\gamma) \subset \mathbb{C}$ of it in the plane which is an analytic Jordan curve.  For such $\gamma$, the conformal maps $f: \mathbb{H} \rightarrow H$ and $g: \mathbb{H}^* \rightarrow H^*$ to the two domains $H \cup H^* = \C \bs \{\gamma\}$ defined by $\gamma$ extend to be conformal in some neighborhood $\{\, z \,:\, |\Im(z)|<\epsilon \,\}$ of $\mathbb{R}$ (although this is a strictly weaker condition).

The following lemma is at the heart of our argument for Theorem \ref{Comparable}.

\begin{lemma}\label{lem:smoothcrossterm}
Let $\g \subset \hat{\mathbb{C}}$ be an analytic Jordan curve passing through $\infty$, with $f: \mathbb{H} \rightarrow H$ and $g:\mathbb{H}^* \rightarrow H^*$ Riemann maps to either side of $\gamma$ which fix $\infty$, and $h := g^{-1} \circ f$ the corresponding conformal welding on $\mathbb{R}$. Then
\begin{align}\label{Eq:CrossTerm}
\begin{split}
    \brac{\log |f'|,\log |g'\circ h|}_{\half(\R)}+\brac{\log |g'|,\log |f'\circ h^{-1}|}_{\half(\R)} &= - \brac{\log|f'|, \log|h'|}_{\half(\R)} \\
    &=-\brac{\log|g'|, \log|(h^{-1})'|}_{\half(\R)}.
\end{split}
\end{align}
Furthermore, the inner products on the left side satisfy
\begin{align}
    \brac{\log |f'|,\log |g'\circ h|}_{\half(\R)} &= -\frac{1}{2\pi}\| \nabla \log|g'| \|^2_{L^2(\H^*)}, \; \quad \text{and}\label{Eq:IPDEg}\\
    \brac{\log |g'|,\log |f'\circ h^{-1}|}_{\half(\R)} &= -\frac{1}{2\pi}\| \nabla \log|f'| \|^2_{L^2(\H)}.\label{Eq:IPDEf}
\end{align}
\end{lemma}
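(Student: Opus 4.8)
The plan is to prove the two Dirichlet-energy identities \eqref{Eq:IPDEg}--\eqref{Eq:IPDEf} first, since the chain of equalities \eqref{Eq:CrossTerm} then drops out formally. Differentiating the welding relations $g\circ h = f$ and $f\circ h^{-1}=g$ on $\R$ gives, for a.e.\ $x$, the boundary identities
\begin{align*}
    \log|g'\circ h| = \log|f'| - \log|h'| \qquad\text{and}\qquad \log|f'\circ h^{-1}| = \log|g'| - \log|(h^{-1})'|.
\end{align*}
Granting \eqref{Eq:IPDEg}--\eqref{Eq:IPDEf}, the sum of the two left-hand inner products equals $-\tfrac{1}{2\pi}\big(\|\nabla\log|g'|\|^2_{L^2(\H^*)} + \|\nabla\log|f'|\|^2_{L^2(\H)}\big) = -\tfrac12 I^L(\gamma)$ by \eqref{Eq:LEHNabla}; and expanding $\brac{\log|f'|,\log|h'|}_{\half}$ with the first relation above, together with the Douglas identity $\|\log|f'|\|^2_{\half} = \tfrac{1}{2\pi}\|\nabla\log|f'|\|^2_{L^2(\H)}$ from \eqref{Eq: DouglasformulaFirstDisk}, yields $\brac{\log|f'|,\log|h'|}_{\half} = \tfrac12 I^L(\gamma)$, which matches. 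The symmetric computation with $\log|(h^{-1})'|$ gives the last equality in \eqref{Eq:CrossTerm}.

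For \eqref{Eq:IPDEg} itself I would argue as follows. Since $\gamma$ is analytic and passes through $\infty$, the maps $f,g$ extend conformally across $\R$, so $U_f:=\log|f'|$ is harmonic on $\H$ with the stated boundary trace, and likewise $U_g:=\log|g'|$ is harmonic on $\H^*$. Let $V$ be the harmonic extension to $\H$ of the trace $\log|g'\circ h|$. Using the inner-product form \eqref{Eq:Parallel} of the Douglas formula, write
\begin{align*}
    \brac{\log|f'|,\log|g'\circ h|}_{\half(\R)} = (U_f, V)_{\nabla(\H)} = \frac{1}{2\pi}\int_{\H}\nabla U_f\cdot\nabla V\,\dd A.
\end{align*}
Because $U_f$ is harmonic, Green's first identity turns this into the boundary integral $\tfrac{1}{2\pi}\int_\R \log|g'\circ h|\,\partial_n U_f\,\dd x$, with $n$ the outward normal to $\H$. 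The Cauchy--Riemann equations for the holomorphic $\log f'$ give $\partial_n\log|f'| = \tfrac{d}{dx}\arg f'$ on $\R$, while differentiating $f=g\circ h$ and using $h'>0$ yields the key boundary identity $\arg f'(x) = \arg g'(h(x))$. Substituting and changing variables $w=h(x)$ then collapses the integral to one depending on $g$ alone:
\begin{align*}
    \brac{\log|f'|,\log|g'\circ h|}_{\half(\R)} = \frac{1}{2\pi}\int_\R \log|g'(w)|\,\frac{d}{dw}\arg g'(w)\,\dd w.
\end{align*}

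Finally, running Green's identity and the Cauchy--Riemann equations in the \emph{opposite} direction on $\H^*$ — now with $U_g$ harmonic there and outward normal pointing into $\H$ — identifies $\int_\R \log|g'|\,\tfrac{d}{dw}\arg g'\,\dd w$ with $-\|\nabla\log|g'|\|^2_{L^2(\H^*)}$, which gives \eqref{Eq:IPDEg}; identity \eqref{Eq:IPDEf} follows by the symmetric argument with $f,g,h,\H$ replaced by $g,f,h^{-1},\H^*$. I expect the main obstacle to be rigor near $\infty$: because $\gamma$ passes through infinity one must verify that the boundary terms in Green's identity carry no contribution from a neighborhood of $\infty$ and that the harmonic-conjugate manipulations are legitimate (single-valuedness and decay of $\arg f'$, $\arg g'$, and invariance of the whole computation under the additive-constant ambiguity of the logarithms), which is exactly where analyticity of $\gamma$ and finiteness of the Dirichlet integrals enter. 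The conceptual crux, however, is the transfer from the $\H$-side to the $\H^*$-side effected by the boundary welding identity $\arg f' = \arg g'\circ h$.
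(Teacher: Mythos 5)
Your proposal is correct, and its analytic core coincides with the paper's: both arguments rest on the Douglas formula \eqref{Eq:Parallel}, Green's first identity with the contribution near $\infty$ controlled by analyticity, the Cauchy--Riemann equations giving $\partial_n\log|f'| = \partial_x \arg f'$ on $\R$, and the welding relation $\arg f' = \arg(g'\circ h)$ (valid since $h'>0$) to change variables between the two sides of $\gamma$. The difference is the decomposition. You prove the energy identities \eqref{Eq:IPDEg}--\eqref{Eq:IPDEf} first and then recover the chain \eqref{Eq:CrossTerm} purely formally, from bilinearity of $\brac{\cdot,\cdot}_{\half(\R)}$, the Douglas isometry $\|\log|f'|\|^2_{\half} = \tfrac{1}{2\pi}\|\nabla\log|f'|\|^2_{L^2(\H)}$, and the boundary identities $\log|g'\circ h| = \log|f'|-\log|h'|$ and $\log|f'\circ h^{-1}| = \log|g'|-\log|(h^{-1})'|$. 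The paper instead computes the two cross terms directly via Green/Cauchy--Riemann, watches them combine into $-\brac{\log|f'|,\log|h'|}_{\half(\R)}$ after reversing the Green argument, and only then extracts \eqref{Eq:IPDEg}--\eqref{Eq:IPDEf} from intermediate lines of that computation. Your ordering buys a cleaner logical structure --- \eqref{Eq:CrossTerm}, and in effect Corollary \ref{Thm:LoewnerIP}, become algebra once the energy identities are in hand --- while the paper's ordering makes the cancellation between the two cross terms explicit; the total analytic workload is comparable, since your proof of \eqref{Eq:IPDEg} already requires running Green's identity on both $\H$ and $\H^*$. One caution: the decay issue you flag but leave as a remark is a genuine part of the proof, not an afterthought, and it is exactly where the paper works. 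Analyticity of $\gamma$ at $\infty$ gives Laurent expansions implying $\log|g'\circ h|$ and $\log|h'|$ are bounded on $\R$ and $|\nabla\log|f'|| = |f''/f'| = O(|z|^{-3})$, so the integral over the arc $\partial B_R(0)\cap\H$ in Green's identity is $O(R^{-2})\to 0$; your argument needs this same estimate (and its mirror on $\H^*$) written out for both applications of Green's identity.
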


\begin{remark}\label{Remark:MixedTermsNotEqual}
    As it is known that, in general,
    \begin{align*}
        \| \log|g'| \|^2_{L^2(\H^*)} \neq \| \log|f'| \|^2_{L^2(\H)},
    \end{align*}
    equations \eqref{Eq:IPDEg} and \eqref{Eq:IPDEf} show that the ``mixed terms'' appearing on the left-hand side of \eqref{Eq:CrossTerm} are not generally identical.  In contrast, the lemma says  the ``mixed terms'' on the right-hand side of \eqref{Eq:CrossTerm} are the same.
\end{remark}

\begin{proof}

    We begin with the first equality in \eqref{Eq:CrossTerm}.  To that end, let $P(F)$ and $P^*(F)$ denote the Poisson extensions $F:\mathbb{R} \rightarrow \mathbb{R}$ into $\mathbb{H}$ and $\mathbb{H}^*$, respectively. By \eqref{Eq:Parallel}, 
   \begin{align}
    \brac{\log |f'|,\log |g'\circ h|}_{\half} &= \brac{\log|f'|, P(\log|g'\circ h|)}_{\nabla(\mathbb{H})} \notag\\
    &= \frac{1}{2\pi}\int_\mathbb{H} \nabla \log|f'| \cdot \nabla P(\log|g'\circ h|) \, dx\,dy \notag\\
       &= \lim_{R \rightarrow \infty} \frac{1}{2\pi}\int_{B_R(0) \cap \mathbb{H}} \nabla \log|f'| \cdot \nabla P(\log|g'\circ h|) \, dx\,dy.\label{Eq:IPLimit}
   \end{align}

   For fixed $R$, Green's first identity and the harmonicity of $\log|f'|$ yield
   \begin{align}
       \frac{1}{2\pi}\int_{B_R(0) \cap \mathbb{H}} &\nabla \log|f'| \cdot \nabla P(\log|g'\circ h|) \, dx\,dy \notag\\
       &= \frac{1}{2\pi}\int_{-R}^R \log|g'\circ h|\,\partial_n \log|f'| dx + \frac{1}{2\pi}\int_{(\partial B_R(0)) \cap \mathbb{H}} P(\log|g'\circ h|)\, \partial_n \log|f'| ds, \label{Eq:GGSubdomain}
    \end{align}
    where $n$ is the outward-pointing normal vector to $B_R(0) \cap \mathbb{H}$.  We claim that the second integral is negligible when $R$ is large.  Indeed, analyticity of $\gamma$ yields that $g$ has a Laurent expansion of the form $a_1z + a_0 + a_{-1}/z + \cdots$ at $\infty$, and thus $g'(z) = a_1 + O(z^{-2})$ and $\log g'(z) = b_1 + O(z^{-2})$ as $z \rightarrow \infty$.  In particular, $\log|g'\circ h|$ is bounded on $\mathbb{R}$, showing
    \begin{align*}
        \Bigg| \frac{1}{2\pi}\int_{(\partial B_R(0)) \cap \mathbb{H}} P(\log|g'\circ h|)\, \partial_n \log|f'| ds \Bigg| \lesssim \int_{(\partial B_R(0)) \cap \mathbb{H}} \big|\partial_n \log|f'| 
        \big| ds.
    \end{align*}
    Now, $|\partial_n \log|f'| | \leq | \nabla \log |f'|| = |f''/f'| = O(|z|^{-3})$ as $z \rightarrow \infty$ by the similar expansion $\log f'(z) = c_1 + O(z^{-2})$ for $f$.  Thus passing to the $R \rightarrow \infty$ limit in \eqref{Eq:IPLimit} using \eqref{Eq:GGSubdomain}, we see
    \begin{align}\label{Eq:DirichletIPtoNormalDeriv}
        \langle\log|f'|, P(\log|g'\circ h|)\rangle_{\nabla(\H)}  = -\frac{1}{2\pi}\int_\R \log|g'\circ h|\,\partial_y \log|f'|\, ds.
    \end{align}
    By assumption, $f$ and $g$ are conformal in a neighborhood of $\R$, and thus $\log f'$ is analytic across $\R$ and $h'$ does not vanish.  The Cauchy-Riemann equations thus hold, yielding 
   \begin{align}
       -\frac{1}{2\pi}\int_\R \log|g'\circ h| \,\partial_y\log|f'| \,ds &= \frac{1}{2\pi}\int_\R \log|g'\circ h|\,\partial_x \arg f' \,ds \label{Eq:CrossTermPiece1a}\\
       &= -\frac{1}{2\pi}\int_\R \log|(g^{-1})'\circ f|\,\partial_x \arg f' \,ds \notag\\
       &= \frac{1}{2\pi}\int_\R (\log|f'| - \log|h'|)\partial_x\arg f' \,ds.\label{Eq:CrossTermPiece1}
   \end{align}
    
    Shifting to the second term on the left-hand side of \eqref{Eq:CrossTerm}, we apply the same logic as above to obtain
    \begin{align}\label{Eq:CrossTermShowEnergyA}
        \brac{\log|g'|, \log|f'\circ h^{-1}|}_{\half(\R)} &= \brac{\log|g'|, P^*(\log|f'\circ h^{-1}|)}_{\nabla(\mathbb{H}^*)} \notag\\
        &= \frac{1}{2\pi}\int_\R \log|f'\circ h^{-1}| \,\partial_y\log|g'|\,ds.
    \end{align}
    Harmonicity and a change of variables then yields
    \begin{align}
        \frac{1}{2\pi}\int_\R \log|f'\circ h^{-1}| \,\partial_y\log|g'|\,ds &= -\frac{1}{2\pi}\int_\R \log|f'\circ h^{-1}| \,\partial_x\arg g' \, ds \label{Eq:CrossTermAltStep}\\
        &= -\frac{1}{2\pi}\int_\R \log|f'| \big(\partial_x\arg g'\big)\circ h  \cdot h' \,ds \notag\\
        &= -\frac{1}{2\pi}\int_\R \log|f'| \,\partial_x\arg (g'\circ h)\,ds \notag\\
        &= -\frac{1}{2\pi}\int_\R \log|f'| \,\partial_x\arg (f') \, ds,\label{Eq:CrossTermShowEnergy}
    \end{align}
    and so summing with \eqref{Eq:CrossTermPiece1}, we obtain 
\begin{align}\label{Eq:CrossTermPenultimate1}
    \begin{split}
        \langle\log|f'|, \log|g'\circ h|&\rangle_{\half(\R)}+ \brac{\log|g'|, \log|f'\circ h^{-1}|}_{\half(\R)}\\
        &=- \frac{1}{2\pi}\int_\R (\partial_x \arg f') \log|h'| \,ds = -\frac{1}{2\pi}\int_\mathbb{R}\big(\partial_n\log|f'|\big)\log|h'| \,ds,
    \end{split}
\end{align}
where $\partial_n = -\partial_y$ is the outer normal derivative to $\H$.  Reversing the logic that gave us \eqref{Eq:DirichletIPtoNormalDeriv}, we find
\begin{align}\label{Eq:CrossTermLastStep}
    -\frac{1}{2\pi}\int_\mathbb{R}\big(\partial_n\log|f'|\big)\log|h'| \,ds &= -\langle\log|f'|, P(\log|h'|)\rangle_{\nabla(\H)} = - \brac{\log|f'|, \log|h'|}_{\half(\R)}.
\end{align}
(Here the same argument goes through since $\log|h'|$ is bounded on $\mathbb{R}$ by analyticity of $\gamma$.)  Combined with \eqref{Eq:CrossTermPenultimate1} we have the first equality in \eqref{Eq:CrossTerm}.

There are several ways to see the second equality in \eqref{Eq:CrossTerm}.  One method is to exchange the roles of the two integrals in the above argument.  Specifically, at \eqref{Eq:CrossTermPiece1a} do a change of variables $x \mapsto h^{-1}(x)$ and use $\arg f'\circ h^{-1} = \arg g'$.  Then for the second integral at \eqref{Eq:CrossTermAltStep}, use $\log|f'\circ h^{-1}| = \log|g'|-\log|(h^{-1})'|$.  This symmetric argument and yields the second equality in \eqref{Eq:CrossTerm}.\footnote{Another method is to replace $\gamma$ with its complex conjugate $\gamma^*$, which has conformal maps $F(z) = \overline{g(\bar{z})}$ and $G(z) = \overline{f(\bar{z})}$ from $\mathbb{H}$ and $\mathbb{H}^*$, respectively, and corresponding conformal welding $H(x) = h^{-1}(x)$.  Then apply the first equality in \eqref{Eq:CrossTerm} to $F,G,$ and $H$.}

By similar symmetry, it suffices to prove one of \eqref{Eq:IPDEg} and \eqref{Eq:IPDEf}, and we show the latter.  Indeed, by \eqref{Eq:CrossTermShowEnergyA} and \eqref{Eq:CrossTermShowEnergy},
\begin{align*}
    \brac{\log |g'|,\log |f'\circ h^{-1}|}_{\half} &= -\frac{1}{2\pi}\int_\R \log|f'| \,\partial_x\arg (f') \, ds\\
    &= \frac{1}{2\pi}\int_\R \log|f'| \,\partial_y \log|f'| \, ds\\
    &= -\frac{1}{2\pi}\int_\R \log|f'| \,\partial_n \log|f'| \, ds\\
    &= -\frac{1}{2\pi} \| \nabla \log|f'| \|_{L^2(\H)}^2,
\end{align*}
where the last step again follows from reversing the Green's-identity argument at the beginning of the proof.
\end{proof}
\begin{remark}\label{rmk: GeodesicCurvatureFormula}
    We can also prove this lemma by the geodesic curvature formula under the weaker assumption that $\gamma$ is smooth. More precisely, let $\partial_n$ and $\partial_{n^*}$ to be the outer normal derivatives on $\R= \partial \H= \partial \H^{*}$ with respect to $\H$ and $\H^{*}$, respectively, and $k(z), k^{*}(z)$  the geodesic curvature at $z \in \gamma= \partial H= \partial H^{*}$.  Instead of the Cauchy-Riemann equations, we can then use the geodesic curvature formulas
    \begin{align*}
       (\partial_{n} \log|f'|)(x)=k(f(x))|f'(x)| \quad \text{ and } \quad (\partial_{n^*} \log|g'|)(y)=k^*(g(y))|g'(y)|
    \end{align*}
    (see \cite[Appendix A]{Yilin19}) to handle the $\partial_{y} \log|f'|= -\partial_{n} \log|f'|$ and $\partial_{y} \log|g'|= \partial_{n^*} \log|g'|$ factors in \eqref{Eq:CrossTermPiece1a} and \eqref{Eq:CrossTermAltStep}. 
    The first equality in~\eqref{Eq:CrossTerm} then follows from a change of variables and the fact $k(z)+k^{*}(z) \equiv 0$ for all $z\in \gamma$, with a symmetric argument establishing the second equality.
\end{remark}

We can now prove Theorem \ref{Thm:LoewnerIP} in the case of $\gamma$ an analytic Jordan curve passing through $\infty$. \ref{lem:smoothcrossterm}.
\begin{cor}\label{Cor:LEEqualsCrossTerm}
    With $\gamma$, $f$, $g$, and $h$ as in the statement of Lemma \ref{lem:smoothcrossterm},
    \begin{align*}
        I^L(\gamma) &= 2\brac{\log|f'|, \log|h'|}_{\half(\R)}\\
        &= 2\brac{\log|g'|, \log|(h^{-1})'|}_{\half(\R)}\\
        &= -2\brac{\log |f'|,\log |g'\circ h|}_{\half(\R)} -2\brac{\log |g'|,\log |f'\circ h^{-1}|}_{\half(\R)}.
    \end{align*}
\end{cor}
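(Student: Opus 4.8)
The plan is to obtain all three expressions for $I^L(\gamma)$ by feeding Lemma \ref{lem:smoothcrossterm} into Wang's gradient formula \eqref{Eq:LEHNabla}; essentially no new analytic work is required, since the substantive computation (Green's identity, the Cauchy--Riemann step, and the vanishing of the boundary term at infinity) already lives inside that lemma. The guiding observation is that the two ``mixed-term'' identities \eqref{Eq:IPDEg} and \eqref{Eq:IPDEf} express each cross term as a negative multiple of a \emph{one-sided} Dirichlet energy, and these one-sided energies are exactly the two summands in \eqref{Eq:LEHNabla}.

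First I would establish the third (bottom) equality. Multiplying \eqref{Eq:IPDEg} and \eqref{Eq:IPDEf} by $-2$ and adding gives
\begin{align*}
    &-2\brac{\log |f'|,\log |g'\circ h|}_{\half(\R)} -2\brac{\log |g'|,\log |f'\circ h^{-1}|}_{\half(\R)}\\
    &\qquad = \frac{1}{\pi}\| \nabla \log|g'| \|^2_{L^2(\H^*)} + \frac{1}{\pi}\| \nabla \log|f'| \|^2_{L^2(\H)},
\end{align*}
and the right-hand side is precisely $I^L(\gamma)$ by Wang's formula \eqref{Eq:LEHNabla}. This identifies $I^L(\gamma)$ with the sum of the two negated cross terms.

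Next I would convert this sum into the remaining two expressions using the chain of equalities \eqref{Eq:CrossTerm}. Since the first equality there asserts that the sum of the two cross terms equals $-\brac{\log|f'|,\log|h'|}_{\half(\R)}$, multiplying by $-2$ yields $I^L(\gamma) = 2\brac{\log|f'|,\log|h'|}_{\half(\R)}$; the second equality of \eqref{Eq:CrossTerm} identifies the same sum with $-\brac{\log|g'|,\log|(h^{-1})'|}_{\half(\R)}$, giving $I^L(\gamma) = 2\brac{\log|g'|,\log|(h^{-1})'|}_{\half(\R)}$. Collecting the three expressions completes the argument. There is no genuine obstacle, as the corollary is a repackaging of Lemma \ref{lem:smoothcrossterm}; the only point demanding care is the sign bookkeeping and matching each cross term to the correct side ($\H$ versus $\H^*$) of the gradient formula.
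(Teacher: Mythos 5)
Your proposal is correct and follows exactly the paper's route: the paper's own (one-line) proof is precisely to compare \eqref{Eq:CrossTerm} with the sum of \eqref{Eq:IPDEg} and \eqref{Eq:IPDEf} in light of Wang's formula \eqref{Eq:LEHNabla}. You have simply written out the sign bookkeeping that the paper leaves implicit.
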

\begin{proof}
    Recalling \eqref{Eq:LEHNabla}, compare \eqref{Eq:CrossTerm} with the sum of  \eqref{Eq:IPDEg} and \eqref{Eq:IPDEf}.
\end{proof}

\noindent We can also now prove Theorem \ref{Comparable} under the assumption of this high regularity.
\begin{lemma}\label{Lemma:ComparableAnalytic}
    Let $\g \subset \hat{\mathbb{C}}$ be an analytic Jordan curve passing through $\infty$ that is also a $K$-quasicircle, with associated conformal welding $h:\wR \rightarrow \wR$ that fixes $\infty$.  Then 
\begin{align*}
        \frac{1}{2}\Big(3+\frac{1}{K^2+K^{-2}}\Big)I^L(\gamma) \leq W_h(\infty) \leq \frac{1}{2}(3+K^2+K^{-2})\,I^L(\gamma).
\end{align*}
\end{lemma}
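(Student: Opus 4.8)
The plan is to make rigorous the formal expansion \eqref{Eq:IntroExpandWInfty}--\eqref{Eq:IntroWInftyDealWithThis3} sketched in the introduction, relying on the two auxiliary results now available for analytic curves: the cross-term evaluation of Corollary \ref{Cor:LEEqualsCrossTerm} and the operator-norm bound of Corollary \ref{Cor:OperatorBound}.

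First I would record the termwise chain rule. Since $\gamma$ is analytic and passes through $\infty$, the maps $f$ and $g$ extend conformally across $\R$, so $h = g^{-1}\circ f$ is real-analytic with non-vanishing derivative, and almost everywhere on $\R$
\begin{align*}
    \log|h'| = \log|(g^{-1})'\circ f| + \log|f'| = -\log|g'\circ h| + \log|f'|,
\end{align*}
using $(g^{-1})'\circ f = 1/(g'\circ h)$; symmetrically $\log|(h^{-1})'| = -\log|f'\circ h^{-1}| + \log|g'|$. Each of the four summands lies in $H^{\half}(\R)$: the functions $\log|f'|,\log|g'|$ by the finite-energy hypothesis together with \eqref{Eq:LERHHalf}, and the pull-backs $\log|g'\circ h| = \calP_h(\log|g'|)$ and $\log|f'\circ h^{-1}| = \calP_{h^{-1}}(\log|f'|)$ by the boundedness of $\calP_h$ and $\calP_{h^{-1}}$ (Proposition \ref{const.}). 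This legitimizes expanding the squared norms in \eqref{Def:W} through the $H^{\half}$ inner product.

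Carrying out that expansion yields
\begin{align*}
    W_h(\infty) = \big(\|\log|f'|\|_{\half}^2 + \|\log|g'|\|_{\half}^2\big) &+ \big(\|\log|g'\circ h|\|_{\half}^2 + \|\log|f'\circ h^{-1}|\|_{\half}^2\big)\\
    &- 2\brac{\log|g'\circ h|,\log|f'|}_{\half} - 2\brac{\log|f'\circ h^{-1}|,\log|g'|}_{\half}.
\end{align*}
By \eqref{Eq:LERHHalf} the first bracket equals $\tfrac12 I^L(\gamma)$, and by Corollary \ref{Cor:LEEqualsCrossTerm} (using symmetry of the real inner product) the last two cross terms sum to $+I^L(\gamma)$. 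For the middle bracket I would apply Corollary \ref{Cor:OperatorBound} twice. Bounding above, $\|\calP_h(\log|g'|)\|_{\half}^2 \le (K^2+K^{-2})\|\log|g'|\|_{\half}^2$ and $\|\calP_{h^{-1}}(\log|f'|)\|_{\half}^2 \le (K^2+K^{-2})\|\log|f'|\|_{\half}^2$, so the middle bracket is at most $\tfrac12(K^2+K^{-2})I^L(\gamma)$. Bounding below, I would instead write $\log|g'| = \calP_{h^{-1}}(\log|g'\circ h|)$ and $\log|f'| = \calP_h(\log|f'\circ h^{-1}|)$, so the same operator bound gives $\|\log|g'|\|_{\half}^2 \le (K^2+K^{-2})\|\log|g'\circ h|\|_{\half}^2$ and likewise for $f$, whence the middle bracket is at least $\tfrac{1}{2(K^2+K^{-2})}I^L(\gamma)$. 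Substituting gives $W_h(\infty) = \tfrac32 I^L(\gamma) + (\text{middle bracket})$, and inserting the two-sided bound on the middle bracket produces exactly the asserted inequality.

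I do not anticipate a serious obstacle: granting Corollary \ref{Cor:LEEqualsCrossTerm} and Corollary \ref{Cor:OperatorBound}, this is precisely the bookkeeping of \eqref{Eq:IntroExpandWInfty}--\eqref{Eq:IntroWInftyDealWithThis3} made rigorous. The one point meriting care is the justification of the termwise chain rule and the $H^{\half}$-membership that permit splitting the norms via the inner product; this is exactly where analyticity of $\gamma$ enters, guaranteeing that $f,g$ extend conformally across $\R$, that $h'$ does not vanish, and hence that all the identities above hold a.e.\ with every function lying in $H^{\half}(\R)$.
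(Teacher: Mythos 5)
Your proposal is correct and follows essentially the same route as the paper's proof: expand $W_h(\infty)$ via the chain rule $\log|h'| = \log|f'| - \log|g'\circ h|$ and the $H^{1/2}$ inner product, identify the diagonal terms with $\tfrac12 I^L(\gamma)$ via \eqref{Eq:LERHHalf}, evaluate the cross terms as $+I^L(\gamma)$ by Corollary \ref{Cor:LEEqualsCrossTerm}, and bound the pulled-back norms two-sidedly with Corollary \ref{Cor:OperatorBound} (including the same $h\circ h^{-1}$ trick for the lower bound). Your added care about $H^{1/2}$-membership of each summand is a sound, slightly more explicit justification of what the paper dismisses with "we can freely differentiate the conformal maps on the boundary."
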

\begin{proof}
We have 
\begin{align*}
    W_h(\infty) &= \| \log |h'| \|_\half^2 + \| \log |(h^{-1})'| \|_\half^2\\
    &= \|\log |(g^{-1})'\circ f| + \log|f'|\|_{\half}^{2} + \|\log |(f^{-1})'\circ g| + \log |g'|\|_{\half}^{2},
\end{align*}
as we can freely differentiate the conformal maps on the boundary. Expanding the inner products and using \eqref{Eq:IntroDouglasFormula} or \eqref{Eq:LERHHalf} yields
\begin{align}
    W_h(\infty) &= \frac{1}{2} I^L(\gamma) + \|\log |(g^{-1})'\circ f|\|_{\half}^2 + \|\log |(f^{-1})'\circ g| \|_{\half}^{2} \notag\\
    &\hspace{13.5mm} +2 \brac{\log |(g^{-1})'\circ f|, \log|f'|}_{\half} + 2 \brac{\log |(f^{-1})'\circ g|, \log|g'|}_{\half} \notag\\
    &= \frac{1}{2} I^L(\gamma) + \|\log |g'\circ h|\|_{\half}^2 + \|\log |f'\circ h^{-1}| \|_{\half}^{2} \label{Eq:WExpandIP}\\
    &\hspace{13.5mm} -2 \brac{\log |g' \circ h|, \log|f'|}_{\half} - 2 \brac{\log |f'\circ h^{-1}|, \log|g'|}_{\half}.\notag
\end{align}
At this point, we apply Corollary \ref{Cor:OperatorBound}, \eqref{Eq:LERHHalf}, and Corollary \ref{Cor:LEEqualsCrossTerm} to conclude
\begin{align*}
    W_h(\infty) &\leq \frac{1}{2}I^L(\gamma) + (K^2+K^{-2}) \big( \|\log |g'|\|_{\half}^2 + \|\log |f'| \|_{\half}^{2}\big) + I^L(\gamma) \\
    &= \frac{1}{2}(3 + K^2+K^{-2})I^L(\gamma).
\end{align*}
The lower bound follows the same logic, but starts from \eqref{Eq:WExpandIP} by bounding $W_h(\infty)$ from below by writing
\begin{align*}
    \|\log |g'|\|_{\half}^2 + \|\log |f'| \|_{\half}^{2} &= \|\log |g'\circ h \circ h^{-1}|\|_{\half}^2 + \|\log |f'\circ h^{-1} \circ h| \|_{\half}^{2}\\
    &\leq (K^2+K^{-2})\big( \|\log |g'\circ h |\|_{\half}^2 + \|\log |f'\circ h^{-1}| \|_{\half}^{2} \big).
\end{align*}
The remaining steps are the same as for the upper bound.
\end{proof}

\subsection{Proofs for general Weil--Petersson curves and any root}\label{Sec:ProofOfComparable}

\subsubsection{Setup, argument sketch, and normalization}\label{Sec:ApproxNormalization}
We now consider the case of a general curve $\gamma = f(\hatR)$ of finite Loewner energy which passes through $\infty$, with associated conformal maps $f:\H \rightarrow H$, $g:\H^* \rightarrow H^*$  satisfying $f(\infty)=g(\infty) = \infty$. We approximate $\gamma$ with images under $f$ of larger and larger circles in $\H$.  After normalizing, each of the latter will be an analytic curves through $\infty$, and thus Lemma \ref{lem:smoothcrossterm} will apply.  We will argue in Lemma \ref{lem: curvesWPconvergence} that the approximating curves converge to $\gamma$ in the sense of Proposition \ref{Thm: equiv convergence H}, and it will follow that the conclusions of Lemma \ref{lem:smoothcrossterm} and Corollary \ref{Cor:LEEqualsCrossTerm} will pass through to the limiting curve $\gamma$.

\subsubsection{The approximating equipotentials}
We proceed to construct the approximating curves.  Let $C_n$ be the circle in $\mathbb{H}$ which has a diameter on the imaginary axis with endpoints $in$ and $i/n$, $n \in \{2,3,\ldots\}$.  Then 
\begin{align}\label{Eq:EquipotentialT_n}
    T_n(z):=i\frac{nz+i}{z+ni}
\end{align}
is the \Mob transformation from $\H$ to the bounded component $H_n$ of $\C \bs C_n$ which takes the triple $(0, i, \infty)$ to $(i/n, i, in)$.  See Figure \ref{Fig:Equipotentials}.  We wish to approximate $\gamma$ with the equipotentials $f(C_n) =: \gamma_n^*$, but need to further post-compose by \Mob transformations $M_n$ to normalize the $\gamma_n^*$ in the sense of \S\ref{Sec: topologicalconvergence}.  We construct these $M_n$ in two steps, first noting $T_n(1) = \frac{1+i/n}{1-i/n}$ and setting
\begin{align*}
    M_n^*(z) :=  \frac{f\big(\frac{1+i/n}{1-i/n} \big) - f(in)}{f\big(\frac{1+i/n}{1-i/n} \big)-f(i/n)}\cdot \frac{z-f(i/n)}{z-f(in)}.
\end{align*}
Thus $M_n^* \circ f \circ T_n$ fixes 0,1, and $\infty$.  Writing $\ell_n^*$ for the length of arc $M_n^* \circ f \circ T_n([0,1])$, we set $M_n := \frac{1}{\ell_n^*}M_n^*$, 
\begin{align}\label{Eq:NormalizedEquipotential}
    \gamma_n : = M_n \circ f \circ T_n(\R) = M_n \circ f (C_n),
\end{align} 
and name the associated map
\begin{align}\label{Eq:NormalizedEquipotentialMap}
    f_n : = M_n \circ f \circ T_n.
\end{align} 
Note that, by construction, $\gamma_n$ passes through 0 and $\infty$, and its positively-oriented arc-length parametrization $z_n$ satisfies $z_n(1) = \frac{1}{\ell_n^*} >0$.  Furthermore, $f_n(s) = z_n(s)$ for $s \in \{0,1,\infty\}$ by construction, and so $f_n$ is the unique normalized conformal map for $\gamma_n$ in the sense of \S\ref{Sec: topologicalconvergence}. What remains to show is that these $\gamma_n$ converge to $\gamma$ in the Weil--Petersson \Teich space.

\begin{figure}
    \centering
    \scalebox{0.85}{
	\begin{tikzpicture}

    \draw[fill=gray!15] (4.6,0.1) to[closed, curve through = {(4.5,2.5) (7.5,2.5) (10.2,2.5) (10.5,1.5) (10.2,0.5) (9.8,0.3) (8.1,0) (7.7,-0.08) (7.4,0) (6.2,0.5)} ] (5.4,0.3);
    \fill[white] (3.5,0) rectangle (5.4,3);
    \fill[white] (5,2) rectangle (10.5,3);
    \fill[white] (9.8,3) rectangle (10.75,0);
    \fill (7.4,0) circle (0.7mm);
    \node[yshift=3.5mm] at (7.4,0) {$0$};
    \fill (8.1,0) circle (0.7mm);
    \node[yshift=4.3mm] at (8.1,0) {$\frac{1}{\ell_n^*}$};
    \node at (6.3,0.8) {$\gamma_n$};
    \node at (9.3,1.2) {$\Omega_n$};
    \fill (7.6,1.3) circle (0.7mm);
    \draw[fill=white] (7.6,1.3) circle[radius=0.5mm];
    \draw[dashed] (7.4,-0.5) to[closed, curve through = {(8,-0.7) (8.6,-0.3) (7.6,-1.6) (5.8,-0.5)}] (6.6,-0.1);

    \fill [gray!15] (-2,0) rectangle (2.46,2);
    \node at (1.25,1) {$\H$};
    \fill (0,1) circle (0.7mm);
    \draw[fill=white] (0,1) circle[radius=0.5mm];
    \node[right] at (0,1) {$i$};
    \fill (0,0) circle (0.7mm);
    \fill (1,0) circle (0.7mm);
    \draw (-2,0) -- (1.6,0);
    \node at (2,0) {$\cdots$};
    \draw[thick,fill=white] (2.34,0.06) rectangle (2.46,-0.06);
    \node[yshift=-3.5mm] at (2.4,0) {$\infty$};
    \node[yshift=-3.5mm] at (0,0) {$0$};
    \node[yshift=-3.5mm] at (1,0) {$1$};

    \draw [-{Latex}] (0, 2.5) to ++(0, 1);
    \node[right] at (0,3) {$T_n$};
    \draw[-{Latex}] (7,3.5) to ++(0,-1);
    \node[right] at (7,3) {$M_n$};
    
    \draw[->,dashed] (-2,4) -- (2,4);
    \draw[fill=gray!15] (0,5.75) circle[radius=1.5];
    \fill (0,4.25) circle (0.7mm);
    \node[above] at (0,4.25) {$i/n$};
    \fill (0,4.25) circle (0.7mm);
    \fill (0.88,4.52) circle (0.7mm);
    \fill (0,5.25) circle (0.7mm);
    \draw[fill=white] (0,5.25) circle[radius=0.5mm] node[right] {$i$};
    \draw[thick,fill=white] (-0.06,7.19) rectangle (0.06,7.31);
    \node[yshift=3mm] at (0,7.25) {$in$};
    \node at (0,6.1) {$H_n$};
    \node at (1.65,5) {$C_n$};

    \draw[-{Latex}] (3.4,5.75) -- ++(1,0);
    \node[above] at (3.9,5.75) {$f$};
    \draw[-{Latex}] (3.4,1) -- ++(1,0);
    \node[below] at (3.9,1) {$f_n$};

    \draw[->,dashed] (5.4,5.15) .. controls (7,6.75) and (8.5,1.75) .. (9.8,4.3);
    \node at (6.1,5.1) {$\gamma$};
    \draw[fill=gray!15] (7,6) to[closed, curve through = {(6,6.5) (8,7) (9,7) (9.5,5) (8.5,4) }] (8,5.3);
    \foreach \pt in {(8.5,4), (8,5.3)} \fill \pt circle (0.7mm);
    \draw[thick, fill=white] (9.44,4.94) rectangle (9.56,5.06);
    \fill (8.6,4.8) circle (0.7mm);
    \draw[fill=white] (8.6,4.8) circle[radius=0.5mm];
    \node[above] at (7,6.1) {$\gamma_n^*$};
    \end{tikzpicture}
    }
    \caption{Approximating $\gamma$ with normalized equipotentials $\gamma_n = M_n \circ f(C_n)$.}
    \label{Fig:Equipotentials}
\end{figure}

\subsubsection{The approximation argument}
We first observe that if $\gamma$ is a $K$-quasicircle, then so are all of the $\gamma_n$. 
\begin{lemma}\label{Lemma:Equipotentials}
    Let $\gamma$ be a $K$-quasicircle  passing through $\infty$, and $f: \mathbb{H} \rightarrow \Omega$ a conformal map to one side of $\C\bs \gamma$ which fixes $\infty$.  Then the equipotentials $\gamma_n$ defined in \eqref{Eq:NormalizedEquipotential} are also all $K$-quasicircles.
\end{lemma}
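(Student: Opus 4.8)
The plan is to reduce the statement to the unnormalized equipotential $\gamma_n^* = f(C_n)$ and then verify the quasicircle bound via the Smirnov criterion already invoked in Lemma~\ref{Lemma:K^2Optimal}. Since $\gamma_n = M_n(\gamma_n^*)$ for the M\"obius transformation $M_n$, and post-composition by an element of $\PSL_2(\C)$ (a $1$-quasiconformal map) preserves the $K$-quasicircle constant, it suffices to show that $\gamma_n^* = f(C_n)$ is a $K$-quasicircle.

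\textbf{Constructing the candidate extension.} To this end, I would first record that, because $\gamma$ is a $K$-quasicircle with $f \colon \H \to H$ conformal and fixing $\infty$, the map $f$ admits a $K^2$-quasiconformal extension $\hat f \colon \hatC \to \hatC$ which is conformal on $\H$ and $K^2$-quasiconformal on $\H^*$ (this is the extension used in the proof of Corollary~\ref{Cor:OperatorBound}, via \cite[Ch.I Lem. 6.2]{Lehtobook}). The candidate global map for $\gamma_n^*$ is then $\Phi_n := \hat f \circ T_n$, with $T_n \in \PSL_2(\C)$ the M\"obius map of \eqref{Eq:EquipotentialT_n}. I would check two properties. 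First, since $C_n \subset \H$ for every $n \geq 2$, the closed disk $\overline{H_n}$ it bounds satisfies $\overline{H_n} \subset \H$; as $T_n(\H) = H_n$ and $\hat f = f$ is conformal on $\H \supset \overline{H_n}$, the composition $\Phi_n$ is conformal on all of $\H$ and agrees there with $f \circ T_n$. Second, $\Phi_n$ is globally $K^2$-quasiconformal, being the composition of the $K^2$-quasiconformal $\hat f$ with the conformal $T_n$; in particular $\Phi_n|_{\H^*}$ is $K^2$-quasiconformal.

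\textbf{Applying Smirnov's criterion.} With $\Phi_n$ conformal on $\H$ and $K^2$-quasiconformal on $\H^*$, Smirnov's theorem \cite[Thm. 4(ii)]{Smirnov} (exactly as applied in Lemma~\ref{Lemma:K^2Optimal}) shows the boundary curve $\Phi_n(\hatR)$ is a $K$-quasicircle. Since $C_n = T_n(\hatR) \subset \H$ and $\hat f = f$ there, we have $\Phi_n(\hatR) = f(C_n) = \gamma_n^*$, so $\gamma_n^*$ is a $K$-quasicircle, and hence so is $\gamma_n = M_n(\gamma_n^*)$.

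\textbf{Main obstacle.} The crux of the argument --- and the one point where one must be careful --- is the distinction between $K$ and $K^2$. A naive application of the global $K^2$-quasiconformal map $\hat f$ to the circle $C_n$ only exhibits $f(C_n)$ as a $K^2$-quasicircle. The gain back to $K$ comes precisely from the observation that $C_n$ together with the disk $H_n$ it bounds lies in the region $\H$ where $\hat f$ is genuinely conformal, so that the quasiconformal distortion is confined to the single side $\H^*$; invoking the Smirnov ``conformal on one side, $K^2$-quasiconformal on the other'' criterion is what converts this one-sided $K^2$ distortion into the sharp $K$-quasicircle bound. Verifying that $\overline{H_n} \subset \H$ (so that $\Phi_n$ really is conformal on the full half-plane $\H$) is the small geometric fact underpinning this step.
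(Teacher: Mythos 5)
Your proposal is correct and follows essentially the same route as the paper's own proof: both take the $K^2$-quasiconformal extension of $f$ from \cite[Ch.I Lem. 6.2]{Lehtobook}, precompose with $T_n$ to get a map conformal on $\H$ (since $\overline{H_n} \subset \H$) and $K^2$-quasiconformal globally, apply Smirnov's theorem \cite[Thm. 4(ii)]{Smirnov} to conclude $\gamma_n^* = f(C_n)$ is a $K$-quasicircle, and note that the M\"obius map $M_n$ preserves this. The paper states this in three sentences; you have merely spelled out the details (the explicit extension $\hat f \circ T_n$ and the geometric fact $\overline{H_n}\subset\H$) that the paper leaves implicit.
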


\begin{proof}
    Since $\gamma$ is a $K$-quasicircle, the conformal map $f$ has a $K^2$-quasiconformal extension to the plane \cite[Ch.I \S6.2]{Lehtobook}.  Thus a conformal map for $\gamma_n^* = f(C_n)$ also has a $K^2$-quasiconformal extension to $\mathbb{C}$, and so by Smirnov's canonical Beltrami representation theorem \cite{Smirnov}, each $\gamma_n^*$ is a $K$-quasicircle.  Applying the \Mob transformation $M_n$ does not affect this.
\end{proof}

\noindent Our main approximation result is the following.  We recall that convergence $\gamma_n \rightarrow \gamma$ in the normalized Weil--Petersson \Teich space is defined by the equivalences in Proposition \ref{Thm: equiv convergence H}.
\begin{lemma}\label{lem: curvesWPconvergence}
    Let $\gamma$ be a Jordan curve of finite Loewner energy which passes through $\infty$, and $f: \mathbb{H} \rightarrow \Omega$ a conformal map to one side of $\C\bs \gamma$ which fixes $\infty$.  Further assume that both $\gamma$ and $f$ are normalized according to Definition \ref{Def:NormalizedCurve}. Then the normalized equipotentials $\gamma_n$ constructed in \eqref{Eq:NormalizedEquipotential} satisfy
    \begin{align*}
        \lim_{n \rightarrow \infty} \gamma_n = \gamma,
    \end{align*}
    where the limit is in the normalized Weil--Petersson \Teich space.
\end{lemma}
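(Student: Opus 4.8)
The plan is to verify the third condition $(\ref{Lemma:TopologyPSH})$ of Proposition \ref{Thm: equiv convergence H}, namely that $\calA_{f_n} \to \calA_f$ in $A^1_2(\H)$, where $f_n$ is the normalized map of \eqref{Eq:NormalizedEquipotentialMap}. Since $M_n$ is built to place its pole at $f(in)=f(T_n(\infty))$, the map $f_n=M_n\circ f\circ T_n$ fixes $\infty$, just as $f$ does; hence by \eqref{Eq:NPSfwInfty} we have $\calA_{f_n}=-2\calB_{f_n}(\cdot,\infty)$ and $\calA_f=-2\calB_f(\cdot,\infty)$, so it is equivalent to show $\calB_{f_n}(\cdot,\infty)\to\calB_f(\cdot,\infty)$ in $L^2(\H)$. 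The decisive first step is the M\"obius covariance of the normalized pre-Schwarzian (Lemma \ref{Lemma:NPSMobiusInvariance1}): as $M_n\in\PSL_2(\C)$ is invisible under post-composition and $T_n(\infty)=in$,
\[
\calB_{f_n}(z,\infty)=\calB_{M_n\circ f\circ T_n}(z,\infty)=\calB_f\big(T_n(z),\,in\big)\,T_n'(z).
\]
This eliminates the normalizations $M_n$ entirely and recasts the whole problem in terms of the single fixed map $f$.

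Next I would record pointwise convergence. From \eqref{Eq:EquipotentialT_n} one checks $T_n(z)\to z$ and $T_n'(z)\to 1$ locally uniformly, while $in\to\infty$; continuity of $w\mapsto\calB_f(z,w)$ up to $w=\infty$, which is exactly what the limiting definitions \eqref{Eq:NPSwInfty}--\eqref{Eq:NPSfwInfty} provide, then gives $\calB_{f_n}(z,\infty)\to\calB_f(z,\infty)$ for every $z\in\H$, hence $\calA_{f_n}\to\calA_f$ a.e.

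To upgrade a.e.\ convergence to $L^2$ convergence I would apply the generalized dominated convergence theorem (Proposition \ref{Thm: DCL}) with the majorant $g_n:=2|\calA_{f_n}|^2+2|\calA_f|^2\ge|\calA_{f_n}-\calA_f|^2$; since $g_n\to 4|\calA_f|^2$ a.e., it suffices to establish the norm convergence $\int_\H|\calA_{f_n}|^2\,dA\to\int_\H|\calA_f|^2\,dA$. This is where Theorem \ref{Thm:BoundaryLE} and Corollary \ref{Lemma:BIntegralInvariance} do the bookkeeping: the change of variables $w=T_n(z)$ yields
\[
\tfrac14\int_\H|\calA_{f_n}|^2\,dA=\int_\H\big|\calB_f(T_n(z),in)\big|^2\,|T_n'(z)|^2\,dA(z)=\int_{H_n}\big|\calB_f(w,in)\big|^2\,dA(w),
\]
with $H_n=T_n(\H)\nearrow\H$, while the target $\tfrac14\int_\H|\calA_f|^2=\int_\H|\calB_f(w,\infty)|^2\,dA(w)$ is finite precisely because $I^L(\gamma)<\infty$ (Theorem \ref{Thm:BoundaryLE}). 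Writing $\calB_f(\cdot,in)=\calB_f(\cdot,\infty)+R_n$ with $R_n:=\frac{f'}{f-f(in)}-\frac{1}{\,\cdot\,-\,in}$, monotone convergence gives $\int_{H_n}|\calB_f(\cdot,\infty)|^2\to\int_\H|\calB_f(\cdot,\infty)|^2$, and Cauchy--Schwarz controls the cross term, so the norm convergence reduces to the single estimate $\|R_n\|_{L^2(H_n)}\to 0$.

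I expect this last estimate to be the main obstacle. The trouble is that $R_n$ is a difference of two individually non-$L^2$ Cauchy kernels whose poles at $w=in$ cancel, so it cannot be dominated term by term; one must exploit that moving the root from $\infty$ to the nearby boundary point $in$ becomes negligible in the limit. My intended route is to transport $R_n$ back to $\H$ by $T_n$, where (conformal invariance of the Dirichlet integral and a direct computation) $R_n(T_n(z))\,T_n'(z)=\frac{G_n'(z)}{G_n(z)-G_n(\infty)}+\frac{1}{z+ni}$ for $G_n:=f\circ T_n\to f$ with $G_n(\infty)=f(in)\to\infty$ and $-ni\in\H^*$; the integrand tends to $0$ pointwise, and the uniform $L^2$-control supplied by Theorem \ref{Thm:BoundaryLE} — all $\gamma_n$ are finite-energy $K$-quasicircles by Lemma \ref{Lemma:Equipotentials} — should furnish the uniform integrability needed to pass to the limit against the surviving cancellation. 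Once $\|R_n\|_{L^2(H_n)}\to 0$ is in hand, the norm convergence follows, Proposition \ref{Thm: DCL} gives $\calA_{f_n}\to\calA_f$ in $A^1_2(\H)$, and Proposition \ref{Thm: equiv convergence H} then yields $\gamma_n\to\gamma$ in the normalized Weil--Petersson \Teich space.
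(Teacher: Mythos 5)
Your reduction is sound and, up to the final step, runs parallel to the paper's own argument: the paper also verifies Proposition \ref{Thm: equiv convergence H}$(\ref{Lemma:TopologyPSH})$, eliminates the normalizations $M_n$ (via the pre-Schwarzian composition rule rather than the \Mob covariance of $\calB$, but these are equivalent bookkeeping), establishes the same pointwise limits, and reduces everything to the single estimate $\|R_n\|_{L^2(H_n)} \to 0$, where $R_n := \frac{f'}{f - f(in)} - \frac{1}{\cdot - in}$. Indeed, the paper's second term in \eqref{Ineq:ApproximationsBound} has square exactly $4\int_{H_n}|R_n|^2\,dA$ (their \eqref{Eq:EquipotentialApproxBoundThisA}), and your transported identity $R_n(T_n(z))T_n'(z) = \frac{G_n'(z)}{G_n(z)-G_n(\infty)} + \frac{1}{z+ni}$ checks out. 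Your top-level use of generalized dominated convergence with majorant $2|\calA_{f_n}|^2+2|\calA_f|^2$, the monotone convergence over $H_n \nearrow \H$, and the Cauchy--Schwarz handling of the cross term are all fine, so the entire proof hinges on that one estimate --- exactly as in the paper.

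The gap is in how you propose to prove it. You argue that the integrand tends to zero pointwise and that "uniform $L^2$-control" from Theorem \ref{Thm:BoundaryLE} and the uniform quasicircle constant of Lemma \ref{Lemma:Equipotentials} "should furnish the uniform integrability needed." That inference is invalid: a sequence with uniformly bounded $L^2$ norms converging pointwise a.e.\ to zero need only converge \emph{weakly} to zero; its $L^2$ norm need not vanish (mass can escape, which is precisely the danger here, since the root $in$ and the pole $f(in)$ both run off to $\infty$). Ruling out this escape of mass is the actual content of the step, and it requires an input your proposal does not contain. The paper supplies it as follows: it dominates $|R_n|^2\chi_{H_n}$ by $G_n := 2|\calB_f(\cdot,in)|^2\chi_{H_n} + 2|\calB_{g_n}(\cdot,h_n(in))|^2\chi_{H_n^*} + \frac{1}{2}|\calB_f(\cdot,\infty)|^2\chi_{H_n}$, where $g_n$ are \emph{exterior} conformal maps of the equipotentials $f(C_n)$; by Theorem \ref{Thm:BoundaryLE} the integral $\int_\C G_n$ equals $\frac{\pi}{2}I^L(\gamma_n^*)$ plus a tail term; pointwise convergence of $G_n$ (along subsequences) is obtained from a normal-families/Hurwitz argument for the $g_n$; and, crucially, $\int G_n \to \int G$ follows from the convergence of Loewner energies of nested circular equipotentials, $I^L(f(C_n)) \to I^L(f(\R)) = I^L(\gamma)$, quoted from \cite[Cor. 1.5]{Interplay}. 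It is this external energy-convergence result (or some substitute playing the same role) that prevents concentration and lets generalized dominated convergence close the argument; without it, your final step does not go through.
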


\begin{proof}
   By Proposition \ref{Thm: equiv convergence H}$(\ref{Lemma:TopologyPSH})$, it suffices to show 
    \begin{align}\label{Eq:EquipotentialApproxGoal}
        \lim_{n \rightarrow \infty}\| \mathcal{A}_{f_n} - \mathcal{A}_f\|_{L^2(\mathbb{H})} = 0,
    \end{align}
    where $f_n: \H \rightarrow \Omega_n$ is the normalized conformal maps for $\gamma_n$ constructed above in \eqref{Eq:NormalizedEquipotentialMap}, with $\Omega_n$ as in Figure \ref{Fig:Equipotentials}.  Noting that
    \begin{align*}
        \calA_{f_n} = \calA_{\frac{z - f(i/n)}{z-f(in)} \circ f \circ T_n},
    \end{align*}
    we use the pre-Schwarzian composition rule (see Table \ref{Table:ABS}) to compute 
    \begin{align}
        \| \mathcal{A}_{f_n} - \mathcal{A}_f\|_{L^2(\mathbb{H})} &= \Big\| \frac{-2(f'\circ T_n)T_n'}{f\circ T_n - f(in)} +  (\mathcal{A}_f\circ T_n) T_n'  + \mathcal{A}_{T_n} - \mathcal{A}_f\Big \|_{L^2(\mathbb{H})}\notag\\ 
        &\leq \Big\| (\mathcal{A}_f\circ T_n)T_n' - \mathcal{A}_f  \Big\|_{L^2(\mathbb{H})} + \Big\| \frac{-2(f'\circ T_n)T_n'}{f\circ T_n - f(in)} + \mathcal{A}_{T_n} \Big\|_{L^2(\mathbb{H})}.\label{Ineq:ApproximationsBound}
    \end{align}
    We claim both these terms are vanishing.  
    
    We first show the former tends to zero by Proposition \ref{Thm: DCL}. Indeed, as $T_n(z) \rightarrow z$ locally uniformly, we have $(\mathcal{A}_f\circ T_n(z))T_n'(z) - \calA_f(z) \rightarrow 0$ point-wise on $\H$. Furthermore,
    \[ \left|(\mathcal{A}_f\circ T_n)T_n'-\mathcal{A}_f\right|^2 \leq 2\left(|\mathcal{A}_f\circ T_n)T_n'|^2+|\mathcal{A}_f|^2\right) =: G_n(z) \rightarrow 4|\calA_f(z)|^2 =: G(z)
    \]
   point-wise. Recalling our above notation $H_n = T_n(\H)$ for the interior of the circle $C_n$, we thus have 
    \begin{align*}
        \int_\H G_n \,dA &= 2\int_{\H} \left|\frac{f''}{f'}(T_n)T_n'\right|^2 dA + 2\int_{\H} \left|\frac{f''}{f'}\right|^2dA \\ &= 2\int_{H_n} \left|\frac{f''}{f'}\right|^2 dA +2\int_{\H} \left|\frac{f''}{f'}\right|^2 dA \rightarrow \int G\,dA
    \end{align*} 
    as $n \rightarrow \infty$.    We conclude $\| (\mathcal{A}_f\circ T_t)T_t' - \mathcal{A}_f \|_{L^2(\mathbb{H})} \rightarrow 0$ by Proposition~\ref{Thm: DCL}.

The vanishing of the second term in \eqref{Ineq:ApproximationsBound} appears to be less trivial, and we start by observing
\begin{align}
         \Big\| \frac{-2(f'\circ T_n)T_n'}{f\circ T_n - f(in)} + \mathcal{A}_{T_n} \Big\|_{L^2(\H)}^2
         &=\int_{\H} \abs{\frac{2(f'\circ T_n)(z)}{f\circ T_n(z)-f(in)}-\frac{\mathcal{A}_{T_n}(z)}{T_n'(z)} }^2\abs{T_n'(z)}^2 dA(z) \notag\\
         &=\int_{H_n} \abs{\frac{2f'(w)}{f(w)-f(in)}-\frac{\mathcal{A}_{T_n}(T_n^{-1}(w))}{T_n'(T_n^{-1}(w))} }^2 dA(w) \notag\\
         &=\int_{H_n} \abs{\frac{2f'(w)}{f(w)-f(in)}+ \mathcal{A}_{T_n^{-1}}(w)}^2 dA(w) \notag\\
         &=4\int_{H_n} \abs{\frac{f'(w)}{f(w)-f(in)}-\frac{1}{w-in}}^2 dA(w),\label{Eq:EquipotentialApproxBoundThisA}
    \end{align}
    where we have used the pre-Schwarzian chain rule from Table \ref{Table:ABS} for the third equality, and the explicit expression \eqref{Eq:EquipotentialT_n} for $T_n$ for the fourth.
    
    To show that \eqref{Eq:EquipotentialApproxBoundThisA} limits to zero, it suffices to show that for each subsequence $\gamma_{n_m}$, there is a further subsequence on which it vanishes.  To that end, let $\gamma_{n_m}$ be any subsequence, which we relabel as $\gamma_n$.  Since $f$ fixes $\infty$, note that the integrand in \eqref{Eq:EquipotentialApproxBoundThisA} point-wise vanishes as $n \rightarrow \infty$.  We use Proposition \ref{Thm: DCL} again, combined with Theorem \ref{Thm:BoundaryLE} and a result on the convergence of the energy of nested equipotentials in the disk, \cite[Cor. 1.5]{Interplay}, to show that the entire integral tends to zero.  
    
    To construct the dominating functions $G_n$ (we are re-using the same notation $G_n$ as in the first part of the proof, but now the functions will be different), we observe
    \begin{align*}
        \Big|\frac{f'(w)}{f(w)-f(in)}&-\frac{1}{w-in}\Big|^2 = \Big|\frac{f'(w)}{f(w)-f(in)}-\frac{1}{w-in}\Big|^2 \chi_{H_n}(w)\\
        &\leq 2\abs{\frac{f'(w)}{f(w)-f(in)}-\frac{1}{w-in} -\frac{1}{2}\frac{f''(w)}{f'(w)}}^2\chi_{H_n}(w) + \frac{1}{2}\abs{\frac{f''(w)}{f'(w)}}^2\chi_{H_n}(w)\\
        &= 2|\calB_f(w,in)|^2\chi_{H_n}(w) + \frac{1}{2}|\calB_f(w,\infty)|^2\chi_{H_n}(w)\\
        &\leq 2|\calB_f(w,in)|^2 \chi_{H_n}(w) + 2|\calB_{g_n}(w,h_n(in))|^2 \chi_{H_n^*}(w) + \frac{1}{2}|\calB_f(w,\infty)|^2 \chi_{H_n}(w)\\
        &=: G_n(w),
    \end{align*}
where $g_n: H_n^*:= \hatC \bs \overline{H_n} \rightarrow \hatC \bs \overline{f(H_n)} =: \Omega_n^*$ is a conformal map from the outside of the circle $C_n$ to the outside of the circle's image $\gamma_n^* = f(C_n)$ under $f$, and $h_n := g_n^{-1} \circ f \in \ophom{C_n}$ the associated conformal welding.  With respect to normalization, we select $g_n$ to be the unique map satisfying
\begin{align}\label{Eq:MoveDomainToH^*}
    g_n(i/n) = f(i/n), \qquad g_n(T_n(1)) = f(T_n (1)), \quad \text{and} \quad 
    g_n(in) = f(in).
\end{align} 

Thus $h_n$ fixes $i/n, T_n(1),$ and $in$. To show that the integral in \eqref{Eq:EquipotentialApproxBoundThisA} tends to zero along a further subsequence $\gamma_{n_\ell}$, by Proposition \ref{Thm: DCL} it suffices to show that $(i)$ $G_{n_\ell} \in L^1(\C)$, $(ii)$ $G_{n_\ell}$ has a point-wise limit $G \in L^1(\C)$, and that $(iii)$ $\int_\C G_{n_\ell} \rightarrow \int_\C G$. 
\begin{enumerate}[$(i)$]
    \item By Theorem \ref{Thm:BoundaryLE},
    \begin{align*}
        \int_\C G_n(w) dA(w) &= 2I^L(\gamma_n^*) + \frac{1}{2}\int_{H_n} |\calB_f(w,\infty)|^2 dA(w)\\
        &\leq 2I^L(\gamma_n^*) + \frac{1}{2}\int_{\H} |\calB_f(w,\infty)|^2 dA(w)\\
        &\leq 2I^L(\gamma_n^*) + \frac{1}{2}I^L(\gamma) < \infty
    \end{align*}
    since the $\gamma_n^*$ are all analytic arcs.  Hence $\{G_n\} \subset L^1(\C)$.
    \item We claim that, along a further subsequence $\{\gamma_{n_\ell}\}$, we have the point-wise limit
    \begin{align}\label{Limit:EquipotentialsGMapsPtwise}
        G_{n_\ell}(w) \rightarrow 2|\calB_f(w,\infty)|^2 \chi_{\H}(w) + 2|\calB_{g}(w,\infty)|^2 \chi_{\H^*}(w) + \frac{1}{2}|\calB_f(w,\infty)|^2 \chi_{\H}(w) =: G(w)
    \end{align}
    a.e. $w \in \C$, where $g$ is a subsequential limit of the $g_{n_\ell}$.  We proceed to first show that such a limit $g$ exists and is a conformal map from $\H^*$ to $\Omega^* := \C \bs \overline{f(\H)}$, and then subsequently show the claimed point-wise convergence in \eqref{Limit:EquipotentialsGMapsPtwise}.

    \quad Indeed, by Lemma \ref{Lemma:Equipotentials}, the compositions $\tilde{g}_n := g_n \circ T_n : \H^* \rightarrow \Omega_n^*$ are conformal maps to one side of the $K$-quasicircles $\gamma_n^*$, and thus each of the $\tilde{g}_n$ extends as a $K^2$-quasiconformal map of $\C$, which we also denote by $\tilde{g}_n$.  Given our normalization \eqref{Eq:MoveDomainToH^*} of $g_n$ and the convergence of $T_n$ to the identity map (and that $f$ extends continuously to $\R$), the images of $0,1$, and $\infty$ under the family $\{\tilde{g}_n\}$ are mutually positively separated, thus showing $\{\tilde{g}_n\}$ is a normal family of quasiconformal mappings \cite[Ch.I Thm. 2.1]{Lehtobook}, with subsequential limit $\tilde{g}_{n_\ell} \rightarrow {g}$ for some ${g}: \C \rightarrow \C$ . Since $g(0)=f(0)$ and $g(1) = f(1)$, $g$ is not constant, and is thus also a $K^2$-quasiconformal map \cite[Ch.I Thm. 2.2]{Lehtobook}, which is furthermore conformal on $\H^*$ by Hurwitz's theorem.\footnote{Or, if you will, by the convergence of the complex dilatations $\mu_{\tilde{g}_n} \equiv 0$ on $\H^*$ \cite[Ch.I Thm. 4.5]{Lehtobook}.}  
    
   \quad Next, note that $g: \H^* \rightarrow \Omega^*$ conformally.  Indeed, any $w\in \Omega^*$ is also in $H_n^*$ for all $n$ by monotonicity of the domains, and there is therefore a fixed curve $\eta_w \subset \H^*$ such that $\tilde{g}_{n_\ell}(\eta_w)$ winds once around $w$ for large $\ell$ (we can construct $\eta_w$ by using the locally-uniform convergence of the inverse family $\{\tilde{g}_{n_\ell}^{-1}\}$, for instance).  Thus $g(\eta_w)$ also winds once around $w$, and so $g$ maps onto $\Omega^*$ by the argument principle.  Similarly, for $w \notin \Omega^*$ the image of any large curve $\eta \subset \H^*$ under $\tilde{g}_{n_\ell}$ does not wind around $w$, and so similarly $w \notin g(\H^*)$.    
    
    \quad Having thus established the existence of the locally-uniform limit $g:\C \rightarrow \C$ whose restriction to $\H^*$ is a conformal map to $\Omega^*$, we lastly show  the point-wise convergence \eqref{Limit:EquipotentialsGMapsPtwise} for a.e. $w \in \C$.  Recalling $f(\infty) = \infty$, the convergence 
    \begin{align}\label{Lim:NPSf}
        \calB_f(w,in) = \frac{f'(w)}{f(w) - f(in)} - \frac{1}{w-in} - \frac{1}{2} \frac{f''(w)}{f'(w)} \rightarrow - \frac{1}{2} \frac{f''(w)}{f'(w)} = \calB_f(w,\infty)
    \end{align}
    for $w \in \H$ is clear, and hence    
    \begin{align*}
        &2|\calB_f(w,in)|^2 \chi_{H_n}(w) + \frac{1}{2}|\calB_f(w,\infty)|^2 \chi_{H_n}(w)\\
        \rightarrow &2|\calB_f(w,\infty)|^2 \chi_{\H}(w)  + \frac{1}{2}|\calB_f(w,\infty)|^2 \chi_{\H}(w)
    \end{align*}
    a.e. $w \in \C$.  To obtain \eqref{Limit:EquipotentialsGMapsPtwise}, it remains to show    
    \begin{align}\label{Lim:NPSg_n}
       |\calB_{g_{n_\ell}}(w,h_{n_\ell}(in_\ell))|^2 \chi_{H_{n_\ell}^*}(w)   \rightarrow |\calB_{g}(w,\infty)|^2 \chi_{\H*}(w).
    \end{align}
    Since $h_n(in) = in$ by \eqref{Eq:MoveDomainToH^*}, and furthermore $g(\infty) = \infty$ since
    \begin{align*}
        \tilde{g}_n(\infty) = f(in) \rightarrow \infty,
    \end{align*}
    \eqref{Lim:NPSg_n} is clear from the definition of $\calB_{g_{n_\ell}}$, as above for $f$ in \eqref{Lim:NPSf}. We conclude $G_{n_\ell}(w) \rightarrow G(w)$ a.e. on $w \in \C$, as claimed.

    \item 
    We lastly show $\int_\C G_n \rightarrow \int_\C G$. Since 
    \begin{align*}
        \frac{1}{2}\int_{H_n} |\calB_f(w,\infty)|^2dA(w) \rightarrow \frac{1}{2}\int_\H|\calB_f(w,\infty)|^2dA(w),
    \end{align*}
    to obtain $\int_\C G_n \rightarrow \int_\C G$ it suffices to show 
    \begin{align*}
        2\int_\H |\calB_f(w,\infty)|^2 dA(w) + 2\int_{\H^*}|\calB_{g}(w,\infty)|^2 dA(w) = \frac{\pi}{2}I^L(\gamma)
    \end{align*}
    is the limit of 
    \begin{align*}
        2\int_{H_n} |\calB_f(w,in)|^2 dA(w) + 2\int_{H_n^*}|\calB_{g_n}(w,h_n(in))|^2 dA(w) = \frac{\pi}{2}I^L(\gamma_n^*),
    \end{align*}
    where the equalities are by Theorem \ref{Thm:BoundaryLE}.  To see this, write $S: \H \rightarrow \D$ for the Cayley transform $S(z) = \frac{z-i}{z+i}$, and note the images $S(C_n)$ of the circles $C_n$ are the concentric circles $\frac{1-1/n}{1+1/n}\SSS =: r_n \SSS$ in $\D$. Therefore, by the invariance of Loewner energy under automorphisms of $\hatC$, and the convergence of nested, circular equipotentials \cite[Cor. 1.5]{Interplay}, we have
    \begin{multline*}
        I^L(\gamma_n^*) = I^L(f(C_n)) = I^L(S\circ f \circ S^{-1} (r_n \T))\\
        \rightarrow I^L(S\circ f \circ S^{-1} (\T)) = I^L(f (\R)) = I^L(\gamma).
    \end{multline*}
    We thus have $\int_\C G_n \rightarrow \int_\C G$. 
\end{enumerate}
By Proposition \ref{Thm: DCL} we conclude \eqref{Eq:EquipotentialApproxBoundThisA} tends to zero, thus showing both terms in \eqref{Ineq:ApproximationsBound} asymptotically vanish.
\end{proof}

\subsubsection{Proofs of Lemma \ref{lem:generalcrossterm} and Theorem \ref{Comparable} via approximation}\label{Sec:ProofViaApprox}

Let $\gamma$ be a general curve of finite Loewner energy that passes through $\infty$.  We push Lemma \ref{lem:smoothcrossterm} and its corollary through to this general case by means of Lemma \ref{lem: curvesWPconvergence} and the framework of \S \ref{Sec: topologicalconvergence}.  The latter requires normalized curves in the sense of Definition \ref{Def:NormalizedCurve}, however, and thus we begin by post-composing by an affine map $A \in \PSL_2(\C)$ such that $\Gamma := A(\gamma)$ also passes through $0$, and such that $\Gamma$'s positively-oriented arc-length parametrization $z$ with $z(0) = 0$ satisfies $z(1)>0$.  We also want the associated conformal maps to be normalized, and hence select affine $B,C \in \PSL_2(\R)$ such that $F:= A \circ f \circ B$ and $G:= A \circ g \circ C$ satisfy $F(s) = z(s) = G(s)$ for $s =0,1,\infty$.  Thus $F$ and $G$ are the unique normalized conformal maps from $\H$ and $\H^*$ to the two sides of $\C \bs \Gamma$.

We claim we can operate with $\Gamma$, $F$, and $G$ in place of $\gamma, f$ and $g$ without loss of generality, in the following sense.  If $h = g^{-1} \circ f$ was the original conformal welding, the new welding is $H := G^{-1} \circ F =  C^{-1} \circ h \circ B$, and it is not hard to see that all of the inner products and norms appearing in Lemma \ref{lem:generalcrossterm} and Corollary \ref{Thm:LoewnerIP} are invariant under these affine changes of coordinates.  For instance,
\begin{align*}
    \brac{\log |F'|,\log |G'\circ H|}_{\half} &= \brac{\log |f'|,\log |g'\circ h|}_{\half}, \\
    \qquad \langle \log |G'|,\log |F'\circ H^{-1}| \rangle_{\half} &= \langle \log |g'|,\log |f'\circ h^{-1}| \rangle_{\half},
\end{align*}
and similarly for the others.  Thus we may suppose that both the curve and the associated conformal maps are normalized in the sense Definition \ref{Def:NormalizedCurve}, and we proceed to revert to the original names $\gamma, f$, and $g$ for these now-normalized objects.

\begin{proof}[Proof of Lemma~\ref{lem:generalcrossterm}]
As discussed above, we may assume that $\gamma,f$ and $g$ are all normalized according to Definition \ref{Def:NormalizedCurve}.  Let $\gamma_n$ be the normalized approximating equipotentials from Lemma \ref{lem: curvesWPconvergence}, and $f_n,g_n$ the associated normalized conformal maps.  The identities \eqref{Eq:CrossTermGeneral}, \eqref{Eq:IPDEgGeneral}, and \eqref{Eq:IPDEfGeneral} all hold for $f_n, g_n,$ and $h_n$ by Lemma \ref{lem:smoothcrossterm}.  Taking limits in these, we claim, yields the identities for $f,g,$ and $h$. For instance, considering the left-most inner product of \eqref{Eq:CrossTermGeneral}, we observe
\begin{align*}
    \big|  \langle \log |f_n'|,&\log |g_n'\circ h_n| \rangle_{\half} - \brac{\log |f'|,\log |g'\circ h|}_{\half} \big|\\
    &= \big|  \brac{\log |f_n'|-\log|f'|,\log |g_n'\circ h_n|}_{\half} + \brac{\log |f'|,\log |g_n'\circ h_n| - \log|g'\circ h|}_{\half} \big|\\
    &\leq \big\| \log |f_n'|-\log|f'| \big\|_{\half} \big\| \log |g_n'\circ h_n|\big\|_{\half} + \big\| \log|f'| \big\|_{\half} \big\| \log |g_n'\circ h_n| - \log|g'\circ h|\big\|_{\half}\\
    &\leq \big\| \log |f_n'|-\log|f'| \big\|_{\half} C \big\| \log |g_n'|\big\|_{\half} + \big\| \log|f'| \big\|_{\half} \big\| \log |g_n'\circ h_n| - \log|g'\circ h|\big\|_{\half}\\
    &= \frac{C}{2\pi}\big\| \nabla\log |f_n'|-\nabla \log|f'| \big\|_{L^2(\H)}  \big\| \nabla \log |g_n'|\big\|_{L^2(\H)}\\
    &\hspace{35mm}+ \frac{1}{\sqrt{2\pi}}\big\| \nabla \log|f'| \big\|_{L^2(\H)} \big\| \log |g_n'\circ h_n| - \log|g'\circ h|\big\|_{\half},
\end{align*}
where we have used Corollary \ref{Cor:OperatorBound} and the Douglas formula \eqref{Eq:IntroDouglasFormula}.  By Proposition \ref{Thm: equiv convergence H}$(\ref{Lemma:TopologyPSH})$ and $(\ref{Lemma:TopologyPSH*})$ the first term tends to zero. For the second term, we note
\begin{align*}
    \log |g_n'\circ h_n| = \log \frac{1}{|(g_n^{-1})'\circ f_n|} = \log \frac{
|f_n'|}{|h_n'|},
\end{align*}
while similarly $\log|g'\circ h| = \log\frac{|f'|}{|h'|}$ a.e., and therefore
\begin{align*}
    \big\| \log |g_n'\circ h_n| - \log|g'\circ h|\big\|_{\half} &\leq \big\| \log |f_n'| - \log|f'|\big\|_{\half} + \big\| \log|h'| - \log|h_n'|\big\|_\half\\
    &=\frac{1}{\sqrt{2\pi}}\big\| \nabla\log |f_n'|-\nabla \log|f'| \big\|_{L^2(\H)}+ \big\| \log|h'| - \log|h_n'|\big\|_\half \\&\rightarrow 0
\end{align*}
by Proposition \ref{Thm: equiv convergence H}$(\ref{Lemma:TopologyPSH})$ and $(\ref{Lemma:TopologyWelding})$.  We conclude $\langle \log |f_n'|,\log |g_n'\circ h_n| \rangle_{\half} \rightarrow \brac{\log |f'|,\log |g'\circ h|}_{\half}$. Similar arguments show
\begin{align*}
    \brac{\log |g_n'|,\log |f_n'\circ h_n^{-1}|}_{\half} &\rightarrow \brac{\log |g'|,\log |f'\circ h^{-1}|}_{\half},\\
    \brac{\log|f_n'|, \log|h_n'|}_{\half} & \rightarrow \brac{\log|f'|, \log|h'|}_{\half},\\
    \brac{\log|g_n'|, \log|(h_n^{-1})'|}_{\half} &\rightarrow \brac{\log|g'|, \log|(h^{-1})'|}_{\half}.
\end{align*}
Also, the right-hand sides of equations \eqref{Eq:IPDEgGeneral} and \eqref{Eq:IPDEfGeneral} converge by Proposition \ref{Thm: equiv convergence H}$(\ref{Lemma:TopologyPSH*})$ and $(\ref{Lemma:TopologyPSH})$, respectively.  Taking limits in Lemma \ref{lem:smoothcrossterm} thus yields Lemma \ref{lem:generalcrossterm}, as claimed.

\end{proof}

\begin{proof}[Proof of Theorem~\ref{Comparable}.]
By Theorem \ref{Thm:WFiniteEquiv} all three parts of the inequality are infinite if $\gamma$ is not Weil--Petersson, so we may assume $I^L(\gamma) < \infty$.  

We first prove \eqref{Ineq:Main} for rooting at $y=\infty$, and we again claim that here we may assume, without loss of generality, that $\gamma$, $f$, and $g$ are normalized according to Definition \ref{Def:NormalizedCurve}.  Indeed, the \Mob post-composition does not change the Loewner energy, and as discussed at the outset of \S\ref{Sec:ProofViaApprox}, the new welding after normalization is $H = C^{-1}\circ h \circ B$ for some affine $B,C \in \PSL_2(\R)$.  Hence $W_H(\infty) = W_h(\infty)$ by Theorem \ref{Thm:WMobiusInvariance}, and so we may indeed assume the curve and conformal maps are normalized, as claimed.

By Lemma \ref{Lemma:ComparableAnalytic}, \eqref{Ineq:Main} holds for each of the approximating equipotentials of Lemma \ref{lem: curvesWPconvergence}, and as $n \rightarrow \infty$, $I^L(\gamma_n) \rightarrow I^L(\gamma)$ by Lemma \ref{Lemma:SchwarziansLEConverge} and $\| \log|h_n'| - \log|h'|\|_\half \rightarrow 0$ by Proposition \ref{Thm: equiv convergence H}$(\ref{Lemma:TopologyWelding})$.  Furthermore, $\|\log|(h_n^{-1})'| - \log|(h^{-1})'|\|_\half \rightarrow 0$ as well since the conjugated curves $\overline{\gamma}_n$  converge to $\overline{\gamma}$ in the normalized Weil--Petersson \Teich space (reflect the conformal maps and again applying Proposition \ref{Thm: equiv convergence H}$(\ref{Lemma:TopologyPSH})$, for instance).  Hence $W_{h_n}(\infty) \rightarrow W_h(\infty)$, yielding \eqref{Ineq:Main} when $y=\infty$.

For a general root $y \in \R$, select $T_y, T_{h(y)} \in \PSL_2(\R)$ such that $T_y(\infty) = y$ and $T_{h(y)}(\infty) = h(y)$.  Then the welding $H := T_{h(y)}^{-1} \circ h \circ T_y$ fixes $\infty$ and corresponds to the same Jordan curve $\gamma$, and thus 
    \begin{align*}
        \frac{1}{2}\Big(3+\frac{1}{K^2+K^{-2}}\Big)I^L(\gamma) \leq W_H(\infty) \leq \frac{1}{2}\Big(3+K^2+K^{-2}\Big)I^L(\gamma)
    \end{align*}
    by the previous paragraphs.  However, $W_H(\infty) = W_h(y)$ by Theorem \ref{Thm:WMobiusInvariance}.    
\end{proof}

\subsection{Continuity of welding energy in the root}\label{Sec:WContinuous}

Let $C \subset \hatC$ be a circle and $h \in \ophom{C}$.  In this section we cycle back to prove Theorem \ref{Thm: WisContinuousatRoot}, the continuity of $y \mapsto W_h(y)$. While in some sense this is a question in classical analysis, it is also a question intertwined with the Weil--Petersson \Teich space, since by Theorem \ref{Thm:WFiniteEquiv} $W_h(y)$ is finite if and only if $h \in \WP(C)$.  Hence it is not entirely unreasonable to prove properties of $W_h(y)$ using energy-specific tools, and that is the approach we follow.

We recall the continuity of $y \mapsto W_h(y)$ immediately follows from the continuity of $y \mapsto K_h(y)$, Theorem \ref{Thm: KisContinuousatRoot}.  We prove the latter by using the \Mob covariance of $K_h(y)$ to reduce to the case $y=\infty$, and then apply the equivalences of Proposition \ref{Thm: equiv convergence H}.

\begin{proof}[Proof of Theorem~\ref{Thm: KisContinuousatRoot}]       
    If $I^L(\gamma) = \infty$, then $W_h(y) = \infty$ and every $W_h(y_n) = \infty$ by Theorem \ref{Thm:WFiniteEquiv}, so there is nothing to prove.  We thus assume $I^L(\gamma)< \infty$. 
       
    Take $\{y_n\} \subset 
 \R$ such that $y_n \rightarrow y \in \hatR$. Let $f$ and $g$ be conformal on $\H$ and $\H^*$, respectively, such that $h = g^{-1}\circ f$.  As we have no assumptions on the normalization of $h$ or $\gamma$, we start by M\"obius-rotating $\gamma$ on the sphere, and choosing a normalization for each $y_n$ to match Definition \ref{Def:NormalizedCurve}.  
 
 Indeed, first suppose $f(y) \neq \infty$.  Choose $z_0 \in \gamma \bs \big(\{f(y_n)\}_{n\geq 1} \cup \{f(y)\}\big)$, and set
\begin{align*}
    \tilde{M}_y(z) := \frac{z-z_0}{z-f(y)} \quad \text{ and } \quad \tilde{\gamma}_y := \tilde{M}_y(\gamma),
\end{align*}
while similarly setting 
\begin{align*}
    \tilde{M}_n(z) := \frac{z-z_0}{z-f(y_n)} \quad \text{ and } \quad \tilde{\gamma}_n := \tilde{M}_n(\gamma).
\end{align*}
Write $\tilde{f}_y := \tilde{M}_y \circ f$ and $\tilde{f}_n := \tilde{M}_n \circ f$, and let $\tilde{z}_y$ and $\tilde{z}_n$ be the positively-oriented arc-length parametrizations of $\tilde{\gamma}_y$ and $\tilde{\gamma}_n$ which satisfy $\tilde{z}_y(0) = \tilde{z}_n(0)=0$.\footnote{The orientation of $\tilde{z}_y$ is determined by $\tilde{f}_y$, in the sense that there is some $x>0$ such that $\tilde{f}_y(x) = \tilde{z}_y(1)$. Similarly for $\tilde{z}_n$.}  Writing 
\begin{align*}
    g_y(x) := \int_{x_0}^x |\tilde{f}_y'(s)|ds
\end{align*}
for $x \geq x_0 := f^{-1}(z_0)$, and similarly $g_n(x) := \int_{x_0}^x |\tilde{f}_n'(s)|ds$, we see by a simple dominated convergence argument that the monotone functions $g_n$ and $g$ satisfy the point-wise convergence $g_n(x) \rightarrow g(x)$, whence $g_n^{-1}(1) \rightarrow g^{-1}(1)$, and therefore
\begin{align*}
    \tilde{z}_n(1) = \tilde{f}_n\circ g_n^{-1}(1) \rightarrow \tilde{f}_y \circ g^{-1}(1) = \tilde{z}_y(1).
\end{align*}
It follows that 
\begin{align*}
    M_n(z) := e^{-i\arg(\tilde{z}_n(1))} \frac{z-z_0}{z-f(y_n)} \quad \rightarrow \quad e^{-i\arg(\tilde{z}_y(1))} \frac{z-z_0}{z-f(y)} =: M_y(z)
\end{align*}
uniformly away from $f(y)$. We normalize based off of these latter two maps, setting $\gamma_y:= M_y(\gamma)$, $\gamma_n:= M_n(\gamma)$, and write $z_y$ and $z_n$ for the corresponding positively-oriented arc-length parametrizations which satisfy $z_y(0)=z_n(0)=0$.  Following the above rotations, both $z_y(1)$ and $z_n(1)$ are positive reals.  

To finish normalizing in the sense of Definition \ref{Def:NormalizedCurve}, we pre-compose $M_y \circ f$ by $T_y \in \PSL_2(\R)$ that maps the triple $(0,1,\infty)$ to $\big( f^{-1}(z_0), f^{-1}\circ M_y^{-1}(z_y(1)), y \big)$, so that $f_y := M_y \circ f \circ T_y$ satisfies $f_y(s) = z_y(s)$, $s \in \{0,1,\infty\}$. Similarly choosing $T_{h(y)} \in \PSL_2(\R)$ so that $g_y := M_y \circ g \circ T_{h(y)}$ also agrees with $z_y(s)$ for $s \in \{0,1,\infty\}$, we have that $h_y:=g_y^{-1} \circ f_y$ is the unique normalized welding for $\gamma_y$, fixing the triple $(0,1,\infty)$.  Furthermore,
\begin{align}\label{Eq:ConjugatedWeldingK}
    K_{h_y}(\infty)=K_{ T^{-1}_{h(y)} \circ h \circ T_y}(\infty)=K_h(y)
\end{align}
by Theorem~\ref{Thm:KMobiusInvariance}.

Similarly setting $f_n = M_n \circ f \circ T_{n}$ and $g_n = M_n \circ g \circ T_{h(y_n)}$ for the unique elements $T_{n}, T_{h(y_n)} \in \PSL_2(\R)$ such that $f_n(s) = z_n(s) =g_n(s)$, $s \in \{0,1,\infty\}$, we have that the corresponding normalized welding $h_n$ satisfies $K_{h_n}(\infty) = K_h(y_n)$, and therefore
\begin{align*}
    |K_h(y_n)- K_h(y)|= | K_{h_n}(\infty) - K_{h_y}(\infty)| &=\big| \|\log h_n'\|_{\half}^2- \|\log h_y'\|_{\half}^2 \big| \\
    &\leq \big( \|\log h_n'\|_{\half}+ \|\log h_y'\|_{\half}\big) \| \log h_n' - \log h_y'\|_{\half}\\
    &\lesssim \| \log h_n' - \log h_y'\|_{\half},
\end{align*}
since by Theorem~\ref{Comparable} and the conformal invariance of both the quasiconformal constant and the Loewner energy, 
\begin{align*}
    \|\log h_n'\|_{\half}+ \|\log h_y'\|_{\half} &\leq \max\{\, 1,  \,\|\log h_n'\|_{\half}^2  \,\} + \max\{\, 1,  \,\|\log h_y'\|_{\half}^2 \,\}\\
    &\leq \max\{\, 1,  \,W_{h_n}(\infty)  \,\} + \max\{\, 1,  \,W_{h_y}(\infty) \,\}\\
    &\leq \max\{\, 2,  \,(3+K_\gamma^2+K_\gamma^{-2}) I^{L}(\gamma) \,\}.
\end{align*}
Thus it suffices to show $\| \log h_n' - \log h_y' \|_{\half} \to 0$, which, by Proposition \ref{Thm: equiv convergence H}, is equivalent to showing
\begin{align}\label{Lim:KContinuousNTS}
    \int_{\mathbb{H}} \abs{\frac{f_{n}''}{f_{n}'} - \frac{f_y''}{f_y'}}^2 dA \rightarrow 0.
\end{align}
We can see the latter by Proposition \ref{Thm: DCL}.  We first claim that, point-wise on $\H$, 
\begin{align}\label{Lim:NTSAPtWiseConv}
    \calA_{f_n}(z) \rightarrow \calA_{f_y}(z).
\end{align}  
Indeed, recalling the composition law for the pre-Schwarzian (see Table \ref{Table:ABS}), we have
\begin{align*}
    |\calA_{f_n} - \calA_{f_y}| \leq & |\calA_{T_n}-\calA_{T_{y}}|+ |\calA_f \circ T_n\cdot T_n'- \calA_f \circ T_y \cdot T_y'| \\
    &+ |(\calA_{M_n} \circ f \cdot f')\circ T_n \cdot T_n' - (\calA_{M_y}\circ f \cdot f') \circ T_y \cdot T_y'| \\
    &=:I_1+I_2+I_3.
\end{align*}
For $I_1$, we compute $T_n(z) = \frac{yz-c_nf^{-1}(z_0)}{z-c_n}$ and $T_y(z) = \frac{yz-c_yf^{-1}(z_0)}{z-c_y}$ for 
\begin{align*}
    c_n = \frac{f^{-1} \circ M_n^{-1}(z_n(1))-y}{f^{-1} \circ M_n^{-1}(z_n(1))-f^{-1}(z_0)} \rightarrow \frac{f^{-1} \circ M_y^{-1}(z_y(1))-y}{f^{-1} \circ M_y^{-1}(z_y(1))-f^{-1}(z_0)} = c_y,
\end{align*}
and thus see
\begin{align*}
    |\calA_{T_n}(z)-\calA_{T_{y}}(z)| = \bigg| \frac{-2}{z-c_n} - \frac{-2}{z-c_y} \bigg| \rightarrow 0.
\end{align*}

For $I_2$, we note that
\begin{align*}
    I_2 &\leq |\calA_f \circ T_n\cdot T_n' -\calA_f \circ T_y \cdot T_{n}'| + |\calA_f \circ T_y \cdot T_{n}'- \calA_f \circ T_y \cdot T_y'| \\
    & \leq |\calA_f \circ T_n -\calA_f \circ T_y||T_n'| + |\calA_f \circ T_y| | T_{n}'- T_y'| \to 0
\end{align*}
since $T_n \rightarrow T_y$ and $T_n' \rightarrow T_y'$. 

For the last term, we similarly see 
\begin{align*}
    I_3 \leq&  \big|(\calA_{M_n} \circ f \cdot f')\circ T_n  - (\calA_{M_y} \circ f \cdot f')\circ T_y \big| |T_{n}'|+
    |(\calA_{M_y} \circ f \cdot f')\circ T_y ||T_n' - T_y'|\\
    \leq& \big|\calA_{M_n}\circ f \cdot f'- \calA_{M_y}\circ f \cdot f'\big| \circ T_n \cdot |T_n'|\\
    & \qquad + \big| (\calA_{M_y}\circ f \cdot f') \circ T_n - (\calA_{M_y}\circ f \cdot f') \circ T_y \big| \cdot |T_n'|\\
    & \qquad +|(\calA_{M_y} \circ f \cdot f')\circ T_y ||T_n' - T_y'|\\
    \to& \,0.
\end{align*}
We conclude \eqref{Lim:NTSAPtWiseConv} holds, as claimed.  By symmetry, we likewise have $\calA_{g_n}(z) \rightarrow \calA_{g_y}(z)$.

For the dominating functions $G_n$ of Proposition \ref{Thm: DCL}, we observe
\begin{align*}
    |\calA_{f_n} - \calA_{f_y}|^2 &= |\calA_{f_n} - \calA_{f_y}|^2\chi_{\H}\\
    &\leq |\calA_{f_n} - \calA_{f_y}|^2\chi_{\H} + \abs{\calA_{g_n} - \calA_{g_y}}^2\chi_{\H^*}\\
    &\leq 2(|\calA_{f_n}|^2 + |\calA_{f_y}|^2)\chi_{\H} + 2(|\calA_{g_n}|^2 + |\calA_{g_y}|^2)\chi_{\H^*} =:G_n. 
\end{align*}
From the above point-wise convergence, it follows that
\begin{align*}
    G_n(z)  \rightarrow 4|\calA_{f_y}(z)|^2\chi_{\H}(z) + 4|\calA_{g_y}(z)|^2\chi_{\H^*}(z) =: G(z)
\end{align*}
a.e. $z \in \C$. Recalling \eqref{Eq:LEH} and the conformal invariance of Loewner energy, we have
\begin{align*}
    \int_{\C} G_n \,dA = 2\pi I^L(\gamma_n) + 2\pi I^L(\gamma_y) = 4\pi I^L(\gamma) = \int_{\C}G \,dA.
\end{align*}
We conclude by Proposition \ref{Thm: DCL} that \eqref{Lim:KContinuousNTS} holds.  


If $f(y)=\infty$, choose \Mob $S$ sending $\infty$ to a finite point and replace $f$ and $g$ with $S\circ f$ and $S \circ g$, which still has the same welding $h$.
\end{proof}

\subsection{A corollary on Weil--Petersson distance}\label{Sec: cor1}

\begin{proof}[Proof of Corollary \ref{Thm: boundedcor}]
        Since $I^{L}(\g)\leq \frac{1}{3} W_h(\infty)$ by Theorem \ref{Comparable}, it suffices to show $W_h(\infty)$ is controlled by a constant only depending on $d_{\WP}(h, \text{id})$.  Our approach to this will be to imitate the argument of \cite[proof of Lemma 6.8]{ParametrizationpWP}.  By \cite[Lem. 6.8]{ParametrizationpWP}, there exist constants $C, \tau >0$ such that the operator norm $\|\calP_h\|$ of the pullback $\calP_h$ satisfies $\|\calP_h\| \leq C$ whenever $\| \log h' \|_{\half} \leq \tau$.  Furthermore, by the analytic (and in particular continuous) nature of the correspondence $\WP(\R) \rightarrow H^\half(\R)$ given by $h \mapsto \log h'$, $\| \log h' \|_{\half} \leq \tau$ whenever $d_{\WP}(h,\text{id}) \leq \epsilon$ \cite[Thm. 2.3]{WPII}.  Write $d$ for $d_{\WP}$ to simplify notation.

    Join $h$ to id in $\WP(\R)$ with a curve of length less than $d(h,\text{id}) + 1$, say, and subdivide the curve into the smallest number of points $id=h_0, h_1, \ldots, h_n=h$ such that $d(h_j, h_{j-1}) \leq \epsilon$ for each $j$.  Note $n$ only depends on $d(h,\text{id})$.  By invariance of $d$ under right translation \cite[Ch.I \S4.2]{TTbook},
    \begin{align*}
        d(h_j \circ h^{-1}_{j-1}, \text{id}) = d(h_j, h_{j-1}) \leq \epsilon 
    \end{align*}
    for each $j$, and writing $H_j := h_j \circ h^{-1}_{j-1}$, we thus have both $\|\log H_j'\|_{\half}\leq \tau$ and $\|\calP_{H_j}\|\leq C$ for all $j$.  We thereby deduce
    \begin{align*}
        \|\log h'\|_{\half} &=\|\log (H_n \circ \cdots \circ H_1 )'\|_{\half}\\
        &\leq 
         \| \calP_{H_1} \circ \cdots \circ  \calP_{H_{n-1}} \log H_n' \|_{\half} +\cdots + \|\calP_{H_1} \log H_2'\|_{\half}+\|\log H_1'\|_{\half}\\
         &\leq  C^{n-1}\tau +\cdots + C\tau + \tau =\tau\frac{1-C^n}{1-C}
    \end{align*}
    if $C \neq 1$, while the sum is simply $n\tau$ otherwise.  Noting $d(h^{-1},\text{id}) = d(h,\text{id})$, we apply the same logic to obtain the identical bound for $\|\log (h^{-1})'\|_{\half}$, and thus  conclude 
    \begin{equation*}
        W_h(\infty)=\|\log h'\|_{\half}^2 +\|\log (h^{-1})'\|_{\half}^2\leq C_1(d(h,\text{id})).\qedhere
    \end{equation*}
\end{proof}

\section{Proof of main inequalities for Loewner energy}\label{sec: entropypart}

\subsection{Sub-additivity of the Loewner energy}\label{Sec:ProofOfSubAdditivity}

In this section we prove Theorem \ref{Thm:GeneralCompBound} by means of showing a slightly more general result that keeps track of both quasiconformal constants.  Theorem \ref{Thm:GeneralCompBound} is clearly an immediate consequence.

\begin{thm}\label{Thm:GeneralCompBoundBothKs}
    For finite-energy Jordan curves $\gamma$ and $\eta$ that are $K_\gamma$- and $K_\eta$-quasicircles, respectively,
    \begin{align*}
        I^L(\gamma \circ \eta) \leq (1+K_\eta^2)(4+K_\gamma^2)I^L(\gamma) + (1+K_\gamma^2)(4+K_\eta^2)I^L(\eta).
    \end{align*}
\end{thm}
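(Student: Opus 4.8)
The plan is to combine the comparison inequality of Theorem~\ref{Comparable} with the composition bound for the welding energy from Lemma~\ref{Lemma:WGeneralComp}, exactly as advertised in the introduction. The key observation is that the composition $\gamma \circ \eta$ is \emph{defined} by welding composition: if $h_\gamma$ and $h_\eta$ are conformal weldings of $\gamma$ and $\eta$ (rooted at the same circle $C = \hatR$, say), then $h_\gamma \circ h_\eta$ is a welding of $\gamma \circ \eta$. The strategy is thus to pass to the welding side, estimate $W_{h_\gamma \circ h_\eta}$, and pass back.

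First I would fix a root $y \in \hatR$ and apply the \emph{lower} bound of Theorem~\ref{Comparable} to the composed curve, namely $\tfrac{3}{2}I^L(\gamma\circ\eta) \leq W_{h_\gamma \circ h_\eta}(y)$. Next I would invoke Lemma~\ref{Lemma:WGeneralComp}, in the form \eqref{Ineq:WGeneralCompDiffK} that keeps both quasiconformal constants separate, to obtain
\begin{align*}
    W_{h_\gamma \circ h_\eta}(y) \leq 2(K_\eta^2 + K_\eta^{-2})\,W_{h_\gamma}(h_\eta(y)) + 2(K_\gamma^2 + K_\gamma^{-2})\,W_{h_\eta}(y).
\end{align*}
Then I would apply the \emph{upper} bound of Theorem~\ref{Comparable} to each of $W_{h_\gamma}(h_\eta(y))$ and $W_{h_\eta}(y)$, namely $W_{h_\gamma}(h_\eta(y)) \leq \tfrac{1}{2}(3 + K_\gamma^2 + K_\gamma^{-2})I^L(\gamma)$ and similarly for $\eta$. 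Assembling these three inequalities and dividing through by $\tfrac{3}{2}$ yields
\begin{align*}
    I^L(\gamma \circ \eta) \leq \tfrac{2}{3}(K_\eta^2 + K_\eta^{-2})(3 + K_\gamma^2 + K_\gamma^{-2})I^L(\gamma) + \tfrac{2}{3}(K_\gamma^2 + K_\gamma^{-2})(3 + K_\eta^2 + K_\eta^{-2})I^L(\eta).
\end{align*}

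The remaining work is purely to massage these products into the cleaner claimed coefficients $(1+K_\eta^2)(4+K_\gamma^2)$ and $(1+K_\gamma^2)(4+K_\eta^2)$. Since $K \geq 1$ always, I expect $K^2 + K^{-2} \leq 1 + K^2$ (because $K^{-2} \leq 1$), and $3 + K^2 + K^{-2} \leq 4 + K^2$ by the same token; combined with the harmless factor $\tfrac{2}{3} \leq 1$, these elementary bounds on each factor should dominate the desired expression. This is the one genuinely fiddly point: I would verify that $\tfrac{2}{3}(K^2+K^{-2})(3+\tilde K^2 + \tilde K^{-2}) \leq (1+K^2)(4+\tilde K^2)$ for all $K, \tilde K \geq 1$, checking it factor-by-factor rather than expanding, so the constant comes out in the advertised form. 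No obstacle of substance arises here—the whole content is front-loaded into Theorems~\ref{Comparable} and Lemma~\ref{Lemma:WGeneralComp}—so the proof genuinely is almost one line once those are in hand, with the only care needed being the bookkeeping that the roots match up ($h_\eta(y)$ is the correct root to feed into the $W_{h_\gamma}$ estimate) and the final coefficient simplification.
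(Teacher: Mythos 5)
Your proposal is correct and follows exactly the paper's own proof: the lower bound of Theorem~\ref{Comparable} applied to $\gamma\circ\eta$, then Lemma~\ref{Lemma:WGeneralComp} in the form \eqref{Ineq:WGeneralCompDiffK}, then the upper bound of Theorem~\ref{Comparable} on each welding-energy term, with the roots matched precisely as you indicate. The only difference is cosmetic: the paper stops at the bound with coefficients $\tfrac{2}{3}(K_\eta^2+K_\eta^{-2})(3+K_\gamma^2+K_\gamma^{-2})$ and leaves the passage to $(1+K_\eta^2)(4+K_\gamma^2)$ implicit, whereas you spell out the (correct) factor-by-factor verification using $K^{-2}\leq 1$ and $\tfrac{2}{3}\leq 1$.
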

\begin{proof}
Let $h_\gamma, h_\eta \in \ophom{\wR}$ be weldings of $\gamma$ and $\eta$, respectively.  By Theorem \ref{Comparable}, Lemma \ref{Lemma:WGeneralComp}, and then Theorem \ref{Comparable} again,
\begin{align*}
    I^L(\gamma \circ \eta) &\leq \frac{2}{3}W_{h_\gamma\circ h_\eta}(y)\\
    &\leq \frac{4}{3} (K_\eta^2+K_\eta^{-2})W_{h_\gamma}(h_\eta(y)) + \frac{4}{3}(K_\gamma^2+K_\gamma^{-2})W_{h_\eta}(y) \\
    &\leq \frac{2}{3} (K_\eta^2+K_\eta^{-2})(3+K_\gamma^2+K_\gamma^{-2})I^L(\gamma) + \frac{2}{3}(K_\gamma^2+K_\gamma^{-2})(3+K_\eta^2+K_\eta^{-2})I^L(\eta).\qedhere
\end{align*}
\end{proof}

\subsection{Estimate on growth under self-composition}\label{Sec:ProofOfEntropy}
We next turn to Theorem \ref{Thm:LEGrowth}.  We could give a version of this bound directly from Theorem \ref{Thm:GeneralCompBound}, but we obtain a slightly sharper constant by going back to Theorem \ref{Comparable} and Lemma \ref{Lemma:WGeneralComp}.  

Either approach, however, requires one additional lemma.  It is well known that the constant $K$ for quasiconformal maps is multiplicative in composition: if $f_j: \C \rightarrow \C$ are $K_j$-quasiconformal, then $f_1 \circ f_2$ is $K_1K_2$-quasiconformal.  We need a version of this for conformal welding.  While this appears not to exist in the literature, it is a simple consequence Lemma \ref{Lemma:K^2Optimal} (and thus  Smirnov's result \cite[Thm. 4]{Smirnov}, from which Lemma \ref{Lemma:K^2Optimal} follows).

    \begin{figure}
       \centering

    \scalebox{0.9}{
    \begin{tikzpicture}
        \tikzset{
            diff/.style={text=blue!80},
            dom/.style={fill=gray!15}
        }
        
        \draw[dom] (0,0) circle[radius=1];        
        \node at (0,0) {$\D$};
        \draw[-{Latex}] (-0.05,1.2) to  [in=45, out=135,looseness=20] node[above]{$w_{\mu_2}$}  (0.05,1.15);
        \node[diff] at (-1.05,0.9) {$\mu_2$};
        
        \draw[-{Latex}] (-1.75,0) -- ++(-1,0);
        \node[above] at (-2.25,0) {$f_2$};
        \draw[-{Latex}] (1.75,0) -- ++(1,0);
        \node[above] at (2.25,0) {$f_1$};
        \node[below] at (2.25,0) {$g_1$};
        \draw[-{Latex}] (5,-1.75) -- ++(0,-1);
        \node[right] at (5,-2.25) {$q$};
        \draw[-{Latex}] (1.75,-1.75) -- ++(1,-1);
        \node[anchor = north east] at (2.25,-2.25) {$q \circ g_1 \circ w_{\mu_2}$};
        
        \draw[dom] (-5.25,-0.5) to[closed, curve through = {(-4.75,-1.5) (-3.25,-0.5) (-4.25,0)  }] (-3.75,1);
        \node[right] at (-3.8,0.7) {$\gamma_2$};

        \draw[dom] (4.5,0.5) to[closed, curve through = {(5,1) (5.5,0.5) (6,-0.5) (4,-1) (3.5,-0.5)}] (4,0.5);
        \node at (5.75,0.75) {$\gamma_1$};
        \node[diff,above] at (3.8,0.65) {\small $(g_1^{-1})^* \mu_2$};
        \node at (4.75,-0.25) {$\Omega_1$};
        \node at (5.2,1.5) {$\Omega_1^*$};

        \draw[dom] (3,-3.75) to[closed, curve through = {(3.5,-3.75) (4.5,-4.25) (4,-3.25) (5,-3.25) (5.5,-4.75) (5,-5.25) (4.5,-5.75) (4,-5.25)}] (3.5,-4.25);
        \node[anchor = south west] at (5.4,-3.75) {$\gamma_3$};
        \node at (5,-4.35) {$\Omega_3$};
        \node at (5.6,-5.45) {$\Omega_3^*$};
    \end{tikzpicture}
    }
        
        \caption{\small The maps and differentials behind the construction of $\gamma_3$ in the alternative proof of Lemma \ref{prop: compositionofK}.  Beltrami differentials are in blue.  (The shape of $\gamma_3$ is a sketch only for illustrative purposes; for accurate pictures of curve composition, see \cite[Figure 10]{SharonMumford}.)}
        \label{Fig: Composition curve}
    \end{figure}
    
\begin{lemma}
\label{prop: compositionofK}
    If $h_j$ are conformal weldings of $K_j$-quasicircles $\gamma_j \subset \hat{\C}$, $j=1,2$, then $h_1\circ h_2$ is the conformal welding of a $K_1K_2$-quasicircle.
\end{lemma}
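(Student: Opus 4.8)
The plan is to reduce the statement to the dilatation characterization of conformal weldings established in Lemma \ref{Lemma:K^2Optimal}, together with the classical multiplicativity of the maximal dilatation under composition. First I would apply the ``only if'' direction of Lemma \ref{Lemma:K^2Optimal} to each $h_j$: since $h_j \in \ophom{\R}$ welds a $K_j$-quasicircle, it admits a $K_j^2$-quasiconformal extension $H_j: \C \rightarrow \C$. Because each $h_j$ maps $\R$ onto $\R$, the composition $H_1 \circ H_2$ is a homeomorphism of $\C$ whose restriction to $\R$ is exactly $h_1 \circ h_2$ (indeed, for $x \in \R$ we have $H_2(x) = h_2(x) \in \R$, so $H_1(H_2(x)) = h_1(h_2(x))$). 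Hence $H_1 \circ H_2$ is a quasiconformal extension of $h_1 \circ h_2$ to $\C$.

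The next step is the dilatation count. Since $H_2$ is $K_2^2$-quasiconformal and $H_1$ is $K_1^2$-quasiconformal, the standard submultiplicativity of the maximal dilatation under composition gives that $H_1 \circ H_2$ is $K_1^2 K_2^2 = (K_1 K_2)^2$-quasiconformal. Thus $h_1 \circ h_2$ has a $(K_1 K_2)^2$-quasiconformal extension to $\C$. Applying the ``if'' direction of Lemma \ref{Lemma:K^2Optimal} to $h_1 \circ h_2$ then yields that it is the conformal welding of a $K_1 K_2$-quasicircle, as desired.

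I do not expect any serious obstacle here; the argument is essentially a one-line consequence of Lemma \ref{Lemma:K^2Optimal}, which in turn rests on Smirnov's theorem. The only points meriting care are the verification that $H_1 \circ H_2$ genuinely restricts to $h_1 \circ h_2$ on $\R$ (immediate, since each $H_j|_{\R} = h_j$ preserves $\R$) and the submultiplicative bound $K(H_1 \circ H_2) \leq K(H_1)\,K(H_2)$, which is classical. As an alternative, one could argue constructively, directly building the curve welded by $h_1 \circ h_2$ by transporting the Beltrami differential $\mu_2$ of $\gamma_2$ through the uniformizing maps of $\gamma_1$ and tracking the total dilatation, as indicated in Figure \ref{Fig: Composition curve}; this route yields the same bound but requires more bookkeeping of the pulled-back differentials.
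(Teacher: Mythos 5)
Your proof is correct and follows essentially the same route as the paper: extend each $h_j$ to a $K_j^2$-quasiconformal map of $\C$, use multiplicativity of the dilatation to get a $(K_1K_2)^2$-quasiconformal extension of $h_1\circ h_2$, and invoke Lemma \ref{Lemma:K^2Optimal} (i.e.\ Smirnov's theorem) to conclude. Even your closing remark about the constructive alternative matches the paper's second, explicit proof via transported Beltrami differentials.
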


\begin{proof}
    Let $f_j$ and $g_j$ be conformal maps from $\H$ and $\H^*$, respectively, to the two components of $\hat{\C} \bs \gamma_j$, such that $h_j = g_j^{-1} \circ f_j$.  Both $f_j$ and $g_j$ have $K_j^2$-quasiconformal extensions to all of $\C$ \cite[Ch.I Lem.~6.2]{Lehtobook}.  Denoting these extensions by the same name, we thus see that $h_j$ extends globally to a $K_j^2$-quasiconformal homeomorphism of $\C$.  In particular, the composition of these extensions $H := h_1 \circ h_2$ restricts to a homeomorphism of $\R$, and is globally a  $K_1^2 K_2^2$-quasiconformal map.  By Lemma \ref{Lemma:K^2Optimal}, it thereby welds a $K_1K_2$-quasicircle.
\end{proof}

There is also an alternative, explicit argument for Lemma \ref{prop: compositionofK}.  We are grateful to Yilin Wang for teaching us how to think of composition of weldings in terms of \Teich space models, from which one obtains the following argument.  The benefit of this approach is that we explicitly construct the quasicircle $\gamma_3$ welded by $h_1\circ h_2$. 

\begin{proof}[Alternative proof of Lemma \ref{prop: compositionofK}] 
For $j=1,2$, let $\Omega_j$ and $\Omega_j^*$ be the bounded and unbounded components of $\C \bs \gamma_j$, respectively, with $f_j:\D \rightarrow \Omega_j$ and $g_j: \D^* \rightarrow \Omega_j^*$ associated conformal maps.  Recall that $f_2$ has a $K_2^2$-quasiconformal extension to all of $\C$ \cite[Ch.I Lem. 6.2]{Lehtobook}.  We call this extension $f_2$ still, and denote its complex dilatation on $\C$ by $\tilde{\mu}_2$.  Since $f_2$ is conformal on $\D$, $\tilde{\mu}_2|_\D \equiv 0$, and we proceed to replace these null values on $\D$ with the reflection across $\partial \D$ of $\tilde{\mu}_2$ outside of $\D$.  That is, for $z \in \D$ we redefine 
\begin{align}\label{Eq:BeltramiReflection}
    \tilde{\mu}_2(z) := \overline{\tilde{\mu}_2\bigg(\frac{1}{\bar{z}}\bigg)}\, \frac{z^2}{\bar{z}^2},
\end{align}
and use the same name $\tilde{\mu}_2$ for this new Beltrami differential (which still has its original values outside of $\D$).  Taking a solution $\tilde{w}$ of the Beltrami equation for $\tilde{\mu}_2$ on all of $\C$, we have that $\tilde{w}|_{\partial \D}$ is a conformal welding for $\gamma_2$, and thus, by the uniqueness of welding for quasicircles, $S \circ \tilde{w} \circ T|_{\partial \D} = h_2$ for some \Mob transformations $S,T \in \text{Aut}(\D)$ \cite[Ch.III Lem. 1.1]{Lehtobook}.  Let $\mu_2$ be the Beltrami coefficient of $S \circ \tilde{w} \circ T$ (which is a rotation of $\tilde{\mu}_2$), and set $w_{\mu_2}:=S \circ \tilde{w} \circ T$.  For this and what follows, the reader may be aided by Figure~\ref{Fig: Composition curve}.

We can now explicitly construct the curve $\gamma_3$ whose welding is $h_1 \circ h_2$.  Let $q$ be a solution of the Beltrami equation 
    \begin{align*}
        \frac{\partial_{\bar{z}} q}{\partial_z q}=
         \begin{cases}
             0 & \text{ on } \Omega_1,\\
            (g_1^{-1})^{*} \mu_2 & \text{ on } \Omega_1^*
         \end{cases}
    \end{align*}
    normalized so that $q(\gamma_1) =: \gamma_3 \subset \C$, where $(g_1^{-1})^{*} \mu_2 = \mu_2 \circ g_1^{-1} \cdot \frac{ \overline{\partial_z g_1^{-1}}}{\partial_z g_1^{-1}}$ is the pullback of differential $\mu_2$ under $g_1^{-1}$.  That is, $q$'s dilatation on $\Omega_1^*$ is that of $w_{\mu_2} \circ g_1^{-1}$ on $\Omega_1^*$.  It follows that $g_3:= q \circ g_1 \circ w_{\mu_2}^{-1}$ maps $\D^*$ conformally to the unbounded domain $\Omega_3^*$ of $\C \bs \gamma_3$.\footnote{Conformality also follows from computation, via the Beltrami coefficient composition rule  \[ \mu_{f \circ g^{-1}}(w) = \frac{\mu_{f}(z)-\mu_g(z)}{1-\mu_f(z)\overline{\mu_g(z)}}\left(\frac{\partial_{z}g}{|\partial_{z}g|}\right)^2,  \, w=g(z).
    \].}  We also have that $f_3:= q \circ f_1: \D \to \Omega_3$ is conformal on $\D$, being a composition of conformal maps there.  A welding of $\gamma_3$ is thus
    \begin{align*}
        g_3^{-1} \circ f_3 &= \left(q \circ g_1 \circ w_{\mu_2}^{-1}\right)^{-1} \circ q \circ f_1 = w_{\mu_2} \circ g_1^{-1} \circ f_1 = h_2 \circ h_1,
    \end{align*}
    as claimed.

    Lastly, to see that $\gamma_3$ is a $K_1K_2$-quasicircle, by \cite[Thm. 4(ii)]{Smirnov} it suffices to show that $g_3$ has a $K_1^2K_2^2$-quasiconformal extension to $\D$.  This, however, is immediate from the definition of $g_3$.  Indeed, since $f_2$ is $K_2^2$-quasiconformal on $\D^*$, so is $\tilde{w}$ by \eqref{Eq:BeltramiReflection}, and thus $w_{\mu_2} = S\circ \tilde{w} \circ T$ is likewise.  Furthermore, $g_1$ has a $K_1^2$-quasiconformal extension to $\D$, and $q$ is conformal on $\Omega_1$.  
\end{proof}

\begin{proof}[Proof of Theorem \ref{Thm:LEGrowth}]
    Write $h^n$ for the $n$-fold composition of $h$ with itself. By \eqref{Ineq:Main} it suffices to show 
    \begin{align}\label{Ineq:EntropyIneqProxy}
        \limsup_{n \to \infty} \frac{\log W_{h^n}(y)}{n} \leq \log(K^2+K^{-2})
    \end{align}
    for some $y$ (and some normalization of the welding).  We M\"obius re-normalize $h$ to fix $\infty$, call the new welding by the same name $h$, and choose $y=\infty$.  Lemma~\ref{Lemma:WGeneralComp} then yields
    \begin{align*}
        W_{h^2}(\infty) \leq 4(K^2+K^{-2}) W_h(\infty).
    \end{align*}
    Since $h^2$ welds a $K^2$-quasicircle by Lemma~\ref{prop: compositionofK}, applying \eqref{Ineq:WGeneralCompDiffK} and then the above inequality yields
    \begin{align*}
        W_{h^3}(\infty) = W_{h^2 \circ h}(\infty) &\leq 2(K^2+K^{-2})W_{h^2}(\infty)+2(K^4+K^{-4})W_h(\infty)\\
        &\leq \big(8(K^2+K^{-2})^2+2(K^4+K^{-4})\big)W_h(\infty)\\
        &\leq 10 (K^2+K^{-2})^2 W_h(\infty).
    \end{align*}
    Continuing by induction, one readily sees
    \[W_{h^n}(\infty) \leq 2(n+2) (K^2+K^{-2})^{n-1}W_h(\infty),\] 
    from which \eqref{Ineq:EntropyIneqProxy} directly follows.
\end{proof}

\appendix
\section{Appendix: Elementary properties of the normalized pre-Schwarzian and associated differential operators}\label{Appendix:NPS}

\subsection{The normalized pre-Schwarzian derivative}
In this section we prove the properties of the $\calB_f$ operator summarized in Table \ref{Table:NPSProps}.  We recall that for $f: \Omega \rightarrow \wC$ conformal on a domain $\Omega \subset \wC$ such that $f$ extends continuously to $\overline{\Omega}$,
\begin{align}\label{Eq:NPSDefAppendix}
    \calB_f(z,w) := \frac{f'(z)}{f(z)-f(w)} - \frac{1}{z-w} - \frac{1}{2} \frac{f''(z)}{f'(z)},
\end{align}
assuming each of $z,w,f(z)$, and $f(w)$ are finite, and that $z \neq w$.   The corresponding term vanishes when either $w$ or $f(w)$ is infinite.  By saying $f:\Omega \rightarrow \hatC$ is conformal, recall we mean that $f$ is injective and meromorphic on $\Omega$.  See \S\ref{Sec:NPS} for further discussion and other cases.

We first consider the value of $\calB_f(z,w)$ on the diagonal $z=w \in \Omega$.
\begin{lemma}\label{Lemma:NPSDiagonal}
Let $\Omega \subset \widehat{\C}$ be a domain and $f: \Omega \rightarrow \widehat{\C}$ conformal.
\begin{enumerate}[$(i)$]
    \item For $w \in \Omega \bs \{\infty\}$, \begin{align}\label{Eq:BDiagonalExpansion}
    \calB_f(z,w) = -\frac{1}{6}S_f(w)(z-w) + O(z-w)^2, \qquad z \rightarrow w.
\end{align}
\item If $\infty \in \Omega$,
\begin{align}\label{Eq:BDiagonalExpansion2}
    \calB_f(z,\infty) = \frac{1}{6}S_f(z)z + O(z^{-4}) = \frac{1}{6} \tilde{S}_f(\infty)z^{-3} + O(z^{-4}), \qquad z \rightarrow \infty,
\end{align}
where $\tilde{S}_f(\infty) := \lim\limits_{z \rightarrow \infty}z^4S_f(z) = (S_{f\circ \frac{1}{z}})(0)$.
\end{enumerate}
\end{lemma}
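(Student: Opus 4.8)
The plan is to establish both expansions directly from the definition \eqref{Eq:NPSDefAppendix} by Taylor-expanding each of the three constituent terms around the diagonal, and then collecting like powers. For part $(i)$ I would fix $w \in \Omega \setminus \{\infty\}$ with $f(w) \in \C$ and set $\zeta := z - w$, treating everything as a formal power series in $\zeta$. The three pieces to expand are $f'(z)/(f(z)-f(w))$, the elementary $1/(z-w) = 1/\zeta$, and $\tfrac12 f''(z)/f'(z)$. For the first piece I would write $f(z) - f(w) = f'(w)\zeta\bigl(1 + \tfrac{f''(w)}{2f'(w)}\zeta + \tfrac{f'''(w)}{6f'(w)}\zeta^2 + \cdots\bigr)$ and $f'(z) = f'(w)\bigl(1 + \tfrac{f''(w)}{f'(w)}\zeta + \tfrac{f'''(w)}{2f'(w)}\zeta^2 + \cdots\bigr)$, so that the ratio becomes $\tfrac1\zeta$ times a quotient of two unit-leading series; expanding that quotient via the geometric series gives $\tfrac1\zeta\bigl(1 + a_1\zeta + a_2\zeta^2 + \cdots\bigr)$ for explicit $a_1, a_2$ in terms of the derivatives of $f$ at $w$. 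The $1/\zeta$ terms cancel against the middle term of \eqref{Eq:NPSDefAppendix}, the constant term cancels against the leading part of $\tfrac12 f''/f'$, and the surviving linear coefficient should reorganize into exactly $-\tfrac16 \calS_f(w)$, recovering \eqref{Eq:BDiagonalExpansion}.

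The only genuinely delicate arithmetic is tracking the $\zeta^1$ coefficient, and I expect this bookkeeping — correctly combining $a_1$, the expansion of $\tfrac12 f''(z)/f'(z)$ to first order, and recognizing the Schwarzian combination $\tfrac{f'''}{f'} - \tfrac32(f''/f')^2$ — to be the main (though still elementary) obstacle. One clean way to organize it is to note that $\calB_f(z,w) = \tfrac12 \partial_z \log\bigl((f(z)-f(w))^2 / (f'(z)(z-w)^2)\bigr)$ from \eqref{Eq:NPSDef}, so it suffices to expand $\Phi(z,w) := \log\bigl((f(z)-f(w))^2/(f'(z)(z-w)^2)\bigr)$ to order $\zeta^2$ and differentiate in $z$; since $\Phi$ is the logarithm of a quantity that $\to 1$ as $z \to w$ (by the cross-ratio normalization), the expansion begins at order $\zeta^2$, and its $\zeta^2$ coefficient is a standard computation yielding $-\tfrac16 \calS_f(w)$ after differentiation. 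This logarithmic route sidesteps the quotient-of-series manipulation and is the form I would actually write up.

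For part $(ii)$ I would reduce to part $(i)$ via the coordinate change at infinity rather than expanding afresh. Writing $j(\zeta) = 1/\zeta$ and $\tilde f := f \circ j$, the point $z = \infty$ corresponds to $\zeta = 0$, and the composition/covariance behavior of $\calB$ (Lemma \ref{Lemma:NPSMobiusInvariance1}, i.e. the second row of Table \ref{Table:ABS}, together with the precomposition rule) lets me relate $\calB_f(z,\infty)$ to $\calB_{\tilde f}$ near $0$. Concretely, using $\calB_{f \circ T}(z,w) = \calB_f(T(z),T(w))T'(z)$ with $T = j$, I would get $\calB_{\tilde f}(\zeta, 0) = \calB_f(1/\zeta, \infty)\cdot(-1/\zeta^2)$, so $\calB_f(z,\infty) = -z^2\,\calB_{\tilde f}(1/z, 0)$. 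Applying part $(i)$ to $\tilde f$ at the finite point $0$ gives $\calB_{\tilde f}(1/z,0) = -\tfrac16 \calS_{\tilde f}(0)\cdot(1/z) + O(z^{-2})$, hence $\calB_f(z,\infty) = \tfrac16 \calS_{\tilde f}(0)\, z + O(1)$; combined with the Schwarzian transformation law $\calS_{\tilde f}(\zeta) = \calS_f(1/\zeta)\zeta^{-4}$ this identifies the coefficient as $\tfrac16 \tilde S_f(\infty) = \tfrac16 \lim_{z\to\infty} z^4 \calS_f(z)$, which is exactly \eqref{Eq:BDiagonalExpansion2}. The main thing to be careful about here is the order of the error term and the normalization $\tilde S_f(\infty) = (S_{f \circ 1/z})(0)$, both of which should fall out once the covariance identities are applied cleanly; I would verify the claimed $O(z^{-4})$ sharpness by carrying the expansion of $\calB_{\tilde f}(\cdot,0)$ one order further, which is again just the bookkeeping from part $(i)$.
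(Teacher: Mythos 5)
Your part (i) argument for the case $f(w)\in\C$ is sound: the logarithmic form $\calB_f(z,w)=\tfrac12\partial_z\log\bigl((f(z)-f(w))^2/(f'(z)f'(w)(z-w)^2)\bigr)$ (you may insert the constant $f'(w)$ freely, since $\partial_z$ kills it) has the quantity inside the log expanding as $1-\tfrac16\calS_f(w)\zeta^2+O(\zeta^3)$, and differentiating the convergent series gives \eqref{Eq:BDiagonalExpansion}; this is a legitimate, slightly slicker alternative to the paper's route, which normalizes to $w=0=f(w)$ and expands the three terms of \eqref{Eq:NPSDefAppendix} directly. However, there are two genuine problems. First, part (i) also covers the case where $w$ is a pole of $f$, i.e.\ $f(w)=\infty$: there $\calB_f(z,w)$ is defined by the two-term formula \eqref{Eq:NPSfInfty}, the logarithmic representation you rely on is no longer available, and your proof says nothing. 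The paper handles this by a separate Laurent expansion $f(z)=a_{-1}/(z-w)+a_0+a_1(z-w)+\cdots$; alternatively you could reduce it to your case by post-composing with $1/z$, using $\calB_{\frac{1}{z}\circ f}=\calB_f$ from Lemma \ref{Lemma:NPSMobiusInvariance1} together with \Mob invariance of the Schwarzian, but you must say so. This omission propagates: in part (ii) you apply part (i) to $\tilde f=f\circ\tfrac1z$ at $\zeta=0$, and when $f(\infty)=\infty$ the point $0$ is exactly a pole of $\tilde f$.

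Second, your part (ii) computation contains an inverted Jacobian that produces a conclusion contradicting the lemma. The covariance $\calB_{f\circ T}(z,w)=\calB_f(T(z),T(w))T'(z)$ with $T(\zeta)=1/\zeta$ gives $\calB_{\tilde f}(\zeta,0)=\calB_f(1/\zeta,\infty)\cdot(-\zeta^{-2})$, hence $\calB_f(z,\infty)=-z^{-2}\,\calB_{\tilde f}(1/z,0)$, not $-z^{2}\,\calB_{\tilde f}(1/z,0)$ as you wrote. Your version yields $\calB_f(z,\infty)=\tfrac16\calS_{\tilde f}(0)\,z+O(1)$, which grows linearly, whereas \eqref{Eq:BDiagonalExpansion2} asserts decay of order $z^{-3}$; these cannot both hold, and your claim that your expression ``is exactly'' \eqref{Eq:BDiagonalExpansion2} conflates $\calS_f(z)\,z\sim\tilde S_f(\infty)z^{-3}$ with $\calS_{\tilde f}(0)\,z=\tilde S_f(\infty)\,z$. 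With the corrected factor the same computation does work: $\calB_f(z,\infty)=-z^{-2}\bigl(-\tfrac16\calS_{\tilde f}(0)z^{-1}+O(z^{-2})\bigr)=\tfrac16\tilde S_f(\infty)z^{-3}+O(z^{-4})$, matching the lemma, so your strategy is repairable — and it is genuinely different from the paper, which never invokes the covariance lemma here but instead expands both sub-cases ($f(\infty)\in\C$ and $f(\infty)=\infty$) in Laurent series at $\infty$ and compares with the expansion of $\calS_f$. Note also that using Lemma \ref{Lemma:NPSMobiusInvariance1} here is logically safe only because its proof (via Lemmas \ref{Lemma:NPSGeneralComp} and \ref{Lemma:BMobiusZero}) does not use the present lemma; since it appears after this lemma in the appendix, you should flag this to rule out circularity.
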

\begin{proof}
    $(i)$ When $f(w) \in \C$, \eqref{Eq:BDiagonalExpansion} is a routine series computation.  In fact, all cases here are; we proceed to sketch some details for completeness.  By considering the function $F(z):= f(z+w)-f(w)$ which satisfies $\calB_F(z-w,0) = \calB_f(z,w)$, we may assume $w=0=f(w)$.  Writing $f(z) = a_1z + a_2z^2 + a_3z^3 + \cdots$ for $z$ nearby zero, we find
    \begin{align*}
        \calB_f(z,0) &= \frac{f'(z)}{f(z)} - \frac{1}{z} - \frac{1}{2} \frac{f''(z)}{f'(z)}\\
        &= \frac{1}{z}\bigg( 1 + \frac{a_2}{a_1}z + \Big( 2 \frac{a_3}{a_1}-\frac{a_2^2}{a_1^2} \Big)z^2 + O(z^3) \bigg) - \frac{1}{z} - \frac{a_2}{a_1} + \bigg(2\frac{a_2^2}{a_1^2} - 3 \frac{a_3}{a_1} \bigg)z + O(z^2)\\
        &=-\bigg(\frac{a_3}{a_1}-\frac{a_2^2}{a_1^2} \bigg)z + O(z^2),
    \end{align*}
    as claimed.
    
    If $f(w) = \infty$, we may again suppose that $w=0$, and we have 
    \begin{align}\label{Eq:fLaurent}
        f(z) = \frac{a_{-1}}{z} + a_0 + a_1z + O(z^2), \qquad z \rightarrow 0,
    \end{align}
    leading to the computation
    \begin{align*}
        \calB_f(z,0) := - \frac{1}{z} - \frac{1}{2} \frac{f''(z)}{f'(z)}
        =- \frac{1}{z} + \frac{1}{z}\bigg(1 + \frac{a_1}{a_{-1}}z^2 + O(z^3) \bigg) = \frac{a_1}{a_{-1}}z + O(z^2).
    \end{align*}
    Arithmetic on the Laurent series \eqref{Eq:fLaurent} similarly yields $S_f(0) = -6a_1/a_{-1}$.

    \noindent $(ii)$ When $w=\infty$ but $f(w) \neq \infty$, 
    \begin{align*}
        f(z) = b_0 + \frac{b_1}{z} + \frac{b_2}{z^2} + \frac{b_3}{z^3} + O\Big(\frac{1}{z^4}\Big), \qquad z \rightarrow \infty,
    \end{align*}
    and we have
    \begin{align*}
        \calB_f(z,\infty) := \frac{f'(z)}{f(z)-b_0} - \frac{1}{2} \frac{f''(z)}{f'(z)} = \frac{b_3/b_1 - b_2^2/b_1^2}{z^3} + O\Big(\frac{1}{z^4}\Big), \qquad z \rightarrow \infty.
    \end{align*}
    On the other hand, computations yield
    \begin{align*}
        S_f(z) = \frac{6b_3/b_1 - 6b_2^2/b_1^2}{z^4} + O\Big(\frac{1}{z^5}\Big), \qquad z \rightarrow \infty,
    \end{align*}
    yielding the claim.

    If $f(\infty)=\infty$, then nearby $\infty$,
    \begin{align*}
        f(z) = b_{-1}z + b_0 + \frac{b_1}{z} + O\Big(\frac{1}{z^2}\Big),
    \end{align*}
    and we find
    \begin{align*}
        \calB_f(z,\infty) := - \frac{1}{2} \frac{f''(z)}{f'(z)} = \frac{-b_1/b_{-1}}{z^3} + O\Big(\frac{1}{z^4}\Big), \qquad z \rightarrow \infty.
    \end{align*}
    On the other hand, 
    \begin{align*}
        S_f(z) = \frac{-6b_1/b_{-1}}{z^4} + O\Big(\frac{1}{z^5}\Big), \qquad z \rightarrow \infty,
    \end{align*}
    which again verifies the claim.
\end{proof}

The normalized pre-Schwarzian has the following general composition rule, which parallels the standard pre-Schwarzian.  
\begin{lemma}\label{Lemma:NPSGeneralComp}
     Let $\Omega_1,\Omega_2 \subset \widehat{\C}$ be domains with locally-connected boundaries with $g: \Omega_1 \rightarrow \Omega_2$ and $f: \Omega_2 \rightarrow \widehat{\C}$ conformal. Then for all $(z,w) \in \Omega_1 \times \overline{\Omega}_1$, 
     \begin{align}\label{Eq:NPSGeneralComp}
         \calB_{f \circ g}(z,w) =\calB_{f}\big(g(z),g(w)\big)g'(z) + \calB_g(z,w).
     \end{align} 
\end{lemma}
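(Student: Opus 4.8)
The plan is to verify the composition rule \eqref{Eq:NPSGeneralComp} by direct computation in the generic case where all quantities are finite, and then to argue that the degenerate cases (where infinities appear among $z,w,g(z),g(w),f(g(z)),f(g(w))$) follow either by the same algebra or by a limiting argument using the continuity built into Definition \ref{Def:NPS}. The key observation that makes the finite case transparent is the logarithmic-derivative form of $\calB_f$ recorded in \eqref{Eq:NPSDef}, namely $\calB_f(z,w) = \frac{1}{2}\partial_z \log\big((f(z)-f(w))^2/(f'(z)(z-w)^2)\big)$. Since the argument of the logarithm is multiplicative under composition in exactly the way the point-wise functional $L_f$ is (recall Lemma \ref{Lemma:LGeneralComp}), and $\calB_f(z,w)$ is morally ``$\frac12 \partial_z L_f(z,w)$,'' the composition rule should drop out of the chain rule applied to $L_{f\circ g} = L_f(g(\cdot),g(\cdot)) + L_g$.

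Concretely, first I would treat the principal case $z,w,g(z),g(w),f(g(z)),f(g(w)) \in \C$ with $z\neq w$. Writing out the three terms of $\calB_{f\circ g}(z,w)$ using $(f\circ g)' = (f'\circ g)g'$ and $(f\circ g)'' = (f''\circ g)(g')^2 + (f'\circ g)g''$, the middle-and-third terms expand as
\begin{align*}
    \calB_{f\circ g}(z,w) = \frac{(f'\circ g)(z)\,g'(z)}{f(g(z))-f(g(w))} - \frac{1}{z-w} - \frac12 \frac{(f''\circ g)(z)g'(z)}{(f'\circ g)(z)} - \frac12\frac{g''(z)}{g'(z)}.
\end{align*}
I would then insert $\pm \frac{g'(z)}{g(z)-g(w)}$ to split this into $\calB_f(g(z),g(w))g'(z)$ plus a remainder, and recognize the remainder $\frac{g'(z)}{g(z)-g(w)} - \frac{1}{z-w} - \frac12\frac{g''(z)}{g'(z)}$ as exactly $\calB_g(z,w)$. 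This is a short and essentially forced algebraic identity. The case $z=w$ (the diagonal) follows by taking $z\to w$ and invoking the expansions of Lemma \ref{Lemma:NPSDiagonal}, or simply by continuity of both sides in $z$.

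The main obstacle, and where the real work lies, is the bookkeeping for the degenerate boundary cases prescribed by \eqref{Eq:NPSfInfty}, \eqref{Eq:NPSwInfty}, \eqref{Eq:NPSfwInfty}: since $w$ ranges over $\overline{\Omega}_1$, any of $w$, $g(w)$, $f(g(w))$ may be infinite, and likewise for the evaluation point via a pole of $g$ or $g(z)=\infty$. There are several sub-cases, and in each the definition of $\calB$ replaces the offending factor by its limiting form (the corresponding term simply vanishing). The clean way to dispatch these is \emph{not} to re-derive each case by hand but to use that $\calB_f(z,w)$ is defined precisely as a limit in $w$ (and in $z$), so that \eqref{Eq:NPSGeneralComp} at a degenerate $(z,w)$ is the limit of \eqref{Eq:NPSGeneralComp} at nearby generic points, where it already holds; one only needs that $g$ is a homeomorphism of $\overline{\Omega}_1$ onto $\overline{\Omega}_2$ (guaranteed by the locally-connected-boundary hypothesis and \cite[Thm. 2.1]{Pommerenke}) so that $g(w_n)\to g(w)$ as $w_n\to w$, making the limits on the two sides match up term-by-term. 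I would present the generic algebraic computation in full, verify one representative infinite case ($g(w)=\infty$, say) to illustrate that the vanishing terms align, and then invoke the limiting/continuity argument to cover the remainder, in keeping with the paper's stated practice of sequestering such case-checks.
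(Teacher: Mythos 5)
Your proof is correct, and your treatment of the generic case is the paper's computation in different clothing: inserting $\pm\, g'(z)/(g(z)-g(w))$ after expanding $(f\circ g)'$ and $(f\circ g)''$ is exactly what falls out of the paper's one-line splitting of the logarithm,
\begin{align*}
    2\calB_{f\circ g}(z,w) = \partial_z \log \bigg( \frac{\big(f(g(z))-f(g(w))\big)^2}{f'(g(z))\big(g(z)-g(w)\big)^2} \bigg) + \partial_z \log \bigg( \frac{\big(g(z)-g(w)\big)^2}{g'(z)(z-w)^2} \bigg),
\end{align*}
so the two finite-case arguments are interchangeable. Where you genuinely differ is in the degenerate bookkeeping: the paper verifies the (up to) eight configurations of $w$, $g(w)$, $f(g(w))$ finite-or-infinite by hand (it writes out two and declares the rest ``similar rearrangements''), while you replace all of them with a single limiting argument, exploiting the fact that Definition~\ref{Def:NPS} defines the degenerate values of $\calB$ precisely as limits in the second variable, so that \eqref{Eq:NPSGeneralComp} at a degenerate $(z,w)$ is the limit of \eqref{Eq:NPSGeneralComp} at generic $w_n \to w$ (such $w_n$ exist, since the exceptional points inside $\Omega_1$ are finitely many). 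This buys economy and scales better than case-checking, at the modest cost of having to note that each of the three terms is continuous in the second variable at the degenerate points --- automatic from the definitions-by-limits together with the continuous boundary extensions of $g$ and $f\circ g$ --- and it is the same device the paper itself uses for infinities involving $z$. Two small touch-ups: local connectivity of the boundaries gives a continuous extension of $g$ to $\overline{\Omega}_1$, not a homeomorphism of closures (that would require Jordan boundaries), but continuity, i.e.\ $g(w_n)\to g(w)$, is all your argument actually uses; and your handling of the diagonal via Lemma~\ref{Lemma:NPSDiagonal} is non-circular, since that lemma is proved by direct series expansion, independently of the composition rule.
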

\noindent Note that the expressions for $\calB_{f\circ g}, \calB_f,$ and $\calB_g$ on either side of \eqref{Eq:NPSGeneralComp} may have different numbers of terms, depending on whether or not each of $w, g(w),$ and $f\circ g(w)$ is infinite.  The lemma says that \eqref{Eq:NPSGeneralComp} holds in all possible combinations.  We understand \eqref{Eq:NPSGeneralComp} in a limiting sense if any of the expressions involving $z$ is infinite.

Compare the general composition rule for the point-wise welding functional $L_h$ in Lemma \ref{Lemma:LGeneralComp}.

\begin{proof}
    We first suppose that all expressions involving $z$ are finite.  Here there are technically eight cases to check, corresponding to whether or not each of $w,g(w),$ and $f\circ g(w)$ is in $\C$ or infinite.  We do two cases to give the reader a sense of the elementary computation; the rest are very similar.
    \begin{itemize}
        \item All of $w,g(w),f(g(w)) \in \C$.  We note
        \begin{align*}
            2\calB_{f \circ g}(z,w) &= \partial_z \log \bigg( \frac{\big(f(g(z))-f(g(w))\big)^2}{f'(g(z))g'(z)(z-w)^2} \bigg)\\
            &= \partial_z \log \bigg( \frac{\big(f(g(z))-f(g(w))\big)^2}{f'(g(z))(g(z)-g(w))^2} \bigg) + \partial_z \log \bigg( \frac{(g(z)-g(w))^2}{g'(z)(z-w)^2} \bigg)\\
            &= 2\calB_{f}\big(g(z),g(w)\big)g'(z) + 2\calB_g(z,w).
        \end{align*}
        \item $w=\infty$ and $g(w),f(g(w)) \in \C$. We use \eqref{Eq:NPSwInfty} and proceed as above, noting
        \begin{align*}
            2\calB_{f \circ g}(z,w) &= \partial_z \log \bigg( \frac{\big(f(g(z))-f(g(w))\big)^2}{f'(g(z))g'(z)} \bigg)\\
            &= \partial_z \log \bigg( \frac{\big(f(g(z))-f(g(w))\big)^2}{f'(g(z))(g(z)-g(w))^2} \bigg) + \partial_z \log \bigg( \frac{(g(z)-g(w))^2}{g'(z)} \bigg)\\
            &= 2\calB_{f}\big(g(z),g(w)\big)g'(z) + 2\calB_g(z,w).
        \end{align*}
    \end{itemize}
    The logic is identical for the outstanding cases: one multiplies the numerator and denominator by the appropriate factor and splits the logarithm apart.

    If any of the expressions involving $z$ is infinite, take limits in \eqref{Eq:NPSGeneralComp} from nearby finite points.
\end{proof}

\begin{lemma}\label{Lemma:BMobiusZero}
    Let $\Omega \subset \widehat{\C}$ be a domain with locally-connected boundary and $f: \Omega \rightarrow \widehat{\C}$ conformal.  There exists $w \in \overline{\Omega}$ such that $z \mapsto \calB_f(z,w)$ is the zero function in some neighborhood of $\Omega$ if and only if $f$ is a \Mob transformation.  In this case, $(z,w) \mapsto \calB_f(z,w)$ is the zero function on $\widehat{\C}^2$.
\end{lemma}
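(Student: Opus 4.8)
The plan is to prove the equivalence in two directions, with the ``only if'' direction being the easier one. First I would dispose of the reverse implication: if $f$ is a \Mob transformation, then by the cross-ratio identity \eqref{Eq:MobDifferenceQuotient} (in its fused form \eqref{Eq:MobDifferenceQuotient}) we have $\frac{(f(z)-f(w))^2}{f'(z)f'(w)(z-w)^2} = 1$ identically, so that $\log\big(\frac{(f(z)-f(w))^2}{f'(z)(z-w)^2}\big)$ is a function of $w$ alone (namely $\log f'(w)$). Since $\calB_f(z,w) = \frac{1}{2}\partial_z$ of this logarithm, differentiating in $z$ annihilates it, giving $\calB_f \equiv 0$ on all of $\widehat{\C}^2$. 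The cases where $w$, $f(w)$, or the $z$-variables are infinite are handled by the limiting definitions \eqref{Eq:NPSfInfty}, \eqref{Eq:NPSwInfty}, \eqref{Eq:NPSfwInfty}, each reducing to the same vanishing.

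For the forward direction, the key observation is the diagonal expansion of Lemma \ref{Lemma:NPSDiagonal}. Suppose there is some $w_0 \in \overline{\Omega}$ with $z \mapsto \calB_f(z,w_0) \equiv 0$ on a neighborhood in $\Omega$. I would first argue this forces the Schwarzian $\calS_f$ to vanish identically. The cleanest route is via the composition rule: reduce to the case $w_0 \in \Omega$ (if $w_0 \in \partial\Omega$, one can post-compose $f$ and pre-compose with a \Mob map to move an interior point into the relevant position, using Lemma \ref{Lemma:NPSMobiusInvariance1}). Then on the diagonal, $(i)$ of Lemma \ref{Lemma:NPSDiagonal} gives $\calB_f(z,w) = -\frac{1}{6}\calS_f(w)(z-w) + O((z-w)^2)$ as $z \to w$, so the first-order coefficient of $\calB_f(\cdot, w)$ near $z=w$ recovers $-\frac{1}{6}\calS_f(w)$. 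Here is where I need to upgrade ``$\calB_f(\cdot, w_0) \equiv 0$ for one $w_0$'' to information about $\calS_f$ on all of $\Omega$.

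The main obstacle is precisely this upgrade: the diagonal expansion only extracts $\calS_f(w_0)$ from $\calB_f(\cdot, w_0)$, i.e. from the single root $w_0$, so a priori I only learn $\calS_f(w_0)=0$. To get $\calS_f \equiv 0$ I would instead use that $\calB_f(\cdot, w_0)$ is holomorphic in $z$ (Corollary \ref{Lemma:BFinite}) together with the explicit formula $\partial_w \calB_f(z,w) = \frac{f'(z)f'(w)}{(f(z)-f(w))^2} - \frac{1}{(z-w)^2}$ noted in the Remark after Definition \ref{Def:NPS}. Since $\calB_f(z, w_0) \equiv 0$ in $z$, differentiating the defining relation or examining the integral kernel should force the symmetric kernel to reduce to the \Mob case. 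Concretely, $\calB_f(z,w_0)=0$ for all $z$ means $\partial_z \log\big(\frac{(f(z)-f(w_0))^2}{f'(z)(z-w_0)^2}\big)=0$, so $\frac{(f(z)-f(w_0))^2}{f'(z)(z-w_0)^2}$ is constant in $z$; writing this constant as $c$ gives the first-order ODE $f'(z) = \frac{1}{c}\frac{(f(z)-f(w_0))^2}{(z-w_0)^2}$. This is a Riccati-type equation whose solutions are exactly \Mob transformations (one solves it explicitly by the substitution $u = f - f(w_0)$, yielding $u'/u^2 = \text{const}/(z-w_0)^2$, hence $-1/u = a/(z-w_0)+b$, i.e. $f$ is a ratio of linear functions). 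Thus $f$ is \Mob, and then the reverse direction already proven gives $\calB_f \equiv 0$ on $\widehat{\C}^2$. I would present this ODE integration as the heart of the argument, check that the constant $c$ is nonzero (it cannot vanish since $f$ is nonconstant and $f'$ does not vanish identically), and handle the infinite cases for $w_0$ by the analogous limiting formulas, sparing the reader the parallel but routine computations.
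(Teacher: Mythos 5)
Your proof is correct and takes essentially the same route as the paper's: the heart of both arguments is the observation that $\calB_f(\cdot,w_0)\equiv 0$ forces $(f(z)-f(w_0))^2/\big(f'(z)(z-w_0)^2\big)$ to be constant in $z$, and integrating this first-order ODE (exactly your Riccati substitution, which is the paper's antiderivative manipulation) shows $f$ is M\"obius on a ball, hence everywhere by the identity theorem, with the infinite cases for $w_0$ and $f(w_0)$ handled by the same kind of limiting/covariance reductions the paper uses via Lemma \ref{Lemma:NPSMobiusInvariance1}. Your reverse direction via the fused cross-ratio identity \eqref{Eq:MobDifferenceQuotient} is a mild streamlining of the paper's case-by-case computation, and you correctly recognize and discard the Schwarzian-diagonal digression as a dead end; also note that the reduction to $w_0\in\Omega$ you sketch is unnecessary, since the ODE argument works verbatim for boundary roots.
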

\begin{proof}
    In the case that $f(z) = \frac{az+b}{cz+d}$ is \Mob, it is routine to verify that $\calB_f$ vanishes on $\widehat{\C}^2$; we include the argument for completeness.  As in the previous lemma, there are several cases.  
    \begin{itemize}
        \item $w\in \C$.  If $f(w) \neq \infty$, we find
        \begin{align*}
            \calB_f(z,w) = \frac{cw+d}{(z-w)(cz+d)} - \frac{1}{z-w} + \frac{c}{cz+d} = 0.
        \end{align*}        
        If $f(w)=\infty$, then $w=-d/c$, and 
        \begin{align*}
            \calB_f(z,w) = 0 -\frac{1}{z+d/c} + \frac{c}{cz+d}=0.
        \end{align*}
        In either case, taking limits then covers the cases of when $z=\infty$ or $z$ a pole of $f$.
        \item $w = \infty$. If $f(\infty) = a/c \in \C$, we find
        \begin{align*}
            \calB_f(z,w) = \frac{f'(z)}{f(z)-a/c} - \frac{1}{2} \frac{f''(z)}{f'(z)} =0.
        \end{align*}
        If $f$ fixes $\infty$, $f$ is affine and 
        \begin{align*}
            \calB_f(z,w) =  - \frac{1}{2} \frac{f''(z)}{f'(z)} =0.
        \end{align*}
        Again, taking limits in either instance covers the $z=\infty$ and $f(z)=\infty$ cases.
    \end{itemize}

    Conversely, suppose there exists $w \in \overline{\Omega}$ that $\calB_f(\cdot, w)$ is the zero function in some $B_\epsilon(z_0) \subset \Omega$.  We first suppose that both $w$ and $f(w)$ are finite.  The assumption yields $\log \frac{f'(z)(z-w)^2}{(f(z)-f(w))^2}$ is some constant $C_1$ in $B_\epsilon(z_0)$, and therefore
    \begin{align*}
        -\frac{d}{dz} \frac{1}{f(z)-f(w)} = \frac{f'(z)}{(f(z)-f(w))^2}  = \frac{C_1}{(z-w)^2} = - \frac{d}{dz} \frac{C_1}{z-w}
    \end{align*}
    for $z \in B_\epsilon(z_0)$.  Integrating and re-arranging yields
    \begin{align*}
        f(z) = f(w) + \frac{z-w}{C_2z-C_2w + C_1},
    \end{align*}
    a \Mob transformation.  By the identity theorem we conclude the holomorphic function $z \mapsto \calB_f(z,w)$ is everywhere M\"{o}bius, and thus $\calB_f$ is well-defined and vanishing on all of $\widehat{\C}^2$ by the first part of the proof.

    If $w \in \C$ and $f(w) = \infty$, we apply Lemma \ref{Lemma:NPSMobiusInvariance1} and the above argument to $\calB_f(\cdot, w) = \calB_{1/f}(\cdot, w)$, concluding $1/f$, and thus $f$ itself, is M\"{o}bius.

    Next suppose $w=\infty$.  While $f(\infty)$ may be finite or infinite, by the previous paragraph, it suffices to suppose $f(\infty) \in \C$.  Writing $g(z) := f(1/z)$, by Lemma \ref{Lemma:NPSMobiusInvariance1} we have that $z \mapsto -z^2\calB_{g}(z,0)$, and thus $z \mapsto \calB_{g}(z,0)$ itself, vanishes on some ball.  Arguing as above we conclude $f\circ 1/z$ is M\"{o}bius.    
\end{proof}
\noindent Note that integrating as in the preceding proof shows that the property \eqref{Eq:MobDifferenceQuotient} characterizes $\PSL_2(\C)$.

The most important property in this section is the following, which shows how $\calB_f$ interacts with pre- and post-composition by elements of $\PSL_2(\C)$.
\begin{lemma}\label{Lemma:NPSMobiusInvariance1}
     Let $\Omega \subset \widehat{\C}$ be a domain with locally-connected boundary and $f: \Omega \rightarrow \widehat{\C}$ conformal.  If $S$ and $T$ are \Mob transformations, 
     \begin{align}\label{Eq:NPSMobiusPrePost}
         \calB_{S\circ f\circ T}(z,w) =\calB_{f}\big(T(z),T(w)\big)T'(z)
     \end{align} 
     for all $(z,w) \in T^{-1}(\Omega) \times T^{-1}(\overline{\Omega})$.
\end{lemma}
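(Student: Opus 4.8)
The plan is to reduce the combined pre- and post-composition statement \eqref{Eq:NPSMobiusPrePost} to the two separate composition facts already captured by Lemma \ref{Lemma:NPSGeneralComp} and Lemma \ref{Lemma:BMobiusZero}. The key observation is that \eqref{Eq:NPSMobiusPrePost} asserts two things at once: that precomposition by the M\"obius map $T$ transforms $\calB_f$ covariantly as a $1$-form (the factor $T'(z)$ and the evaluation at $T(z),T(w)$), and that postcomposition by the M\"obius map $S$ leaves $\calB_f$ entirely unchanged. I would handle these in turn.

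First I would treat postcomposition. Applying the general composition rule \eqref{Eq:NPSGeneralComp} of Lemma \ref{Lemma:NPSGeneralComp} with outer map $S$ and inner map $f$ gives
\begin{align*}
    \calB_{S\circ f}(z,w) = \calB_{S}\big(f(z),f(w)\big)f'(z) + \calB_f(z,w).
\end{align*}
Since $S \in \PSL_2(\C)$ is M\"obius, Lemma \ref{Lemma:BMobiusZero} shows that $\calB_S$ is the zero function on all of $\widehat{\C}^2$, so the first term vanishes and we obtain $\calB_{S\circ f} = \calB_f$. This already establishes invariance under postcomposition, which corresponds to the second row of Table \ref{Table:ABS}.

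Next I would treat precomposition. Applying \eqref{Eq:NPSGeneralComp} again, this time with outer map $f$ and inner map $T$, yields
\begin{align*}
    \calB_{f\circ T}(z,w) = \calB_f\big(T(z),T(w)\big)T'(z) + \calB_T(z,w),
\end{align*}
and once more Lemma \ref{Lemma:BMobiusZero} kills the $\calB_T$ term since $T$ is M\"obius, leaving $\calB_{f\circ T}(z,w) = \calB_f(T(z),T(w))T'(z)$. Composing the two facts — first replacing $f$ by $S \circ f$ in the precomposition identity, then using postcomposition invariance — gives
\begin{align*}
    \calB_{S\circ f \circ T}(z,w) = \calB_{S\circ f}\big(T(z),T(w)\big)T'(z) = \calB_f\big(T(z),T(w)\big)T'(z),
\end{align*}
which is exactly \eqref{Eq:NPSMobiusPrePost} on $T^{-1}(\Omega) \times T^{-1}(\overline{\Omega})$.

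I do not anticipate a genuine obstacle here, since the statement is essentially a corollary of the two preceding lemmas; the only point requiring a little care is bookkeeping the domains and the various infinite cases. Specifically, one should check that the composition rule \eqref{Eq:NPSGeneralComp} is being invoked on the correct domains (note $T: T^{-1}(\Omega) \to \Omega$ is conformal and $\partial(T^{-1}(\Omega))$ is locally connected since $T$ is a homeomorphism of $\widehat{\C}$), and that the limiting definitions \eqref{Eq:NPSfInfty}, \eqref{Eq:NPSwInfty}, and \eqref{Eq:NPSfwInfty} are respected whenever $T(z)$, $T(w)$, or their images under $f$ pass through $\infty$. Since Lemma \ref{Lemma:NPSGeneralComp} is already stated to hold in all such cases in a limiting sense, these points are automatic, and the proof is just the two-line concatenation above.
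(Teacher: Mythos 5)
Your proposal is correct and follows essentially the same route as the paper: both proofs combine the general composition rule of Lemma \ref{Lemma:NPSGeneralComp} with the vanishing of $\calB$ on \Mob maps (Lemma \ref{Lemma:BMobiusZero}), and both dispatch the infinite cases by taking limits; the paper merely applies the composition rule to the triple composition $S\circ f\circ T$ in a single display rather than splitting into the post- and pre-composition steps as you do.
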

\noindent We recall that the assumption on $\partial \Omega$ is only so that $\calB$ is well defined for all $w \in \partial \Omega$. We understand \eqref{Eq:NPSMobiusPrePost} in a limiting sense if any of the expressions involving $z$ is infinite.

Thus, like the Schwarzian, $\calB_f$ is invariant under post-composition by \Mob maps. While the Schwarzian is a $(2,0)$-differential with respect to \Mob precomposition, we see the normalized pre-Schwarzian behaves as a $(1,0)$-differential; neither property holds for the traditional pre-Schwarzian.

\begin{proof}
    First assume all quantities involving $z$ are finite.  By Lemmas \ref{Lemma:NPSGeneralComp} and \ref{Lemma:BMobiusZero},
    \begin{align*}
        \calB_{S\circ f\circ T}(z,w) &= \calB_S\big( f\circ T(z), f\circ T(w) \big)(f \circ T)'(z) + \calB_f\big( T(z), T(w) \big)T'(z) + \calB_T(z,w)\\
        &= \calB_f\big( T(z), T(w) \big)T'(z).
    \end{align*}
    If any of the expressions involving $z$ is infinite, take limits from a nearby finite point.
\end{proof}

\begin{cor}\label{Lemma:BIntegralInvariance}
    Let $\Omega \subset \widehat{\C}$ be a domain with locally-connected boundary and $f: \Omega \rightarrow \widehat{\C}$ conformal. Then for any \Mob transformation $T$ and $w \in \overline{\Omega}$,
    \begin{align*}
        \int_\Omega |\calB_f(z,w)|^2 \dd A(z) = \int_{T^{-1}(\Omega)}|\calB_{f \circ T}(z, T^{-1}(w))|^2 \dd A(z).
    \end{align*}
\end{cor}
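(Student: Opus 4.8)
The plan is to combine the Möbius covariance of $\calB_f$ established in Lemma~\ref{Lemma:NPSMobiusInvariance1} with the conformal change-of-variables formula for area integrals. Specifically, I would start from the covariance identity
\begin{align*}
    \calB_{f \circ T}(z, T^{-1}(w)) = \calB_f\big(T(z), T(T^{-1}(w))\big)\,T'(z) = \calB_f\big(T(z), w\big)\,T'(z),
\end{align*}
valid for all $z \in T^{-1}(\Omega)$ (applying Lemma~\ref{Lemma:NPSMobiusInvariance1} with the roles of the maps set so that $f\circ T$ is the composed map and $T$ is the precomposed Möbius transformation, using $S = \mathrm{id}$). Taking the squared modulus gives $|\calB_{f\circ T}(z, T^{-1}(w))|^2 = |\calB_f(T(z),w)|^2\,|T'(z)|^2$ pointwise.

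Next I would integrate this over $T^{-1}(\Omega)$ and perform the substitution $\zeta = T(z)$. Since $T$ is a Möbius transformation, it is conformal on $T^{-1}(\Omega)$, and the real Jacobian of $z \mapsto T(z)$ is exactly $|T'(z)|^2$. Hence $\dd A(\zeta) = |T'(z)|^2\,\dd A(z)$, and the change of variables yields
\begin{align*}
    \int_{T^{-1}(\Omega)} |\calB_{f\circ T}(z,T^{-1}(w))|^2\,\dd A(z) = \int_{T^{-1}(\Omega)} |\calB_f(T(z),w)|^2\,|T'(z)|^2\,\dd A(z) = \int_{\Omega} |\calB_f(\zeta,w)|^2\,\dd A(\zeta),
\end{align*}
which is the claimed identity.

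The only genuine subtlety is that a Möbius $T$ may send a point of $T^{-1}(\Omega)$ to $\infty$ (if $\infty \in \Omega$) or have a pole inside $T^{-1}(\Omega)$, so the naive change of variables must be justified near those isolated points. I expect this to be the main (though minor) obstacle: the resolution is that $\calB_f(\cdot,w)$ is holomorphic on $\Omega$ (Corollary~\ref{Lemma:BFinite}), so the integrand is locally bounded and the offending points form a set of measure zero; one removes small disks around them, applies the smooth change of variables on the complement, and passes to the limit, the boundary contributions vanishing because the integrand is square-integrable (the integrals are finite, being constant multiples of $I^L$ by Theorem~\ref{Thm:BoundaryLE}). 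Since the statement only involves $w \in \overline{\Omega}$ as a fixed parameter, no uniformity in $w$ is needed, and the argument is a direct application of Lemma~\ref{Lemma:NPSMobiusInvariance1} followed by a standard conformal substitution.
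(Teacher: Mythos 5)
Your proposal is correct and follows essentially the same route as the paper: apply the \Mob covariance of Lemma~\ref{Lemma:NPSMobiusInvariance1} (with $S = \mathrm{id}$) and then perform the conformal change of variables $\zeta = T(z)$ with Jacobian $|T'(z)|^2$. The extra care you take near poles of $T$ is harmless but not needed in the paper's one-line argument (and your appeal to square-integrability is superfluous, since the change of variables holds for any nonnegative measurable integrand regardless of finiteness).
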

\noindent Part of the content of the lemma is that this formula holds whether or not $w, f(w)$, and/or $T^{-1}(W)$ is infinite.  That is, the expressions for $\calB_f(\cdot,w)$ and $\calB_{f\circ T}(\cdot, T^{-1}(w))$ in the integrals may have a different number of terms, depending on which of \eqref{Def:NPS}, \eqref{Eq:NPSfInfty}, \eqref{Eq:NPSwInfty}, or \eqref{Eq:NPSfwInfty} applies.
\begin{proof}
    By Lemma \ref{Lemma:NPSMobiusInvariance1} and a change of variables,
    \begin{align*}
        \int_{T^{-1}(\Omega)}|\calB_{f \circ T}(z, T^{-1}(w))|^2 \dd A(z) &= \int_{T^{-1}(\Omega)}|\calB_{f}(T(z), w)|^2|T'(z)|^2 \dd A(z)\\
        &= \int_\Omega |\calB_f(z,w)|^2 \dd A(z). \qedhere
    \end{align*}
\end{proof}

In the course of discussing the Schwarzian derivative, Lehto notes that, ``[it] can be defined in any domain $A$ for every function $f$ meromorphic and locally injective in $A$, and it is a holomorphic function in $A$'' \cite[Page 52]{Lehtobook}.  Lemma \ref{Lemma:NPSMobiusInvariance1} implies the same holds for $\calB_f$.  That is, even though \emph{a priori} $\calB_f(\cdot,w) \in \mathcal{M}(\Omega)$, the collection the meromorphic functions on $\Omega$, it never assumes the value $\infty$, even when $f$ itself does, and even when $z = \infty \in \Omega$. Let $\mathcal{H}(\Omega)$ be the collection of holomorphic (i.e. analytic, $\C$-valued) maps on $\Omega$.  Recall that the Schwarzian vanishes to at least order four at $\infty$.

\begin{cor}\label{Lemma:BFinite}
    Let $\Omega \subset \widehat{\C}$ be a domain with locally-connected boundary and $f: \Omega \rightarrow \widehat{\C}$ conformal.  For any $w \in \overline{\Omega}$, $\calB_f(\cdot, w) \in \calH(\Omega)$.  In particular, when $\infty \in \Omega$, $\calB_f(\cdot,w)$ vanishes to at least order two at $z=\infty$.
\end{cor}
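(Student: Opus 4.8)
The plan is to deduce the corollary directly from the Möbius covariance of Lemma \ref{Lemma:NPSMobiusInvariance1}, which is exactly the tool that trivializes the potential pole of $\calB_f$. The key observation is that although $\calB_f(\cdot,w)$ is \emph{a priori} only meromorphic on $\Omega$ (being built from $f$, which may itself take the value $\infty$), the covariance formula $\calB_{S\circ f \circ T}(z,w) = \calB_f(T(z),T(w))T'(z)$ lets us transport any problematic point to a point where the finiteness is manifest.

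First I would handle the case of a finite point $z_0 \in \Omega$ at which $f(z_0) = \infty$ (a pole of $f$). Post-compose with the Möbius transformation $S = 1/z$; then $S\circ f$ is holomorphic and non-vanishing near $z_0$, so $\calB_{S\circ f}(\cdot, w)$ is visibly holomorphic there by the defining formula \eqref{Eq:NPSDefAppendix}. By Lemma \ref{Lemma:NPSMobiusInvariance1} with $T = \mathrm{id}$, we have $\calB_f(z,w) = \calB_{S\circ f}(z,w)$, so $\calB_f(\cdot,w)$ is holomorphic at $z_0$ too; the apparent pole cancels. Next I would treat $z_0 = \infty \in \Omega$. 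Here pre-compose with $T(z) = 1/z$, so that $\infty$ is moved to $0 \in T^{-1}(\Omega)$. If $f(\infty)$ is also infinite, combine this with the post-composition $S = 1/z$ so that $S\circ f \circ T$ is holomorphic and non-vanishing near $0$. Applying the covariance \eqref{Eq:NPSMobiusPrePost}, $\calB_f(z,w) = \calB_{S \circ f \circ T}(T^{-1}(z), T^{-1}(w)) (T^{-1})'(z)$, and since $\calB_{S\circ f \circ T}(\cdot, \cdot)$ is finite near $0$ while $(T^{-1})'$ is the only factor that could blow up, a short check shows the product stays finite (indeed small) as $z \to \infty$.

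For the order-of-vanishing claim at $\infty$, I would simply invoke Lemma \ref{Lemma:NPSDiagonal}$(ii)$, or more directly the diagonal/off-diagonal expansions already established: the explicit Laurent computations in the proof of Lemma \ref{Lemma:NPSDiagonal} show that in every sub-case ($f(\infty)$ finite or infinite, $w$ finite or infinite) one has $\calB_f(z,\infty) = O(z^{-3})$ and $\calB_f(z,w) = O(z^{-2})$ as $z \to \infty$, so $\calB_f(\cdot,w)$ vanishes to at least order two at $z = \infty$. This is consistent with the covariance argument above, since the $(T^{-1})'(z) = -z^{-2}$ factor automatically forces at least double vanishing.

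I expect the only real subtlety — not so much an obstacle as a bookkeeping nuisance — to be organizing the cases cleanly so that the covariance identity is applied with the correct $S$ and $T$ in each configuration of which of $z_0$, $f(z_0)$, $w$, $f(w)$ are infinite. Since Lemma \ref{Lemma:NPSMobiusInvariance1} has already been proved to hold in all these cases (including in the limiting sense when $z$-expressions are infinite), no new analytic input is needed: everything reduces to reading off finiteness from the defining formula after an appropriate Möbius normalization, together with the Laurent expansions from Lemma \ref{Lemma:NPSDiagonal} for the vanishing rate. I would therefore present the proof briefly, verifying one representative case of each type (pole of $f$ at a finite point; $z_0 = \infty$ with $f(\infty) = \infty$) and remarking that the remaining configurations follow by the identical normalization.
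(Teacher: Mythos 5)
Your proposal is correct and follows essentially the same route as the paper's proof: post-composition invariance $\calB_{\frac{1}{z}\circ f}=\calB_f$ (Lemma \ref{Lemma:NPSMobiusInvariance1} with $T=\mathrm{id}$) disposes of poles of $f$, and pre-composition by $T(z)=1/z$ produces the $-z^{-2}$ factor that simultaneously gives finiteness and order-two vanishing at $z=\infty$. The only cosmetic differences are your slight misstatement that $(T^{-1})'$ ``could blow up'' (it vanishes, which is exactly what you want) and your optional appeal to the Laurent expansions of Lemma \ref{Lemma:NPSDiagonal} for the vanishing rate, which the paper obtains directly from the covariance factor.
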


\noindent  The corollary says that, at a pole $z$ of $f$, the singularity from the $f'(z)/(f(z)-f(w))$ term of $\calB_f$ cancels the singularity from the pre-Schwarzian term. 
\begin{remark}
    If we broadened the definition of the normalized pre-Schwarzian to include merely locally-injective functions or general meromorphic maps $f$, $\calB_f$ could have poles. The lemma precludes this possibility for injective $f$.
\end{remark}
\begin{proof}
    Clearly $\calB_f(\cdot, w) \in \mathcal{M}(\Omega)$, so we just need to show $\calB_f(
    \cdot,w): \Omega \rightarrow \C$.  This is the case, by definition \eqref{Eq:NPSDefAppendix} (or appropriate variant when $w$ and/or $f(w)$ is infinite) when both $z$ and $f(z)$ are finite.  Furthermore, since $\calB_{\frac{1}{z}\circ f}(z,w) = \calB_{f}(z,w)$ by Lemma \ref{Lemma:NPSMobiusInvariance1}, we may always assume $f(z) \in \C$.  To handle the $z=\infty$ case, we consider $\tilde{f}:= f \circ \frac{1}{z}$ and apply Lemma \ref{Lemma:NPSMobiusInvariance1} to see 
    \begin{align*}
        \calB_f(\infty,w) := \lim_{z \rightarrow 0} \calB_f(1/z,w) =  \lim_{z \rightarrow 0} -z^2 \calB_{\tilde f}(z,1/w) =0
    \end{align*}
    since $\calB_{\tilde f}(0,1/w) \in \C$ by the previous sentences.  In particular, $\calB_f(\infty,w)$ is zero to at least order two.
\end{proof}

\subsection{Two associated operators}

We lastly prove \Mob covariance for the $\calB^*$ and $\calC$ operators introduced in \S\ref{Sec:B*C}.  Recall the setting: $\gamma \subset \widehat{\C}$ is any Jordan curve, $D \subsetneq \widehat{\C}$ any disk, and $f:D \rightarrow \Omega$ and $g: D^* \rightarrow \Omega^*$ any conformal maps to the two complementary components of $\widehat{\C}\bs \gamma = \Omega \cup \Omega^*$.  Let $u \in \overline{D}$ and any $v \in \overline{D^*}$ be arbitrary.  Writing $u^* := j(u)= j_D(u)$ for the Schwarz reflection of $u$ across $\partial D$, we recall that
\begin{align*}
   \calB_{f,g,D}^*(z,u,v) := \frac{f'(z)}{f(z)-g(v)} - \frac{1}{z-u^*} - \frac{1}{2}\frac{f''(z)}{f'(z)}
\end{align*}
and
\begin{align}
    \calC_{f,g,D}(z,u,v) :=& \frac{f'(z)}{f(z)-f(u)} - \frac{f'(z)}{f(z)-g(v)} - \bigg(\frac{1}{z-u} - \frac{1}{z-u^*}  \bigg) \notag\\
    =& \calB_f(z,u) - \calB^*_{f,g}(z,u,v)\label{Eq:CalCDefAppendix}
\end{align}
when all expressions appearing are elements of $\C$.  When $u,u^*,v,f(u)$ or $g(v)$ is infinite, the corresponding term vanishes.  See \S\ref{Sec:B*C} for details.

\begin{lemma}\label{Lemma:NPS*MobiusInvariance}
    Let $D, D^*, f,g,u,$ and $v$ be as above.  Then for any \Mob transformation $S$ and any \Mob transformations $T_u$ and $T_v$ which map the same disk $E$ onto $D$ (and thus also $E^*$ to $D^*$),
    \begin{align}
        \calB^*_{S\circ f \circ T_u, S \circ g \circ T_v, E} (z,u,v) &= \calB^*_{f,g,D}\big(T_u(z),T_u(u), T_v(v)\big)T_u'(z) \quad \text{and}\label{Eq:NPS*MobiusInvariance}\\
        \calC_{S\circ f \circ T_u, S \circ g \circ T_v, E} (z,u,v) &= \calC_{f,g,D}\big(T_u(z),T_u(u), T_v(v)\big)T_u'(z) \label{Eq:CMobiusInvariance}
    \end{align}
    for all $(z,u,v) \in E \times \overline{E} \times \overline{E^*}$.
\end{lemma}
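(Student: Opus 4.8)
The plan is to reduce both covariance identities to the already-established covariance of the ordinary normalized pre-Schwarzian $\calB$ (Lemma \ref{Lemma:NPSMobiusInvariance1}), together with the composition rule (Lemma \ref{Lemma:NPSGeneralComp}) and the vanishing of $\calB$ on M\"obius maps (Lemma \ref{Lemma:BMobiusZero}). The key structural observation is that $\calB^*$ differs from $\calB$ only in that the point $f(u)$ in the first term is replaced by $g(v)$, and $u$ in the second term is replaced by its reflection $u^*$. Since Schwarz reflection across $\partial D$ interacts nicely with M\"obius maps, namely $j_E = T_u^{-1}\circ j_D \circ T_u$ when $T_u:E\to D$ is M\"obius (so that $T_u$ maps the reflection across $\partial E$ to the reflection across $\partial D$), I expect the reflected point to transform exactly as needed.

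First I would treat $\calC$, which is the cleaner of the two, since $\calC_{f,g}(z,u,v) = \calB_f(z,u) - \calB^*_{f,g}(z,u,v)$ by \eqref{Eq:CalCDefAppendix}, and once \eqref{Eq:NPS*MobiusInvariance} is known, \eqref{Eq:CMobiusInvariance} follows by subtracting it from the $\calB$-covariance $\calB_{S\circ f\circ T_u}(z,u) = \calB_f(T_u(z),T_u(u))T_u'(z)$ of Lemma \ref{Lemma:NPSMobiusInvariance1}. So the real content is \eqref{Eq:NPS*MobiusInvariance}. I would prove this by the same device used throughout Appendix \ref{Appendix:NPS}: write $\calB^*$ as a sum of $\calB$-type pieces plus correction terms, apply the composition and M\"obius rules piece by piece, and check that the corrections are constants (in $z$) that vanish or reassemble correctly. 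Concretely, I would first assume all quantities involving $z$ are finite and compute directly. Writing $F := S\circ f \circ T_u$ and $G := S\circ g\circ T_v$, the first term of $\calB^*_{F,G,E}(z,u,v)$ is $F'(z)/(F(z)-G(v))$; expanding $F' = (S'\circ f\circ T_u)(f'\circ T_u)T_u'$ and $F(z)-G(v) = S(f(T_u(z))) - S(g(T_v(v)))$, and using that for M\"obius $S$ one has the difference-quotient identity from \eqref{Eq:MobDifferenceQuotient}, this first term should transform into $\frac{f'(T_u(z))}{f(T_u(z))-g(T_v(v))}T_u'(z)$ plus a term coming from $S'\circ f/\!(S\circ f - S\circ g(v))$ that will cancel against the pre-Schwarzian correction, exactly as the $\calB_S$ term cancels in the proof of Lemma \ref{Lemma:NPSMobiusInvariance1}.

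The main obstacle, and the step I would check most carefully, is the reflected-point term $-1/(z-u^*)$. Here I must verify that the reflection of $u$ across $\partial E$, call it $u^*_E$, satisfies $T_u(u^*_E) = (T_u(u))^*_D$, i.e. that the M\"obius map $T_u$ conjugates reflection across $\partial E$ to reflection across $\partial D$. This is where the hypothesis that $T_u$ and $T_v$ map the \emph{same} disk $E$ onto $D$ is essential: it guarantees $u^*$ is computed consistently. Granting this, the term $-1/(z-u^*_E)$ transforms under the change $z\mapsto T_u(z)$ like $-T_u'(z)/(T_u(z)-T_u(u^*_E)) + (\text{pre-Schwarzian correction of }T_u)$, which matches $-T_u'(z)/(T_u(z)-(T_u(u))^*_D)$ up to the same M\"obius-correction constants that are already being absorbed. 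Once the finite case is settled, the infinite cases (any of $u,u^*,v,f(u),g(v)$ being $\infty$) follow by taking limits from nearby finite points, just as in Lemmas \ref{Lemma:NPSGeneralComp} and \ref{Lemma:NPSMobiusInvariance1}; since only finitely many terms can individually blow up and the total is known to be finite and holomorphic in $z$ by the $\calB$-analogue (Corollary \ref{Lemma:BFinite}), these limits are routine and I would relegate them to a remark rather than grinding through all cases.

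\begin{proof}
    By \eqref{Eq:CalCDefAppendix} we have $\calC_{f,g,D} = \calB_f - \calB^*_{f,g,D}$, so \eqref{Eq:CMobiusInvariance} follows by subtracting \eqref{Eq:NPS*MobiusInvariance} from the covariance $\calB_{S\circ f\circ T_u}(z,u) = \calB_f\big(T_u(z),T_u(u)\big)T_u'(z)$ of Lemma \ref{Lemma:NPSMobiusInvariance1}. It thus suffices to establish \eqref{Eq:NPS*MobiusInvariance}.

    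Assume first that all expressions involving $z$ are finite. Write $F := S\circ f\circ T_u$ and $G := S\circ g\circ T_v$, and let $u^*$ denote the reflection of $u$ across $\partial E$, so that $T_u(u^*) = \big(T_u(u)\big)^*$ is the reflection of $T_u(u)$ across $\partial D$ (this is where we use that $T_u$ and $T_v$ share the domain disk $E$). Consider the three terms of $\calB^*_{F,G,E}(z,u,v)$ separately. For the last two, observe that $\frac{1}{z-u^*} + \frac{1}{2}\frac{F''(z)}{F'(z)}$ is, up to sign, exactly the portion of $\calB_{F}(z,u^*)$ not involving $F(u^*)$; applying the composition rule (Lemma \ref{Lemma:NPSGeneralComp}) to $F = S\circ(f\circ T_u)$ together with the vanishing of $\calB_S$ (Lemma \ref{Lemma:BMobiusZero}) yields
    \begin{align*}
        -\frac{1}{z-u^*} - \frac{1}{2}\frac{F''(z)}{F'(z)} = \Big( -\frac{1}{T_u(z)-T_u(u^*)} - \frac{1}{2}\frac{f''(T_u(z))}{f'(T_u(z))}\Big)T_u'(z),
    \end{align*}
    using the M\"obius difference-quotient identity \eqref{Eq:MobDifferenceQuotient} for $T_u$ to rewrite $1/(z-u^*)$ and absorbing the pre-Schwarzian of $T_u$. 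For the first term, expanding $F'(z) = \big(S'(f(T_u(z)))\big)f'(T_u(z))T_u'(z)$ and $F(z)-G(v) = S(f(T_u(z)))-S(g(T_v(v)))$ and applying \eqref{Eq:MobDifferenceQuotient} to $S$ gives
    \begin{align*}
        \frac{F'(z)}{F(z)-G(v)} = \frac{f'(T_u(z))}{f(T_u(z))-g(T_v(v))}T_u'(z).
    \end{align*}
    Summing the three contributions and recalling $T_u(u^*) = \big(T_u(u)\big)^*$ produces exactly $\calB^*_{f,g,D}\big(T_u(z),T_u(u),T_v(v)\big)T_u'(z)$, which is \eqref{Eq:NPS*MobiusInvariance}.

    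If any of the quantities involving $z$ is infinite, take limits in \eqref{Eq:NPS*MobiusInvariance} from nearby finite points; the limits exist and the identity persists, since by Corollary \ref{Lemma:BFinite} the analogous expressions are holomorphic in $z$. Likewise the cases in which one or more of $u,u^*,v,f(u),g(v)$ is infinite follow from the convention that the corresponding term vanishes, by taking the appropriate limits in the above computation exactly as in the proofs of Lemmas \ref{Lemma:NPSGeneralComp} and \ref{Lemma:NPSMobiusInvariance1}.
\end{proof}
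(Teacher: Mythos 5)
Your overall outline matches the paper's: derive \eqref{Eq:CMobiusInvariance} by subtracting \eqref{Eq:NPS*MobiusInvariance} from the $\calB$-covariance of Lemma \ref{Lemma:NPSMobiusInvariance1}, use the conjugation of reflections $T_u\circ j_E = j_D\circ T_u$, and handle infinite cases by limits. However, the core computation contains a genuine error: the two displayed identities in your proof are each \emph{false} whenever $S$ is not affine, and only their sum is correct. Writing $F := S\circ f\circ T_u$, $G := S\circ g\circ T_v$ as in your proof, $u^*$ for the reflection of $u$ across $\partial E$, and $a := f(T_u(z))$, a direct computation with $S(w)=\frac{\alpha w+\beta}{\gamma w+\delta}$ (determinant $1$) gives
\begin{align*}
    \frac{F'(z)}{F(z)-G(v)} &= \frac{f'(T_u(z))}{f(T_u(z))-g(T_v(v))}\,T_u'(z) \;+\; \frac{1}{2}\calA_S(a)\,f'(T_u(z))\,T_u'(z),\\
    -\frac{1}{z-u^*} - \frac{1}{2}\frac{F''(z)}{F'(z)} &= \Big( -\frac{1}{T_u(z)-T_u(u^*)} - \frac{1}{2}\frac{f''(T_u(z))}{f'(T_u(z))}\Big)T_u'(z) \;-\; \frac{1}{2}\calA_S(a)\,f'(T_u(z))\,T_u'(z),
\end{align*}
where $\calA_S(a) = -2\gamma/(\gamma a+\delta)$, which is nonzero for non-affine $S$ (e.g.\ $S(w)=1/w$ gives $\calA_S(a)=-2/a$). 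The failure is structural: the composition rule and \Mob covariance (Lemmas \ref{Lemma:NPSGeneralComp} and \ref{Lemma:NPSMobiusInvariance1}) are statements about the \emph{full} three-term sum $\calB_F$, whose validity rests on cancellations among all three terms; the ``portion of $\calB_F(z,u^*)$ not involving $F(u^*)$'' is not a covariantly-transforming object, so those lemmas cannot be applied to it. Your informal preamble correctly anticipates that the $S'$ contribution from the first term ``will cancel against the pre-Schwarzian correction,'' but the formal proof then asserts each piece separately transforms covariantly, which is false.

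The repair is short but necessary: either carry the two error terms $\pm\frac{1}{2}\calA_S(a)f'(T_u(z))T_u'(z)$ explicitly and observe that they cancel upon summing the three terms of $\calB^*_{F,G,E}$ — which does yield \eqref{Eq:NPS*MobiusInvariance} — or adopt the paper's factorization, which avoids cross terms entirely. The paper first proves post-composition invariance $\calB^*_{S\circ f,\,S\circ g} = \calB^*_{f,g}$ in isolation (affine $S$ is trivial, so it reduces to $S(w)=1/w$, where the $\calA_{S\circ f}$ contribution visibly cancels against the modified first term inside a single computation), and then proves pre-composition covariance under $T_u,T_v$ with no outer $S$; in that second step the first term of $\calB^*$ transforms cleanly because the chain rule only produces the factor $T_u'$, so no term-by-term issue can arise. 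Your reflection identity $T_u(u^*_E)=(T_u(u))^*_D$ and the limiting arguments for the infinite cases are fine as stated.
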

We understand \eqref{Eq:NPS*MobiusInvariance} and \eqref{Eq:CMobiusInvariance} in a limiting sense if any of the expressions involving $z$ is infinite.
\begin{proof}
    We first prove \eqref{Eq:NPS*MobiusInvariance}, and then use it and Lemma \ref{Lemma:NPSMobiusInvariance1} to show \eqref{Eq:CMobiusInvariance}.
    
    Beginning with  \eqref{Eq:NPS*MobiusInvariance}, we start by showing
    \begin{align}\label{Eq:NPS*MobiusInvarianceA}
        \calB^*_{S\circ f, S \circ g} (z,u,v) = \calB^*_{f,g}(z,u,v).
    \end{align}
    This is clear when $S$ is affine, and so for full \Mob invariance it suffices to consider $S(z)=1/z$.  First assuming each element appearing in the expression is finite, we compute
    \begin{align*}
        \calB^*_{S\circ f, S \circ g} (z,u,v) &= \frac{-\frac{f'(z)}{f(z)^2}}{\frac{1}{f(z)} - \frac{1}{g(v)}} - \frac{1}{z-u^*} + \frac{f'(z)}{f(z)} - \frac{1}{2}\calA_f(z)\\
        &= \frac{f'(z)}{f(z)-g(v)}-\frac{1}{z-u^*} - \frac{1}{2}\calA_f(z) = \calB^*_{f,g}(z,u,v).
    \end{align*}
    If any of the expressions involving $u$ or $v$ is infinite, take limits using the above finite case.  Then further  limits in $z$, if necessary (when $z$ or $f(z)$ is $\infty$), yields \eqref{Eq:NPS*MobiusInvarianceA} in all instances. 

    We secondly show
    \begin{align}\label{Eq:NPS*MobiusInvarianceB}
        \calB^*_{f\circ T_u, g\circ T_v,E} (T_u^{-1}(z),T_u^{-1}(u),T_v^{-1}(v)) = \calB^*_{f,g,D}(z,u,v)T_u'(T_u^{-1}(z))
    \end{align}
    for any $(z,u,v) \in D \times \overline{D} \times \overline{D^*}$, which completes the proof of \eqref{Eq:NPS*MobiusInvariance}.  Again, we first suppose each element appearing on either side is finite, and compute
    \begin{align}
        \calB^*_{f\circ T_u, g\circ T_v} (&T_u^{-1}(z),T_u^{-1}(u),T_v^{-1}(v)) \notag\\
        &= \bigg(  \frac{f'(z)}{f(z)-g(v)} - \frac{(T_u^{-1})'(z)}{T_u^{-1}(z)-(T_u^{-1}(u))^*} -\frac{1}{2} \calA_f(z) - \frac{\calA_{T_u}(T_u^{-1}(z))}{2T_u'(T_u^{-1}(z))} \bigg)T_u'(T_u^{-1}(z)). \label{Eq:NPS*MobiusLastStep}
    \end{align}
    By the pre-Schwarzian chain rule (see the first entry of Table \ref{Table:ABS}),
    \begin{align*}
        - \frac{\calA_{T_u}(T_u^{-1}(z))}{2T_u'(T_u^{-1}(z))} = \frac{1}{2}\calA_{T_u^{-1}}(z),
    \end{align*}
    while furthermore 
    \begin{align*}
        (T_u^{-1}(u))^* = j_{E}(T_u^{-1}(u)) = T_u^{-1}(j_D(u)) = T_u^{-1}(u^*)
    \end{align*}
    since $j_{E} \circ T_u^{-1} \circ j_D = T_u^{-1}$ on $\partial D$ and thus everywhere. Hence \eqref{Eq:NPS*MobiusLastStep} says
    \begin{align*}
        \calB^*_{f\circ T_u, g\circ T_v} (&T_u^{-1}(z),T_u^{-1}(u),T_v^{-1}(v))\\
        &= \bigg(  \frac{f'(z)}{f(z)-g(v)} - \frac{(T_u^{-1})'(z)}{T_u^{-1}(z)-T_u^{-1}(u^*)} +\frac{1}{2}\calA_{T_u^{-1}}(z) - \frac{1}{2} \calA_f(z) \bigg)T_u'(T_u^{-1}(z))\\
        &= \bigg(  \frac{f'(z)}{f(z)-g(v)} -\frac{1}{z-u^*} - \frac{1}{2} \calA_f(z) \bigg)T_u'(T_u^{-1}(z))\\
        &= \calB^*_{f,g,D}(z,u,v)T_u'(T_u^{-1}(z)),
    \end{align*}
    where we see the second equality from Lemma \ref{Lemma:BMobiusZero}, for instance. Thus \eqref{Eq:NPS*MobiusInvarianceB} holds in the finite case, as claimed, and taking limits again handles the infinite cases.

    The covariance \eqref{Eq:CMobiusInvariance} of the $\calC$ operator then immediately follows by \eqref{Eq:NPSMobiusPrePost} and  \eqref{Eq:NPS*MobiusInvariance} applied to \eqref{Eq:CalCDefAppendix}.
\end{proof}

\bibliography{Weld}
\bibliographystyle{alpha}
\end{document}